  \theoremstyle{remark}
  \newtheorem*{acknowledgement*}{\protect\acknowledgementname}
\theoremstyle{plain}
\newtheorem{thm}{\protect\theoremname}[section]
  \theoremstyle{plain}
  \newtheorem{lem}[thm]{\protect\lemmaname}
  \theoremstyle{plain}
  \newtheorem{assumption}[thm]{\protect\assumptionname}
  \theoremstyle{plain}
  \newtheorem{prop}[thm]{\protect\propositionname}
  \theoremstyle{plain}
  \newtheorem{cor}[thm]{\protect\corollaryname}
  \theoremstyle{definition}
  \newtheorem{defn}[thm]{\protect\definitionname}
  \theoremstyle{definition}
  \newtheorem{example}[thm]{\protect\examplename}
  \theoremstyle{remark}
  \newtheorem{rem}[thm]{\protect\remarkname}
  \providecommand{\acknowledgementname}{Acknowledgement}
  \providecommand{\assumptionname}{Assumption}
  \providecommand{\corollaryname}{Corollary}
  \providecommand{\definitionname}{Definition}
  \providecommand{\examplename}{Example}
  \providecommand{\lemmaname}{Lemma}
  \providecommand{\propositionname}{Proposition}
  \providecommand{\remarkname}{Remark}
\providecommand{\theoremname}{Theorem}
\begin{document}
\global\long\def\phi{\varphi}
\global\long\def\epsilon{\varepsilon}
\global\long\def\theta{\vartheta}
\global\long\def\E{\mathbb{E}}
\global\long\def\N{\mathbb{N}}
\global\long\def\Z{\mathbb{Z}}
\global\long\def\R{\mathbb{R}}
\global\long\def\F{\mathcal{F}}
\global\long\def\le{\leqslant}
\global\long\def\ge{\geqslant}
\global\long\def\MT{\ensuremath{\clubsuit}}
\global\long\def\1{\mathbbm1}
\global\long\def\d{\,\mathrm{d}}
\global\long\def\dd{\mathrm{d}}
\global\long\def\leq{\le}
\global\long\def\geq{\ge}
 \global\long\def\subset{\subseteq}
\global\long\def\supset{\supseteq}
\global\long\def\argmin{\arg\,\min}
\global\long\def\bull{{\scriptstyle \bullet}}
\global\long\def\supp{\operatorname{supp}}
\global\long\def\sgn{\operatorname{sign}}

\title{Rough differential equations driven by signals in Besov spaces}

\author{David J. Prömel\thanks{Eidgenössische Technische Hochschule Zürich, Email: david.proemel@math.ethz.ch }
~and Mathias Trabs\thanks{Université Paris-Dauphine, Email: trabs@ceremade.dauphine.fr} }
\maketitle
\begin{abstract}
Rough differential equations are solved for signals in general Besov
spaces unifying in particular the known results in Hölder and p-variation
topology. To this end the paracontrolled distribution approach, which
has been introduced by Gubinelli, Imkeller and Perkowski \citep{Gubinelli2012}
to analyze singular stochastic PDEs, is extended from Hölder to Besov
spaces. As an application we solve stochastic differential equations
driven by random functions in Besov spaces and Gaussian processes
in a pathwise sense. 
\end{abstract}
\noindent \textbf{Key words:} Besov regularity, Itô map, Paradifferential
calculus, Rough differential equation, Geometric Besov rough path,
Stochastic differential equation, Young integration. 

\noindent \textbf{MSC 2010 Classification:} Primary: 34A34, 60H10;
Secondary: 30H25, 46N20, 46N30.

\section{Introduction }

Differential equations belong to the most fundamental objects in numerous
areas of mathematics gaining extra interest from their various fields
of applications. A very important sub-class of classical ordinary
differential equations (ODEs) are controlled ODEs, whose dynamics
are given by

\begin{equation}
\dd u(t)=F(u(t))\xi(t),\quad u(0)=u_{0},\quad t\in\mathbb{R},\label{eq:rde}
\end{equation}
where $u_{0}\in\mathbb{R}^{m}$ is the initial condition, $u\colon\mathbb{R}\to\mathbb{R}^{m}$
is a continuous function, $\dd$ denotes the differential operator
and $F\colon\mathbb{R}^{m}\to\mathcal{L}(\mathbb{R}^{n},\mathbb{R}^{m})$
is a family of vector fields on $\R^{m}$. In such a dynamic $\xi\colon\mathbb{R}\to\mathbb{R}^{n}$
typically models the input signal and $u$ the output. 

If the signal $\xi$ is very irregular, for instance if $\xi$ has
the regularity of white noise, equation (\ref{eq:rde}) is called
rough differential equation (RDE). Starting with the seminal paper
by \citet{Lyons1998}, the theory of rough paths has been developed
to solve and analyze rough differential equations over the last two
decades. A significant insight due to \citet{Lyons1998} was that
the driving signal $\xi$ must be enhanced to a \textquotedblright rough
path\textquotedblright{} in some sense, in order to solve the RDE~(\ref{eq:rde})
and to restore the continuity of the Itô map defined by $\xi\mapsto u$
in a $p$-variation topology, cf. \citep{Lyons2002,Lyons2007,Friz2010}.
In particular, the rough path framework allows for treating important
examples as stochastic differential equations in a non-probabilistic
setting. Parallel to the $p$-variation results, rough differential
equations have been analyzed in the Hölder topology with similar tools,
cf. \citep{Friz2005a,Friz2013}.

One core goal of this article is to unify the approach via the $p$-variation
and the one via the Hölder topology in a common framework. To this
end, we deal with rough differential equations on the very large and
flexible class of Besov spaces $B_{p,q}^{\alpha}$, noting that, loosely
speaking, the space of $\alpha$-Hölder regular functions is given
by the Besov space $B_{\infty,\infty}^{\alpha}$ and that the $p$-variation
scale corresponds to $B_{p,q}^{1/p}$ (see \citep{BourdaudEtAl2006}).
The results by \citet{Zahle1998,Zahle2001,Zahle2005}, who set up
integration for functions in Sobolev\textendash Slobodeckij spaces
via fractional calculus, are covered by our results as well. In fact,
Besov spaces unify numerous function spaces, including also Sobolev
spaces and Bessel-potential spaces, for a comprehensive monograph
we refer to \citet{triebel2010}. Furthermore, different types of
Besov spaces and Besov embeddings already appear naturally in various
applications of rough path theory. Let us mention, for instance, their
use to derive large deviation principles \citep{Ledoux2002,Inahama2015},
a non-Markovian Hörmander theory for RDEs \citep{Cass2010,Cass2015}
and certain embedding results in the context rough path \citep{Hara2007,Friz2006}.

Due to this generality, studying solutions to the RDE~(\ref{eq:rde})
on Besov spaces is a highly interesting, but challenging problem.
In a first step, provided the driving signal $\xi$ is in $B_{p,q}^{\alpha-1}$
for $\alpha>1/2$, $p\ge2$, $q\ge1$, the existence and uniqueness
of a solution $u$ to the RDE~(\ref{eq:rde}) is proven, see Theorem~\ref{thm:Young},
and further it is shown that the corresponding Itô map is locally
Lipschitz continuous with respect to the Besov topology, see Theorem~\ref{thm:lipschitz young}.
In particular, with these results we recover the classical Young integration
\citep{Young1936} on Besov spaces. 

In order to handle a more irregular driving signal $\xi$ in $B_{p,q}^{\alpha-1}$
for $\alpha>1/3$, $p\ge3$, $q\ge1$, the path itself has to be enhanced
with an additional information, say $\pi(\theta,\xi)$, which always
exists for a smooth path $\xi$ and corresponds to the first iterated
integral in rough path theory. In the spirit of the usual notion of
geometric rough path, this leads naturally to the new definition of
the space of geometric Besov rough paths $\mathcal{B}_{p,q}^{0,\alpha}$,
cf. Definition~\ref{def:geometric rough path}. Starting with a smooth
path $\xi$, it is shown that the Itô map associated to the RDE~(\ref{eq:rde})
extends continuously to the space of geometric Besov rough path, cf.
Theorem~\ref{thm:gobalsolution}. As a consequence there exists a
unique pathwise solution to the RDE~(\ref{eq:rde}) driven by a geometric
Besov rough path. Note that due to $\alpha>1/p$ our results are restricted
to continuous solutions, which seems to appear rather naturally, see
Remark~\ref{remark:jumps} for a discussion. Especially, for signals
which are not self-similar like Brownian motion but whose regularity
is determined by rare singularities, we can profit from measuring
regularity in general Besov norms. 

The immediate and highly non-trivial problem appearing in equation
(\ref{eq:rde}) is that the product $F(u)\xi$ is not well-defined
for very irregular signals. While classical approaches as rough path
theory formally integrate equation~(\ref{eq:rde}) and then give
the appearing integral a meaning, the first step of our analysis is
to give a direct meaning to the product in (\ref{eq:rde}). Our analysis
relies on the notion of paracontrolled distributions, very recently
introduced by \citet{Gubinelli2012} on the Hölder spaces $B_{\infty,\infty}^{\alpha}$.
Their key insight is that by applying Bony's decomposition to $F(u)\xi$
the appearing resonant term can be reduced to the resonant term $\pi(\theta,\xi)$
of $\xi$ and its antiderivative $\theta$, using a controlled ansatz
to the solution $u$. The resonant term $\pi(\theta,\xi)$ turns out
to be the necessary additional information to show the existence of
a pathwise solution and corresponds to the first iterated integral
in rough path theory as already mentioned above.

Generalizing the approach from \citep{Gubinelli2012} to Besov spaces
poses severe additional problems, which are solved by using the Besov
space characterizations via Littlewood-Paley blocks as well as the
one via the modulus of continuity. Besov spaces are a Banach algebra
if and only if $p=q=\infty$ such that in general our results can
only rely on pointwise multiplier theorems, Bony's decomposition and
Besov embeddings. We thus need to generalize certain results in \citep{Bahouri2011}
and \citep{Gubinelli2012}, including the commutator lemma, see Lemma~\ref{lem:commutator}.
A second difficulty is that $u\in B_{p,q}^{\alpha}$ imposes an $L^{p}$-integrability
condition on $u$. To overcome this problem, we localize the signal
and consider a weighted Itô(-Lyons) map, both done in a way that does
not change the dynamic of the RDE on a compact interval around the
origin.

The paracontrolled distribution approach \citep{Gubinelli2012} offers
an extension of rough path theory to a multiparameter setting as also
done by the innovative theory of regularity structures developed by
\citet{Hairer2014}. While Hairer's theory presumably has a much wider
range of applicability, both successfully give a meaning to many stochastic
partial differential equations (PDEs) like the KPZ equation \citep{Hairer2013a,Gubinelli2015}
and the dynamical $\Phi_{3}^{4}$ equation \citep{Hairer2014,Catellier2013}
just to name two. Even if the approach of \citet{Gubinelli2012} may
not be a systematic theory as regularity structures, it comprises
some advantages. The approach works with already well-studied tools
like Bony's paraproduct and Littlewood-Paley theory, which leads to
globally defined objects rather than the locally operating \textquotedblleft jets\textquotedblright{}
appearing in the theory of regularity structures. Since for stochastic
PDEs the question about the ``most suitable'' function spaces seems
not to be settled yet, it might be quite promising on its own to extend
\citep{Gubinelli2012} to a more general foundation as we do by working
with general Besov spaces. For instance, let us refer to the very
recent work of \citet{Hairer2015}, where the theory of regularity
structures is adapted to a setting of weighted Besov spaces. 

In probability theory the prototypical example of the differential
equation~(\ref{eq:rde}) is a stochastic differential equation driven
by a fractional Brownian motion $B^{H}$ with Hurst index $H>0$.
It is well-known that the Besov regularity of such a fractional Brownian
motion is $B_{p,\infty}^{H}$ for $p\in[1,\infty)$ and thus the results
of the present paper are applicable. For our Besov setting, an even
more interesting example coming from stochastic analysis, recalling
for example the Karhunen-Loève theorem, are Gaussian processes and
stochastic processes given by a basis expansion with random coefficients,
see e.g. \citet{Friz2013a}. The Besov regularity of such random functions
can be determined sharply and they are well-studied for instance when
investigating the regularity of solutions for certain stochastic PDEs
\citep{Chioica2012} or in non-parametric Bayesian statistics \citep{Abramovich1998,Bochkina2013}.
In order to make our results about RDEs accessible for these examples,
we prove all the required sample path properties in Section~\ref{sec:SDE},
especially the existence of the resonant term is provided.

This work is organized as follows. Section~\ref{sec:functional analysis}
introduces the functional analytic framework and gives some preliminary
results. In Section~\ref{sec:Young} we recover Young integration
on Besov spaces and deal with differential equations driven by paths
with regularity $\alpha>1/2$. The analytic foundation of the paracontrolled
distribution approach on general Besov spaces is presented in Section~\ref{sec:LinCom}.
The application of the paracontrolled ansatz to rough differential
equations is developed in Section~\ref{sec:paracontrolled ansatz}
and in Section~\ref{sec:SDE} it is used to solve certain stochastic
differential equations. In Appendix~\ref{sec:appendix} some known
results about Besov spaces are recalled and the proof for the local
Lipschitz continuity of the Itô map is given.
\begin{acknowledgement*}
The authors would like to thank Peter Imkeller and Nicolas Perkowski
for fruitful discussions. D.J.P. gratefully acknowledges the financial
support by the DFG Research Training Group 1845 \textquotedbl{}Stochastic
Analysis with Applications in Biology, Finance and Physics\textquotedbl{}
and the Swiss National Foundation under Grant No. 200021\_163014.
M.T. gratefully acknowledges the financial support by a DFG research
fellowship TR 1349/1-1. The main part of the paper was carried out
while M.T. was employed and D.J.P. was a Ph.D. student at the Humboldt-Universität
zu Berlin.
\end{acknowledgement*}

\section{Functional analytic preliminaries\label{sec:functional analysis}}

For our analysis we need to recall the definition of Besov spaces,
some elements of the Littlewood-Paley theory and Bony's paraproduct.
For the properties of Besov spaces we refer to \citet{triebel2010}.
The calculus of Bony's paraproduct is comprehensively studied by \citet{Bahouri2011},
from which we also borrow most of our notation. 

For the sake of clarification let us mention that $L^{p}(\R^{d},\R^{m})$
denotes the space of Lebesgue $p$-integrable functions for $p\in(0,\infty)$
and $L^{\infty}(\R^{d},\R^{m})$ denotes the space of bounded functions
with the (quasi-)norms $\|\cdot\|_{L^{p}}$, $p\in(0,\infty]$. The
space of $\alpha$-Hölder continuous functions $f\colon\R^{d}\to\R^{m}$
is denoted by $C^{\alpha}$ equipped with the Hölder norm 
\[
\|f\|_{\alpha}:=\sum_{|k|<\lfloor\alpha\rfloor}\|f^{(k)}\|_{L^{\infty}}+\sum_{|k|=\lfloor\alpha\rfloor}\sup_{x\neq y}\frac{|f^{(k)}(x)-f^{(k)}(y)|}{|x-y|^{\alpha-\lfloor\alpha\rfloor}},
\]
where $k$ denotes multi-indices with usual conventions and where
$\lfloor\alpha\rfloor$ denotes the integer part of $\alpha>0$. For
operator valued functions $F\colon\mathbb{R}^{m}\to\mathcal{L}(\mathbb{R}^{n},\mathbb{R}^{m})$
we write $F\in C^{n}$, $n\in\mathbb{N}$, if $F$ is bounded, continuous
and $n$-times differentiable with bounded and continuous derivatives,
and we use the abbreviation $C:=C^{0}$. The first and second derivative
are denoted by $F^{\prime}$ and $F^{\prime\prime}$, respectively,
and higher derivatives by $F^{(n)}$. On the space $C^{n}$ we introduce
the norm 
\[
\|F\|_{\infty}:=\sup_{x\in\R^{m}}\|F(x)\|\quad\mbox{and}\quad\|F\|_{C^{n}}:=\|F\|_{\infty}+\sum_{j=1}^{n}\|F^{(n)}\|_{\infty},
\]
for $n\geq1$, where $\|\cdot\|$ denotes the corresponding operator
norms.

The presumably most fundamental way to define \emph{Besov spaces}
is given via the modulus of continuity of a function $f\in L^{p}(\R^{d},\R^{m})$
\begin{equation}
\omega_{p}(f,\delta):=\sup_{0<|h|<\delta}\|f(\cdot)-f(\cdot-h)\|_{L^{p}}\quad\text{for}\quad p,\delta>0.\label{eq:ModCont}
\end{equation}
For $p,q\in[1,\infty]$ and $\alpha\in(0,1)$ Besov spaces are defined
as 
\begin{align*}
B_{p,q}^{\alpha}(\R^{d}):=B_{p,q}^{\alpha}(\R^{d},\R^{m}) & :=\big\{ f\in L^{p}(\R^{d},\R^{m}):\|f\|_{\omega:\alpha,p,q}<\infty\big\}\quad\text{with}\\
\|f\|_{\omega:\alpha,p,q} & :=\|f\|_{L^{p}}+\Big(\int_{\R^{d}}|h|^{-\alpha q}\omega_{p}(f,|h|)^{q}\frac{\mathrm{d}h}{|h|^{d}}\Big)^{1/q}
\end{align*}
and the usual modification if $q=\infty$. If $d=1$ (and no confusion
arises from the dimension $m$) we subsequently abbreviate $L^{p}:=L^{p}(\R,\R^{m})$
and $B_{p,q}^{\alpha}:=B_{p,q}^{\alpha}(\R,\R^{m})$. In $B_{p,q}^{\alpha}(\R^{d})$
the regularity $\alpha$ is measured in the $L^{p}$-norm while $q$
is basically a fine tuning parameter in view of the embedding $B_{p,q_{1}}^{\alpha}(\R^{d})\subset B_{p,q_{2}}^{\beta}(\R^{d})$
for $\beta<\alpha$ and any $q_{1},q_{2}\ge1$. The classical Hölder
spaces and Sobolev spaces are recovered as the special cases $B_{\infty,\infty}^{\alpha}(\R^{d})$
(for non-integer $\alpha$) and $B_{2,2}^{\alpha}(\R^{d})$, respectively.
Alternatively, Besov spaces can be characterized in terms of a Littlewood-Paley
decomposition. Since our analysis mainly relies on this latter characterization,
we describe it subsequently.

\medskip{}

We write $\mathcal{S}(\R^{d}):=\mathcal{S}(\R^{d},\R^{m})$ for the
space of Schwartz functions on $\R^{d}$ and denote its dual by $\mathcal{S}^{\prime}(\R^{d})$,
which is the space of tempered distributions. For a function $f\in L^{1}$
the Fourier transform is defined by

\[
\mathcal{F}f(z):=\int_{\mathbb{R}^{d}}e^{-i\langle z,x\rangle}f(x)\d x
\]
and so the inverse Fourier transform is given by $\mathcal{F}^{-1}f(z):=(2\pi)^{-d}\mathcal{F}f(-z)$.
If $f\in\mathcal{S}^{\prime}(\R^{d})$, then the usual generalization
of the Fourier transform is considered. The Littlewood-Paley theory
is based on localization in the frequency domain. Let $\chi$ and
$\rho$ be non-negative infinitely differentiable radial functions
on $\mathbb{R}^{d}$ such that
\begin{enumerate}
\item there is a ball $\mathcal{B}\subset\R^{d}$ and an annulus $\mathcal{A}\subset\R^{d}$
satisfying $\supp\chi\subset\mathcal{B}$ and $\supp\rho\subset\mathcal{A}$,
\item $\chi(z)+\sum_{j\geq0}\rho(2^{-j}z)=1$ for all $z\in\mathbb{R}^{d}$,
\item \label{enu:suppRho}$\textup{supp}(\chi)\cap\textup{supp}(\rho(2^{-j}\cdot))=\emptyset$
for $j\geq1$ and $\textup{supp}(\rho(2^{-i}\cdot))\cap\textup{supp}(\rho(2^{-j}\cdot))=\emptyset$
for $\vert i-j\vert>1$.
\end{enumerate}
We say a pair $(\chi,\rho)$ with these properties is a\textit{\textcolor{black}{{}
dyadic partition of unity}}\textcolor{red}{{} }and we throughout use
the notation 
\[
\rho_{-1}:=\chi\quad\text{and}\quad\rho_{j}:=\rho(2^{-j}\cdot)\quad\text{for }j\ge0.
\]
For the existence of such a partition we refer to \citep[Prop. 2.10]{Bahouri2011}.
Taking a dyadic partition of unity $(\chi,\rho),$ the \emph{Littlewood-Paley
blocks} are defined as 
\[
\Delta_{-1}f:=\mathcal{F}^{-1}(\rho_{-1}\mathcal{F}f)\quad\text{and}\quad\Delta_{j}f:=\mathcal{F}^{-1}(\rho_{j}\mathcal{F}f)\quad\text{for }j\geq0.
\]
Note that $\Delta_{j}f$ is a smooth function for every $j\geq-1$
and for every $f\in\mathcal{S}^{\prime}(\R^{d})$ we have 
\[
f=\sum_{j\geq-1}\Delta_{j}f:=\lim_{j\to\infty}S_{j}f\quad\text{with}\quad S_{j}f:=\sum_{i\leq j-1}\Delta_{i}f.
\]
For $\alpha\in\mathbb{R}$ and $p,q\in(0,\infty]$ the Besov space
can be characterized in full generality as 
\[
B_{p,q}^{\alpha}(\mathbb{R}^{d},\mathbb{R}^{m})=\bigg\{ f\in\mathcal{S}^{\prime}(\mathbb{R}^{d},\mathbb{R}^{m}):\|f\|_{\alpha,p,q}<\infty\bigg\}\quad\text{with}\quad\|f\|_{\alpha,p,q}:=\Big\|\big(2^{j\alpha}\|\Delta_{j}f\|_{L^{p}}\big)_{j\ge-1}\Big\|_{\ell^{q}}.
\]
According to \citep[Thm. 2.5.12]{triebel2010}, the norms $\|\cdot\|_{\omega:\alpha,p,q}$
and $\|\cdot\|_{\alpha,p,q}$ are equivalent for $p,q\in(0,\infty]$
and $\alpha\in(\frac{d}{\min\{p,1\}}-d,1)$. $B_{p,q}^{\alpha}(\R^{d})$
is a quasi-Banach space and if $p,q\ge1$, it is Banach space, cf.
\citep[Thm. 2.3.3]{triebel2010}. Although the (quasi-)norm $\|\cdot\|_{\alpha,p,q}$
depends on the dyadic partition $(\chi,\rho)$, different dyadic partitions
of unity lead to equivalent norms. 

\medskip{}

We will frequently use the notation $A_{\theta}\lesssim B_{\theta}$,
for a generic parameter $\theta$, meaning that $A_{\theta}\le CB_{\theta}$
for some constant $C>0$ independent of $\theta$. We write $A_{\theta}\sim B_{\theta}$
if $A_{\theta}\lesssim B_{\theta}$ and $B_{\theta}\lesssim A_{\theta}$.
For integers $j_{\theta},k_{\theta}\in\Z$ we write $j_{\theta}\lesssim k_{\theta}$
if there is some $N\in\N$ such that $j_{\theta}\le k_{\theta}+N$,
and $j_{\theta}\sim k_{\theta}$ if $j_{\theta}\lesssim k_{\theta}$
and $k_{\theta}\lesssim j_{\theta}.$

\medskip{}

In view of the RDE~(\ref{eq:rde}) we need to study the product of
two distributions. The standard estimate, cf. \citet[(24) on p. 143]{triebel2010},
\begin{equation}
\|fg\|_{\alpha,p,q}\lesssim\|f\|_{\alpha,\infty,q}\|g\|_{\alpha,p,q}\label{eq:pointwiseMulti}
\end{equation}
applies only for $\alpha>0$ and $p,q\ge1$. However, in the context
of RDEs the regularity $\alpha$ of the involved product will typically
be negative. Given $f\in B_{p_{1,}q_{1}}^{\alpha}(\R^{d})$ and $g\in B_{p_{2},q_{2}}^{\beta}(\R^{d})$,
at least formally we can decompose the product $fg$ in terms of Littlewood-Paley
blocks as 
\[
fg=\sum_{j\geq-1}\sum_{i\geq-1}\Delta_{i}f\Delta_{j}g=T_{f}g+T_{g}f+\pi(f,g),
\]
where 
\begin{equation}
T_{f}g:=\sum_{j\geq-1}S_{j-1}f\Delta_{j}g,\quad\text{and}\quad\pi(f,g):=\sum_{\vert i-j\vert\leq1}\Delta_{i}f\Delta_{j}g.\label{eq:bony decomposition}
\end{equation}
We call $\pi(f,g)$ the \textit{resonant term}. This decomposition
was introduced by \citet{Bony1981} and it comes with the following
estimates: 
\begin{lem}
\label{lem:paraproduct}Let $\alpha,\beta\in\mathbb{R}$ and $p_{1},p_{2},q_{1},q_{2}\in[1,\infty]$
and suppose that 
\[
\frac{1}{p}:=\frac{1}{p_{1}}+\frac{1}{p_{2}}\le1\quad\text{and}\quad\frac{1}{q}:=\frac{1}{q_{1}}+\frac{1}{q_{2}}\le1.
\]

\begin{enumerate}
\item For any $f\in L^{p_{1}}(\R^{d})$ and $g\in B_{p_{2},q}^{\beta}(\R^{d})$
we have
\[
\|T_{f}g\|_{\beta,p,q}\lesssim\|f\|_{L^{p_{1}}}\|g\|_{\beta,p_{2},q}.
\]

\item If $\alpha<0$, then for any $(f,g)\in B_{p_{1},q_{1}}^{\alpha}(\R^{d})\times B_{p_{2},q_{2}}^{\beta}(\R^{d})$
we have
\begin{eqnarray*}
\|T_{f}g\|_{\alpha+\beta,p,q} & \lesssim & \|f\|_{\alpha,p_{1},q_{1}}\|g\|_{\beta,p_{2},q_{2}}.
\end{eqnarray*}

\item If $\alpha+\beta>0$, then for any $(f,g)\in B_{p_{1},q_{1}}^{\alpha}(\R^{d})\times B_{p_{2},q_{2}}^{\beta}(\R^{d})$
we have 
\[
\|\pi(f,g)\|_{\alpha+\beta,p,q}\lesssim\|f\|_{\alpha,p_{1},q_{1}}\|g\|_{\beta,p_{2},q_{2}}.
\]

\end{enumerate}
\end{lem}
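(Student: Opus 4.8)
The plan is to follow the classical proof of Bony's estimates, cf.\ \citep{Bahouri2011}, taking care to track how the three integrability exponents $p,p_{1},p_{2}$ and the three summability exponents $q,q_{1},q_{2}$ propagate: pointwise products will be estimated by Hölder's inequality in $L^{p}$, while the dyadic sums will be estimated by Young's and Hölder's inequalities on $\ell^{q}$-sequence spaces. Two structural facts will be used repeatedly. First, for every $r\in[1,\infty]$ the blocks $\Delta_{j}$ and the partial sums $S_{j}$ are bounded on $L^{r}$ with a constant independent of $j$, since each is convolution with a kernel of $j$-independent $L^{1}$-norm; the hypotheses $1/p\le1$ and $p_{1},p_{2}\ge1$ keep us in this range. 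Second, the spectral supports coming from the dyadic partition of unity: $S_{j-1}f\,\Delta_{j}g$ is Fourier-supported in an annulus $2^{j}\widetilde{\mathcal{A}}$, so $\Delta_{k}(S_{j-1}f\,\Delta_{j}g)=0$ unless $|k-j|\le N_{0}$, whereas $\Delta_{i}f\,\Delta_{j}g$ with $|i-j|\le1$ is Fourier-supported in a ball $2^{j}\widetilde{\mathcal{B}}$, so $\Delta_{k}(\Delta_{i}f\,\Delta_{j}g)=0$ unless $j\ge k-N_{0}$, for a fixed $N_{0}\in\N$.

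For part~(i) I would use the annulus property to write $\Delta_{k}(T_{f}g)=\sum_{|j-k|\le N_{0}}\Delta_{k}(S_{j-1}f\,\Delta_{j}g)$, then bound $\|S_{j-1}f\,\Delta_{j}g\|_{L^{p}}\le\|S_{j-1}f\|_{L^{p_{1}}}\|\Delta_{j}g\|_{L^{p_{2}}}\lesssim\|f\|_{L^{p_{1}}}\|\Delta_{j}g\|_{L^{p_{2}}}$ by Hölder and the uniform boundedness of $S_{j-1}$ on $L^{p_{1}}$, so that $2^{k\beta}\|\Delta_{k}(T_{f}g)\|_{L^{p}}\lesssim\|f\|_{L^{p_{1}}}\sum_{|j-k|\le N_{0}}2^{j\beta}\|\Delta_{j}g\|_{L^{p_{2}}}$; taking the $\ell^{q}$-norm in $k$ and absorbing the finite-range sum gives the claim (no split of the $\ell^{q}$-sum is needed since the summability exponent does not change).

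For parts~(ii) and~(iii) I would set $a_{i}:=2^{i\alpha}\|\Delta_{i}f\|_{L^{p_{1}}}$ and $b_{j}:=2^{j\beta}\|\Delta_{j}g\|_{L^{p_{2}}}$, so that $\|(a_{i})_{i}\|_{\ell^{q_{1}}}=\|f\|_{\alpha,p_{1},q_{1}}$ and $\|(b_{j})_{j}\|_{\ell^{q_{2}}}=\|g\|_{\beta,p_{2},q_{2}}$. For~(ii) the annulus property gives $\Delta_{k}(T_{f}g)=\sum_{|j-k|\le N_{0}}\sum_{i\le j-2}\Delta_{k}(\Delta_{i}f\,\Delta_{j}g)$, and Hölder in $L^{p}$ together with $2^{k(\alpha+\beta)}\sim2^{j(\alpha+\beta)}$ leads to $2^{k(\alpha+\beta)}\|\Delta_{k}(T_{f}g)\|_{L^{p}}\lesssim\sum_{|j-k|\le N_{0}}d_{j}b_{j}$ with $d_{j}:=\sum_{i\le j-2}2^{(j-i)\alpha}a_{i}$, which is the convolution of $(a_{i})$ with the kernel $l\mapsto2^{l\alpha}\1_{\{l\ge2\}}$; this kernel lies in $\ell^{1}$ precisely because $\alpha<0$, so Young's inequality yields $\|(d_{j})_{j}\|_{\ell^{q_{1}}}\lesssim\|f\|_{\alpha,p_{1},q_{1}}$, and taking $\ell^{q}$ in $k$ followed by the Hölder split $\|(d_{j}b_{j})_{j}\|_{\ell^{q}}\le\|(d_{j})_{j}\|_{\ell^{q_{1}}}\|(b_{j})_{j}\|_{\ell^{q_{2}}}$ finishes. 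For~(iii) the ball property gives $\Delta_{k}\pi(f,g)=\sum_{j\ge k-N_{0}}\sum_{|i-j|\le1}\Delta_{k}(\Delta_{i}f\,\Delta_{j}g)$, and writing $2^{k(\alpha+\beta)}=2^{(k-j)(\alpha+\beta)}2^{j(\alpha+\beta)}$ leads to $2^{k(\alpha+\beta)}\|\Delta_{k}\pi(f,g)\|_{L^{p}}\lesssim\sum_{l\le N_{0}}2^{l(\alpha+\beta)}\widetilde a_{k-l}b_{k-l}$ with $\widetilde a_{j}:=2^{j\alpha}\sum_{|i-j|\le1}\|\Delta_{i}f\|_{L^{p_{1}}}$ (still $\ell^{q_{1}}$-summable, up to a shift of indices); here the kernel $l\mapsto2^{l(\alpha+\beta)}\1_{\{l\le N_{0}\}}$ is summable precisely because $\alpha+\beta>0$, so Young on $\ell^{q}$ and then the Hölder split finish as before.

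The step I expect to be the main point of care — rather than a place where something could fail — is the bookkeeping of exponents in the sequence inequalities: the convolution produced by the geometric decay must be performed in $\ell^{q_{1}}$, which is exactly where the sign condition on $\alpha$, resp.\ on $\alpha+\beta$, makes the kernel lie in $\ell^{1}$, and the Hölder split $\ell^{q}=\ell^{q_{1}}\cdot\ell^{q_{2}}$ must be applied only at the very end; reversing the order would force spurious embeddings between $\ell$-spaces. One should also check that the low-frequency blocks $j\in\{-1,0,1\}$, where $S_{j-1}$ degenerates, contribute only finitely many terms with harmless constants, and note that the hypotheses $p,q\ge1$ keep us away from any quasi-Banach issues, so that no restriction on $\alpha,\beta$ beyond those stated is needed.
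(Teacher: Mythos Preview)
Your proof is correct and follows essentially the same route as the paper's. The only cosmetic difference is packaging: the paper cites \citep[Thm.~2.85]{Bahouri2011} outright for~(iii) and, for~(ii), bounds $\|S_{j-1}f\|_{L^{p_{1}}}$ directly and then invokes the characterization $\big\|(2^{j\alpha}\|S_{j-1}f\|_{L^{p_{1}}})_{j}\big\|_{\ell^{q_{1}}}\lesssim\|f\|_{\alpha,p_{1},q_{1}}$ for $\alpha<0$ (Lemma~\ref{lem:bahouri}, i.e.\ \citep[Prop.~2.79]{Bahouri2011}), whereas you unpack that lemma explicitly as a discrete convolution with the $\ell^{1}$-kernel $l\mapsto 2^{l\alpha}\1_{\{l\ge2\}}$ and apply Young's inequality on sequences---which is precisely how that proposition is proved.
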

\begin{proof}
The last claim is Theorem~2.85 in \citep{Bahouri2011}. For the first
claim and the second one we slightly generalize their Theorem~2.82.
Since $\rho_{j}$ is supported on $2^{j}$ times an annulus and the
Fourier transform of $S_{k-1}f\Delta_{k}g$ is supported on $2^{k}$
times another annulus, it holds $\Delta_{j}T_{f}g=\Delta_{j}\sum_{j\sim k}S_{k-1}f\Delta_{k}g$.
Using that $\Delta_{j}$ is a convolution with $\F^{-1}\rho_{j}=2^{jd}\F^{-1}\rho(2^{j}\cdot)$,
$j\ge0$, Young's inequality yields for any function $h\in L^{p}(\R^{d})$
that $\|\Delta_{j}h\|_{L^{p}}\lesssim\|\F^{-1}\rho\|_{L^{1}}\|h\|_{L^{p}}$.
Together with Hölder's inequality we obtain for any $j\ge-1$
\[
\Big\|\Delta_{j}\big(\sum_{k\ge-1}S_{k-1}f\Delta_{k}g\big)\Big\|_{L^{p}}\lesssim\sum_{k\sim j}\|S_{k-1}f\Delta_{k}g\|_{L^{p}}\le\sum_{k\sim j}\|S_{k-1}f\|_{L^{p_{1}}}\|\Delta_{k}g\|_{L^{p_{2}}}.
\]
Since $\lim_{k\to\infty}\|S_{k-1}f\|_{L^{p_{1}}}=\|f\|_{L^{p_{1}}}$,
assertion (i) follows from
\begin{align*}
\|T_{f}g\|_{\beta,p,q} & \lesssim\Big\|2^{j\beta}\sum_{j\sim k}\|S_{k-1}f\|_{L^{p_{1}}}\|\Delta_{k}g\|_{L^{p_{2}}}\Big\|_{\ell^{q}}\\
 & \lesssim\|f\|_{L^{p_{1}}}\big\|2^{j\beta}\|\Delta_{j}g\|_{L^{p_{2}}}\big\|_{\ell^{q}}=\|f\|_{L^{p_{1}}}\|g\|_{\beta,p_{2},q}.
\end{align*}
For (ii) another application of Hölder's inequality yields
\begin{align*}
\|T_{f}g\|_{\alpha+\beta,p,q}\lesssim & \Big\|2^{j(\alpha+\beta)}\sum_{j\sim k}\|S_{k-1}f\|_{\ell^{p_{1}}}\|\Delta_{k}g\|_{L^{p_{2}}}\Big\|_{\ell^{q}}\\
\lesssim & \big\|2^{j\alpha}\|S_{j-1}f\|_{L^{p_{1}}}\big\|_{\ell^{q_{1}}}\big\|2^{j\beta}\|\Delta_{j}g\|_{L^{p_{2}}}\big\|_{\ell^{q_{2}}}\le\big\|2^{j\alpha}\|S_{j-1}f\|_{L^{p_{1}}}\big\|_{\ell^{q_{1}}}\|g\|_{\beta,p_{2},q_{2}}.
\end{align*}
Finally, we apply Lemma~\ref{lem:bahouri} to conclude that $(2^{j\alpha}\|S_{j-1}f\|_{L^{p_{1}}})_{j}\in\ell^{q_{1}}$
and that
\[
\|(2^{j\alpha}\|S_{j-1}f\|_{L^{p_{1}}})_{j}\|_{\ell^{q_{1}}}\lesssim\|f\|_{\alpha,p_{1},q_{1}.}\qedhere
\]

\end{proof}
We finish this section with two elementary lemmas, which seem to be
non-standard (cf. Lemma~A.4 and A.10 in \citep{Gubinelli2012} for
the Hölder case). To control the norm of an antiderivative with respect
to the function itself will play an import role, naturally restricted
to the case $d=1$. The following lemma provides the counterpart to
the well-known estimate $\|F'\|_{\alpha-1,p,q}\lesssim\|F\|_{\alpha,p,q}$
for any $F\in B_{p,q}^{\alpha}$, cf. \citet[Thm. 2.3.8]{triebel2010}.
For $p<\infty$ the antiderivative will in general have no finite
$L^{p}$-norm such that we have to apply a weighting function to ensure
integrability. 
\begin{lem}
\label{lem:antiderivative} Let $p\in(1,\infty]$ and $\alpha\in(1/p,1)$.
For every $f\in B_{p,q}^{\alpha-1}(\R)$ there exits a unique function
$F\colon\R\to\R^{m}$ such that $F'=f$ and $F(0)=0$. Moreover, it
holds for any fixed $\psi\in C^{1}$ satisfying $C_{\psi}:=\|\psi\|_{C^{1}}+\sum_{j,k\in\{0,1\}}\|t^{j}\psi^{(k)}(t)\|_{L^{p}}<\infty$
that 
\[
\|\psi F\|_{\alpha,p,q}\lesssim C_{\psi}\|f\|_{\alpha-1,p,q}.
\]
In particular, for any smooth $\psi$ with $\supp\psi\subset[-\mathcal{T},\mathcal{T}]$
for some $\mathcal{T}>0$ one has 
\[
\|\psi F\|_{\alpha,p,q}\lesssim(1\vee\mathcal{T}^{2})\|\psi\|_{C^{1}}\|f\|_{\alpha-1,p,q}.
\]
\end{lem}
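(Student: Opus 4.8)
The plan is to split $f$ into its low- and high-frequency parts, to antidifferentiate each part separately, and to use the weight $\psi$ only in order to absorb the (possibly unbounded) primitive of the low-frequency part. Write $f=f_{<}+f_{\ge}$ with $f_{<}:=\Delta_{-1}f\in L^{p}(\R)$ and $f_{\ge}:=\sum_{j\ge0}\Delta_{j}f$. For $j\ge0$ the distribution $\Delta_{j}f$ has Fourier support in the dyadic annulus $2^{j}\mathcal A$, which stays away from the origin, so with the fixed function $h(\eta):=\rho(\eta)/(i\eta)\in C_{c}^{\infty}$ the identity $(i\xi)^{-1}\rho(2^{-j}\xi)=2^{-j}h(2^{-j}\xi)$ shows that $G_{j}:=\F^{-1}\big((i\xi)^{-1}\rho(2^{-j}\xi)\F f\big)$ is a genuine function with $G_{j}'=\Delta_{j}f$ and, by Young's inequality, $\|G_{j}\|_{L^{p}}\lesssim2^{-j}\|\Delta_{j}f\|_{L^{p}}$. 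Since the $G_{j}$ have uniformly lacunary Fourier support, $G:=\sum_{j\ge0}G_{j}$ converges in $B^{\alpha}_{p,q}$ with $\|G\|_{\alpha,p,q}\lesssim\|f\|_{\alpha-1,p,q}$ and $G'=f_{\ge}$, and $G\in B^{\alpha}_{p,q}\hookrightarrow L^{\infty}$ because $\alpha>1/p$, so $G(0)$ is meaningful. The low-frequency part $\Delta_{-1}f$ is a smooth $L^{p}$-function, hence $H(t):=\int_{0}^{t}\Delta_{-1}f(s)\d s$ is $C^{1}$ with $H(0)=0$, $H'=f_{<}$, and $|H(t)|\le|t|^{1-1/p}\|\Delta_{-1}f\|_{L^{p}}$ by Hölder's inequality. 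I would then take $F:=G-G(0)+H$, which is continuous, satisfies $F(0)=0$ and $F'=f$, and is the unique such function since any two antiderivatives agreeing at the origin coincide.

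For the estimate I would bound $\|\psi F\|_{\alpha,p,q}$ by $\|\psi G\|_{\alpha,p,q}+|G(0)|\,\|\psi\|_{\alpha,p,q}+\|\psi H\|_{\alpha,p,q}$ and treat the three summands separately. The first is handled by the pointwise multiplier estimate~\eqref{eq:pointwiseMulti}, by $\|\psi\|_{\alpha,\infty,q}\lesssim\|\psi\|_{C^{1}}$ (valid since $\alpha<1$), and by the bound on $\|G\|_{\alpha,p,q}$. For the second, $|G(0)|\le\|G\|_{L^{\infty}}\lesssim\|G\|_{\alpha,p,q}\lesssim\|f\|_{\alpha-1,p,q}$, while $\|\psi\|_{\alpha,p,q}\lesssim\|\psi\|_{L^{p}}+\|\psi'\|_{L^{p}}\le C_{\psi}$ by the elementary bound $\|\Delta_{j}\psi\|_{L^{p}}\lesssim 2^{-j}\|\psi'\|_{L^{p}}$ and $\alpha-1<0$. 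For the third I would invoke the standard converse $\|g\|_{\alpha,p,q}\lesssim\|g\|_{L^{p}}+\|g'\|_{\alpha-1,p,q}$ of the inequality $\|F'\|_{\alpha-1,p,q}\lesssim\|F\|_{\alpha,p,q}$: with $(\psi H)'=\psi'H+\psi f_{<}$ it remains to estimate $\|\psi H\|_{L^{p}}$, $\|\psi'H\|_{L^{p}}$ and $\|\psi f_{<}\|_{L^{p}}$; the first two follow from $|H(t)|\le|t|^{1-1/p}\|\Delta_{-1}f\|_{L^{p}}$ and $|t|^{1-1/p}\le1+|t|$, which reduce them to $\|\Delta_{-1}f\|_{L^{p}}$ times the four $L^{p}$-moments of $\psi,\psi'$ occurring in $C_{\psi}$, and the third is at most $\|\psi\|_{L^{\infty}}\|\Delta_{-1}f\|_{L^{p}}$. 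Since $\alpha-1<0$ gives $L^{p}\hookrightarrow B^{\alpha-1}_{p,q}$, these $L^{p}$-bounds upgrade to $B^{\alpha-1}_{p,q}$-bounds, so that $\|\psi H\|_{\alpha,p,q}\lesssim C_{\psi}\|f\|_{\alpha-1,p,q}$; adding up the three terms proves the first displayed inequality.

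Finally, for smooth $\psi$ with $\supp\psi\subset[-\mathcal T,\mathcal T]$ one has $\|\psi\|_{L^{p}},\|\psi'\|_{L^{p}}\le(2\mathcal T)^{1/p}\|\psi\|_{C^{1}}\lesssim(1\vee\mathcal T)\|\psi\|_{C^{1}}$ and $\|t\psi(t)\|_{L^{p}},\|t\psi'(t)\|_{L^{p}}\le\mathcal T$ times these, whence $C_{\psi}\lesssim(1\vee\mathcal T^{2})\|\psi\|_{C^{1}}$ and the second claim follows from the first. I expect the real difficulty to be the low-frequency term $H$: its primitive genuinely need not lie in $L^{p}$ (a band-limited $L^{p}$-function can have a linearly growing antiderivative), so the weight is indispensable, and the quantitative heart of the argument is the interplay between the polynomial bound $|H(t)|\lesssim|t|^{1-1/p}\|\Delta_{-1}f\|_{L^{p}}$, the moment conditions built into $C_{\psi}$, and the product rule $(\psi H)'=\psi'H+\psi f_{<}$. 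A secondary subtlety is that, because the regularity $\alpha-1$ is negative, the pointwise product estimate~\eqref{eq:pointwiseMulti} cannot be used for the terms stemming from $H'$, and one instead routes them through $L^{p}\hookrightarrow B^{\alpha-1}_{p,q}$ (or, alternatively, through Bony's estimates of Lemma~\ref{lem:paraproduct}).
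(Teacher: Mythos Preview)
Your proof is correct and follows essentially the same route as the paper: the same high/low-frequency split $F=G-G(0)+H$, the same Fourier-multiplier estimate $\|G\|_{\alpha,p,q}\lesssim\|f\|_{\alpha-1,p,q}$, and the same H\"older bound $|H(t)|\le|t|^{1-1/p}\|\Delta_{-1}f\|_{L^{p}}$ combined with the product rule for $(\psi H)'$. The only cosmetic differences are that you treat the constant $G(0)\psi$ as a separate term and bound $\|\psi H\|_{\alpha,p,q}$ via $\|\psi H\|_{L^{p}}+\|(\psi H)'\|_{\alpha-1,p,q}$ followed by $L^{p}\hookrightarrow B^{\alpha-1}_{p,q}$, whereas the paper routes directly through $\|\psi H\|_{1,p,\infty}\lesssim\|\psi H\|_{L^{p}}+\|(\psi H)'\|_{L^{p}}$.
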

\begin{proof}
Since differentiating in spatial domain corresponds to multiplication
in Fourier domain, we set 
\[
G(t):=\sum_{j\geq0}\mathcal{F}^{-1}\Big[\frac{1}{iu}\rho_{j}(u)\mathcal{F}f(u)\Big](t)\quad\text{and}\quad H(t):=\int_{0}^{t}\Delta_{-1}f(s)\d s,\quad t\in\mathbb{R}.
\]
Provided 
\begin{equation}
\|\psi G\|_{\alpha,p,q}\lesssim\|\psi\|_{C^{1}}\|G\|_{\alpha,p,q}\lesssim\|\psi\|_{C^{1}}\|f\|_{\alpha-1,p,q},\quad\|\psi H\|_{\alpha,p,q}\le C_{\psi}\|f\|_{\alpha-1,p,q}\label{eq:antiderivative}
\end{equation}
and noting that $B_{p,q}^{\alpha}\subset C(\R)$ for $\alpha>1/p$,
the function $F:=G+H-G(0)$ satisfies $F'=f$ and the asserted norm
estimate. Uniqueness follows because any distribution with zero derivative
is constant. 

It remains to verify (\ref{eq:antiderivative}). Concerning $G$,
we obtain for each Littlewood-Paley block, using $\textup{supp}(\rho_{j})\cap\textup{supp}(\rho_{k})=\emptyset$
for all $j,k\geq-1$ with $\vert k-j\vert>1$, 
\begin{align*}
\Delta_{k}G & =\sum_{j=(k-1)\vee0}^{k+1}\mathcal{F}^{-1}\Big[\frac{1}{iu}\rho_{k}(u)\rho_{j}(u)\mathcal{F}f(u)\Big]=\bigg(\sum_{j=(k-1)\vee0}^{k+1}\mathcal{F}^{-1}\Big[\frac{1}{iu}\rho_{j}(u)\Big]\bigg)*\Delta_{k}f.
\end{align*}
Using twice a substitution, we have for $j\ge0$ 
\[
\Big\|\mathcal{F}^{-1}\Big[\frac{\rho_{j}(u)}{iu}\Big]\Big\|_{L^{1}}=\Big\|\mathcal{F}^{-1}\Big[\frac{\rho(u)}{iu}\Big](2^{j}\cdot)\Big\|_{L^{1}}=2^{-j}\Big\|\mathcal{F}^{-1}\Big[\frac{\rho(u)}{iu}\Big]\Big\|_{L^{1}}.
\]
Hence, Young's inequality yields
\begin{align*}
\|G\|_{\alpha,p,q}= & \Big\|\big(2^{\alpha k}\|\Delta_{k}G\|_{L^{p}}\big)_{k}\Big\|_{\ell^{q}}\lesssim\Big\|\big(2^{(\alpha-1)k}\big\|\mathcal{F}\big[\rho(u)/(iu)\big]\big\|_{L^{1}}\|\Delta_{k}f\|_{L^{p}}\big)\Big\|_{\ell^{q}}\lesssim\|f\|_{\alpha-1,p,q}.
\end{align*}

To show the second part of (\ref{eq:antiderivative}), we use $\|\psi H\|_{\alpha,p,q}\lesssim\|\psi H\|_{1,p,\infty}\lesssim\|\psi H\|_{L^{p}}+\|(\psi H)'\|_{L^{p}}$
due to $\alpha<1$. Hölder's inequality yields for $\bar{p}:=\frac{p}{p-1}$
with the usual modification for $p=\infty$ that
\[
\|\psi H\|_{L^{p}}\le\|\Delta_{-1}f\|_{L^{p}}\big\|\psi(t)t^{1/\bar{p}}\big\|_{L^{p}}\lesssim\|(1\vee t)\psi(t)\|_{L^{p}}\|f\|_{\alpha-1,p,q}
\]
and similarly 
\begin{align*}
\|(\psi H)'\|_{L^{p}}\le\|\psi'H\|_{L^{p}}+\|\psi\Delta_{-1}f\|_{L^{p}} & \lesssim\|\Delta_{-1}f\|_{L^{p}}\big(\big\|\psi'(t)t^{1/\bar{p}}\big\|_{L^{p}}+\|\psi\|_{\infty}\big)\\
 & \lesssim\big(\|\psi\|_{\infty}+\|(1\vee t)\psi'(t)\|_{L^{p}}\big)\|f\|_{\alpha-1,p,q}.\mbox{\tag*{{\qedhere}}}
\end{align*}
 
\end{proof}
For later reference we finally investigate the \emph{scaling operator}
$\Lambda_{\lambda}$, given by $\Lambda_{\lambda}f(\cdot):=f(\lambda\cdot)$
for any $\lambda>0$ and any function $f$, on Besov spaces. 
\begin{lem}
\label{lem:scaling} For $\alpha\neq0$, $p,q\ge1$ and all $f\in B_{p,q}^{\alpha}(\R^{d})$
we have 
\[
\|\Lambda_{\lambda}f\|_{\alpha,p,q}\lesssim(1+\lambda^{\alpha}|\log\lambda|)\lambda^{-d/p}\|f\|_{\alpha,p,q}.
\]
\end{lem}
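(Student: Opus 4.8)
The plan is to use the Littlewood-Paley characterization of the Besov norm together with the simple observation that the scaling operator $\Lambda_\lambda$ interacts with Fourier localization by rescaling frequencies. First I would record the scaling identity for the $L^p$-norm: for any $g\in L^p(\R^d)$ one has $\|\Lambda_\lambda g\|_{L^p}=\lambda^{-d/p}\|g\|_{L^p}$. Next I would analyze how $\Delta_j$ acts on $\Lambda_\lambda f$. Since $\mathcal{F}(\Lambda_\lambda f)(z)=\lambda^{-d}\mathcal{F}f(z/\lambda)$, the block $\Delta_j(\Lambda_\lambda f)$ is, up to the scaling, a Fourier multiplier with symbol $\rho_j(\lambda\,\cdot)$ applied to $f$, so that $\Delta_j(\Lambda_\lambda f)=\Lambda_\lambda\big(\mathcal{F}^{-1}[\rho_j(\lambda\cdot)\mathcal{F}f]\big)$. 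The symbol $\rho_j(\lambda\cdot)$ is supported on $2^j/\lambda$ times the annulus $\mathcal A$ (respectively, for $j=-1$, on $1/\lambda$ times the ball $\mathcal B$), hence it only overlaps with finitely many of the $\rho_k$; in fact $\rho_j(\lambda\cdot)$ is a finite sum of pieces each "living at frequency level" $k$ with $2^k\sim 2^j/\lambda$, i.e.\ $k\sim j-\log_2\lambda$.

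With this in hand the main estimate is a standard Fourier-support/Young's-inequality argument: writing $\rho_j(\lambda\cdot)=\sum_{k}\rho_j(\lambda\cdot)\rho_k$ where the sum runs over the $O(1)$ many $k$ with $k\sim j-\log_2\lambda$, one gets $\Delta_j(\Lambda_\lambda f)=\Lambda_\lambda\big(\sum_{k\sim j-\log_2\lambda}\mathcal{F}^{-1}[\rho_j(\lambda\cdot)\rho_k\mathcal{F}f]\big)$, and each inner term is a convolution of $\Delta_k f$ with an $L^1$-kernel whose $L^1$-norm is uniformly bounded in $j,k,\lambda$ (this uses that $\rho$ is a fixed Schwartz-type bump and that rescaling a function preserves its $L^1$-norm). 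Therefore
\[
\|\Delta_j(\Lambda_\lambda f)\|_{L^p}\lesssim\lambda^{-d/p}\sum_{k\sim j-\log_2\lambda}\|\Delta_k f\|_{L^p}.
\]
Plugging this into $\|\Lambda_\lambda f\|_{\alpha,p,q}=\|(2^{j\alpha}\|\Delta_j(\Lambda_\lambda f)\|_{L^p})_j\|_{\ell^q}$, pulling out $\lambda^{-d/p}$, and reindexing $j\mapsto k$ via $j\sim k+\log_2\lambda$ produces a factor $2^{\alpha(j-k)}$ which is comparable to $\lambda^{\alpha}$ (up to the bounded shift), together with a discrete convolution in $\ell^q$ against a finitely supported sequence, which is harmless by Young's inequality for sequences. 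This yields $\|\Lambda_\lambda f\|_{\alpha,p,q}\lesssim\lambda^{-d/p}\,\lambda^{\alpha}\,\|f\|_{\alpha,p,q}$ — but only for $\lambda\le 1$; for $\lambda\ge 1$ the symmetric argument (or applying the case $\lambda\le1$ to $\Lambda_{1/\lambda}$) gives $\lambda^{-d/p}\|f\|_{\alpha,p,q}$, and combining the two regimes gives the bound $(1+\lambda^\alpha)\lambda^{-d/p}$.

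The one subtlety that accounts for the $|\log\lambda|$ factor in the statement is the boundary block $j=-1$ (and, after rescaling, the cluster of blocks near frequency $1$): when $\lambda$ is small the symbol $\rho_{-1}(\lambda\cdot)=\chi(\lambda\cdot)$ is supported on a ball of radius $\sim 1/\lambda$, which overlaps not just $O(1)$ but rather $O(|\log\lambda|)$ of the $\rho_k$'s, namely all $k$ from $-1$ up to roughly $-\log_2\lambda$. Summing $\|\Delta_k f\|_{L^p}$ over these $k$ and using $\|\Delta_k f\|_{L^p}\le 2^{-\alpha k}\|f\|_{\alpha,p,q}$ gives a geometric-type sum that, after the $2^{\alpha(j-k)}$ bookkeeping, contributes the extra $\lambda^{\alpha}|\log\lambda|$ term (this is genuinely present only when $\alpha\neq 0$ controls the sign of the geometric series and forces us to bound it by the number of terms times the largest, rather than by a convergent tail; this is precisely why $\alpha\neq 0$ is assumed). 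I expect this careful treatment of the low-frequency block — keeping track that a ball of radius $1/\lambda$ meets logarithmically many dyadic annuli — to be the only real obstacle; everything else is the routine scaling-plus-Young's-inequality machinery. I would close by remarking that the factor is sharp for, e.g., the indicator-type example or a compactly supported smooth function, consistent with $\|\Lambda_\lambda f\|_{L^p}=\lambda^{-d/p}\|f\|_{L^p}$ being exact.
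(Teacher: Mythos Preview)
Your approach is essentially the paper's (rescaled multiplier $\rho_j(\lambda\cdot)$, reduction to $\Delta_kf$ with $k\sim j-\log_2\lambda$, separate treatment of the low-frequency block), but two points need fixing. The duality shortcut for $\lambda\ge1$ (``apply the case $\lambda\le1$ to $\Lambda_{1/\lambda}$'') goes the wrong way: with $g=\Lambda_\lambda f$ the bound $\|\Lambda_{1/\lambda}g\|\lesssim(1/\lambda)^{\alpha-d/p}\|g\|$ only gives a \emph{lower} bound on $\|\Lambda_\lambda f\|$. You must run the direct argument for $\lambda\ge1$ as well; the new feature is that for $0\le j\lesssim\log_2\lambda$ the annulus $\supp\rho_j(\lambda\cdot)$ lies inside the ball $\supp\chi$, so only $k=-1$ contributes, and the $\ell^q$-sum of $2^{j\alpha}$ over these $j$ produces the factor $\max(1,\lambda^\alpha)$.

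Your explanation of the role of $\alpha\neq0$ is also inverted. With $\alpha\neq0$ the geometric sum $\sum_{k=-1}^{\sim-\log_2\lambda}2^{-\alpha k}$ arising from the $j=-1$ block \emph{converges}, to $\lesssim1+\lambda^\alpha$ (this is exactly the paper's estimate for its $k=-1$ term), so no $|\log\lambda|$ is forced there; it is only when $\alpha=0$ that one would need the crude ``number of terms times the largest'' bound. The $|\log\lambda|$ in the stated inequality actually enters through the paper's somewhat generous Young kernel $a=(\1_{[-|\log\lambda|,|\log\lambda|]})$ on the $k\ge0$ part; a careful execution of your own outline in fact yields the sharper bound $(1+\lambda^\alpha)\lambda^{-d/p}\|f\|_{\alpha,p,q}$.
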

\begin{proof}
Using $\Lambda_{\kappa}(\mathcal{F}f)=\kappa^{-d}\mathcal{F}[\Lambda_{\kappa^{-1}}f]$
for $\kappa>0$, $f\in B_{p,q}^{\alpha}(\R^{d})$, we first deduce
\begin{align*}
\Delta_{j}(\Lambda_{\lambda}f) & =\lambda^{-d}\mathcal{F}^{-1}[\rho_{j}\Lambda_{\lambda^{-1}}(\mathcal{F}f)]=\mathcal{F}^{-1}[\rho_{j}(\lambda\cdot)\mathcal{F}f](\lambda\cdot)\quad\text{and}\\
\Lambda_{\lambda}(\Delta_{j}f) & =\lambda^{-d}\mathcal{F}^{-1}[\rho_{j}(\lambda^{-1}\cdot)(\mathcal{F}f)(\lambda^{-1}\cdot)]=\mathcal{F}^{-1}[\rho_{j}(\lambda^{-1}\cdot)\mathcal{F}[\Lambda_{\lambda}f]]
\end{align*}
for all $\lambda>0$. For $j\ge0$ the Fourier transform of $\Lambda_{\lambda}(\Delta_{j}f)$
is consequently supported in $\lambda2^{j}\mathcal{A}$, where $\mathcal{A}$
is the annulus containing the support of $\rho$, and we have $\Delta_{k}(\Lambda_{\lambda}\Delta_{j}f)\neq0$
only if $2^{k}\sim\lambda2^{j}$. Together with $\|\Delta_{k}f\|_{L^{p}}\le\|\F^{-1}\rho_{k}\|_{L^{1}}\|f\|_{L^{p}}\lesssim\|f\|_{L^{p}}$
by Young's inequality we obtain 
\[
\|\Delta_{k}\Lambda_{\lambda}f\|_{L^{p}}\le\sum_{j:2^{k}\sim\lambda2^{j}}\|\Delta_{k}\Lambda_{\lambda}(\Delta_{j}f)\|_{L^{p}}\lesssim\lambda^{-d/p}\sum_{j:2^{k}\sim\lambda2^{j}}\|\Delta_{j}f\|_{L^{p}}\quad\text{for }k\geq0.
\]
Applying again Young's inequality to the sequences $a:=(\1_{[-|\log\lambda|,|\log\lambda|]}(k))_{k}$
and $(2^{j\alpha}\|\Delta_{j}f\|_{L^{_{p}}})_{j}$, we infer
\begin{align*}
\Big\|\big(2^{k\alpha}\|\Delta_{k}\Lambda_{\lambda}f\|_{L^{p}}\big)_{k\ge0}\Big\|_{\ell^{q}}\lesssim & \lambda^{-d/p}\Big\|\Big(\sum_{j:2^{k}\sim\lambda2^{j}}\lambda^{\alpha}2^{j\alpha}\|\Delta_{j}f\|_{L^{p}}\Big)_{k\ge0}\Big\|_{\ell^{q}}\\
\lesssim & \lambda^{\alpha-d/p}\|a\|_{\ell^{1}}\|f\|_{\alpha,p,q}\lesssim|\log\lambda|\lambda^{\alpha-d/p}\|f\|_{\alpha,p,q}.
\end{align*}
Finally, we obtain analogously for $k=-1$ that 
\[
\|\Delta_{-1}\Lambda_{\lambda}f\|_{L^{p}}\lesssim\lambda^{-d/p}\sum_{j:\lambda2^{j}\lesssim1}\|\Delta_{j}f\|_{L^{p}}\lesssim\lambda^{-d/p}\|f\|_{\alpha,p,q}\sum_{j:\lambda2^{j}\lesssim1}2^{-\alpha j}\lesssim(1+\lambda^{\alpha})\lambda^{-d/p}\|f\|_{\alpha,p,q}.\qedhere
\]

\end{proof}

\section{Young integration revisited\label{sec:Young} }

In the present section we start to consider the differential equation\,(\ref{eq:rde}),
which was given by 

\[
\dd u(t)=F(u(t))\xi(t),\quad u(0)=u_{0},\quad t\in\R,
\]
where $u_{0}\in\mathbb{R}^{m}$, $u\colon\mathbb{R}\to\mathbb{R}^{m}$
is a continuous function and $F\colon\mathbb{R}^{m}\to\mathcal{L}(\mathbb{R}^{n},\mathbb{R}^{m})$.
Assuming our driving signal $\xi\colon\mathbb{R}\to\mathbb{R}^{n}$
is smooth enough, the differential equation\,(\ref{eq:rde}) is well-defined
and can be equivalently written in its integral form 
\begin{equation}
u(t)=u_{0}+\int_{0}^{t}F(u(s))\xi(s)\d s,\quad t\in[0,\infty),\label{eq:rdeintgral-1}
\end{equation}
and analogously for $t\in(-\infty,0)$. According to \citet{Young1936},
the involved integral can be defined as limit of Riemann sums as long
as the driving signal $\xi$ is the derivative of a path $\theta$
which is of finite $p$-variation for $p<2$. Then, equation\,(\ref{eq:rdeintgral-1})
admits a unique solution on every bounded interval $[-\mathcal{T},\mathcal{T}]\subset\mathbb{R}$
if $F\in C^{2}$ (see modern books as \citep[Theorem 1.28]{Lyons2007}
or \citep[Theorem 1]{Lejay2009}). This result was first proven by
\citet{Lyons1994} using a Picard iteration. The case of a $1/p$-Hölder
continuous driving path $\theta$ was treated by \citet{Ruzmaikina2002}.
Since then it is still of great interest to find new approaches to
(\ref{eq:rdeintgral-1}): \citet{Gubinelli2004} has introduced the
notion of controlled paths, \citet{Davie2007} has shown the convergence
of an Euler scheme, \citet{Hu2007} have used techniques from fractional
calculus and \citet{Lejay2010} has developed a simple approach similar
to \citep{Ruzmaikina2002}.

In this section we recover the analogous results on Besov spaces with
a special focus on the situation when $F$ is a linear functional.
For a discussion of the importance of linear RDEs we refer to \citet{Coutin2014}
and references therein. 

We first note that the function $F(u)$ inherits its regularity from
the regularity of $u$. More precisely, \citep[Thm. 2.87]{Bahouri2011}
shows for $u\in B_{p,q}^{\alpha}$ satisfying $\|u\|_{\infty}<\infty$
and a family of sufficient regular vector fields $F$ with $F(0)=0$
(or $p=\infty$) that 
\begin{equation}
\|F(u)\|_{\alpha,p,q}\lesssim\Big(\sum_{k=1}^{\lceil\alpha\rceil}\sup_{|x|\le\|u\|_{\infty}}\|F^{(k)}(x)\|\Big)\|u\|_{\alpha,p,q}\lesssim\|F\|_{C^{\lceil\alpha\rceil}}\|u\|_{\alpha,p,q},\label{eq:F(u)}
\end{equation}
denoting the smallest integer larger or equal than $\alpha>0$ by
$\lceil\alpha\rceil$ and provided the norms on the right-hand side
are finite. If the product $F(u)\xi$ is regular enough, we can understand
the differential equation~(\ref{eq:rde}) in its integral form~(\ref{eq:rdeintgral-1})
where the integral is given by the antiderivative of the product,
i.e.
\[
\dd\big(\int_{0}^{t}F(u(s))\xi(s)\d s\big)=F(u(t))\xi(t)\quad\text{and}\quad\int_{0}^{0}F(u(s))\xi(s)\d s=0.
\]

In view of Lemma~\ref{lem:antiderivative} the solution $u$ of (\ref{eq:rde})
cannot be expected to be contained in $B_{p,q}^{\alpha}$. Therefore,
we consider instead a localized version of the differential equation.
Alternatively, the solution of the RDE~(\ref{eq:rde}) could be studied
in homogenous or weighted Besov spaces, which can only lead to very
similar results. In order to provide our results in the most commonly
used notion of Besov spaces, we focus on localized equations. We impose
the following standing assumption:
\begin{assumption}
\label{ass:phi}Let $\phi\colon\R\to\R_{+}$ be fixed smooth function
with support $[-2,2]$ and equal to $1$ on $[-1,1]$. Denote $\phi_{\mathcal{T}}(x):=\phi(x/\mathcal{T})$
for $\mathcal{T}>0$. \end{assumption}
\begin{thm}
\label{thm:Young} Let $\mathcal{T}>0$, $\alpha\in(1/2,1]$ and assume
that $\xi\in B_{p,q}^{\alpha-1}$ for $p\in[2,\infty]$ and $q\in[1,\infty]$.
If $F\colon\mathbb{R}^{m}\to\mathcal{L}(\mathbb{R}^{n},\mathbb{R}^{m})$
is a linear mapping, then for every $u_{0}\in\mathbb{R}^{d}$ there
exists a unique global solution $u\in B_{p,q}^{\alpha}$ to the Cauchy
problem 
\begin{equation}
u(t)=\phi_{\mathcal{T}}(t)u_{0}+\phi_{\mathcal{T}}(t)\int_{0}^{t}F(u(s))\xi(s)\d s,\quad t\in\mathbb{R},\label{eq:lip-1}
\end{equation}
with the usual convention for $t<0$. This result extends to nonlinear
$F\in C^{2}$ if $p=\infty$. \end{thm}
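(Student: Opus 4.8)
The plan is to set up a Picard iteration on the space $B_{p,q}^{\alpha}$, using Lemma~\ref{lem:antiderivative} as the workhorse that converts a product of negative regularity into a function of positive regularity with a controlled loss. First I would define the map $\mathcal{M}$ sending $v\in B_{p,q}^{\alpha}$ to
\[
\mathcal{M}(v)(t):=\phi_{\mathcal{T}}(t)u_{0}+\phi_{\mathcal{T}}(t)\int_{0}^{t}F(v(s))\xi(s)\d s,
\]
and show it is a well-defined self-map. The product $F(v)\xi$ lives in $B_{p/2,q}^{\alpha-1}$ (resonant term included, since $\alpha+(\alpha-1)>0$ precisely because $\alpha>1/2$): by Bony's decomposition and Lemma~\ref{lem:paraproduct}, $T_{F(v)}\xi$, $T_{\xi}F(v)$ and $\pi(F(v),\xi)$ are all bounded by $\|F(v)\|_{\alpha,p,q}\|\xi\|_{\alpha-1,p,q}$, and $\|F(v)\|_{\alpha,p,q}\lesssim\|F\|_{C^{\lceil\alpha\rceil}}\|v\|_{\alpha,p,q}$ by \eqref{eq:F(u)} (for linear $F$ this is immediate with $\|F(v)\|_{\alpha,p,q}\lesssim\|F\|\,\|v\|_{\alpha,p,q}$; for nonlinear $F\in C^{2}$ and $p=\infty$ we use the bound on $\|v\|_{\infty}$). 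Then Lemma~\ref{lem:antiderivative}, applied with the weight $\psi=\phi_{\mathcal{T}}$ (which satisfies $C_{\psi}<\infty$), gives
\[
\|\mathcal{M}(v)\|_{\alpha,p,q}\lesssim \|\phi_{\mathcal{T}}\|_{\alpha,p,q}|u_{0}|+(1\vee\mathcal{T}^{2})\|\phi_{\mathcal{T}}\|_{C^{1}}\|F(v)\xi\|_{\alpha-1,p/2,q},
\]
noting $B_{p/2,q}^{\alpha-1}\hookrightarrow B_{p,q}^{\alpha-1}$ is not needed — rather one keeps track that the antiderivative lemma is stated for the relevant integrability index, so I would either run the whole argument with the integrability exponent $p$ (absorbing the loss via a Besov embedding $B_{p/2,q}^{\alpha-1}\subset B_{p,q}^{\alpha-2/p\cdot\text{something}}$) or, more cleanly, observe that $\|F(v)\xi\|_{\alpha-1,p,q}\lesssim\|F(v)\|_{\alpha,\infty,q}\|\xi\|_{\alpha-1,p,q}$ when $p=\infty$ and, for $p<\infty$ with linear $F$, exploit that the paraproduct estimates already deliver the index $p$ when one factor sits in $L^{\infty}$ — here $F(v)$ does, since $\alpha>1/p$ forces $B_{p,q}^{\alpha}\hookrightarrow C$. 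This embedding is the quiet hypothesis that makes everything fit together.

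The second step is the contraction estimate. For $v_{1},v_{2}\in B_{p,q}^{\alpha}$,
\[
\mathcal{M}(v_{1})-\mathcal{M}(v_{2})=\phi_{\mathcal{T}}(t)\int_{0}^{t}\big(F(v_{1}(s))-F(v_{2}(s))\big)\xi(s)\d s,
\]
and for linear $F$ we simply have $F(v_{1})-F(v_{2})=F(v_{1}-v_{2})$, so the same chain of estimates yields $\|\mathcal{M}(v_{1})-\mathcal{M}(v_{2})\|_{\alpha,p,q}\lesssim (1\vee\mathcal{T}^{2})\|\phi_{\mathcal{T}}\|_{C^{1}}\|F\|\,\|\xi\|_{\alpha-1,p,q}\|v_{1}-v_{2}\|_{\alpha,p,q}$. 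For nonlinear $F\in C^{2}$ with $p=\infty$, one writes $F(v_{1})-F(v_{2})=\big(\int_{0}^{1}F'(v_{2}+r(v_{1}-v_{2}))\,\dd r\big)(v_{1}-v_{2})$ and controls the $B_{\infty,\infty}^{\alpha}$-norm of the bracket by $\|F\|_{C^{2}}(1+\|v_{1}\|_{\alpha}+\|v_{2}\|_{\alpha})$ via the composition estimate \eqref{eq:F(u)} applied to $F'$. Either way, since the constant carries a genuine factor of $\|\xi\|_{\alpha-1,p,q}$ but — crucially — no way to make it small by shrinking $\mathcal{T}$ (shrinking $\mathcal{T}$ does not help because $\|\phi_{\mathcal{T}}\|_{C^{1}}\sim\mathcal{T}^{-1}$ blows up), the contraction is not automatic; this is where I expect the real work.

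The resolution of that obstacle — and the main obstacle — is to iterate on a \emph{small} time interval where the loss in $\|\xi\|$ can be controlled, then patch. Concretely, I would replace $\xi$ by $\phi_{\delta}\xi$ for small $\delta$ and use the scaling Lemma~\ref{lem:scaling} together with the localization to show that $\|\phi_{\delta}\xi\|_{\alpha-1,p,q}\to 0$ as $\delta\to0$ is \emph{false} in general (a fixed distribution need not have vanishing local norm), so instead the honest route is: because $F$ is linear, the equation is genuinely \emph{linear} in $u$, and a linear fixed-point equation $u=\ell+L u$ in a Banach space has a unique solution as soon as some power $L^{N}$ is a contraction — equivalently, the Neumann series $\sum_{k\ge0}L^{k}\ell$ converges. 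I would estimate the iterated operator norm: each application of $L=\mathcal{M}-\mathcal{M}(0)$ costs a factor $C\|\xi\|_{\alpha-1,p,q}(1\vee\mathcal{T}^2)\|\phi_\mathcal{T}\|_{C^1}$ but, by restricting the convolution integral to $[0,t]$ and splitting $[-\mathcal{T},\mathcal{T}]$ into $n$ subintervals of length $\mathcal{T}/n$, the $k$-fold iterate gains a combinatorial factor $1/k!$ (this is the standard Volterra/Gronwall mechanism: $\int_0^t\int_0^{s_1}\cdots$ has volume $t^k/k!$), so $\|L^{k}\|\le C^{k}/k!$ for a constant $C=C(\mathcal{T},\|\xi\|,\|F\|)$, making the Neumann series converge and giving existence and uniqueness of the global solution $u\in B_{p,q}^{\alpha}$. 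Making the $1/k!$ gain rigorous at the level of Besov (not $L^\infty$) norms — i.e. showing the iterated weighted antiderivatives pick up the factorial — is the technical heart; I would do it by carrying an auxiliary estimate on $\|\1_{[0,t]}\,(\text{$k$-fold iterate})\|_{L^{p}}$ in the time variable and interpolating with the fixed $B_{p,q}^{\alpha}$ bound from the first step. For the nonlinear $C^2$ case with $p=\infty$ the same factorial trick applies once one has an a priori $L^\infty$ bound on the iterates, which follows from \eqref{eq:F(u)} and a Gronwall argument on $[-\mathcal{T},\mathcal{T}]$.
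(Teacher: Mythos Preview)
Your fixed-point setup and the contraction estimate for the solution map are essentially correct and match the paper's Step~1: Bony's decomposition plus Lemma~\ref{lem:paraproduct}, the embedding $B_{p/2,q}^{2\alpha-1}\subset B_{p,q}^{\alpha-1}$ (valid since $\alpha>1/p$ when $p\ge2$ and $\alpha>1/2$), and Lemma~\ref{lem:antiderivative} combine to give
\[
\|\Phi(v_{1})-\Phi(v_{2})\|_{\alpha,p,q}\lesssim C_{\mathcal{T},\phi}\,\|F'\|_{\infty}\,\|\xi\|_{\alpha-1,p,q}\,\|v_{1}-v_{2}\|_{\alpha,p,q}
\]
in the linear case, and the analogous bound with $\|F'\|_{C^{1}}$ and an extra factor $(1+\|v_{1}\|_{\alpha,\infty,q}+\|v_{2}\|_{\alpha,\infty,q})$ when $p=\infty$ and $F\in C^{2}$. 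You also correctly identify the obstruction: there is no small parameter, and shrinking $\mathcal{T}$ is useless because $\|\phi_{\mathcal{T}}\|_{C^{1}}\sim\mathcal{T}^{-1}$.

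The gap is in your resolution of that obstruction. The Volterra mechanism you invoke---the $1/k!$ from the simplex volume $\int_{0}^{t}\int_{0}^{s_{1}}\cdots\d s_{k}=t^{k}/k!$---is a statement about iterated \emph{pointwise} integration on $[0,t]$. In the present setting each application of $L$ is not ``integrate from $0$ to $t$'' but rather ``apply the antiderivative of Lemma~\ref{lem:antiderivative} and multiply by $\phi_{\mathcal{T}}$'', and the Besov bound this lemma provides is a \emph{global} estimate with no dependence on the length of the support of the integrand inside $[-2\mathcal{T},2\mathcal{T}]$. Iterating it $k$ times yields $C^{k}$, not $C^{k}/k!$. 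Your suggested fix---track $\|\1_{[0,t]}(\cdot)\|_{L^{p}}$ and interpolate---does not recover the lost regularity: multiplication by $\1_{[0,t]}$ is not a bounded operator on $B_{p,q}^{\alpha}$ for $\alpha>1/p$, and an $L^{p}$ bound alone cannot feed back into the $B_{p,q}^{\alpha}$ estimate needed for the \emph{next} iteration, since the product estimate $\|F(v)\xi\|_{\alpha-1,p,q}$ genuinely requires $\|F(v)\|_{\alpha,p,q}$ (or at least $\|F(v)\|_{\infty}$, which you again cannot get from $L^{p}$). So the Neumann series does not converge by this route, and the argument as written does not close.

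The paper's device is different and avoids the issue entirely: it manufactures a small parameter by \emph{dilation}. One sets $\xi^{\lambda}:=\lambda^{1-\alpha+1/p+\epsilon}\Lambda_{\lambda}\xi$ and checks via Lemma~\ref{lem:scaling} that $\|\xi^{\lambda}\|_{\alpha-1,p,q}\lesssim\|\xi\|_{\alpha-1,p,q}$ uniformly in $\lambda\in(0,1)$. The rescaled equation carries an explicit prefactor $\lambda^{\alpha-1/p-\epsilon}$ in front of $F$, which can be made as small as desired; Step~1 then gives a unique solution $u^{\lambda}$ to the rescaled problem, and $u:=\Lambda_{\lambda^{-1}}u^{\lambda}$ solves the original equation on $[-\lambda\mathcal{T},\lambda\mathcal{T}]$. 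Since the admissible $\lambda$ depends only on $\|\xi\|_{\alpha-1,p,q}$, $\mathcal{T}$ and $\|F'\|_{C^{1}}$---and \emph{not} on $u_{0}$---one can restart at the endpoints and cover $[-\mathcal{T},\mathcal{T}]$ in finitely many steps. This is the missing idea.
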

\begin{proof}
\emph{Step 1:} First we establish a contraction principle under the
assumption that $\|F'\|_{C^{1}}$ is sufficiently small. Without loss
of generality we may assume $u_{0}=0$. Following a fixed point argumentation,
we consider the solution map
\begin{align*}
\Phi\colon B_{p,q}^{\alpha}\to B_{p,q}^{\alpha},\quad u & \mapsto\tilde{u}:=\phi_{\mathcal{T}}\int_{0}^{\cdot}F(u(s))\xi(s)\d s,\quad t\in\mathbb{R}.
\end{align*}
 In order to verify that $\Phi$ is indeed well-defined, we use Lemma~\ref{lem:antiderivative}
to observe
\[
\|\phi_{\mathcal{T}}F\|_{\alpha,p,q}\lesssim(1\vee\mathcal{T}^{2})(1\vee\mathcal{T}^{-1})\|\phi\|_{C^{1}}\|f\|_{\alpha-1,p,q}\lesssim C_{\mathcal{T},\phi}\|f\|_{\alpha-1,p,q},
\]
where $C_{\mathcal{T},\phi}:=(\mathcal{T}^{-1}\vee\mathcal{T}^{2})\|\phi\|_{C^{1}}$,
for any given $f\in B_{p,q}^{\alpha-1}$ with $\dd F=f$ and $F(0)=0$.
We thus have

\begin{align*}
\|\Phi(u)\|_{\alpha,p,q} & =\Big\|\phi_{\mathcal{T}}\Big(\int_{0}^{\cdot}F(u(s))\xi(s)\d s\Big)\Big\|_{\alpha,p,q}\lesssim C_{\mathcal{T},\phi}\|F(u)\xi\|_{\alpha-1,p,q}.
\end{align*}
Applying Bony's decomposition, the Besov embedding $B_{p/2,q}^{2\alpha-1}\subset B_{p,q}^{\alpha-1}$
(cf. \citep[Thm. 2.7.1]{triebel2010}) for $p>1/\alpha$ and Lemma~\ref{lem:paraproduct},
we obtain
\begin{align*}
\|\Phi(u)\|_{\alpha,p,q} & \lesssim C_{\mathcal{T},\phi}\big(\|T_{F(u)}\xi\|_{\alpha-1,p,q}+\|\pi(F(u),\xi)\|_{2\alpha-1,p/2,q}+\|T_{\xi}(F(u))\|_{\alpha-1,p,q}\big)\\
 & \lesssim C_{\mathcal{T},\phi}\big(\|F(u)\|_{\infty}\|\xi\|_{\alpha-1,p,q}+\|F(u)\|_{\alpha,p,2q}\|\xi\|_{\alpha-1,p,2q}+\|\xi\|_{\alpha-1,p,q}\|F(u)\|_{0,\infty,\infty}\big).
\end{align*}
Using the embeddings $B_{p,q}^{\alpha}\subset L^{\infty}$ and $B_{p,q}^{\alpha}\subset B_{\infty,\infty}^{0}$
for $\alpha>1/p$ and (\ref{eq:F(u)}), we deduce that 
\begin{equation}
\|\Phi(u)\|_{\alpha,p,q}\lesssim C_{\mathcal{T},\phi}\|F'\|_{\infty}\|\xi\|_{\alpha-1,p,q}\|u\|_{\alpha,p,q}.\label{eq:bound young}
\end{equation}
To apply Banach's fixed point theorem, it remains to show that $\Phi$
is a contraction. For $u,\tilde{u}\in B_{p,q}^{\alpha}$ Lemma~\ref{lem:antiderivative}
again yields 
\begin{align*}
\|\Phi(u)-\Phi(\tilde{u})\|_{\alpha,p,q} & \lesssim C_{\mathcal{T},\phi}\|\big(F(u)-F(\tilde{u})\big)\xi\|_{\alpha-1,p,q}\\
 & \lesssim C_{\mathcal{T},\phi}\int_{0}^{1}\|F^{\prime}(u+t(u-\tilde{u}))(u-\tilde{u})\xi\|_{\alpha-1,p,q}\d t.
\end{align*}
Denoting by $v_{t}:=F^{\prime}(u+t(u-\tilde{u}))(u-\tilde{u})$, we
conclude as above
\begin{align*}
\|\Phi(u)-\Phi(\tilde{u})\|_{\alpha,p,q} & \lesssim C_{\mathcal{T},\phi}\int_{0}^{1}\big(\|T_{v_{t}}\xi\|_{\alpha-1,p,q}+\|\pi(v_{t},\xi)\|_{2\alpha-1,p/2,q/2}+\|T_{\xi}v_{t}\|_{\alpha-1,p,q}\big)\d t\\
 & \lesssim C_{\mathcal{T},\phi}\int_{0}^{1}\big(\|v_{t}\|_{\alpha,p,q}\|\xi\|_{\alpha-1,p,q}\big)\d t.
\end{align*}
By the standard estimate (\ref{eq:pointwiseMulti}), we obtain 
\begin{equation}
\|\Phi(u)-\Phi(\tilde{u})\|_{\alpha,p,q}\lesssim C_{\mathcal{T},\phi}\Big(\int_{0}^{1}\|F^{\prime}(u+t(\tilde{u}-u))\|_{\alpha,\infty,q}\d t\Big)\|\xi\|_{\alpha-1,p,q}\|u-\tilde{u}\|_{\alpha,p,q}.\label{eq:youngContraction}
\end{equation}
Hence, if $F$ is linear and $\|F^{\prime}\|_{\infty}$ is small enough,
$\Phi$ is a contraction. Provided $p=\infty$ and $F\in C^{2}$,
it suffices if $\|F^{\prime}\|_{C^{1}}$ is sufficiently small: 
\begin{equation}
\|\Phi(u)-\Phi(\tilde{u})\|_{\alpha,p,q}\lesssim C_{\mathcal{T},\phi}\|F^{\prime}\|_{C^{1}}\big(\|u\|_{\alpha,\infty,q}+\|\tilde{u}\|_{\alpha,\infty,q}\big)\|\xi\|_{\alpha-1,\infty,q}\|u-\tilde{u}\|_{\alpha,\infty,q}.\label{eq:youngContraction2}
\end{equation}

\emph{Step 2:} In order to ensure that $\|F'\|_{C^{1}}$ is small
enough, we scale $\xi$ as follows: For some fixed $\epsilon\in(0,\alpha-1/p)$
and for some $\lambda\in(0,1)$ to be chosen later we set 
\begin{equation}
\xi^{\lambda}:=\lambda^{1-\alpha+1/p+\epsilon}\Lambda_{\lambda}\xi,\label{eq:scaling young}
\end{equation}
where we recall the scaling operator $\Lambda_{\lambda}f=f(\lambda\cdot)$
for $f\in\mathcal{S}^{\prime}$. Lemma~\ref{lem:scaling} yields
\begin{align*}
\|\xi^{\lambda}\|_{\alpha-1,p,q} & =\lambda^{1-\alpha+1/p+\epsilon}\|\Lambda_{\lambda}\xi\|_{\alpha-1,p,q}\lesssim(\lambda^{\epsilon}|\log\lambda|+\lambda^{1-\alpha+\epsilon})\|\xi\|_{\alpha-1,p,q}\le\|\xi\|_{\alpha-1,p,q}.
\end{align*}
For $\lambda>0$ sufficiently small Step~1 provides a unique global
solution $u^{\lambda}\in B_{p,q}^{\alpha}$ to the (localized) differential
equation 
\begin{equation}
u^{\lambda}(t)=\phi_{\mathcal{T}}(t)u_{0}+\phi_{\mathcal{T}}(t)\int_{0}^{t}\lambda^{\alpha-1/p-\epsilon}F(u^{\lambda}(s))\xi^{\lambda}(s)\d s,\label{eq:rde scaled young}
\end{equation}
for all $u_{0}\in\mathbb{R}$. Setting now $u:=\Lambda_{\lambda^{-1}}u^{\lambda}$,
we have constructed a unique solution to 
\[
u(t)=\Lambda_{\lambda^{-1}}u^{\lambda}(t)=\phi_{\lambda\mathcal{T}}(t)u_{0}+\phi_{\lambda\mathcal{T}}(t)\int_{0}^{t}F(u(s))\xi(s)\d s,
\]
which coincides with (\ref{eq:lip-1}) on $[-\lambda\mathcal{T},\lambda\mathcal{T}]$.

\emph{Step 3:} Since the choice of $\lambda$ does not depend on $u_{0}$,
we can iteratively apply Step~2 on intervals of length $2\lambda\mathcal{T}$
to construct a unique global solution $u\in B_{p,q}^{\alpha}$ to
equation~(\ref{eq:lip-1}).
\end{proof}
In this simple setting it turns out that the Itô map $S$ defined
by
\begin{align}
S\colon\mathbb{R}^{d}\times B_{p,q}^{\alpha-1} & \to B_{p,q}^{\alpha}\quad\mbox{via}\quad(u_{0},\xi)\mapsto u,\label{eq:itomap}
\end{align}
where $u$ denotes the solution of the (localized) Cauchy problem~(\ref{eq:lip-1}),
is a locally Lipschitz continuous map with respect to the Besov norm. 
\begin{thm}
\label{thm:lipschitz young} Let $\alpha\in(1/2,1]$, $q\in[1,\infty]$
and $F\colon\mathbb{R}^{m}\to\mathcal{L}(\mathbb{R}^{n},\mathbb{R}^{m})$.
If either $F$ is a linear mapping and $p\in[2,\infty]$ or $F\in C^{2}$
and $p=\infty$, then the Itô map $S$ given by (\ref{eq:itomap})
is locally Lipschitz continuous. \end{thm}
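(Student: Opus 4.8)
The plan is to upgrade the fixed point construction of Theorem~\ref{thm:Young} to a statement about the Lipschitz dependence of the fixed point on the data, exploiting that the solution map is a \emph{uniform} contraction in the unknown. Fix a radius $R>0$ and take inputs $(u_0,\xi),(\tilde u_0,\tilde\xi)$ with $|u_0|,|\tilde u_0|\le R$ and $\|\xi\|_{\alpha-1,p,q},\|\tilde\xi\|_{\alpha-1,p,q}\le R$; write $u=S(u_0,\xi)$, $\tilde u=S(\tilde u_0,\tilde\xi)$. First I would fix the scaling parameter $\lambda=\lambda(R,\mathcal T,F)\in(0,1)$ as in Step~2 of the proof of Theorem~\ref{thm:Young}, but now chosen small enough that the contraction estimate \eqref{eq:youngContraction} (resp.\ \eqref{eq:youngContraction2} for $C^2$ nonlinear $F$) holds with constant $\kappa\le1/2$ \emph{simultaneously} for all signals of norm $\le R$; this is possible because after the rescaling \eqref{eq:scaling young} the effective vector field is $\lambda^{\alpha-1/p-\epsilon}F$ and $\|\xi^\lambda\|_{\alpha-1,p,q}\le\|\xi\|_{\alpha-1,p,q}\le R$ uniformly. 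Along the way I would record, from \eqref{eq:bound young} by absorption, the a priori bound $\|u^\lambda\|_{\alpha,p,q}\lesssim|u_0|\lesssim R$ for the solution of the rescaled equation \eqref{eq:rde scaled young}.

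The core estimate is a telescoping at the level of the rescaled solution map $\Phi^\lambda_\xi(v):=\phi_{\mathcal T}u_0+\phi_{\mathcal T}\lambda^{\alpha-1/p-\epsilon}\int_0^\cdot F(v(s))\xi^\lambda(s)\,\dd s$:
\[
u^\lambda-\tilde u^\lambda=\big(\Phi^\lambda_\xi(u^\lambda)-\Phi^\lambda_\xi(\tilde u^\lambda)\big)+\big(\Phi^\lambda_\xi(\tilde u^\lambda)-\Phi^\lambda_{\tilde\xi}(\tilde u^\lambda)\big).
\]
The first bracket is bounded by $\kappa\|u^\lambda-\tilde u^\lambda\|_{\alpha,p,q}$ by the contraction property. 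The second bracket equals $\phi_{\mathcal T}(u_0-\tilde u_0)+\phi_{\mathcal T}\lambda^{\alpha-1/p-\epsilon}\int_0^\cdot F(\tilde u^\lambda(s))(\xi^\lambda-\tilde\xi^\lambda)(s)\,\dd s$, and I would estimate its $B^\alpha_{p,q}$-norm by exactly the chain used in Step~1 of the proof of Theorem~\ref{thm:Young} — Lemma~\ref{lem:antiderivative} for the weighted antiderivative, Bony's decomposition together with the Besov embedding $B^{2\alpha-1}_{p/2,q}\subset B^{\alpha-1}_{p,q}$ and Lemma~\ref{lem:paraproduct} for the product $F(\tilde u^\lambda)(\xi^\lambda-\tilde\xi^\lambda)$ — using \eqref{eq:F(u)} and the a priori bound on $\|\tilde u^\lambda\|_{\alpha,p,q}$. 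Combined with $\|\xi^\lambda-\tilde\xi^\lambda\|_{\alpha-1,p,q}=\lambda^{1-\alpha+1/p+\epsilon}\|\Lambda_\lambda(\xi-\tilde\xi)\|_{\alpha-1,p,q}\lesssim\|\xi-\tilde\xi\|_{\alpha-1,p,q}$ from Lemma~\ref{lem:scaling}, this yields
\[
(1-\kappa)\,\|u^\lambda-\tilde u^\lambda\|_{\alpha,p,q}\lesssim_{R,\mathcal T,F}\;|u_0-\tilde u_0|+\|\xi-\tilde\xi\|_{\alpha-1,p,q},
\]
i.e.\ local Lipschitz continuity of the rescaled It\^o map.

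It remains to transfer back and patch. Applying $\Lambda_{\lambda^{-1}}$ and Lemma~\ref{lem:scaling} once more turns the estimate above into a local Lipschitz estimate for the solution of \eqref{eq:lip-1} on $[-\lambda\mathcal T,\lambda\mathcal T]$. As in Step~3 of the proof of Theorem~\ref{thm:Young} one iterates over the $O(1/\lambda)$ sub-intervals of length $2\lambda\mathcal T$ covering the support of $\phi_{\mathcal T}$; at each step the new initial condition is the value at the splitting point of the preceding piece, which depends Lipschitz-continuously on $(u_0,\xi)$ by the estimate just proven and the embedding $B^\alpha_{p,q}\subset C(\R)$, and the finitely many Lipschitz constants compose. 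Since Besov norms are comparable to the supremum of the piecewise norms on a finitely overlapping compact cover, concatenation gives the claim. The main obstacle is entirely bookkeeping: making $\lambda$ — hence the number of patches and all constants — depend only on $R$ (and on the fixed $\mathcal T$, $F$) and not on the individual data, and, in the nonlinear $C^2$ case, propagating the a priori bounds $\|u^\lambda\|_{\alpha,\infty,q}\lesssim_R1$ through both the contraction and the difference estimate so that the quadratic term $\|u\|+\|\tilde u\|$ in \eqref{eq:youngContraction2} stays under control.
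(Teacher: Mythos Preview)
Your proposal is correct and follows essentially the same approach as the paper: the telescoping $\Phi_\xi(u)-\Phi_\xi(\tilde u)+\Phi_\xi(\tilde u)-\Phi_{\tilde\xi}(\tilde u)$ is exactly the splitting $(F(u^1)-F(u^2))\xi^1+F(u^2)(\xi^1-\xi^2)$ the paper uses in its Step~1, and your scaling and patching steps mirror the paper's Steps~2 and~3. The only cosmetic difference is ordering---you rescale first and then run the Lipschitz estimate on the rescaled equation, whereas the paper first derives the estimate under a smallness assumption on $\|F'\|$ and then rescales---and your abstract ``uniform contraction with parameter'' framing, which the paper leaves implicit; for the pasting the paper writes out an explicit partition of unity $(\mu_j)$ with $\|\mu_j(u^1-u^2)\|_{\alpha,p,q}\lesssim\|\mu_j\|_{C^1}\|u_j^1-u_j^2\|_{\alpha,p,q}$ rather than appealing to comparability of Besov norms on a finite cover.
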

\begin{proof}
Let $u_{0}^{i}\in\mathbb{R}^{d}$, $\xi^{i}\in B_{p,q}^{\alpha-1}$
be such that $\|\xi^{i}\|_{\alpha-1,p,q}\leq R$ and $|u_{0}^{i}|\leq R$
for some $R>0$ and denote by $u^{i}$ the unique solution to corresponding
Cauchy problems~(\ref{eq:lip-1}) for $i=1,2$, which exists thanks
to Theorem~\ref{thm:Young}. In order to avoid repetition, we just
consider a linear mapping $F$. The non-linear case works analogously. 

\emph{Step 1:} Suppose that $\|F'\|_{\infty}$ is sufficiently small.
Recalling $C_{\mathcal{T},\phi}=(\mathcal{T}^{-1}\vee\mathcal{T}^{2})\|\phi\|_{C^{1}}$,
we deduce similarly to (\ref{eq:bound young}) that 
\[
\|u^{i}\|_{\alpha,p,q}\lesssim\|\phi_{\mathcal{T}}\|_{\alpha,p,q}|u_{0}^{i}|+C_{\mathcal{T},\phi}\|F'\|_{\infty}\|\xi^{i}\|_{\alpha-1,p,q}\|u^{i}\|_{\alpha,p,q},
\]
which, provided $\|F'\|_{\infty}$ is small enough, depending only
on $R$, $\phi$ and $\mathcal{T}$, leads to 
\[
\|u^{i}\|_{\alpha,p,q}\lesssim\|\phi_{\mathcal{T}}\|_{\alpha,p,q}R,\quad\text{for}\quad i=1,2.
\]
For the difference $u^{1}-u^{2}$ we have
\begin{align*}
\|u^{1}-u^{2}\|_{\alpha,p,q} & \leq\|\phi_{\mathcal{T}}(u_{0}^{1}-u_{0}^{2})\|_{\alpha,p,q}+\Big\|\phi_{\mathcal{T}}\Big(\int_{0}^{\cdot}F(u^{1}(s))\xi^{1}(s)\d s-\int_{0}^{\cdot}F(u^{2}(s))\xi^{2}(s)\d s\Big)\Big\|_{\alpha,p,q}\\
 & \lesssim\|\phi_{\mathcal{T}}\|_{\alpha,p,q}|u_{0}^{1}-u_{0}^{2}|+\Big\|\phi_{\mathcal{T}}\int_{0}^{\cdot}\big(F(u^{1}(s))-F(u^{2}(s))\big)\xi^{1}(s)\d s\Big\|_{\alpha,p,q}\\
 & \qquad+C_{\mathcal{T},\phi}\|F(u^{2})(\xi^{1}-\xi^{2})\|_{\alpha-1,p,q}.
\end{align*}
The second term can be estimated as in (\ref{eq:youngContraction})
and for the last one Bony's decomposition, Lemma~\ref{lem:paraproduct}
and (\ref{eq:F(u)}) yield
\[
\|F(u^{2})(\xi^{1}-\xi^{2})\|_{\alpha-1,p,q}\lesssim\|F(u^{2})\|_{\alpha,p,q}\|\xi^{1}-\xi^{2}\|_{\alpha-1,p,q}\le\|F'\|_{\infty}\|u^{2}\|_{\alpha,p,q}\|\xi^{1}-\xi^{2}\|_{\alpha-1,p,q}.
\]
Therefore, we can combine the above estimates to 
\begin{align*}
\|u^{1}-u^{2}\|_{\alpha,p,q}\lesssim & C_{\mathcal{T},\phi}\Big(|u_{0}^{1}-u_{0}^{2}|+\|\phi_{\mathcal{T}}\|_{\alpha,p,q}\|F'\|_{\infty}R\|\xi^{1}-\xi^{2}\|_{\alpha-1,p,q}\\
 & +\Big(\int_{0}^{1}\|(F'(u^{1}+t(u^{2}-u^{1}))\|_{\alpha-1,\infty,q}\d t\Big)R\|u^{1}-u^{2}\|_{\alpha,p,q}\Big).
\end{align*}
If $F$ is linear with sufficiently small $\|F\|_{C^{1}}$, we obtain
the desired estimate by rearranging:
\[
\|u^{1}-u^{2}\|_{\alpha,p,q}\lesssim C_{\mathcal{T},\phi}\big(|u_{0}^{1}-u_{0}^{2}|+\|\phi_{\mathcal{T}}\|_{\alpha,p,q}\|F\|_{C^{1}}R\|\xi^{1}-\xi^{2}\|_{\alpha-1,p,q}\big).
\]

\emph{Step 2:} The assumption on $\|F'\|_{\infty}$ can be translated
to an assumption on $\mathcal{T}$ using the same scaling argument
as in Step 2 in the proof of Theorem~\ref{thm:Young}. More precisely,
we define $\xi^{\lambda,1}$ and $\xi^{\lambda,2}$ for $\lambda>0$
as in (\ref{eq:scaling young}) and note $\|\xi^{\lambda,i}\|_{\alpha,p,q}\lesssim R$
for $i=1,2$. Therefore, for sufficiently small $\lambda$ there exists
a unique solution $u^{\lambda,i}$ to (\ref{eq:rde scaled young})
for $i=1,2$. Setting again $u^{i}:=\Lambda_{\lambda^{-1}}u^{\lambda}$
and applying twice Lemma~\ref{lem:scaling} together with Step 1
gives 
\begin{align*}
\|u^{1}-u^{2}\|_{\alpha,p,q} & \lesssim\big(1+\lambda^{-\alpha}|\log\lambda^{-1}|\big)\lambda^{1/p}\|u^{\lambda,1}-u^{\lambda,2}\|_{\alpha,p,q}\\
 & \lesssim C_{\mathcal{T},\phi}\big(1+\lambda^{-\alpha}|\log\lambda^{-1}|\big)\lambda^{1/p}\big(|u_{0}^{1}-u_{0}^{2}|+\|\phi_{\mathcal{T}}\|_{\alpha,p,q}\|F'\|_{\infty}R\|\xi^{\lambda,1}-\xi^{\lambda,2}\|_{\alpha-1,p,q}\big)\\
 & \lesssim C_{\mathcal{T},\phi}\big(1+\lambda^{-\alpha}|\log\lambda^{-1}|\big)\lambda^{1/p}\big(|u_{0}^{1}-u_{0}^{2}|+\|\phi_{\mathcal{T}}\|_{\alpha,p,q}\|F'\|_{\infty}R\|\xi^{1}-\xi^{2}\|_{\alpha-1,p,q}\big).
\end{align*}
In conclusion, the Itô map is locally Lipschitz continuous given $\mathcal{T}>0$
is sufficiently small because $u^{i}$ is a solution to 
\[
u^{i}(t)=\phi_{\lambda\mathcal{T}}(t)u_{0}^{i}+\phi_{\lambda\mathcal{T}}(t)\int_{0}^{t}F(u^{i}(s))\xi^{i}(s)\d s,\quad i=1,2.
\]

\emph{Step 3:} The local Lipschitz continuity for arbitrary $\mathcal{T}$
follows by a pasting argument. For this purpose choose a partition
of unity $(\mu_{j})_{j\in\mathbb{Z}}\subset C^{\infty}$ satisfying
$\mu_{j}(t_{j}+\epsilon)=1$, $\epsilon\in[-\frac{1}{2}\lambda\mathcal{T},\frac{1}{2}\lambda\mathcal{T}]$,
for anchor points $t_{j}\in\mathbb{R}$ with $t_{0}=0$ and $|t_{j}-t_{j-1}|\le\lambda\mathcal{T}/2$
and fulfilling 
\[
|\supp\mu_{j}|:=\sup\{|x-y|\ :\ x,y\in\supp\mu_{j}\}\leq\lambda\mathcal{T}\quad\text{and}\quad\sum_{j\in\mathbb{Z}}\mu_{j}(x)=1\quad\text{for all }x\in\mathbb{R}.
\]
Since the $u^{i}$ for $i=1,2$ have compact support, there is some
$N\in\N$ such that one has, using (\ref{eq:pointwiseMulti}), 

\begin{align*}
\|u^{1}-u^{2}\|_{\alpha,p,q} & \leq\sum_{j=-N}^{N}\|\mu_{j}\big(u^{1}-u^{2}\big)\|_{\alpha,p,q}\lesssim\sum_{j=-N}^{N}\|\mu_{j}\|_{C^{1}}\|u_{j}^{1}-u_{j}^{2}\|_{\alpha,p,q},
\end{align*}
where $u_{j}^{i}$ is the unique solution to 
\[
u_{j}^{i}(t)=\phi_{\lambda\mathcal{T}}(t-t_{j})u_{t_{j}}^{i}+\phi_{\lambda\mathcal{T}}(t-t_{j})\int_{t_{j}}^{t}F(u_{j}^{i}(s))\xi^{i}(s)\d s
\]
with initial condition $u_{t_{j}}^{i}:=u^{i}(t_{j})$ for $i=1,2$.
Noting that $|u_{t_{j}}^{1}-u_{t_{j}}^{2}|\lesssim\|u_{j-1}^{1}-u_{j-1}^{2}\|_{\alpha,p,q}$
for $j\ge1$ and similarly for negative $j$, Step~2 yields 
\[
\|u^{1}-u^{2}\|_{\alpha,p,q}\lesssim C_{\mathcal{T},\phi}\big(|u_{0}^{1}-u_{0}^{2}|+\|\phi_{\mathcal{T}}\|_{\alpha,p,q}\|F'\|_{\infty}R\|\xi^{1}-\xi^{2}\|_{\alpha-1,p,q}\big).\qedhere
\]

\end{proof}
To extend these results to nonlinear functions $F$ for $p<\infty$
and to less regular driving signals $\xi$, more precisely $\xi\in B_{p,q}^{\alpha-1}$
for $\alpha\in(1/3,1/2)$, is the aim of the following two sections.

\section{Linearization and commutator estimate\label{sec:LinCom} }

In order to deal with more irregular driving signals $\xi\in B_{p,q}^{\alpha-1}$,
we shall apply Bony's decomposition to rigorously define the product
$F(u)\xi$, which appears in the RDE~(\ref{eq:rde}). Let us first
formally decompose $F(u)\xi$ and analyze the Besov regularity of
the different terms as follows 
\begin{align}
F(u)\xi & =\underbrace{T_{F(u)}\xi}_{\in B_{p,q}^{\alpha-1}}+\underbrace{\pi(F(u),\xi)}_{\in B_{p/2,q/2}^{2\alpha-1}\text{ if }2\alpha-1>0}+\underbrace{T_{\xi}(F(u))}_{\in B_{p/2,q/2}^{2\alpha-1}}.\label{eq:decomp}
\end{align}
The first term $T_{F(u)}\xi$ is in $B_{p,q}^{\alpha-1}$ due to Lemma~\ref{lem:paraproduct}
and the boundedness of $F$. The regularity of the third term $T_{\xi}F(u)\in B_{p/2,q/2}^{2\alpha-1}$
for $\alpha<1$ can also be deduced from Lemma~\ref{lem:paraproduct}
since naturally the solution $u$ has regularity $B_{p,q}^{\alpha}$
and thus $F(u)\in B_{p,q}^{\alpha}$ by (\ref{eq:F(u)}). The regularity
estimate of the resonant term can be applied only if $2\alpha-1>0$.
This is the main reason, why it was possible for $\alpha\in(1/2,1]$
to show the existence of a solution to the (localized) RDE~(\ref{eq:rde})
in Section~\ref{sec:Young} without taking any additional information
about $\xi$ into account. However, this high Besov regularity assumption
on $\xi$ is violated in most of the basic examples from stochastic
analysis as for instance for stochastic differential equations driven
by Brownian motion or martingales. The aim of this section is to reduce
the resonant term $\pi(F(u),\xi)$ to $\pi(u,\xi)$:
\begin{prop}
\label{prop:reduction} Let $\alpha\in(\frac{1}{3},\frac{1}{2})$,
$p\in[3,\infty]$ and $F\in C^{2+\gamma}(\R)$ for some $\gamma\in(0,1]$
satisfying $F(0)=0$. Then there is a map $\Pi_{F}\colon B_{p,\infty}^{\alpha}(\R)\times B_{p,\infty}^{\alpha-1}(\R)\to B_{p/3,\infty}^{3\alpha-1}(\R)$
such that for any $u\in B_{p,\infty}^{\alpha}(\R)$ and $\xi\in B_{p,\infty}^{\alpha-1}(\R)$
we have 
\begin{equation}
\pi(F(u),\xi)=F'(u)\pi(u,\xi)+\Pi_{F}(u,\xi)\label{eq:RDEresonant}
\end{equation}
with
\begin{equation}
\|\Pi_{F}(u,\xi)\|_{3\alpha-1,p/3,\infty}\lesssim\|F\|_{C^{2}}\|u\|_{\alpha,p,\infty}^{2}\|\xi\|_{\alpha-1,p,\infty}.\label{eq:boundResonant}
\end{equation}
Moreover, $\Pi_{F}$ is locally Hölder continuous satisfying for any
$u^{1},u^{2}\in B_{p,q}^{\alpha}(\R)$ and $\xi^{1},\xi^{2}\in B_{p,q}^{\alpha-1}(\R)$
\begin{align*}
 & \|\Pi_{F}(u^{1},\xi^{1})-\Pi_{F}(u^{2},\xi^{2})\|_{3\alpha-1,p/3,\infty}\\
 & \quad\lesssim\|F\|_{C^{2+\gamma}}C(u^{1},u^{2},\xi^{1},\xi^{2})\Big(\|u^{1}-u^{2}\|_{\infty}^{\gamma}+\|u^{1}-u^{2}\|_{\alpha,p,\infty}+\|\xi^{1}-\xi^{2}\|_{\alpha-1,p,\infty}\Big)
\end{align*}
where $C(u^{1},u^{2},\xi^{1},\xi^{2}):=\|u^{1}\|_{\alpha,p,\infty}^{2}\wedge\|u^{2}\|_{\alpha,p,\infty}^{2}+\big(\|u^{1}\|_{\alpha,p,\infty}+\|u^{2}\|_{\alpha,p,\infty}\big)\big(1+\|\xi^{1}\|_{\alpha-1,p,\infty}\wedge\|\xi^{2}\|_{\alpha-1,p,\infty}\big).$
\end{prop}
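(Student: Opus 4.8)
The idea is to use Bony's decomposition of $F(u)\xi$ in tandem with a Taylor expansion of $F$ around the frequency-localized pieces of $u$, following the strategy of Lemma~A.5 in \citep{Gubinelli2012} but carrying along the $L^p$-integrability and fine-parameter $q$ bookkeeping. First I would write $\pi(F(u),\xi)=\sum_{|i-j|\le1}\Delta_i(F(u))\Delta_j\xi$ and replace $\Delta_i(F(u))$ by $F'(S_{i-1}u)\,\Delta_i u$ plus a remainder. Writing $\Delta_i(F(u))=\sum_{k\le i}[\Delta_i,\,\cdot\,]$ telescoping tricks, the natural object is the \emph{second-order} commutator: one shows $\Delta_i(F(u))-F'(S_{i-1}u)\Delta_i u$ is controlled in $L^{p/2}$ by $2^{-2i\alpha}\|F\|_{C^2}\|u\|_{\alpha,p,\infty}^2$, using that $F\in C^2$, a first-order Taylor expansion $F(a+b)-F(a)=\int_0^1 F'(a+tb)\,dt\,b$, and the smoothing estimates $\|\Delta_i u\|_{L^p}\lesssim 2^{-i\alpha}\|u\|_{\alpha,p,\infty}$ and $\|S_{i-1}u-S_{j-1}u\|_{L^\infty}\lesssim$ (difference of low-frequency parts, estimated via the embedding $B^\alpha_{p,\infty}\subset B^0_{\infty,\infty}$ since $\alpha>1/p$). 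Summing $F'(S_{i-1}u)\Delta_iu\,\Delta_j\xi$ over $|i-j|\le1$ then has to be compared with $F'(u)\pi(u,\xi)=F'(u)\sum_{|i-j|\le1}\Delta_iu\,\Delta_j\xi$; the difference $\big(F'(S_{i-1}u)-F'(u)\big)\Delta_iu\,\Delta_j\xi$ is again handled by $F'\in C^1$, giving a factor $\|u-S_{i-1}u\|_{L^\infty}\lesssim 2^{-i\alpha}\|u\|_{\alpha,p,\infty}$ (here one needs $\alpha<1$ so the tail $\sum_{k\ge i}\|\Delta_k u\|_{L^\infty}$ converges). Collecting the two remainders defines $\Pi_F(u,\xi)$, and since the surviving blocks have frequency $\sim 2^j$, a geometric-series summation over the $\ell^\infty$ index gives the claimed regularity $3\alpha-1$ in $B_{p/3,\infty}$: the three $L^p$-factors multiply to $L^{p/3}$ by Hölder, and the exponent $2\alpha$ from $u^2$ plus $\alpha-1$ from $\xi$ yields $3\alpha-1>0$ precisely when $\alpha>1/3$, so the resonant-type sum converges.

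For the bound \eqref{eq:boundResonant} I would simply track constants through the above: the Taylor remainders produce $\|F\|_{C^2}$, each $\Delta_iu$ costs one power $2^{-i\alpha}\|u\|_{\alpha,p,\infty}$ (two of them), and $\Delta_j\xi$ costs $2^{-j(\alpha-1)}\|\xi\|_{\alpha-1,p,\infty}$, so after the geometric sum (finite because the localized-block frequencies are genuinely dyadic and $3\alpha-1$ can be taken positive) one lands on $\|F\|_{C^2}\|u\|_{\alpha,p,\infty}^2\|\xi\|_{\alpha-1,p,\infty}$. Crucially one also uses $F(0)=0$ to pass from $\|F'(x)\|$-type bounds at points $|x|\le\|u\|_\infty\lesssim\|u\|_{\alpha,p,\infty}$ back to $\|F\|_{C^2}$ via \eqref{eq:F(u)} — that is, the boundedness of $F(u),F'(u)$ in the relevant norms is exactly what makes the $T$-type pieces harmless.

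For the local Hölder continuity, the plan is to split $\Pi_F(u^1,\xi^1)-\Pi_F(u^2,\xi^2)$ into a "linear" telescoping sum in $u^1-u^2$ and $\xi^1-\xi^2$ plus the genuinely nonlinear contribution from $F'(S_{i-1}u^1)-F'(S_{i-1}u^2)$ (and the analogous $F''$-type term from the second-order remainder). The terms that are multilinear in the increments are estimated exactly as above, producing the factors $\|u^1-u^2\|_{\alpha,p,\infty}$ and $\|\xi^1-\xi^2\|_{\alpha-1,p,\infty}$ with the coefficient $C(u^1,u^2,\xi^1,\xi^2)$ (two of the three factors being "frozen" at either superscript, whence the minima in the definition of $C$). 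The only place the Hölder exponent $\gamma$ enters is the difference $F'(S_{i-1}u^1)-F'(S_{i-1}u^2)$ — or rather its own remainder after another Taylor step — which, since $F\in C^{2+\gamma}$, is bounded by $\|F\|_{C^{2+\gamma}}\|S_{i-1}(u^1-u^2)\|_{L^\infty}^\gamma\lesssim\|F\|_{C^{2+\gamma}}\|u^1-u^2\|_\infty^\gamma$; one needs $\|\cdot\|_\infty^\gamma$ rather than $\|\cdot\|_{\alpha,p,\infty}^\gamma$ because interpolating the $\gamma$-power against the block decay would lose summability, and the low-frequency truncation $S_{i-1}$ is what permits the uniform $L^\infty$ estimate.

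\textbf{Main obstacle.} The delicate point is the second-order commutator estimate $\|\Delta_i(F(u))-F'(S_{i-1}u)\Delta_iu\|_{L^{p/2}}\lesssim 2^{-2i\alpha}\|F\|_{C^2}\|u\|_{\alpha,p,\infty}^2$, which is the Besov-space analogue of Lemma~A.5 of \citep{Gubinelli2012}: in the Hölder case one argues pointwise with $\|\cdot\|_{L^\infty}$, but here one must interpolate an $L^\infty$-difference ($S_{i-1}u-S_{k-1}u$, needing $\alpha>1/p$ for the embedding and $\alpha<1$ for the tail sum) against an $L^p$-block ($\Delta_k u$) to land in $L^{p/2}$ by Hölder, and then resum a double Paley sum with the right exponent — this is precisely the kind of "generalize results of \citep{Bahouri2011} and \citep{Gubinelli2012}" step flagged in the introduction, and getting all the exponent arithmetic ($\alpha/p$, tails, $3\alpha-1>0$) to close simultaneously is where the real work lies.
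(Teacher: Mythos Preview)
Your plan is in the right spirit but contains a genuine gap concerning frequency localization. You propose to replace $\Delta_i(F(u))$ by $F'(S_{i-1}u)\,\Delta_i u$; the trouble is that $F'(S_{i-1}u)$ is a \emph{nonlinear} function of a band-limited input and therefore need not have compactly supported Fourier transform (take $F'=\exp$, say). Consequently neither the remainder $r_i:=\Delta_i(F(u))-F'(S_{i-1}u)\Delta_i u$ nor the comparison term $[F'(S_{i-1}u)-F'(u)]\Delta_i u\,\Delta_j\xi$ is spectrally supported in any $2^i\mathcal B$, and your assertion that ``the surviving blocks have frequency $\sim2^j$'' is unjustified. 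Without that localization, block estimates of the form $\|\,\cdot\,\|_{L^{p/3}}\lesssim2^{-i(3\alpha-1)}$ only yield convergence of the sum in $L^{p/3}\subset B^0_{p/3,\infty}$, not membership in $B^{3\alpha-1}_{p/3,\infty}$; Lemmas~\ref{lem:BesovAnnulus}--\ref{lem:BesovBall} genuinely require the support hypothesis. A secondary slip: the embedding $B^\alpha_{p,\infty}\hookrightarrow B^{\alpha-1/p}_{\infty,\infty}$ costs $1/p$, so $\|u-S_{i-1}u\|_{L^\infty}\lesssim2^{-i(\alpha-1/p)}$, not $2^{-i\alpha}$; the correct move is to keep this factor in $L^p$ (where $\|u-S_{i-1}u\|_{L^p}\lesssim2^{-i\alpha}$) and combine three $L^p$-factors into $L^{p/3}$ by H\"older.

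The paper avoids the localization obstacle by a modular route. It first proves a paralinearization lemma (Lemma~\ref{lem:linearization}) $F(u)-F(0)=T_{F'(u)}u+R_F(u)$ with $R_F(u)\in B^{2\alpha}_{p/2,\infty}$, and separately a trilinear commutator estimate (Lemma~\ref{lem:commutator}) for $\Gamma(f,g,h):=\pi(T_fg,h)-f\pi(g,h)$. The crucial difference from your ansatz is that the paraproduct $T_{F'(u)}u=\sum_j S_{j-1}\big(F'(u)\big)\Delta_j u$ applies $S_{j-1}$ \emph{to} $F'(u)$ rather than $F'$ to $S_{j-1}u$; then $S_{j-1}(F'(u))\Delta_j u$ has Fourier support in an honest annulus $\sim2^j$, so Lemma~\ref{lem:BesovAnnulus} applies. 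Both lemmas are proved via the modulus-of-continuity characterization of the Besov norm (hence no $1/p$-loss from passing through $L^\infty$), and the proposition follows by setting $\Pi_F(u,\xi):=\Gamma(F'(u),u,\xi)+\pi(R_F(u),\xi)$ and invoking Lemma~\ref{lem:paraproduct}(iii) for the second piece (Corollary~\ref{cor:resonant}). The H\"older continuity is then obtained from the stability statement for $R_F$ in Lemma~\ref{lem:linearization}, which is where the extra regularity $F\in C^{2+\gamma}$ is spent, together with the multilinearity of $\Gamma$ and $\pi$ and the elementary bound~\eqref{eq:estFu}.
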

As we will see in the next section, it suffices to consider only $q=\infty$
in Proposition~\ref{prop:reduction}. Taking into account the embedding
$B_{p,q}^{\alpha}\subset B_{p,\infty}^{\alpha}$ for any $q\in[1,\infty]$,
this case corresponds to the weakest Besov norm for fixed $\alpha$
and $p$. 

In order to prove this proposition, we need the subsequent lemmas.
As the first step, we show the following paralinearization result,
which is a slight generalization of Theorem 2.92 in \citep{Bahouri2011}.
Our proof is inspired by \citep[Lem. 2.6]{Gubinelli2012} and relies
on the characterization of Besov spaces via the modulus of continuity.
We obtain that the composition $F(u)$ can be written as a paraproduct
of $F'(u)$ and $u$ up to some more regular remainder. 
\begin{lem}
\label{lem:linearization} Let $0<\beta\le\alpha<1$ and $F\in C^{1+\beta/\alpha}(\R^{m})$.
Let $p\ge\beta/\alpha+1$ and define $p':=\alpha p/(\alpha+\beta)$.
Then for any $g\in B_{p,\infty}^{\alpha}(\R^{d})\cap L^{\infty}(\R^{d})$
there is some $R_{F}(g)\in B_{p',\infty}^{\alpha+\beta}(\R^{d})$
satisfying 
\[
F(g)-F(0)=T_{F'(g)}g+R_{F}(g)\quad\text{and}\quad\|R_{F}(g)\|_{\alpha+\beta,p',\infty}\lesssim\|F\|_{C^{1+\beta/\alpha}}^{2-\beta/\alpha}\|g\|_{\alpha,p,\infty}^{1+\beta/\alpha}.
\]
Moreover, if $F\in C^{2+\gamma}$ for some $\gamma\in(0,1]$ and if
$p>2\vee1/\alpha$ then the map
\[
R_{F}\colon B_{p,\infty}^{\alpha}(\R^{d})\cap L^{\infty}(\R^{d})\to B_{p/2,\infty}^{2\alpha}(\R^{d})
\]
is locally Hölder continuous with
\begin{align*}
 & \|R_{F}(g)-R_{F}(h)\|_{2\alpha,p/2,\infty}\\
 & \quad\lesssim\|F\|_{C^{2+\gamma}}\Big(\|g\|_{\alpha,p,\infty}^{2}\wedge\|h\|_{\alpha,p,\infty}^{2}+\|g\|_{\alpha,p,\infty}+\|h\|_{\alpha,p,\infty}\Big)\big(\|g-h\|_{\infty}^{\gamma}+\|g-h\|_{\alpha,p,\infty}\big).
\end{align*}
\end{lem}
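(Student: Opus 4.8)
The plan is to follow the standard paralinearization telescoping argument, but to carry it out in the modulus-of-continuity characterization of Besov spaces rather than via Littlewood--Paley blocks, which is what makes the statement valid for general $p$ rather than only $p=\infty$. First I would define $R_F(g):=F(g)-F(0)-T_{F'(g)}g$ and expand the paraproduct as $T_{F'(g)}g=\sum_{j\ge-1}S_{j-1}(F'(g))\,\Delta_j g$. Using a telescoping of $F(g)=\sum_j(S_{j+1}F(g)-S_jF(g))$ together with the mean value theorem applied to the smooth spectral truncations, one writes $\Delta_j(F(g))$ and $S_{j-1}(F'(g))\Delta_j g$ in a comparable form, so that each block of $R_F(g)$ becomes a sum of terms controlled by the second-order regularity of $F$ (this is exactly the mechanism behind Bony's Theorem~2.92 in \citep{Bahouri2011}). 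To avoid $L^p$-integrability troubles and to reach the sharp exponent $1+\beta/\alpha$, I would instead estimate $R_F(g)$ directly through its modulus of continuity: writing $R_F(g)(x)-R_F(g)(x-h)$, a second-order Taylor expansion of $F$ at the points $g(x)$ and $g(x-h)$ produces a main term quadratic in the increments of $g$ plus a term where the increment of $F'(g)$ meets the paraproduct structure. One then splits the frequency scales at $2^{-J}\sim|h|$: high frequencies are bounded using $\|\Delta_j g\|_{L^\infty}\lesssim 2^{-j\alpha}\|g\|_{\alpha,p,\infty}$-type estimates and $F\in C^{1+\beta/\alpha}$, while low frequencies use $\omega_p(g,|h|)\lesssim|h|^\alpha\|g\|_{\alpha,p,\infty}$. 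Interpolating the $L^\infty$ bound $\|g\|_\infty$ against the $B^\alpha_{p,\infty}$-seminorm with exponents $\beta/\alpha$ and $1-\beta/\alpha$ produces the factor $\|F\|_{C^{1+\beta/\alpha}}^{2-\beta/\alpha}\|g\|_{\alpha,p,\infty}^{1+\beta/\alpha}$, and the Hölder conjugacy $1/p'=1/p+\beta/(\alpha p)$ is forced precisely by multiplying a $B^\alpha_{p,\infty}$ factor by an $L^{\alpha p/\beta}$ factor coming from the interpolation. Throughout, Lemma~\ref{lem:paraproduct} governs the one genuine paraproduct term $T_{\text{(increment of }F'(g))}g$.

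For the local Hölder continuity statement, I would write $R_F(g)-R_F(h)=\big(F(g)-F(h)\big)-\big(T_{F'(g)}g-T_{F'(h)}h\big)$ and insert $F(g)-F(h)=\int_0^1 F'(h+t(g-h))(g-h)\,\d t$, then symmetrically decompose the paraproduct difference as $T_{F'(g)-F'(h)}g+T_{F'(h)}(g-h)$. The first of these is again handled by Lemma~\ref{lem:paraproduct} after applying the mean value theorem to $F'$, which brings down $F''$ and the factor $\|g-h\|_\infty$; the $\gamma$-Hölder continuity of $F''$ accounts for the exponent $\|g-h\|_\infty^\gamma$ in the remainder and explains why we need $F\in C^{2+\gamma}$ rather than merely $C^2$. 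The target space $B^{2\alpha}_{p/2,\infty}$ (rather than $B^{\alpha+\beta}_{p',\infty}$) comes from taking $\beta=\alpha$, so that $p'=p/2$ and the gain is $2\alpha$; the restriction $p>2\vee 1/\alpha$ is what makes $p/2$ a legitimate Besov exponent and keeps $B^\alpha_{p,\infty}\hookrightarrow L^\infty$ available so that compositions with $F$ make sense. The coefficient $\|g\|_{\alpha,p,\infty}^2\wedge\|h\|_{\alpha,p,\infty}^2+\|g\|_{\alpha,p,\infty}+\|h\|_{\alpha,p,\infty}$ arises by keeping track of which of the two arguments appears quadratically in each term and bounding the remaining linear-in-difference part.

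The main obstacle I anticipate is organizing the bookkeeping so that the quadratic remainder from the Taylor expansion of $F$ and the genuine paraproduct term $T_{(\text{increment of }F'(g))}g$ are both absorbed into a single clean bound with the optimal regularity exponent $\alpha+\beta$ — in the Littlewood--Paley proof of \citep{Bahouri2011} this is done for $p=\infty$ using Bernstein inequalities freely, but for finite $p$ one must be careful that every product of blocks is split so that exactly one factor carries the $L^p$-norm and the others carry $L^\infty$-norms (via Bernstein, costing the right power of $2^j$), which is where the hypothesis $p\ge\beta/\alpha+1$ is consumed. A secondary subtlety is the interpolation step: one needs $\|g\|_\infty\lesssim\|g\|_{B^0_{\infty,\infty}}\lesssim\|g\|_{\alpha,p,\infty}$ and then a genuine interpolation inequality between $L^\infty$ and the homogeneous seminorm to produce the fractional power $\|g\|_{\alpha,p,\infty}^{1+\beta/\alpha}$; I would make this rigorous by the same scale-splitting at $2^{-J}\sim|h|$ already used for the modulus of continuity, balancing the two regimes to optimize in $J$.
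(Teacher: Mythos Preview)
Your proposal has the right intuition but there is a genuine gap in the H\"older-continuity half. You plan to split $R_F(g)-R_F(h)$ into $F(g)-F(h)$, $T_{F'(g)-F'(h)}g$ and $T_{F'(h)}(g-h)$, and to bound each piece in $B^{2\alpha}_{p/2,\infty}$ via Lemma~\ref{lem:paraproduct}. This cannot work: none of the three pieces individually has regularity $2\alpha$. A paraproduct $T_fg$ with $f$ of nonnegative regularity inherits only the regularity of $g$ (Lemma~\ref{lem:paraproduct}(i) gives at best $B^{\alpha}$, and part~(ii) needs the multiplier to have \emph{negative} regularity to gain anything), while $F(g)-F(h)\in B^{\alpha}_{p,\infty}$ by~(\ref{eq:F(u)}). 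The extra $\alpha$ of smoothness in $R_F(g)-R_F(h)$ comes entirely from cancellation among the three terms, and your decomposition discards exactly that cancellation. The same objection applies to your first-part plan of estimating $R_F(g)(x)-R_F(g)(x-h)$ directly: the increment of $F(g)$ and the increment of $T_{F'(g)}g$ are each only of size $|h|^{\alpha}$ in $L^{p'}$, and you have not explained how the paraproduct increment matches the first-order Taylor term $F'(g(x-h))(g(x)-g(x-h))$ well enough to leave a remainder of size $|h|^{\alpha+\beta}$.

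The paper's device, which you do not isolate, is to keep the cancellation at the level of each Littlewood--Paley block. One sets $F_j:=\Delta_j(F(g)-F(0))-S_{j-1}(F'(g))\Delta_j g$, observes that $\mathcal F F_j$ is supported in $2^j$ times an annulus, and rewrites $F_j$ as the double kernel integral
\[
F_j(x)=\int K_j(x-y)K_{<j-1}(x-z)\Big(F(g(y))-F(g(z))-F'(g(z))\big(g(y)-g(z)\big)\Big)\d y\d z,
\]
using $\int K_j=0$. The integrand is now a \emph{pointwise} second-order Taylor remainder, bounded by $\|F\|_{C^{1+\beta/\alpha}}|g(y)-g(z)|^{1+\beta/\alpha}$; Minkowski's inequality and the modulus of continuity $\omega_p(g,\cdot)$ then give $\|F_j\|_{L^{p'}}\lesssim 2^{-j(\alpha+\beta)}\|g\|_{\alpha,p,\infty}^{1+\beta/\alpha}$, and Lemma~\ref{lem:BesovAnnulus} finishes. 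For $R_F(g)-R_F(h)$ one goes one Taylor order higher \emph{inside the same kernel representation}, writing $F_j$ with an integral of $F''$, subtracting the analogous expression for $h$, and invoking the $\gamma$-H\"older continuity of $F''$ together with Cauchy--Schwarz on the moduli of continuity; this produces $\|G_j\|_{L^{p/2}}\lesssim 2^{-2j\alpha}$ with exactly the claimed constants. The modulus-of-continuity characterization is used, as you anticipated, but only to estimate $\|F_j\|_{L^{p'}}$ block by block, not to bypass the block decomposition.
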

\begin{proof}
The remainder $R_{F}(g)$ is given by
\[
R_{F}(g)=F(g)-F(0)-T_{F'(g)}g=\sum_{j\ge-1}F_{j}\quad\text{with}\quad F_{j}:=\Delta_{j}(F(g)-F(0))-S_{j-1}(F'(g))\Delta_{j}g.
\]
For $j\le0$ Young's inequality and the Lipschitz continuity of $F$
yield 
\begin{align*}
\|F_{j}\|_{L^{p}}= & \|\Delta_{j}(F(g)-F(0))\|_{L^{p}}\le\|\F^{-1}\rho_{j}\|_{L^{1}}\|F(g)-F(0)\|_{L^{p}}\lesssim\|F\|_{C^{1}}\|g\|_{L^{p}}
\end{align*}
and we have $\|F_{j}\|_{L^{p'}}\le\|F_{j}\|_{L^{p}}\|F_{j}\|_{L^{\alpha p/\beta}}\lesssim\|F_{j}\|_{\infty}^{1-\beta/\alpha}\|F_{j}\|_{L^{p}}^{1+\beta/\alpha}.$

For $j>0$ we have $\Delta_{j}F(0)=0$ and the Fourier transform of
$F_{j}$ is supported in $2^{j}$ times some annulus. Defining the
kernel functions $K_{j}:=\F^{-1}\rho_{j}$ and $K_{<j-1}:=\sum_{k<j-1}K_{k}$
and using that $\int K_{j}(x)\d x=\rho_{j}(0)=0$, the blocks $F_{j}$
can be written as convolution
\begin{align}
F_{j}(x) & =\int_{\mathbb{R}^{2}}K_{j}(x-y)K_{<j-1}(x-z)\big(F(g(y))-F'(g(z))g(y)\big)\d y\d z\nonumber \\
= & \int_{\mathbb{R}^{2}}K_{j}(x-y)K_{<j-1}(x-z)\Big(F(g(y))-F(g(z))-F'(g(z))\big(g(y)-g(z)\big)\Big)\d y\d z\nonumber \\
= & \int_{\mathbb{R}^{2}}K_{j}(x-y)K_{<j-1}(x-z)\Big(\big(F'\big(g(z)+\xi_{yz}(g(y)-g(z))\big)-F'(g(z))\big)\big(g(y)-g(z)\big)\Big)\d y\d z,\label{eq:Fj}
\end{align}
where we used in the in last equality the mean value theorem for intermediate
points $\xi_{yz}\in[0,1]$. By the Hölder continuity of $F'$ the
above display can be estimated by
\begin{align*}
|F_{j}(x)|\le & \|F\|_{C^{1+\beta/\alpha}}\int_{\mathbb{R}^{2}}|K_{j}(x-y)K_{<j-1}(x-z)|\xi_{yz}^{\beta/\alpha}|g(y)-g(z)|^{\beta/\alpha+1}\d y\d z\\
\le & \|F\|_{C^{1+\beta/\alpha}}\int_{\mathbb{R}^{2}}|K_{j}(y)K_{<j-1}(z)||g(x-y)-g(x-z)|^{\beta/\alpha+1}\d y\d z.
\end{align*}
Now we can estimate the $L^{p'}$-norm of the integral by the integral
of the $L^{p'}$-norm, which yields
\begin{align*}
\|F_{j}\|_{L^{p'}}\le & \|F\|_{C^{1+\beta/\alpha}}\int_{\mathbb{R}^{2}}|K_{j}(y)K_{<j-1}(z)|\,\big\||g(x-(y-z))-g(x)|^{\beta/\alpha+1}\big\|_{L^{p'}}\d y\d z\\
\le & \|F\|_{C^{1+\beta/\alpha}}\int_{\mathbb{R}^{2}}|K_{j}(y)K_{<j-1}(z)|\sup_{|h|\le|y-z|}\big\||g(x)-g(x-h)|^{\beta/\alpha+1}\big\|_{L^{p'}}\d y\d z\\
= & \|F\|_{C^{1+\beta/\alpha}}\int_{\mathbb{R}^{2}}|K_{j}(y)K_{<j-1}(z)|\sup_{|h|\le|y-z|}\big\| g(x)-g(x-h)\big\|_{L^{p}}^{1+\beta/\alpha}\d y\d z.
\end{align*}
Recalling the modulus of continuity from (\ref{eq:ModCont}) and the
corresponding representation of the Besov norm, we obtain with Hölder's
inequality for any $q\in[1,\infty]$ with $q^{*}=q/(q-1)$
\begin{align}
\|F_{j}\|_{L^{p'}}\le & \|F\|_{C^{1+\beta/\alpha}}\int_{\mathbb{R}^{2}}|K_{j}(y)K_{<j-1}(y-h)|\omega_{p}(g,|h|)^{1+\beta/\alpha}\d y\d h\nonumber \\
\lesssim & \|F\|_{C^{1+\beta/\alpha}}\|g\|_{\alpha,p,(1+\beta/\alpha)q}^{1+\beta/\alpha}\Big(\int\Big(|h|^{\alpha+\beta+d/q}\int|K_{j}(y)K_{<j-1}(y-h)|\d y\Big)^{q^{*}}\d h\Big)^{1/q^{*}}\label{eq:FjLp}
\end{align}
(with $d/q:=0$ for $q=\infty$ and the usual modification for $q^{*}=\infty$).
Abbreviating $\delta:=\alpha+\beta+d/q$, the last integral can be
written as
\begin{align*}
 & \big\||h|^{\delta}\big(|K_{j}|\ast|K_{<j-1}(-\cdot)|\big)(h)\big\|_{L^{q^{*}}}\\
 & \quad\le\big\|\big(|h|^{\delta}|K_{j}|(h)\big)\ast|K_{<j-1}(-\cdot)|\big\|_{L^{q^{*}}}+\big\||K_{j}|\ast\big(|h|^{\delta}|K_{<j-1}(-h)|\big)\big\|_{L^{q^{*}}}\\
 & \quad\le\||h|^{\delta}|K_{j}|(h)\|_{L^{q^{*}}}\|K_{<j-1}\|_{L^{1}}+\|K_{j}\|_{L^{1}}\||h|^{\delta}|K_{<j-1}(-h)|\|_{L^{q^{*}}},
\end{align*}
where we apply Young's inequality in the last estimate. Due to $K_{j}=\F^{-1}\rho_{j}=(2\pi)^{-d}2^{jd}\F\rho(2^{j}\cdot)$,
we see easily that $\||h|^{\delta}|K_{j}|(h)\|_{L^{q^{*}}}\lesssim2^{-j(\alpha+\beta)}$
and $\|K_{j}\|_{L^{1}}\lesssim1$. To bound similarly the norms of
$K_{<j-1}$ note that $\F K_{<j-1}$ is uniformly bounded and supported
on a ball with radius of order $2^{j}$. We conclude
\[
\|F_{j}\|_{L^{p'}}\lesssim2^{-j(\alpha+\beta)}\|F\|_{C^{1+\beta/\alpha}}\|g\|_{\alpha,p,(1+\beta/\alpha)q}^{1+\beta/\alpha}.
\]
The claimed bound $\|R_{F}(g)\|_{\alpha+\beta,p,\infty}$ thus follows
from Lemma~\ref{lem:BesovAnnulus} and choosing $q=\infty$.

To show the Hölder continuity, we will apply similar arguments. For
convenience we define $\Delta f(y,z):=f(y)-f(z)$ for any function
$f$. Using the additional regularity of $F$, we obtain from (\ref{eq:Fj})
that 
\begin{align*}
F_{j}(x)= & \int_{\R^{2}}K_{j}(x-y)K_{<j-1}(x-z)\int_{0}^{1}\Big(F'\big(g(z)+s\Delta g(y,z)\big)-F'(g(z))\Big)\Delta g(y,z)\d s\d y\d z\\
= & \int_{\R^{2}}K_{j}(x-y)K_{<j-1}(x-z)\int_{0}^{1}\int_{0}^{1}sF''\big(g(z)+rs\Delta g(y,z)\big)\Delta g(y,z)^{2}\d r\d s\d y\d z.
\end{align*}
Hence, we can write
\[
R_{F}(g)-R_{F}(h)=\sum_{j\ge-1}G_{j}
\]
with
\begin{align*}
G_{j}(x)= & \int_{\R^{2}}\int_{0}^{1}\int_{0}^{1}K_{j}(x-y)K_{<j-1}(x-z)s\Big(F''\big(g(z)+rs\Delta g(y,z)\big)\Delta g(y,z)^{2}\\
 & \qquad\hspace{40bp}-F''\big(h(z)+rs\Delta h(y,z)\big)\Delta h(y,z)^{2}\Big)\d r\d s\d y\d z\\
= & \int_{\R^{2}}\int_{0}^{1}\int_{0}^{1}K_{j}(x-y)K_{<j-1}(x-z)s\Big(\Big(F''\big(g(z)+rs\Delta g(y,z)\big)-F''\big(h(z)+rs\Delta h(y,z)\big)\Big)\\
 & \qquad\hspace{40bp}\times\Delta g(y,z)^{2}+F''\big(h(z)+rs\Delta h(y,z)\big)\big(\Delta g(y,z)^{2}-\Delta h(y,z)^{2}\big)\Big)\d r\d s\d y\d z.
\end{align*}
The Hölder continuity of $F''$ yields
\begin{align*}
|G_{j}(x)|\le & \|F\|_{C^{2+\gamma}}\int_{\R^{2}}\int_{0}^{1}\int_{0}^{1}\big|K_{j}(x-y)K_{<j-1}(x-z)\big|\Big(\Big|(g-h)(z)+rs\Delta(g-h)(y,z)\Big|^{\gamma}\\
 & \qquad\hspace{45bp}\times\big|\Delta g(y,z)\big|^{2}+\big|\Delta(g-h)(y,z)\big|\big(|\Delta g(y,z)|+|\Delta h(y,z)|\big)\Big)\d r\d s\d y\d z\\
\le & \|F\|_{C^{2+\gamma}}\int_{\R^{2}}\big|K_{j}(x-y)K_{<j-1}(x-z)\big|\Big(\|g-h\|_{\infty}^{\gamma}\big|\Delta g(y,z)\big|^{2}\\
 & \qquad\hspace{45bp}+\big|\Delta(g-h)(y,z)\big|\big(|\Delta g(y,z)|+|\Delta h(y,z)|\big)\Big)\d y\d z.
\end{align*}
Using the inequalities by Minkowski and Cauchy-Schwarz, we obtain
analogously to (\ref{eq:FjLp})
\begin{align*}
\|G_{j}\|_{L^{p/2}}\le & \|F\|_{C^{2+\gamma}}\int_{\R^{2}}\big|K_{j}(y)K_{<j-1}(z)\big|\Big(\|g-h\|_{\infty}^{\gamma}\|\Delta g(x-y,x-z)\|_{L^{p}}^{2}\\
 & +\|\Delta(g-h)(x-y,x-z)\|_{L^{p}}\big(\|\Delta g(x-y,x-z)\|_{L^{p}}+\|\Delta h(x-y,x-z)\|_{L^{p}}\big)\Big)\d y\d z\\
\le & \|F\|_{C^{2+\gamma}}\int_{\R}\big(\big|K_{j}|\ast|K_{<j-1}(-\cdot)|\big)(z)\\
 & \qquad\qquad\times\Big(\|g-h\|_{\infty}^{\gamma}\omega_{p}(g,|z|)^{2}+\omega_{p}(g-h,|z|)\big(\omega_{p}(g,|z|)+\omega_{p}(h,|z|)\big)\Big)\d z\\
\le & \|F\|_{C^{2+\gamma}}\Big(\|g-h\|_{\infty}^{\gamma}\|g\|_{\alpha,p,2q}^{2}+\|g-h\|_{\alpha,p,2q}\big(\|g\|_{\alpha,p,2q}+\|h\|_{\alpha,p,2q}\big)\Big)2^{-j2\alpha}.
\end{align*}
The claimed bound follows again from Lemma~\ref{lem:BesovAnnulus}
and the symmetry in $g$ and $h$.
\end{proof}
In the situation of Proposition~\ref{prop:reduction} we conclude
\[
F(u)=T_{F'(u)}u+R_{F}(u)\quad\text{with}\quad\|R_{F}(u)\|_{2\alpha,p/2,\infty}\lesssim\|u\|_{\alpha,p,\infty}^{2}.
\]
Due to this linearization it remains to study $\pi(T_{F'(u)}u,\xi)$.
For Hölder continuous functions \citet[Lem. 2.4]{Gubinelli2012} have
shown that the terms $\pi(T_{F'(u)}u,\xi)$ and $F'(u)\pi(u,\xi)$
only differ by a smoother remainder. To find an estimate of the regularity
for the commutator 
\begin{equation}
\Gamma(f,g,h):=\pi(T_{f}g,h)-f\pi(g,h)\label{eq:Commutator}
\end{equation}
in general Besov norms, we first prove the following auxiliary lemma,
cf. \citep[Lem. 2.97]{Bahouri2011}.
\begin{lem}
\label{lem:comAux} Let $p,p_{1},p_{2}\ge1$ such that $\frac{1}{p}=\frac{1}{p_{1}}+\frac{1}{p_{2}}\le1$.
Then for $\alpha\in(0,1)$ and for any $f\in B_{p_{1},\infty}^{\alpha}(\R^{d})$
and $g\in L^{p_{2}}(\R^{d})$ the operator $[\Delta_{j},f]g:=\Delta_{j}(fg)-f\Delta_{j}g$
satisfies
\[
\|[\Delta_{j},f]g\|_{L^{p}}\lesssim2^{-j\alpha}\|f\|_{\alpha,p_{1},\infty}\|g\|_{L^{p_{2}}}.
\]
\end{lem}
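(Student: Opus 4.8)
The plan is to represent the commutator $[\Delta_j,f]g$ as a convolution against the kernel $K_j = \mathcal{F}^{-1}\rho_j$ and to exploit the cancellation $\int K_j(x)\,\mathrm dx = \rho_j(0) = 0$ for $j \ge 0$, which lets us replace $f$ by an increment. Concretely, for $j \ge 0$ one writes
\[
[\Delta_j,f]g(x) = \int_{\R^d} K_j(x-y)\big(f(y) - f(x)\big)g(y)\,\mathrm dy = \int_{\R^d} K_j(h)\big(f(x-h) - f(x)\big)g(x-h)\,\mathrm dh,
\]
after the substitution $h = x-y$. The key point is that $f(x-h) - f(x)$ is exactly the kind of increment controlled by the modulus of continuity $\omega_{p_1}(f,|h|)$ from~\eqref{eq:ModCont}.

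The main step is then to take the $L^p$-norm in $x$ inside the integral (Minkowski's integral inequality) and apply H\"older's inequality in the remaining product, using $\tfrac1p = \tfrac1{p_1}+\tfrac1{p_2}$:
\[
\big\|[\Delta_j,f]g\big\|_{L^p} \le \int_{\R^d} |K_j(h)|\,\big\|f(\cdot-h)-f(\cdot)\big\|_{L^{p_1}}\|g(\cdot-h)\|_{L^{p_2}}\,\mathrm dh \le \|g\|_{L^{p_2}}\int_{\R^d} |K_j(h)|\,\omega_{p_1}(f,|h|)\,\mathrm dh.
\]
Since $f \in B^\alpha_{p_1,\infty}$ with $\alpha \in (0,1)$, the norm equivalence $\|f\|_{\omega:\alpha,p_1,\infty}\sim\|f\|_{\alpha,p_1,\infty}$ (valid because $\alpha \in (0,1)$, cf.\ \citep[Thm.~2.5.12]{triebel2010}) gives $\omega_{p_1}(f,\delta) \lesssim \delta^\alpha\|f\|_{\alpha,p_1,\infty}$. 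Hence the integral is bounded by $\|f\|_{\alpha,p_1,\infty}\int |K_j(h)|\,|h|^\alpha\,\mathrm dh$, and using $K_j(h) = (2\pi)^{-d}2^{jd}\mathcal{F}\rho(2^j h)$ together with the substitution $h \mapsto 2^{-j}h$ one gets $\int|K_j(h)|\,|h|^\alpha\,\mathrm dh = 2^{-j\alpha}\int|\mathcal{F}^{-1}\rho(h)|\,|h|^\alpha\,\mathrm dh \lesssim 2^{-j\alpha}$, the last integral being finite because $\mathcal{F}^{-1}\rho$ is a Schwartz function. This yields the claimed bound for $j \ge 0$.

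For $j = -1$ the kernel $K_{-1} = \mathcal{F}^{-1}\chi$ has no vanishing integral, so the increment trick is unavailable; instead one simply estimates $\|[\Delta_{-1},f]g\|_{L^p} \le \|\Delta_{-1}(fg)\|_{L^p} + \|f\Delta_{-1}g\|_{L^p} \lesssim \|f\|_{L^{p_1}}\|g\|_{L^{p_2}}$ by Young's and H\"older's inequalities, and $\|f\|_{L^{p_1}} \lesssim \|f\|_{\alpha,p_1,\infty}$ together with $2^{-(-1)\alpha} = 2^\alpha \gtrsim 1$ absorbs this into the stated estimate. The only genuine obstacle is making sure the modulus-of-continuity characterization is legitimately available in this $L^{p_1}$ setting with $p_1$ possibly infinite and $\alpha \in (0,1)$; this is exactly the regime covered by the Triebel reference quoted in Section~\ref{sec:functional analysis}, so no new work is needed there. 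Everything else is a routine rearrangement of Young/H\"older/Minkowski inequalities.
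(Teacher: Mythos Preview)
Your proof is correct and follows essentially the same route as the paper: the convolution representation of the commutator, Minkowski plus H\"older, the modulus-of-continuity bound $\omega_{p_1}(f,|h|)\lesssim |h|^\alpha\|f\|_{\alpha,p_1,\infty}$, and the scaling of $K_j$. One small remark: the vanishing integral of $K_j$ plays no role here---the identity $[\Delta_j,f]g(x)=\int K_j(h)\big(f(x-h)-f(x)\big)g(x-h)\,\mathrm dh$ is just the definition of convolution and holds for $j=-1$ as well, so the paper treats $j=-1$ by the same argument (the integral $\int|K_{-1}(h)||h|^\alpha\,\mathrm dh$ is finite since $\chi$ is Schwartz), and your separate estimate for $j=-1$ is unnecessary though not wrong.
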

\begin{proof}
Since $\Delta_{j}f=(\F^{-1}\rho_{j})\ast f$ , we observe
\begin{align*}
[\Delta_{j},f]g(x)= & \F^{-1}\rho_{j}\ast(fg)(x)-f(\F^{-1}\rho_{j}\ast g)(x)\\
= & \int_{\mathbb{R}}\F^{-1}\rho_{j}(y)\big(f(x-y)-f(x)\big)g(x-y)\d y,\quad x\in\R^{d}.
\end{align*}
Minkowski's and Hölder's inequalities yield 
\begin{align*}
\|[\Delta_{j},f]g\|_{L^{p}}\le & \int_{\mathbb{R}}\big\|\F^{-1}\rho_{j}(y)\big(f(\cdot-y)-f\big)g(\cdot-y)\big\|_{L^{p}}\d y\\
\le & \|g\|_{L^{p_{2}}}\int_{\mathbb{R}}|\F^{-1}\rho_{j}(y)|\|f(\cdot-y)-f\|_{L^{p_{1}}}\d y.
\end{align*}
With the modulus of continuity (\ref{eq:ModCont}) and the corresponding
Besov norm, we obtain
\begin{align*}
\|[\Delta_{j},f]g\|_{L^{p}}\le & \|g\|_{L^{p_{2}}}\int_{\mathbb{R}}|\F^{-1}\rho_{j}(y)\omega_{p_{1}}(f,|y|)|\d y\\
\le & \|g\|_{L^{p_{2}}}\sup_{y\in\R^{d}}\big\{|y|^{-\alpha}\omega_{p_{1}}(f,|y|)\big\}\int_{\mathbb{R}}|y|^{\alpha}|\F^{-1}\rho_{j}(y)|\d y\\
\sim & \|f\|_{\alpha,p_{1},\infty}\|g\|_{L^{p_{2}}}\big\||y|^{\alpha}|\F^{-1}\rho_{j}(y)|\big\|_{L^{1}}.
\end{align*}
For $j=-1$ the previous $L^{1}$-norm is finite because $\chi$ is
smooth and compactly supported. For $j\ge0$ we additionally note
that $\F^{-1}\rho_{j}=2^{jd}\F\rho(2^{j}\cdot)$ implies
\[
\big\||y|^{\alpha}|\F^{-1}\rho_{j}(y)|\big\|_{L^{1}}=2^{-j\alpha}\big\||y|^{\alpha}|\F^{-1}\rho(y)|\big\|_{L^{1}}\lesssim2^{-j\alpha}.\qedhere
\]
\end{proof}
\begin{lem}
\label{lem:commutator} Let $\alpha\in(0,1)$, $\beta,\gamma\in\mathbb{R}$
such that $\alpha+\beta+\gamma>0$ and $\beta+\gamma<0$. Moreover,
let $p_{1},p_{2},p_{3}\ge1$ satisfy $\frac{1}{p_{1}}+\frac{1}{p_{2}}+\frac{1}{p_{3}}=\frac{1}{p}\le1$
and let $q\ge1$. Then for $f,g,h\in\mathcal{S}(\R^{d})$ the commutator
operator from (\ref{eq:Commutator}) satisfies 
\[
\|\Gamma(f,g,h)\|_{\alpha+\beta+\gamma,p,q}\lesssim\|f\|_{\alpha,p_{1},q}\|g\|_{\beta,p_{2},q}\|h\|_{\gamma,p_{3},q}.
\]
Therefore, $\Gamma$ can be uniquely extended to a bounded trilinear
operator 
\[
\Gamma\colon B_{p_{1},q}^{\alpha}(\R^{d})\times B_{p_{2},q}^{\beta}(\R^{d})\times B_{p_{3},q}^{\gamma}(\R^{d})\to B_{p,q}^{\alpha+\beta+\gamma}(\R^{d}).
\]
\end{lem}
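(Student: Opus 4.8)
The plan is to estimate $\Gamma(f,g,h)=\pi(T_fg,h)-f\pi(g,h)$ block by block, reducing everything to the auxiliary estimate of Lemma~\ref{lem:comAux} plus careful bookkeeping of Littlewood--Paley frequencies. Throughout I would work first with Schwartz (or merely smooth, compactly Fourier supported) $f,g,h$, where all series converge absolutely, and extend to the Besov spaces at the very end by a density/limiting argument based on the a priori bound.

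The starting point is that $S_{k-1}f\,\Delta_kg$ is Fourier supported in a fixed dilate $2^k\mathcal{C}$ of an annulus, so that $\Delta_i(T_fg)=\sum_{|k-i|\le N}\Delta_i(S_{k-1}f\,\Delta_kg)$ for a fixed $N$ and $\pi(T_fg,h)=\sum_i\Delta_i(T_fg)\,\tilde\Delta_ih$ with $\tilde\Delta_i:=\Delta_{i-1}+\Delta_i+\Delta_{i+1}$. Writing $\Delta_i(S_{k-1}f\,\Delta_kg)=[\Delta_i,S_{k-1}f]\Delta_kg+S_{k-1}f\,\Delta_i\Delta_kg$, using that $\Delta_i\Delta_kg=0$ unless $|i-k|\le1$ and $\sum_{|i-k|\le1}\Delta_i\Delta_kg=\Delta_kg$, and resumming, one arrives at the decomposition
\begin{equation*}
\Gamma(f,g,h)=\Gamma^{(1)}(f,g,h)+\Gamma^{(3)}(f,g,h)-\sum_{|k-j|\le1}(f-S_{k-1}f)\,\Delta_kg\,\Delta_jh,
\end{equation*}
where $\Gamma^{(1)}:=\sum_{|i-j|\le1}\sum_{|k-i|\le N}[\Delta_i,S_{k-1}f]\Delta_kg\,\Delta_jh$ is the genuine commutator term and $\Gamma^{(3)}:=\sum_k S_{k-1}f\sum_{|i-k|\le1}\Delta_i\Delta_kg\,(\tilde\Delta_ih-\tilde\Delta_kh)$ records the discrepancy between $\tilde\Delta_ih$ and $\tilde\Delta_kh$. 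For $\Gamma^{(1)}$, Lemma~\ref{lem:comAux} (this is the only place $\alpha\in(0,1)$ enters) together with $\|S_{k-1}f\|_{\alpha,p_1,\infty}\lesssim\|f\|_{\alpha,p_1,q}$ gives $\|[\Delta_i,S_{k-1}f]\Delta_kg\|_{L^{p'}}\lesssim 2^{-i\alpha}\|f\|_{\alpha,p_1,q}\|\Delta_kg\|_{L^{p_2}}$ with $1/p'=1/p_1+1/p_2$; since $i\sim k\sim j$ in $\Gamma^{(1)}$ and the relevant product is Fourier supported in a ball of radius $\lesssim2^i$, summing the block-index geometric series via $\alpha+\beta+\gamma>0$ yields $\|\Delta_m\Gamma^{(1)}\|_{L^p}\lesssim2^{-m(\alpha+\beta+\gamma)}$ times an $\ell^q$-sequence. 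The term $\sum_{|k-j|\le1}(f-S_{k-1}f)\Delta_kg\Delta_jh=\sum_{|k-j|\le1}\sum_{l\ge k-1}\Delta_lf\,\Delta_kg\,\Delta_jh$ is treated by splitting into $l\sim k$ (product in a ball $\lesssim2^k$, summed using $\alpha+\beta+\gamma>0$) and $l\gg k$ (product in an annulus $\sim2^l$, where the $k$-series is summed using $\beta+\gamma<0$), with $\alpha>0$ used to sum $\sum_{l\ge k-1}2^{-l\alpha}\sim2^{-k\alpha}$.

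The delicate piece is $\Gamma^{(3)}$. For $i=k\pm1$ the factor $\tilde\Delta_ih-\tilde\Delta_kh$ reduces to a difference of single blocks $\Delta_{k\pm2}h-\Delta_{k\mp1}h$ at frequency $\sim2^k$, so each summand of $\Gamma^{(3)}$ carries only the coefficient $S_{k-1}f$ and no gain in $\alpha$; hence $\Gamma^{(3)}$ cannot be bounded termwise --- this is consistent with the fact that $\pi(T_fg,h)$ and $f\pi(g,h)$ are individually at best in $B^{\beta+\gamma}_{p,q}$ with $\beta+\gamma<0$. The resolution is a summation by parts in $k$: after relabelling, consecutive summands pair up so that $S_{k-1}f$ is replaced by a difference $S_{k-1}f-S_{k-2}f=\Delta_{k-2}f$ (or $S_kf-S_{k-1}f=\Delta_{k-1}f$), which does obey $\|\Delta_{k-2}f\|_{L^{p_1}}\lesssim2^{-k\alpha}$ times an $\ell^q$-sequence; the resulting terms $\Delta_{k-1}f\,\Delta_k\Delta_{k+1}g\,\Delta_{k\pm1}h$ (up to boundary terms near $k=-1$) are then estimated exactly as $\Gamma^{(1)}$.

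I expect this summation-by-parts step --- arranging the decomposition so that the $\alpha$-regularisation only becomes visible after the correct cancellations --- to be the main obstacle. A secondary complication, absent in the H\"older case $p=q=\infty$ of \citep[Lem.~2.4]{Gubinelli2012}, is that the $\ell^q$-summations now involve three sequences simultaneously and require a discrete H\"older/Young estimate (keeping one factor in $\ell^q$ and the other two in $\ell^\infty$) after each geometric summation in the block index. Finally, since the block series for $\Gamma$ are uniformly controlled by the above estimates and $\mathcal{S}(\R^d)$ is dense in $B^s_{p,q}$ for finite indices, the a priori bound extends $\Gamma$ uniquely to the asserted bounded trilinear operator; the cases involving $\infty$ follow by approximating $f,g,h$ by $S_Nf,S_Ng,S_Nh$ and passing to the limit, the estimates being independent of $N$.
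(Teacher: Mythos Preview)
Your approach is correct and ultimately relies on the same two key ingredients as the paper (Lemma~\ref{lem:comAux} for the genuine commutator piece, and the condition $\beta+\gamma<0$ to sum the remaining ``low--high--high'' contributions), but the initial decomposition is organised differently and this creates the extra obstacle you identify as~$\Gamma^{(3)}$. The paper avoids $\Gamma^{(3)}$ altogether by writing $T_fg=\sum_{k}\Delta_kf\,(g-S_{k+2}g)$ rather than $T_fg=\sum_kS_{k-1}f\,\Delta_kg$, and then taking the commutator with $\Delta_kf$ instead of with $S_{k-1}f$. This yields exactly two pieces: $b_j=\sum_{|i-j|\le1}\sum_{l\sim i}[\Delta_i,S_{l-1}f]\Delta_lg\,\Delta_jh$, which coincides with your $\Gamma^{(1)}$ and is handled via Lemma~\ref{lem:comAux}; and $a_k=\Delta_kf\sum_{i\le k+2}\sum_{|i-j|\le1}\Delta_i(S_{k+2}g)\Delta_jh$, Fourier supported in $2^k\mathcal B$, whose $L^p$-norm is bounded by $2^{-k(\beta+\gamma)}\|\Delta_kf\|_{L^{p_1}}\|g\|_{\beta,p_2,\infty}\|h\|_{\gamma,p_3,\infty}$ using $\beta+\gamma<0$, after which Lemma~\ref{lem:BesovBall} applies. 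In your decomposition the paper's $-\sum_ka_k$ equals precisely $\Gamma^{(3)}-\Gamma^{(2)}$, so you have split one directly estimable object into two pieces that cancel only after your shift-and-subtract argument. That argument is valid (the pairing of (I) with (IV) and (II) with (III) after an index shift indeed replaces $S_{k-1}f$ by $\Delta_{k-2}f$ or $\Delta_{k-1}f$), and the resulting trilinear blocks $\Delta_{k-2}f\,\Delta_{k-1}\Delta_kg\,\Delta_{k+1}h$ are then handled exactly as your $l\sim k$ case of $\Gamma^{(2)}$. In short: both routes work, but the paper's choice of representation for $T_fg$ removes the need for the telescoping step.
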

\begin{proof}
Let $f,g,h\in\mathcal{S}(\R^{d})$. Using $T_{f}g=\sum_{k\ge-1}\sum_{l\ge k+2}\Delta_{k}f\Delta_{l}g=\sum_{k\ge-1}\Delta_{k}f(g-S_{k+2}g)$,
we decompose
\begin{align}
 & \Gamma(f,g,h)=\pi(T_{f}g,h)-f\pi(g,h)\nonumber \\
 & \quad=\sum_{j\ge-1}\sum_{i:|i-j|\le1}\big(\Delta_{i}(T_{f}g)\Delta_{j}h-f\Delta_{i}g\Delta_{j}h\big)\nonumber \\
 & \quad=\sum_{j,k\ge-1}\sum_{i:|i-j|\le1}\Big(\Delta_{i}\big((\Delta_{k}f)(g-S_{k+2}g)\big)-\Delta_{k}f\Delta_{i}g\Big)\Delta_{j}h\nonumber \\
 & \quad=-\sum_{k\ge-1}\underbrace{\sum_{j\ge-1}\sum_{i:|i-j|\le1}\Delta_{k}f\Delta_{i}(S_{k+2}g)\Delta_{j}h}_{=:a_{k}}+\sum_{j\ge-1}\underbrace{\sum_{k\ge-1}\sum_{i:|i-j|\le1}\big([\Delta_{i},\Delta_{k}f](g-S_{k+2}g)\big)\Delta_{j}h}_{=:b_{j}}.\label{eq:proofComLem}
\end{align}
We will separately estimate both sums in the following.

For $k\ge-1$ we have $\Delta_{i}(S_{k+2}g)=0$ for $i>k+2$ due to
property \ref{enu:suppRho} of the dyadic partition of unity. Consequently,
\[
a_{k}=\sum_{i=-1}^{k+2}\sum_{j:|i-j|\le1}\Delta_{k}f\Delta_{i}(S_{k+2}g)\Delta_{j}h
\]
and its Fourier transform satisfies $\supp\F a_{k}\subset2^{k}\mathcal{B}$
for some ball $\mathcal{B}$. Hölder's inequality yields 
\begin{align*}
\|a_{k}\|_{L^{p}}\le & \|\Delta_{k}f\|_{L^{p_{1}}}\sum_{i=-1}^{k+2}\sum_{j:|i-j|\le1}\|\Delta_{i}(S_{k+2}g)\|_{L^{p_{2}}}\|\Delta_{j}h\|_{L^{p_{3}}}.
\end{align*}
Owing to $\Delta_{i}(S_{k+2}g)=\Delta_{i}g$ for $i\le k$ and $\|\Delta_{i}\Delta_{k}g\|_{L^{p_{2}}}\le\|\F^{-1}\rho_{i}\|_{L^{1}}\|\Delta_{k}g\|_{L^{p_{2}}}\lesssim\|\Delta_{k}g\|_{L^{p_{2}}}$
by Young's inequality, we have
\begin{align*}
\|a_{k}\|_{L^{p}} & \lesssim\|\Delta_{k}f\|_{L^{p_{1}}}\sum_{i=-1}^{k+2}\sum_{j:|i-j|\le1}\|\Delta_{i}g\|_{L^{p_{2}}}\|\Delta_{j}h\|_{L^{p_{3}}}\\
 & \lesssim\|\Delta_{k}f\|_{L^{p_{1}}}\|g\|_{\beta,p_{2},\infty}\|h\|_{\gamma,p_{3},\infty}\sum_{i=-1}^{k+2}2^{-i(\beta+\gamma)}\lesssim2^{-k(\beta+\gamma)}\|\Delta_{k}f\|_{L^{p_{1}}}\|g\|_{\beta,p_{2},\infty}\|h\|_{\gamma,p_{3},\infty},
\end{align*}
using $\beta+\gamma<0$ in the last estimate. Since $2^{k\alpha}\|\Delta_{k}f\|_{L^{p_{1}}}\in\ell^{q}$,
Lemma~\ref{lem:BesovBall} yields 
\[
\Big\|\sum_{k\ge-1}a_{k}\Big\|_{\alpha+\beta+\gamma,p,q}\lesssim\|f\|_{\alpha,p_{1},q}\|g\|_{\beta,p_{2},\infty}\|h\|_{\gamma,p_{3},\infty}.
\]

Now, let us consider the second sum in (\ref{eq:proofComLem}). Note
that
\begin{align*}
b_{j}= & \sum_{i:|i-j|\le1}\sum_{k\ge-1}\sum_{l\ge k+2}\big([\Delta_{i},\Delta_{k}f]\Delta_{l}g\big)\Delta_{j}h=\sum_{i:|i-j|\le1}\sum_{l\ge-1}\big([\Delta_{i},S_{l-1}f]\Delta_{l}g\big)\Delta_{j}h.
\end{align*}
Since the support of the Fourier transform of $S_{l-1}f\Delta_{l}g$
is of the form $2^{l}\mathcal{A}$ for some annulus $\mathcal{A}$,
we have that 
\[
[\Delta_{i},S_{l-1}f]\Delta_{l}g=\Delta_{i}(S_{l-1}f\Delta_{l}g)-(S_{l-1}f)(\Delta_{i}\Delta_{l}g)
\]
vanishes if $|i-l|\ge N$ for some $N\in\N$. Therefore, $b_{j}=\sum_{i:|i-j|\le1}\sum_{l\sim i}\big([\Delta_{i},S_{l-1}f]\Delta_{l}g\big)\Delta_{j}h$
has a Fourier transform supported on $2^{j}$ times some annulus.
Using Hölder's inequality and Lemma~\ref{lem:comAux}, we estimate
\begin{align*}
\|b_{j}\|_{L^{p}}\lesssim & \sum_{i:|i-j|\le1}\sum_{l\sim i}2^{-i\alpha}\|S_{k-1}f\|_{\alpha,p_{1},\infty}\|\Delta_{l}g\|_{L^{p_{2}}}\|\Delta_{j}h\|_{L^{p_{3}}}\\
\lesssim & 2^{-j(\alpha+\beta+\gamma)}\|f\|_{\alpha,p_{1},\infty}(2^{j\beta}\sum_{l\sim j}\|\Delta_{l}g\|_{L^{p_{2}}})2^{j\gamma}\|\Delta_{j}h\|_{L^{p_{3}}}.
\end{align*}
For any $q_{2},q_{3}\ge q$ satisfying $\frac{1}{q}=\frac{1}{q_{2}}+\frac{1}{q_{3}}$
Hölder's inequality and Lemma~\ref{lem:BesovBall} yield then
\[
\Big\|\sum_{j\ge-1}b_{j}\Big\|_{\alpha+\beta+\gamma,p,q}\lesssim\|f\|_{\alpha,p_{1},\infty}\|g\|_{\beta,p_{2},q_{2}}\|h\|_{\gamma,p_{3},q_{3}}.
\]
To obtain the claimed norm bound, recall that $B_{p,q}^{\alpha}(\R^{d})$
continuously embeds into $B_{p,q'}^{\alpha}(\R^{d})$ for any $q\le q'$.

For $p,q<\infty$ the Schwartz space $\mathcal{S}(\R^{d})$ is dense
$B_{p,q}^{\alpha}(\R^{d})$ for any $\alpha\in\R$ such that there
is a unique extension of $C$ on $B_{p_{1},q}^{\alpha}(\R^{d})\times B_{p_{2},q}^{\beta}(\R^{d})\times B_{p_{3},q}^{\gamma}(\R^{d})$.
For $p=\infty$ or $q=\infty$ a similar argument as in \citep[Lem. 2.4]{Gubinelli2012}
applies.
\end{proof}
Combining the previous results, we obtain the following corollary,
cf. \citep[Lem. 2.7]{Gubinelli2012}, which immediately implies Proposition~\ref{prop:reduction}
due to the embedding $B_{p,q}^{\alpha}\subset L^{\infty}$ for $\alpha>1/p$
and $d=1$.
\begin{cor}
\label{cor:resonant} Let $p_{1},p_{2}\in[1,\infty]$ satisfy $\frac{2}{p_{1}}+\frac{1}{p_{2}}=:\frac{1}{p}\le1$.
Let $\alpha\in(0,1)$ and \textup{$\beta<0$} such that $2\alpha+\beta>0$
and $\alpha+\beta<0$. Further, suppose $F\in C^{2+\gamma}(\R^{m})$
for some $\gamma\in(0,1]$ satisfying $F(0)=0$. Then there exists
a map $\Pi_{F}\colon B_{p_{1},\infty}^{\alpha}(\R^{d})\times B_{p_{2},\infty}^{\beta}(\R^{d})\to B_{p,\infty}^{2\alpha+\beta}(\R^{d})$
such that
\[
\pi(F(f),g)=F'(f)\pi(f,g)+\Pi_{F}(f,g)
\]
and 
\[
\|\Pi_{F}(f,g)\|_{2\alpha+\beta,p,\infty}\lesssim\|F\|_{C^{2}}\|f\|_{\alpha,p_{1},\infty}^{2}\|g\|_{\beta,p_{2},\infty}.
\]
For $f_{1},f_{2}\in B_{p_{1},\infty}^{\alpha}(\R^{d})\cap L^{\infty}(\R^{d})$
and $g_{1},g_{2}\in B_{p_{2},\infty}^{\beta}(\R^{d})$ we have furthermore
\begin{align*}
 & \|\Pi_{F}(f_{1},g_{1})-\Pi_{F}(f_{2},g_{2})\|_{2\alpha+\beta,p,\infty}\\
 & \quad\lesssim\|F\|_{C^{2+\gamma}}\Big(\|f_{1}\|_{\alpha,p_{1},q}^{2}\wedge\|f_{2}\|_{\alpha,p_{1},\infty}^{2}+\big(\|f_{1}\|_{\alpha,p_{1},\infty}+\|f_{2}\|_{\alpha,p_{1},\infty}\big)\\
 & \quad\qquad\times\big(1+\|g_{1}\|_{\beta,p_{2},\infty}\wedge\|g_{1}\|_{\beta,p_{2},\infty}\big)\Big)\Big(\|f_{1}-f_{2}\|_{\infty}^{\gamma}+\|f_{1}-f_{2}\|_{\alpha,p_{1},\infty}+\|g_{1}-g_{2}\|_{\beta,p_{2},\infty}\Big).
\end{align*}
\end{cor}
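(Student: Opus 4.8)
The plan is to read off $\Pi_F$ directly from the two auxiliary results: paralinearize $F(f)$ with Lemma~\ref{lem:linearization} and then trade the paraproduct for the genuine product $F'(f)\pi(f,g)$ via the commutator of Lemma~\ref{lem:commutator}. Concretely, apply Lemma~\ref{lem:linearization} with its parameter $\beta$ set equal to $\alpha$ (legitimate: $F\in C^{2+\gamma}\subset C^{2}$, and $p_{1}\ge 2$ follows from $\tfrac{2}{p_{1}}+\tfrac{1}{p_{2}}\le 1$, with $p':=p_{1}/2$); using $F(0)=0$ this gives $F(f)=T_{F'(f)}f+R_{F}(f)$ with $R_{F}(f)\in B^{2\alpha}_{p_{1}/2,\infty}$ and $\|R_{F}(f)\|_{2\alpha,p_{1}/2,\infty}\lesssim\|F\|_{C^{2}}\|f\|_{\alpha,p_{1},\infty}^{2}$. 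Applying the bilinear map $\pi(\cdot,g)$ and recalling the definition $\Gamma(F'(f),f,g)=\pi(T_{F'(f)}f,g)-F'(f)\pi(f,g)$ from \eqref{eq:Commutator}, one obtains the identity
\[
\pi(F(f),g)=F'(f)\pi(f,g)+\Pi_{F}(f,g),\qquad \Pi_{F}(f,g):=\Gamma(F'(f),f,g)+\pi(R_{F}(f),g).
\]
As in the proof of Lemma~\ref{lem:commutator} one first works with Schwartz $f,g$, where every term is classical, establishes uniform estimates, and then extends.

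One subtlety must be dealt with before invoking Lemma~\ref{lem:commutator}: if $F'(0)\neq0$ and $p_{1}<\infty$, then $F'(f)\notin L^{p_{1}}$, so $F'(f)\notin B^{\alpha}_{p_{1},\infty}$ and the trilinear operator $\Gamma$ is not defined there. I would therefore split $F'(f)=F'(0)+\big(F'-F'(0)\big)(f)$ and use that $\Gamma$ is linear in its first slot. Since $\big(F'-F'(0)\big)(0)=0$, the composition estimate \eqref{eq:F(u)} applies and gives $\|\big(F'-F'(0)\big)(f)\|_{\alpha,p_{1},\infty}\lesssim\|F\|_{C^{2}}\|f\|_{\alpha,p_{1},\infty}$. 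For the constant part, $T_{F'(0)}f=F'(0)(f-S_{2}f)$, hence $\Gamma(F'(0),f,g)=-F'(0)\pi(S_{2}f,g)$, which only involves the two low‑frequency blocks of $f$; since $S_{2}f$ has Fourier support in a fixed ball, $\pi(S_{2}f,g)$ is a finite sum of products of single Littlewood–Paley blocks and is controlled in every Besov norm by $\|f\|_{\alpha,p_{1},\infty}\|g\|_{\beta,p_{2},\infty}$ (using $\|F'(0)\|\le\|F\|_{C^{2}}$).

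For the main bound \eqref{eq:boundResonant}, the resonant term $\pi(R_{F}(f),g)$ is estimated by Lemma~\ref{lem:paraproduct}(iii) with the regularity pair $(2\alpha,\beta)$ — admissible because $2\alpha+\beta>0$ — and exponents $\tfrac{2}{p_{1}}+\tfrac{1}{p_{2}}=\tfrac1p$, yielding $\|\pi(R_{F}(f),g)\|_{2\alpha+\beta,p,\infty}\lesssim\|F\|_{C^{2}}\|f\|_{\alpha,p_{1},\infty}^{2}\|g\|_{\beta,p_{2},\infty}$. For $\Gamma\big((F'-F'(0))(f),f,g\big)$ I would apply Lemma~\ref{lem:commutator} with $(\alpha,\alpha,\beta)$ in its three slots: the hypotheses $\alpha\in(0,1)$, $2\alpha+\beta>0$ and $\alpha+\beta<0$ are precisely those of the corollary, and $\tfrac1{p_1}+\tfrac1{p_1}+\tfrac1{p_2}=\tfrac1p$; combined with the previous bound on $\|(F'-F'(0))(f)\|_{\alpha,p_{1},\infty}$ this gives the $\|F\|_{C^{2}}\|f\|_{\alpha,p_{1},\infty}^{2}\|g\|_{\beta,p_{2},\infty}$ bound as well. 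Adding the three contributions proves \eqref{eq:boundResonant}, and the embedding $B^{\alpha}_{p,q}\hookrightarrow L^{\infty}$ for $\alpha>1/p$ with $d=1$ then yields Proposition~\ref{prop:reduction}.

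For the local Hölder continuity I would exploit the multilinearity: telescoping,
\[
\Pi_{F}(f_{1},g_{1})-\Pi_{F}(f_{2},g_{2})=\Gamma(F'(f_{1})-F'(f_{2}),f_{1},g_{1})+\Gamma(F'(f_{2}),f_{1}-f_{2},g_{1})+\Gamma(F'(f_{2}),f_{2},g_{1}-g_{2})+\pi(R_{F}(f_{1})-R_{F}(f_{2}),g_{1})+\pi(R_{F}(f_{2}),g_{1}-g_{2}),
\]
after first peeling off $F'(0)$ as above. The last two terms are handled by Lemma~\ref{lem:paraproduct}(iii) together with the local Hölder estimate for $R_{F}$ from Lemma~\ref{lem:linearization} (which needs $F\in C^{2+\gamma}$, $p_{1}>2\vee 1/\alpha$ — satisfied in the applications — and $f_{i}\in L^{\infty}$, and already produces the factor $\|f_{1}\|^{2}_{\alpha,p_1,\infty}\wedge\|f_{2}\|^{2}_{\alpha,p_1,\infty}+\|f_{1}\|_{\alpha,p_1,\infty}+\|f_{2}\|_{\alpha,p_1,\infty}$ and the contribution $\|f_{1}-f_{2}\|_{\infty}^{\gamma}$). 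The middle two are immediate from Lemma~\ref{lem:commutator} and \eqref{eq:F(u)}. The first term needs $\|F'(f_{1})-F'(f_{2})\|_{\alpha,p_{1},\infty}$, which I would obtain from the $C^{1+\gamma}$‑regularity of $F'$ by a composition estimate proved exactly like Lemma~\ref{lem:linearization} (modulus‑of‑continuity characterisation of the Besov norm), of the form $\lesssim\|F\|_{C^{2+\gamma}}(1+\|f_{1}\|_{\alpha,p_1,\infty}+\|f_{2}\|_{\alpha,p_1,\infty})(\|f_{1}-f_{2}\|_{\infty}^{\gamma}+\|f_{1}-f_{2}\|_{\alpha,p_1,\infty})$; collecting all five terms and using the symmetry in the two sets of arguments gives the asserted bound with the constant $C(f_{1},f_{2},g_{1},g_{2})$. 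Finally, as in \citep[Lem.~2.4]{Gubinelli2012}, the estimates extend $\Pi_{F}$ from Schwartz data to all of $B^{\alpha}_{p_{1},\infty}\times B^{\beta}_{p_{2},\infty}$, including the cases $p=\infty$ or $q=\infty$.

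The main obstacle is precisely that first term of the telescoped difference, i.e. controlling the composition difference $F'(f_{1})-F'(f_{2})$ in $B^{\alpha}_{p_{1},\infty}$: this is where the extra $\gamma$ of smoothness of $F$ is genuinely used and where the $L^{\infty}$‑Hölder term $\|f_{1}-f_{2}\|_{\infty}^{\gamma}$ (rather than just a $B^{\alpha}_{p_{1},\infty}$‑difference) is forced. A secondary nuisance, invisible in the Hölder regime $p=\infty$ of \citep{Gubinelli2012} because there constants lie in $C^{\alpha}$, is the integrability failure $F'(f)\notin L^{p_{1}}$ when $F'(0)\neq0$, which is why $F'(0)$ has to be extracted and its contribution treated through the low‑frequency truncation $S_{2}f$.
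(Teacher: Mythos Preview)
Your proof is correct and takes the same overall route as the paper: define $\Pi_F(f,g):=\Gamma(F'(f),f,g)+\pi(R_F(f),g)$, estimate each piece via Lemmas~\ref{lem:paraproduct}, \ref{lem:linearization} and~\ref{lem:commutator}, and obtain the H\"older continuity by telescoping and invoking the H\"older bound on $R_F$ from Lemma~\ref{lem:linearization}. The one substantive difference is how you control $\|F'(f_1)-F'(f_2)\|_{\alpha,p_1,\infty}$: the paper does this in one line by writing $F'(f_1)-F'(f_2)=\int_0^1 F''(f_2+s(f_1-f_2))(f_1-f_2)\,\mathrm{d}s$ and bounding by $\|F''\|_\infty\|f_1-f_2\|_{\alpha,p_1,q}$ (this is \eqref{eq:estFu}), which is shorter than the composition-difference argument you sketch and does not require the extra $\gamma$ of regularity for this particular term. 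Conversely, your splitting $F'(f)=F'(0)+(F'-F'(0))(f)$ to handle the case $F'(0)\neq0$, $p_1<\infty$ is a genuine refinement: the paper applies \eqref{eq:F(u)} directly to $F'(f)$, which strictly speaking requires $F'(0)=0$ when $p_1<\infty$.
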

\begin{proof}
Setting $\Pi_{F}(f,g):=\Gamma(F'(f),f,g)+\pi(R_{F}(f),g)$, we can
write 
\[
\pi(F(f),g)=F'(f)\pi(f,g)+\Gamma(F'(f),f,g)+\pi(R_{F}(f),g)=F'(f)\pi(f,g)+\Pi_{F}(f,g).
\]
Lemmas~\ref{lem:paraproduct}, \ref{lem:linearization} and \ref{lem:commutator}
yield
\begin{align*}
\|\Pi_{F}(f,g)\|_{2\alpha+\beta,p,\infty}\le & \|\Gamma(F'(f),f,g)\|_{2\alpha+\beta,p,\infty}+\|\pi(R_{F}(f),g)\|_{2\alpha+\beta,p,\infty}\\
\lesssim & \|F'(f)\|_{\alpha,p_{1},\infty}\|f\|_{\alpha,p_{1},\infty}\|g\|_{\beta,p_{2},\infty}+\|R_{F}(f)\|_{2\alpha,p_{1}/2,\infty}\|g\|_{\beta,p_{2},\infty}\\
\lesssim & \big(\|F'(f)\|_{\alpha,p_{1},\infty}+\|F\|_{C^{2}}\|f\|_{\alpha,p_{1},\infty}\big)\|f\|_{\alpha,p_{1},\infty}\|g\|_{\beta,p_{2},\infty},
\end{align*}
where we again used Besov embeddings. Finally, we apply (\ref{eq:F(u)}). 

The bound of $\|\Pi_{F}(f_{1},g_{1})-\Pi_{F}(f_{2},g_{2})\|_{2\alpha+\beta,p,\infty}$
follows from analogous estimates, using the argument-wise linearity
of $\Gamma$ and $\pi$, the Hölder continuity of $R_{F}$ from Lemma~\ref{lem:linearization}
and 
\begin{align}
\|F'(f_{1})-F'(f_{2})\|_{\alpha,p_{1},q}= & \Big\|\int_{0}^{1}F''(f_{1}+s(f_{2}-f_{1}))(f_{1}-f_{2})\d s\Big\|_{\alpha,p_{1},q}\nonumber \\
\le & \int_{0}^{1}\|F''(f_{1}+s(f_{2}-f_{1}))(f_{1}-f_{2})\|_{\alpha,p_{1},q}\d s\nonumber \\
\le & \|F''\|_{\infty}\|f_{1}-f_{2}\|_{\alpha,p_{1},q}\label{eq:estFu}
\end{align}
for any $q\in[1,\infty]$.
\end{proof}

\section{The paracontrolled ansatz\label{sec:paracontrolled ansatz}}

Assuming that the driving signal $\xi$ satisfies $\xi\in B_{p,q}^{\alpha}$
for $\alpha>1/3$, we come back to the RDE\,(\ref{eq:rde}). Recall
that it was given by

\[
\dd u(t)=F(u(t))\xi(t),\quad u(0)=u_{0},\quad t\in\R,
\]
where $u_{0}\in\mathbb{R}^{m}$, $u\colon\mathbb{R}\to\mathbb{R}^{m}$
is a continuous function and $F\colon\mathbb{R}^{m}\to\mathcal{L}(\mathbb{R}^{n},\mathbb{R}^{m})$
is a family of vector fields on $\R^{m}$. In Section~\ref{sec:Young}
we have already considered the case $\alpha>1/2$. The classical way
to continuously extend Young's approach to more irregular driving
signals is Lyons' rough path theory, which additionally to the signal
$\xi$ needs to handle the corresponding ``iterated integral''.

As an alternative, we use in the present section a new paracontrolled
ansatz similar to \citet{Gubinelli2012}. We postulate that the solution
$u$ of the RDE\,(\ref{eq:rde}) is of the form
\[
u=T_{u^{\theta}}\theta+u^{\#}
\]
with $\vartheta,u^{\vartheta}\in B_{p,q}^{\alpha}$ and a remainder
$u^{\#}\in B_{p/2,q}^{2\alpha}$. Decomposing $F(u)\xi$ in terms
of Littlewood-Paley blocks and linearizing $F$ by Proposition~\ref{prop:reduction},
we have 
\[
F(u)\xi=T_{F(u)}\xi+\pi(F(u),\xi)+T_{\xi}(F(u))=T_{F(u)}\xi+F^{\prime}(u)\pi(u,\xi)+\Pi_{F}(u,\xi)+T_{\xi}(F(u)).
\]
The presumed controlled structure yields that understanding the (problematic)
term $\pi(u,\xi)$ reduces further to the analysis of $\pi(\theta,\xi)$
owing to the commutator from (\ref{eq:Commutator}):
\[
\pi(u,\xi)=\pi(T_{u^{\vartheta}}\vartheta,\xi)+\pi(u^{\#},\xi)=u^{\vartheta}\pi(\theta,\xi)+\underbrace{\Gamma(u^{\theta},\theta,\xi)}_{\in B_{p/3,q}^{3\alpha-1}}+\underbrace{\pi(u^{\#},\xi)}_{\in B_{p/3,q}^{3\alpha-1}}.
\]
Plugging the paracontrolled ansatz into the RDE~(\ref{eq:rde}),
the Leibniz rule and the above observation yield
\begin{align*}
T_{u^{\vartheta}}\dd\vartheta+T_{\dd u^{\vartheta}}\theta+\dd u^{\#}=\dd u & =T_{F(u)}\xi+F'(u)\pi(u,\xi)+\Pi_{F}(u,\xi)+T_{\xi}(F(u)).
\end{align*}
Comparing the least regular terms on the left-hand and on the right-hand
side, we choose $\vartheta$ as the solution to  $\dd\vartheta=\xi$
with $\theta(0)=0$ and $u^{\vartheta}=F(u)$. 

As already noted in Section~\ref{sec:Young}, we cannot expect $\theta$
to be contained in any Besov space (cf. Lemma~\ref{lem:antiderivative}).
This requirement would especially be violated in most interesting
examples from probability theory, for instance, $\theta$ being Brownian
motion or a martingale. In order to circumvent this issue, we use
again the localizing function $\phi$ from Assumption~\ref{ass:phi}.
Still relying on $\dd\theta=\xi$ and $\theta(0)=0$, we introduce
the local version of the signal
\[
\theta_{\mathcal{T}}:=\phi_{\mathcal{T}}\theta\quad\text{and}\quad\xi_{\mathcal{T}}:=\dd\theta_{\mathcal{T}}=\phi_{\mathcal{T}}\xi+\phi_{\mathcal{T}}'\theta.
\]
The corresponding localized RDE is then given by 
\begin{equation}
\dd u=F(u)\xi_{\mathcal{T}},\quad u(0)=u_{0}.\label{eq:localRDE}
\end{equation}
This differential equation coincides with the original one on the
interval $[-\mathcal{T},\mathcal{T}]$ due to $\phi(t)=1$ and $\phi'(t)=0$
for $|t|\le\mathcal{T}$. 

Summarizing briefly the above discussion, we need two additional pieces
of information about very irregular signals. Namely, $\xi_{\mathcal{T}}$
has to be the derivative of a path $\theta_{\mathcal{T}}$ with compact
support and the resonant term $\pi(\theta_{\mathcal{T}},\xi_{\mathcal{T}})$
has to be well-defined. This precisely corresponds to the classical
rough path theory, where a path $\theta$ defined on some compact
interval is enhanced with the information of the iterated integral
$\int\theta_{s}\d\theta_{s}$. 

Analogously to the notion of geometric rough path (cf. for example
Section 2.2. in \citep{Friz2013}), we introduce now the notion of
geometric Besov rough path:
\begin{defn}
\label{def:geometric rough path} Let $\mathcal{T}>0$ and let $C_{\mathcal{T}}^{\infty}$
be the space of smooth functions $\theta_{\mathcal{T}}\colon\mathbb{R}\to\mathbb{R}^{n}$
with support $\supp\,\theta_{\mathcal{T}}\subset[-2\mathcal{T},2\mathcal{T}]$
and $\theta_{\mathcal{T}}(0)=0$. The closure of the set $\{(\theta_{\mathcal{T}},\pi(\theta_{\mathcal{T}},\dd\theta_{\mathcal{T}}))\,:\,\theta_{\mathcal{T}}\in C_{\mathcal{T}}^{\infty}\}\subset B_{p,q}^{\alpha}\times B_{p/2,q}^{2\alpha-1}$
with respect to the norm $\|\cdot\|_{\alpha,p,q}+\|\cdot\|_{2\alpha-1,p/2,q}$
is denoted by $\mathcal{B}_{p,q}^{0,\alpha}$ and $(\theta_{\mathcal{T}},\eta_{\mathcal{T}})\in\mathcal{B}_{p,q}^{0,\alpha}$
is called \textit{geometric Besov rough path}.
\end{defn}
Even with the driving signal $(\theta,\eta)\in\mathcal{B}_{p,q}^{0,\alpha}$
we unfortunately cannot expect in general that the solution $u$ to
the Cauchy problem~(\ref{eq:localRDE}) with $\xi_{\mathcal{T}}=\dd\theta_{\mathcal{T}}$
lies in any Besov spaces $B_{p,q}^{\alpha}$ for finite $p$ and $q$.
On the other hand, Besov spaces on the compact domain $[-\mathcal{T},\mathcal{T}]$
seem not be convenient for the paraproduct approach since Littlewood-Paley
theory and Bony's paraproduct are from their very nature constructed
on the whole real line. It appears to be natural to instead consider
a weighted version of the Itô-Lyons $\hat{S}$ map given by
\begin{align}
\hat{S}\colon\mathbb{R}^{m}\times\mathcal{B}_{p,q}^{0,\alpha} & \to B_{p,q}^{\alpha}\quad\text{via}\quad(u_{0},\theta_{\mathcal{T}},\pi(\theta_{\mathcal{T}},\dd\theta_{\mathcal{T}}))\mapsto\psi u,\label{eq:localito}
\end{align}
where $u$ solves (\ref{eq:localRDE}) with $\xi_{\mathcal{T}}=\dd\theta_{\mathcal{T}}$
and $\psi\colon\R\to(0,\infty)$ is a regular weight function being
constant one on $[-2\mathcal{T},2\mathcal{T}]$. Consequently, provided
$\theta_{\mathcal{T}}\in C_{\mathcal{T}}^{\infty}$ with $\xi_{\mathcal{T}}=\dd\theta_{\mathcal{T}}$
the weighted solution $\tilde{u}:=\psi u$ possesses the dynamic 
\begin{equation}
\dd\tilde{u}=\psi\dd u+\psi^{\prime}u=F(\tilde{u})\xi_{\mathcal{T}}+\frac{\psi^{\prime}}{\psi}\tilde{u},\quad\tilde{u}(0)=u_{0}.\label{eq:weightedRDE}
\end{equation}
Let us emphasize that also this weighted differential equation still
coincides with the original RDE~(\ref{eq:rde}) restricted to the
interval $[-\mathcal{T},\mathcal{T}]$. While the very recently developed
semigroup approach to paracontrolled calculus by \citet{Bailleul2015}
might allow for working without the weight $\psi$, this would lead
to non-standard Littlewood-Paley blocks and Besov spaces.

The aim is now to continuously extend the weighted Itô-Lyons map $\hat{S}$
from smooth functions with support in $[-2\mathcal{T},2\mathcal{T}]$
to the geometric Besov rough paths or more precisely from the domain
$\mathbb{R}^{d}\times\{(\theta_{\mathcal{T}},\pi(\theta_{\mathcal{T}},\dd\theta_{\mathcal{T}}))\,:\,\theta_{\mathcal{T}}\in C_{\mathcal{T}}^{\infty}\}$
to $\mathbb{R}^{d}\times\mathcal{B}_{p,q}^{0,\alpha}$. For this purpose
we specify our assumptions on the weight function $\psi$ as follows:
\begin{assumption}
\label{ass:weight} For any $\mathcal{T}>0$ let $\psi=\psi_{\mathcal{T}}\in B_{p,q}^{\alpha}\cap C^{1}$
be a strictly positive function which is equal to one on $[-2\mathcal{T},2\mathcal{T}]$
and suppose that there exist two constants $C_{\psi},c_{\psi}>0$
such that $\|\psi'/\psi\|_{\infty}\lesssim C_{\psi}$ and $\max\{\psi(2\mathcal{T}+1),\psi(-2\mathcal{T}-1)\}>c_{\psi}$.
\end{assumption}
The conditions on $\psi$ are quite weak and allow for a large variety
of weight functions as illustrated by the following examples.
\begin{example}
Let $\alpha\in(0,1)$, $\mathcal{T}>0$ and $\kappa\in(0,1)$.\end{example}
\begin{enumerate}
\item The function 
\[
\psi_{\mathcal{T}}(t):=\begin{cases}
1, & |t|\le2\mathcal{T},\\
\exp\Big(-\frac{\kappa(|t|-2\mathcal{T})^{2}}{1+|t|-2\mathcal{T}}\Big),\quad & |t|>2\mathcal{T},
\end{cases}
\]
satisfies Assumption~\ref{ass:weight} for $C_{\psi}=\kappa$ and
$c_{\psi}=e^{-1/2}$.
\item The function 
\[
\psi_{\mathcal{T}}(t):=\begin{cases}
1, & |t|\le2\mathcal{T},\\
\big(1+\kappa(|t|-2\mathcal{T})^{2}\big)^{-2},\quad & |t|>2\mathcal{T},
\end{cases}
\]
satisfies Assumption~\ref{ass:weight} for $C_{\psi}=\sqrt{\kappa}$
and $c_{\psi}=1/4$.
\end{enumerate}
For later reference let us remark a property which makes weight functions
fulfilling Assumption~\ref{ass:weight} so suitable in our context. 
\begin{rem}
\label{rem:weights} For any two weight functions $\psi$ and $\text{\ensuremath{\tilde{\psi}}}$
satisfying Assumption~\ref{ass:weight}, the resulting weighted Besov
norms of the solution $u$ are equivalent. More precisely, it is elementary
to show
\[
\|\psi u\|_{\alpha,p,q}\lesssim\big(1+c_{\tilde{\psi}}^{-1}\|\tilde{\psi}-\psi\|_{\alpha,p,q}\big)\|\tilde{\psi}u\|_{\alpha,p,q}
\]
for any $u\in B_{p,q}^{\alpha}$ which is constant on $(-\infty,-2\mathcal{T}]$
and on $[2\mathcal{T},\infty)$.
\end{rem}
In order to analyze the weighted RDE~(\ref{eq:weightedRDE}), we
modify our ansatz to
\[
\tilde{u}=T_{F(\tilde{u})}\theta_{\mathcal{T}}+u^{\#},\quad\text{where}\quad u^{\#}\in B_{p/2,q}^{2\alpha},\,\theta_{\mathcal{T}}\in C_{\mathcal{T}}^{\infty}.
\]
Roughly speaking, in the terminology of \citep{Gubinelli2012} the
pair $(\tilde{u},F(\tilde{u}))\in(B_{p,q}^{\alpha})^{2}$ is said
to be \emph{paracontrolled} by $\theta_{\mathcal{T}}\in B_{p,q}^{\alpha}$.
The dynamic of $u^{\#}$ is characterized in the next lemma.
\begin{lem}
\label{lem:characterization} Let $u_{0}\in\mathbb{R}^{m}$, let $\theta_{\mathcal{T}}\in C_{\mathcal{T}}^{\infty}$
with derivative $\xi_{\mathcal{T}}=\dd\vartheta_{\mathcal{T}}$ and
suppose that $\psi$ satisfies Assumption~\ref{ass:weight}. Then
the following conditions are equivalent: 
\begin{enumerate}
\item $u$ is the solution to  the ODE~(\ref{eq:localRDE}),
\item $u$ can be written as $u=\psi^{-1}\tilde{u}$ where $\tilde{u}$
solves the ODE~(\ref{eq:weightedRDE}), 
\item $\tilde{u}$ can be written as $\tilde{u}=T_{F(\tilde{u})}\theta_{\mathcal{T}}+u^{\#}$
where $u^{\#}$ solves
\begin{equation}
\dd u^{\#}=F(\tilde{u})\xi_{\mathcal{T}}-\dd(T_{F(\tilde{u})}\theta_{\mathcal{T}})+\frac{\psi^{\prime}}{\psi}\tilde{u},\qquad u^{\#}(0)=u_{0}-T_{F(\tilde{u})}\theta_{\mathcal{T}}(0).\label{eq:equiRDE-1}
\end{equation}

\end{enumerate}
\end{lem}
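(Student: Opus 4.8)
The statement is a bookkeeping lemma, and my plan is to verify the two equivalences (i)$\Leftrightarrow$(ii) and (ii)$\Leftrightarrow$(iii) by direct computation. Since $\theta_{\mathcal{T}}\in C_{\mathcal{T}}^{\infty}$ is smooth with support in $[-2\mathcal{T},2\mathcal{T}]$, the signal $\xi_{\mathcal{T}}=\dd\theta_{\mathcal{T}}$ is smooth with the same support, the solutions of the classical ODEs (\ref{eq:localRDE}) and (\ref{eq:weightedRDE}) are $C^{1}$ (note that $\psi'/\psi$ is bounded and continuous by Assumption~\ref{ass:weight}), and --- since $\theta_{\mathcal{T}}$ has compact support and $F(\tilde{u})$ is bounded --- the paraproduct $T_{F(\tilde{u})}\theta_{\mathcal{T}}$ is a smooth function. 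Hence all derivatives below are classical and the product and chain rules apply freely. The only structural fact I would invoke is that $\psi\equiv1$ on $\supp\xi_{\mathcal{T}}\subset[-2\mathcal{T},2\mathcal{T}]$, so that $\psi\,\xi_{\mathcal{T}}=\psi^{-1}\xi_{\mathcal{T}}=\xi_{\mathcal{T}}$, and, whenever $\tilde{u}=\psi u$, also $u=\tilde{u}$ on $\supp\xi_{\mathcal{T}}$ and therefore $F(u)\xi_{\mathcal{T}}=F(\tilde{u})\xi_{\mathcal{T}}$.

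For (i)$\Leftrightarrow$(ii): if $u$ solves (\ref{eq:localRDE}), put $\tilde{u}:=\psi u$; then the product rule gives $\dd\tilde{u}=\psi\,\dd u+\psi'u=\psi F(u)\xi_{\mathcal{T}}+\psi'u$, which by the structural fact equals $F(\tilde{u})\xi_{\mathcal{T}}+(\psi'/\psi)\tilde{u}$, while $\tilde{u}(0)=\psi(0)u(0)=u_{0}$, so $\tilde{u}$ solves (\ref{eq:weightedRDE}) with $u=\psi^{-1}\tilde{u}$. Conversely, if $\tilde{u}$ solves (\ref{eq:weightedRDE}), set $u:=\psi^{-1}\tilde{u}$, which is $C^{1}$ since $\psi>0$ is $C^{1}$; differentiating and substituting (\ref{eq:weightedRDE}) cancels two terms of the form $(\psi'/\psi^{2})\tilde{u}$ and leaves $\dd u=\psi^{-1}F(\tilde{u})\xi_{\mathcal{T}}=F(u)\xi_{\mathcal{T}}$ with $u(0)=u_{0}$, i.e.\ (\ref{eq:localRDE}).

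For (ii)$\Leftrightarrow$(iii): writing $\tilde{u}=T_{F(\tilde{u})}\theta_{\mathcal{T}}+u^{\#}$ merely \emph{defines} $u^{\#}:=\tilde{u}-T_{F(\tilde{u})}\theta_{\mathcal{T}}$, so the only content is that this $u^{\#}$ solves (\ref{eq:equiRDE-1}). If $\tilde{u}$ solves (\ref{eq:weightedRDE}), then $\dd u^{\#}=\dd\tilde{u}-\dd(T_{F(\tilde{u})}\theta_{\mathcal{T}})=F(\tilde{u})\xi_{\mathcal{T}}+(\psi'/\psi)\tilde{u}-\dd(T_{F(\tilde{u})}\theta_{\mathcal{T}})$, which is exactly the right-hand side of (\ref{eq:equiRDE-1}), and evaluating the decomposition at $0$ gives $u^{\#}(0)=u_{0}-T_{F(\tilde{u})}\theta_{\mathcal{T}}(0)$. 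Conversely, adding $\dd(T_{F(\tilde{u})}\theta_{\mathcal{T}})$ back to (\ref{eq:equiRDE-1}) recovers (\ref{eq:weightedRDE}), and $\tilde{u}(0)=T_{F(\tilde{u})}\theta_{\mathcal{T}}(0)+u^{\#}(0)=u_{0}$.

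I do not expect a genuine obstacle. The one point needing care is the identity $\psi F(u)\xi_{\mathcal{T}}=F(\tilde{u})\xi_{\mathcal{T}}$ used for (i)$\Leftrightarrow$(ii): for nonlinear $F$ one cannot relate $F(\psi u)$ and $\psi F(u)$ globally, so this must be read off from $\supp\xi_{\mathcal{T}}\subset[-2\mathcal{T},2\mathcal{T}]$ together with $\psi\equiv1$ there; a secondary, purely routine point is to record that the smoothness of $\theta_{\mathcal{T}}$ renders all objects and their derivatives --- in particular $\dd(T_{F(\tilde{u})}\theta_{\mathcal{T}})$ --- classical, so that the manipulations are unambiguous.
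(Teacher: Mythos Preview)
Your proof is correct and follows essentially the same route as the paper: both equivalences are checked by direct application of the Leibniz rule, using that $\psi\equiv1$ on $\supp\xi_{\mathcal{T}}\subset[-2\mathcal{T},2\mathcal{T}]$ to identify $\psi F(u)\xi_{\mathcal{T}}=F(\tilde{u})\xi_{\mathcal{T}}$, and the (ii)$\Leftrightarrow$(iii) step is the same tautological rearrangement. If anything, you are more explicit than the paper about why the key identity for nonlinear $F$ holds and why all derivatives are classical.
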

\begin{proof}
For the equivalence between (i) and (ii) note that $u=\psi^{-1}\tilde{u}$
is well-defined by Assumption~\ref{ass:weight} and that we have
by the Leibniz rule 
\[
\dd u=\dd(\psi^{-1}\tilde{u})=\psi^{-1}\dd\tilde{u}-\frac{\psi^{\prime}}{\psi^{2}}\tilde{u}=F(u)\xi_{\mathcal{T}},\qquad u(0)=\psi^{-1}(0)\tilde{u}(0)=u_{0}.
\]

The equivalence between (ii) and (iii) follows by combining $\tilde{u}=T_{F(\tilde{u})}\theta_{\mathcal{T}}+u^{\#}$
and (\ref{eq:weightedRDE}), which yields 
\begin{align*}
\dd u^{\#} & =\dd\tilde{u}-\dd(T_{F(\tilde{u})}\theta_{\mathcal{T}})=F(\tilde{u})\xi_{\mathcal{T}}-\dd(T_{F(\tilde{u})}\theta_{\mathcal{T}})+\frac{\psi^{\prime}}{\psi}\tilde{u}
\end{align*}
and due to $\tilde{u}(0)=u(0)=u_{0}$ the initial condition satisfies
$u^{\#}(0)=u_{0}-T_{F(\tilde{u})}\theta_{\mathcal{T}}(0).$
\end{proof}
As we have seen in the discussion at the beginning of the present
section, we want to reduce the resonant term $\pi(F(\tilde{u}),\xi_{\mathcal{T}})$
to the resonant term $\pi(\theta_{\mathcal{T}},\xi_{\mathcal{T}})$.
Indeed, this is possible as proven in the following proposition. The
specific form of $u$ allows to improve the quadratic estimate (\ref{eq:boundResonant})
in Proposition~\ref{prop:reduction} to a linear one. Its proof is
inspired by Lemma~5.2 by \citet{Gubinelli2012}.
\begin{prop}
\label{prop:BoundResonant} Let $\alpha\in(\frac{1}{3},\frac{1}{2})$,
$p\ge3$, $q\ge1$, and $F\in C^{2}$ with $F(0)=0$. If $\theta_{\mathcal{T}}\in C_{\mathcal{T}}^{\infty}$
with derivative $\xi_{\mathcal{T}}=\dd\theta_{\mathcal{T}}$, then
for $\tilde{u}=T_{F(\tilde{u})}\theta_{\mathcal{T}}+u^{\#}$ with
$\tilde{u}\in B_{p,q}^{\alpha}$ and $u^{\#}\in B_{p/2,q}^{2\alpha}$
one has 
\begin{align*}
\|\pi(F(\tilde{u}),\xi_{\mathcal{T}})\|_{2\alpha-1,p/2,q} & \lesssim\big(\|F\|_{C^{2}}\vee\|F\|_{C^{2}}^{2}\big)\big(\|\tilde{u}\|_{\alpha,p,q}+\|u^{\#}\|_{2\alpha,p/2,q}\big)\\
 & \qquad\times\big(\|\theta_{\mathcal{T}}\|_{\alpha,p,q}+\|\theta_{\mathcal{T}}\|_{\alpha,p,q}^{2}+\|\pi(\theta_{\mathcal{T}},\xi_{\mathcal{T}})\|_{2\alpha-1,p/2,q}\big).
\end{align*}
\end{prop}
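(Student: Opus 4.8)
The strategy is to decompose $\pi(F(\tilde u),\xi_{\mathcal T})$ via the paralinearization of $F$ (Lemma~\ref{lem:linearization}) and the commutator estimate (Lemma~\ref{lem:commutator}), and then to exploit the paracontrolled structure $\tilde u=T_{F(\tilde u)}\theta_{\mathcal T}+u^{\#}$ to reduce everything to the single resonant term $\pi(\theta_{\mathcal T},\xi_{\mathcal T})$ which appears on the right-hand side of the claimed bound. Concretely, I would first invoke Proposition~\ref{prop:reduction}/Corollary~\ref{cor:resonant} to write
\[
\pi(F(\tilde u),\xi_{\mathcal T})=F'(\tilde u)\pi(\tilde u,\xi_{\mathcal T})+\Pi_F(\tilde u,\xi_{\mathcal T}),
\]
with $\|\Pi_F(\tilde u,\xi_{\mathcal T})\|_{3\alpha-1,p/3,q}\lesssim\|F\|_{C^2}\|\tilde u\|_{\alpha,p,q}^2\|\xi_{\mathcal T}\|_{\alpha-1,p,q}$; since $3\alpha-1>2\alpha-1$ this term embeds into $B_{p/3,q}^{2\alpha-1}\subset B_{p/2,q}^{2\alpha-1}$ and is harmless, \emph{provided} $\|\xi_{\mathcal T}\|_{\alpha-1,p,q}$ can be controlled. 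The key point, however, is to avoid using $\|\xi_{\mathcal T}\|_{\alpha-1,p,q}$ as a stand-alone quantity for the leading term $F'(\tilde u)\pi(\tilde u,\xi_{\mathcal T})$ — this is exactly where the improvement from a quadratic to a linear estimate comes from.

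\textbf{Reducing the leading resonant term.}
For $\pi(\tilde u,\xi_{\mathcal T})$ I would insert the paracontrolled ansatz: using the commutator $\Gamma$ from \eqref{eq:Commutator} and Lemma~\ref{lem:commutator},
\[
\pi(\tilde u,\xi_{\mathcal T})=\pi(T_{F(\tilde u)}\theta_{\mathcal T},\xi_{\mathcal T})+\pi(u^{\#},\xi_{\mathcal T})
=F(\tilde u)\,\pi(\theta_{\mathcal T},\xi_{\mathcal T})+\Gamma(F(\tilde u),\theta_{\mathcal T},\xi_{\mathcal T})+\pi(u^{\#},\xi_{\mathcal T}).
\]
Here the point is that $\Gamma(F(\tilde u),\theta_{\mathcal T},\xi_{\mathcal T})$ lands in $B_{p/3,q}^{3\alpha-1}$ by Lemma~\ref{lem:commutator} (with exponents $\alpha$, $\alpha$, $\alpha-1$, since $3\alpha-1>0$ and $2\alpha-1<0$), controlled by $\|F(\tilde u)\|_{\alpha,p,q}\|\theta_{\mathcal T}\|_{\alpha,p,q}\|\xi_{\mathcal T}\|_{\alpha-1,p,q}$, and that $\pi(u^{\#},\xi_{\mathcal T})$ lands in $B_{p/3,q}^{3\alpha-1}$ by Lemma~\ref{lem:paraproduct}(iii) (since $2\alpha+(\alpha-1)>0$), controlled by $\|u^{\#}\|_{2\alpha,p/2,q}\|\xi_{\mathcal T}\|_{\alpha-1,p,q}$. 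The genuinely problematic piece is the product $F(\tilde u)\,\pi(\theta_{\mathcal T},\xi_{\mathcal T})$, but this is a product of $F(\tilde u)\in B_{p,q}^{\alpha}\cap L^\infty$ with $\pi(\theta_{\mathcal T},\xi_{\mathcal T})\in B_{p/2,q}^{2\alpha-1}$, so by Lemma~\ref{lem:paraproduct} (estimating $T_{F(\tilde u)}\pi(\theta_{\mathcal T},\xi_{\mathcal T})$ via (i), $T_{\pi(\theta_{\mathcal T},\xi_{\mathcal T})}F(\tilde u)$ via (ii) using $2\alpha-1<0$, and the resonant part $\pi(F(\tilde u),\pi(\theta_{\mathcal T},\xi_{\mathcal T}))$ via (iii) using $\alpha+2\alpha-1>0$) it lies in $B_{p/2,q}^{2\alpha-1}$ with norm $\lesssim\|F(\tilde u)\|_{\alpha,p,q}\|\pi(\theta_{\mathcal T},\xi_{\mathcal T})\|_{2\alpha-1,p/2,q}$ — \emph{linear} in the rough-path data. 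Multiplying back by $F'(\tilde u)$ (another $B_{p,q}^\alpha\cap L^\infty$ factor, handled by the same paraproduct trilinear estimate) keeps the regularity $2\alpha-1$ and introduces at most another factor $\|F\|_{C^2}\|\tilde u\|_{\alpha,p,q}$, which explains the $\|F\|_{C^2}\vee\|F\|_{C^2}^2$ prefactor.

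\textbf{Eliminating $\xi_{\mathcal T}$ in favour of $\theta_{\mathcal T}$.}
The remaining terms (the $\Pi_F$ remainder, the commutator $\Gamma$, and $\pi(u^{\#},\xi_{\mathcal T})$) are all controlled by $\|\xi_{\mathcal T}\|_{\alpha-1,p,q}$, which is \emph{not} one of the quantities allowed on the right-hand side; so the last step is to bound $\|\xi_{\mathcal T}\|_{\alpha-1,p,q}$ by $\|\theta_{\mathcal T}\|_{\alpha,p,q}$. Since $\xi_{\mathcal T}=\dd\theta_{\mathcal T}$, the standard estimate $\|F'\|_{\alpha-1,p,q}\lesssim\|F\|_{\alpha,p,q}$ cited after Lemma~\ref{lem:antiderivative} gives $\|\xi_{\mathcal T}\|_{\alpha-1,p,q}\lesssim\|\theta_{\mathcal T}\|_{\alpha,p,q}$ directly (this is legitimate because $\theta_{\mathcal T}$ is smooth with compact support, hence genuinely in $B_{p,q}^{\alpha}$). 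Feeding this in, the $\Gamma$-term becomes $\lesssim\|F\|_{C^{\lceil\alpha\rceil}}\|\tilde u\|_{\alpha,p,q}\|\theta_{\mathcal T}\|_{\alpha,p,q}^2$ (using \eqref{eq:F(u)} for $\|F(\tilde u)\|_{\alpha,p,q}$), $\pi(u^{\#},\xi_{\mathcal T})$ becomes $\lesssim\|u^{\#}\|_{2\alpha,p/2,q}\|\theta_{\mathcal T}\|_{\alpha,p,q}$, and $\Pi_F$ becomes $\lesssim\|F\|_{C^2}\|\tilde u\|_{\alpha,p,q}^2\|\theta_{\mathcal T}\|_{\alpha,p,q}$. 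Collecting all contributions and grouping the factors of $\|\tilde u\|_{\alpha,p,q}+\|u^{\#}\|_{2\alpha,p/2,q}$ against $\|\theta_{\mathcal T}\|_{\alpha,p,q}+\|\theta_{\mathcal T}\|_{\alpha,p,q}^2+\|\pi(\theta_{\mathcal T},\xi_{\mathcal T})\|_{2\alpha-1,p/2,q}$ yields the stated bound.

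\textbf{Main obstacle.}
The delicate point, and the reason the proof is non-trivial, is obtaining the \emph{linear} (rather than quadratic) dependence on the rough-path pair: one must be careful \emph{not} to estimate $\pi(\tilde u,\xi_{\mathcal T})$ crudely by $\|\tilde u\|_{\alpha,p,q}\|\xi_{\mathcal T}\|_{\alpha-1,p,q}$ (which is not even allowed since $\alpha+(\alpha-1)<0$), but to perform the paracontrolled substitution first and only afterwards estimate. A second subtlety is bookkeeping of the integrability exponents: one passes through $B_{p/3,q}^{3\alpha-1}$ for the remainder terms and must invoke the Besov embeddings $B_{p/3,q}^{3\alpha-1}\subset B_{p/2,q}^{2\alpha-1}$ (valid since $3\alpha-1-3/p\ge 2\alpha-1-2/p$ iff $\alpha\ge 1/p$, which holds) to land everything in the common target space $B_{p/2,q}^{2\alpha-1}$; one should check $p\ge3$ is exactly what makes $p/3\ge1$ and the algebra of exponents close.
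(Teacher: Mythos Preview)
Your decomposition via Proposition~\ref{prop:reduction} followed by substituting the ansatz into $\pi(\tilde u,\xi_{\mathcal T})$ is natural, and the leading terms $F'(\tilde u)F(\tilde u)\pi(\theta_{\mathcal T},\xi_{\mathcal T})$, $F'(\tilde u)\Gamma(F(\tilde u),\theta_{\mathcal T},\xi_{\mathcal T})$ and $F'(\tilde u)\pi(u^{\#},\xi_{\mathcal T})$ are indeed handled correctly. The gap is in the treatment of the remainder $\Pi_F(\tilde u,\xi_{\mathcal T})$. You invoke the bound \eqref{eq:boundResonant}, which gives
\[
\|\Pi_F(\tilde u,\xi_{\mathcal T})\|_{3\alpha-1,p/3,\infty}\lesssim\|F\|_{C^2}\,\|\tilde u\|_{\alpha,p,\infty}^{2}\,\|\xi_{\mathcal T}\|_{\alpha-1,p,\infty},
\]
and you record this yourself as ``$\Pi_F$ becomes $\lesssim\|F\|_{C^2}\|\tilde u\|_{\alpha,p,q}^2\|\theta_{\mathcal T}\|_{\alpha,p,q}$''. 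This term is \emph{quadratic} in $\|\tilde u\|_{\alpha,p,q}$ and cannot be grouped into the asserted form $\big(\|\tilde u\|_{\alpha,p,q}+\|u^{\#}\|_{2\alpha,p/2,q}\big)\times\big(\cdots\big)$. Attempting to trade one factor of $\|\tilde u\|_{\alpha,p,q}$ via the ansatz $\|\tilde u\|_{\alpha,p,q}\lesssim\|F\|_\infty\|\theta_{\mathcal T}\|_{\alpha,p,q}+\|u^{\#}\|_{2\alpha,p/2,q}$ only produces a term $\|u^{\#}\|_{2\alpha,p/2,q}^{2}\|\theta_{\mathcal T}\|_{\alpha,p,q}$, which is equally inadmissible. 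The whole point of the proposition (as stated just before it in the paper) is precisely to \emph{improve} the quadratic estimate of Proposition~\ref{prop:reduction} to a linear one, so you cannot simply cite Proposition~\ref{prop:reduction} as a black box.

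The paper fixes this in two ways that you are missing. First, instead of applying the commutator to $\pi(T_{F'(\tilde u)}\tilde u,\xi_{\mathcal T})$ directly (which yields the quadratic $\Gamma(F'(\tilde u),\tilde u,\xi_{\mathcal T})$), one substitutes the ansatz \emph{inside the paraproduct first}: $T_{F'(\tilde u)}\tilde u=T_{F'(\tilde u)}T_{F(\tilde u)}\theta_{\mathcal T}+T_{F'(\tilde u)}u^{\#}$, so that the commutator becomes $\Gamma(F'(\tilde u),T_{F(\tilde u)}\theta_{\mathcal T},\xi_{\mathcal T})$, which is bounded by $\|F'(\tilde u)\|_{\alpha,p,q}\|F\|_\infty\|\theta_{\mathcal T}\|_{\alpha,p,q}^{2}$ --- linear in $\tilde u$. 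Second, and more substantially, the paper reproves the linearization lemma using the paracontrolled structure of $\tilde u$ (their Step~1): writing $\tilde u=v_u+u^{\#}$ with $v_u=T_{F(\tilde u)}\theta_{\mathcal T}$ and revisiting the integral representation \eqref{eq:Fj} of the blocks of $R_F(\tilde u)$, one separates the increment $\Delta\tilde u=\Delta v_u+\Delta u^{\#}$ and obtains
\[
\|R_F(\tilde u)\|_{2\alpha,p/2,\infty}\lesssim\|F\|_{C^2}\big(1+\|F\|_\infty\|\theta_{\mathcal T}\|_{\alpha,p,q}\big)\big(\|\tilde u\|_{\alpha,p,q}+\|u^{\#}\|_{2\alpha,p/2,q}\big),
\]
which is the linear replacement for the quadratic bound of Lemma~\ref{lem:linearization}. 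Only with this improved $R_F$ bound does $\pi(R_F(\tilde u),\xi_{\mathcal T})$ fit into the stated estimate.
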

\begin{proof}
\emph{Step 1:} To avoid the quadratic estimate, we first need a modified
version of Lemma~\ref{lem:linearization}. We will borrow some notation
from the proof of this former lemma. For brevity we define $v_{u}:=T_{F(\tilde{u})}\theta_{\mathcal{T}}$
and recall $\tilde{u}:=\psi u$ such that $\tilde{u}=v_{u}+u^{\#}.$
We write
\[
F(\tilde{u})-F(0)=T_{F'(\tilde{u})}\tilde{u}+R_{F}(\tilde{u})
\]
with
\[
R_{F}(\tilde{u})=\sum_{j\ge-1}F_{j}\quad\text{with}\quad F_{j}:=\Delta_{j}(F(\tilde{u})-F(0))-S_{j-1}(F'(\tilde{u}))\Delta_{j}\tilde{u}.
\]
For $j\le0$, we saw in Lemma~\ref{lem:linearization} that $\|F_{j}\|_{L^{p/2}}\lesssim\|F\|_{C^{1}}\|\tilde{u}\|_{L^{p/2}}$
which yields
\begin{align*}
\|F_{j}\|_{L^{p/2}}\lesssim & \|F\|_{C^{1}}(\|v_{u}\|_{L^{p/2}}+\|u^{\#}\|_{L^{p/2}})\\
\le & \|F\|_{C^{1}}(\|T_{F(\tilde{u})}\theta_{\mathcal{T}}\|_{L^{p/2}}+\|u^{\#}\|_{L^{p/2}}).
\end{align*}
For $j>0$, we deduce from (\ref{eq:Fj}) and our ansatz that
\begin{align*}
|F_{j}|= & \Big|\int_{\mathbb{R}^{2}}K_{j}(x-y)K_{<j-1}(x-z)\Big(\big(F'\big(\tilde{u}(z)+\xi_{yz}(\tilde{u}(y)-\tilde{u}(z))\big)-F'(\tilde{u}(z))\big)\\
 & \qquad\times\big(v_{u}(y)-v_{u}(z)+u^{\#}(y)-u^{\#}(z)\big)\Big)\d y\d z\Big|,\\
\le & \|F\|_{C^{2}}\int_{\mathbb{R}^{2}}|K_{j}(x-y)K_{<j-1}(x-z)||\tilde{u}(y)-\tilde{u}(z)||v_{u}(y)-v_{u}(z)|\d y\d z\\
 & \qquad+2\|F\|_{C^{1}}\int|K_{j}(x-y)K_{<j-1}(x-z)||u^{\#}(y)-u^{\#}(z)|\d y\d z.
\end{align*}
Proceeding as in proof of Lemma~\ref{lem:linearization} and applying
Hölder's inequality, we obtain for $q^{*}=q/(q-1)$
\begin{align*}
\|F_{j}\|_{L^{p/2}}\le & \|F\|_{C^{2}}\int_{\mathbb{R}^{2}}|K_{j}(y)K_{<j-1}(z)|\big\|\tilde{u}(x-(y-z))-\tilde{u}(x)\big\|_{L^{p}}\\
 & \qquad\qquad\quad\times\big\|(v_{u}(x-(y-z))-v_{u}(x)\big\|_{L^{p}}\d y\d z\\
 & +2\|F\|_{C^{1}}\int_{\mathbb{R}^{2}}|K_{j}(y)K_{<j-1}(z)|\big\| u^{\#}(x-(y-z))-u^{\#}(x)\big\|_{L^{p/2}}\d y\d z\\
\le & \|F\|_{C^{2}}\int_{\mathbb{R}^{2}}|K_{j}(y)K_{<j-1}(y-h)|\omega_{p}(\tilde{u},|h|)\omega_{p}(v_{u},|h|)\d y\d h\\
 & +2\|F\|_{C^{1}}\int_{\mathbb{R}^{2}}|K_{j}(y)K_{<j-1}(y-h)|\omega_{p/2}(u^{\#},|h|)\d y\d h\\
\le & \|F\|_{C^{2}}\Big\||h|^{2\alpha+1/q}\big(|K_{j}|\ast|K_{<j-1}(-\cdot)|\big)(h)\Big\|_{L^{q^{*}}}\\
 & \times\Big(\Big\||h|^{-\alpha}\omega_{p}(v_{u},|h|)\Big\|_{\infty}\Big\|(|h|^{-\alpha-1/q}\omega_{p}(\tilde{u},|h|)\Big\|_{L^{q}}+2\Big\||h|^{-2\alpha-1/q}\omega_{p/2}(u^{\#},|h|)\Big\|_{L^{q}}\Big)\\
\lesssim & 2^{-j2\alpha}\|F\|_{C^{2}}\big(\|v_{u}\|_{\alpha,p,\infty}\|\tilde{u}\|_{\alpha,p,q}+\|u^{\#}\|_{2\alpha,p/2,q}\big).
\end{align*}
Due to Lemma~\ref{lem:paraproduct} one further has 
\[
\|v_{u}\|_{\alpha,p,\infty}=\|T_{F(\tilde{u})}\theta_{\mathcal{T}}\|_{\alpha,p,\infty}\lesssim\|T_{F(\tilde{u})}\theta_{\mathcal{T}}\|_{\alpha,p,q}\lesssim\|F\|_{\infty}\|\theta_{\mathcal{T}}\|_{\alpha,p,q}
\]
and thus Lemma~\ref{lem:BesovAnnulus} gives 
\begin{align}
\|R_{F}(\tilde{u})\|_{2\alpha,p/2,\infty} & \lesssim\|F\|_{C^{2}}(1+\|F\|_{\infty}\|\theta_{\mathcal{T}}\|_{\alpha,p,q})(\|\tilde{u}\|_{\alpha,p,q}+\|u^{\#}\|_{2\alpha,p/2,q}).\label{eq:RF(u)}
\end{align}

\emph{Step 2:} Plugging in the ansatz once again and keeping the definition
of our commutator (\ref{eq:Commutator}) in mind, we decompose
\begin{align}
\pi(F(\tilde{u}),\xi_{\mathcal{T}})= & \pi(T_{F'(\tilde{u})}\tilde{u},\xi_{\mathcal{T}})+\pi(R_{F}(\tilde{u}),\xi_{\mathcal{T}})\nonumber \\
= & \pi(T_{F'(\tilde{u})}T_{F(\tilde{u})}\theta_{\mathcal{T}},\xi_{\mathcal{T}})+\pi(T_{F'(\tilde{u})}u^{\#},\xi_{\mathcal{T}})+\pi(R_{F}(\tilde{u}),\xi_{\mathcal{T}})\nonumber \\
= & F'(\tilde{u})\pi(T_{F(\tilde{u})}\theta_{\mathcal{T}},\xi_{\mathcal{T}})+\Gamma(F'(\tilde{u}),T_{F(\tilde{u})}\theta_{\mathcal{T}},\xi_{\mathcal{T}})+\pi(T_{F'(\tilde{u})}u^{\#},\xi_{\mathcal{T}})+\pi(R_{F}(\tilde{u}),\xi_{\mathcal{T}})\nonumber \\
= & F'(\tilde{u})F(\tilde{u})\pi(\theta_{\mathcal{T}},\xi_{\mathcal{T}})+F'(\tilde{u})\Gamma(F(\tilde{u}),\theta_{\mathcal{T}},\xi_{\mathcal{T}})+\Gamma(F'(\tilde{u}),T_{F(\tilde{u})}\theta_{\mathcal{T}},\xi_{\mathcal{T}})\nonumber \\
 & \qquad+\pi(T_{F'(\tilde{u})}u^{\#},\xi_{\mathcal{T}})+\pi(R_{F}(\tilde{u}),\xi_{\mathcal{T}}).\label{eq:pi decomposition}
\end{align}
Therefore, we can bound $\|\pi(F(\tilde{u}),\xi_{\mathcal{T}})\|_{2\alpha-1,p/2,q}$
by estimating these five terms separately.\emph{ }We will apply the
following bound which holds owing to the Besov embedding $B_{p/3,q/2}^{3\alpha-1}\subset B_{p/2,q/2}^{2\alpha-1}$
due to $\alpha>1/p$ and which uses Bony's estimates and $2\alpha-1<0$:
for $f\in L^{\infty}\cup B_{p,\infty}^{\alpha}$ and $g\in B_{p/2,q/2}^{2\alpha-1}$
it holds 
\begin{align}
\|fg\|_{2\alpha-1,p/2,q/2}\lesssim & \|T_{f}g\|_{2\alpha-1,p/2,q/2}+\|\pi(f,g)\|_{3\alpha-1,p/3,q/2}+\|T_{g}f\|_{2\alpha-1,p/2,q/2}\nonumber \\
\lesssim & \|f\|_{\infty}\|g\|_{2\alpha-1,p/2,q/2}+\big(\|f\|_{0,\infty,\infty}\|g\|_{3\alpha-1,p/3,q/2}\wedge\|f\|_{\alpha,p,\infty}\|g\|_{2\alpha-1,p/2,q/2}\big)\nonumber \\
 & \qquad+\|g\|_{2\alpha-1,p/2,q/2}\|f\|_{0,\infty,\infty}\nonumber \\
\lesssim & \big(\|f\|_{\infty}\|g\|_{3\alpha-1,p/3,q/2}\big)\wedge\big(\|f\|_{\alpha,p,\infty}\|g\|_{2\alpha-1,p/2,q/2}\big).\label{eq:estimateProd}
\end{align}
Furthermore, note for the following estimates that $\|\xi_{\mathcal{T}}\|_{\alpha-1,p,q}\lesssim\|\theta_{\mathcal{T}}\|_{\alpha,p,q}$
thanks to the lifting property of Besov spaces, cf. \citep[Thm. 2.3.8]{triebel2010}.

Applying (\ref{eq:estimateProd}) and (\ref{eq:F(u)}) to $\tilde{F}:=F'F$,
we obtain for the first summand 
\begin{align*}
\|F'(\tilde{u})F(\tilde{u})\pi(\theta_{\mathcal{T}},\xi_{\mathcal{T}})\|_{2\alpha-1,p/2,q}\lesssim & \|\tilde{F}(\tilde{u})\|_{\alpha,p,\infty}\|\pi(\theta_{\mathcal{T}},\xi_{\mathcal{T}})\|_{2\alpha-1,p/2,q}\\
\lesssim & \|F\|_{C^{1}}\|F\|_{C^{2}}\|\tilde{u}\|_{\alpha,p,q}\|\pi(\theta_{\mathcal{T}},\xi_{\mathcal{T}})\|_{2\alpha-1,p/2,q}.
\end{align*}
For the second term the above estimate~(\ref{eq:estimateProd}) and
Lemma~\ref{lem:commutator} yield
\begin{align*}
\|F'(\tilde{u})\Gamma(F(\tilde{u}),\theta_{\mathcal{T}},\xi_{\mathcal{T}})\|_{2\alpha-1,p/2,q} & \lesssim\|F'\|_{\infty}\|\Gamma(F(\tilde{u}),\theta_{\mathcal{T}},\xi_{\mathcal{T}})\|_{3\alpha-1,p/3,q}\\
 & \lesssim\|F'\|_{\infty}\|F(\tilde{u})\|_{\alpha,p,q}\|\theta_{\mathcal{T}}\|_{\alpha,p,q}\|\xi_{\mathcal{T}}\|_{\alpha-1,p,q}\\
 & \lesssim\|F\|_{C^{1}}^{2}\|\tilde{u}\|_{\alpha,p,q}\|\theta_{\mathcal{T}}\|_{\alpha,p,q}^{2},
\end{align*}
where (\ref{eq:F(u)}) is used in the last line. Lemmas~\ref{lem:paraproduct}
and \ref{lem:commutator} again together with (\ref{eq:F(u)}) gives
for the third term
\begin{align*}
\|\Gamma(F'(\tilde{u}),T_{F(\tilde{u})}\theta_{\mathcal{T}},\xi_{\mathcal{T}})\|_{2\alpha-1,p/2,q} & \lesssim\|F'(\tilde{u})\|_{\alpha,p,q}\|T_{F(\tilde{u})}\theta_{\mathcal{T}}\|_{\alpha,p,q}\|\xi_{\mathcal{T}}\|_{\alpha-1,p,q}\\
 & \lesssim\|F\|_{C^{1}}^{2}\|\tilde{u}\|_{\alpha,p,q}\|\theta_{\mathcal{T}}\|_{\alpha,p,q}^{2}.
\end{align*}
The second last term in (\ref{eq:pi decomposition}) can be estimated
by 
\begin{align*}
\|\pi(T_{F'(\tilde{u})}u^{\#},\xi_{\mathcal{T}})\|_{2\alpha-1,p/2,q} & \lesssim\|T_{F'(\tilde{u})}u^{\#}\|_{2\alpha,p/2,q}\|\xi_{\mathcal{T}}\|_{\alpha-1,p,q}\\
 & \lesssim\|F'\|_{\infty}\|u^{\#}\|_{2\alpha,p/2,q}\|\theta_{\mathcal{T}}\|_{\alpha,p,q}
\end{align*}
where a Besov embedding, Lemma~\ref{lem:paraproduct} and (\ref{eq:F(u)})
are used. Finally, for the last term, note that there is some $\epsilon\in(0,\alpha-\frac{1}{p})$
such that $3\alpha-1-\epsilon>0$. Applying Lemma~\ref{lem:paraproduct},
Step 1 and Besov embeddings, we get 
\begin{align*}
\|\pi(R_{F}(\tilde{u}),\xi_{\mathcal{T}})\|_{2\alpha-1,p/2,q} & \lesssim\|\pi(R_{F}(\tilde{u}),\xi_{\mathcal{T}})\|_{3\alpha-1-\epsilon,p/3,q}\\
 & \lesssim\|R_{F}(\tilde{u})\|_{2\alpha-\epsilon,p/2,q}\|\xi_{\mathcal{T}}\|_{\alpha-1,p,q}\\
 & \lesssim\|F\|_{C^{2}}(1+\|F\|_{\infty}\|\theta_{\mathcal{T}}\|_{\alpha,p,q})(\|\tilde{u}\|_{\alpha,p,q}+\|u^{\#}\|_{2\alpha,p/2,q})\|\theta_{\mathcal{T}}\|_{\alpha,p,q}.
\end{align*}
These five estimates combined lead to the asserted bound. \end{proof}
\begin{rem}
The requirement $F(0)=0$ seems to be a purely technical assumption.
In view of Lemma~\ref{lem:linearization}, we can decompose in general
$\pi(F(\tilde{u}),\xi_{\mathcal{T}})=\pi(F(\tilde{u})-F(0),\xi_{\mathcal{T}})+\pi(F(0),\xi_{\mathcal{T}})$.
If $p=\infty$ the additional term can be easily estimated. If $p<\infty$,
it seems more reasonable to decompose $F(\tilde{u})\xi_{\mathcal{T}}=(F(\tilde{u})-F(0))\xi_{\mathcal{T}}+F(0)\xi_{\mathcal{T}}$
at the beginning. Hence, we decided to assume the condition $F(0)=0$.
Otherwise, all estimates would become even more involved by keeping
track of the additional term due to $F(0)\neq0$ without needing conceptional
new ideas.
\end{rem}
Having established a linear upper bound for the resonant term $\pi(F(\tilde{u}),\xi_{\mathcal{T}})$,
we deduce the boundedness of the solution to  the localized RDE~(\ref{eq:localRDE})
in the weighted Besov norm.
\begin{cor}
\label{cor:bound} Let $\alpha\in(1/3,1/2)$, $p\ge3$, $q\ge1$ and
$F\in C^{2}$ with $F(0)=0$. Let $\theta_{\mathcal{T}}\in C_{\mathcal{T}}^{\infty}$
with derivative $\xi_{\mathcal{T}}=\dd\vartheta_{\mathcal{T}}$. If
the bound 
\begin{align*}
\|F\|_{C^{2}}\vee\|F\|_{C^{2}}^{2} & <c(\mathcal{T}^{3}\vee1)\big(\|\theta_{\mathcal{T}}\|_{\alpha-1,p,q}+\|\theta_{\mathcal{T}}\|_{\alpha,p,q}^{2}+\|\pi(\theta_{\mathcal{T}},\xi_{\mathcal{T}})\|_{2\alpha-1,p/2,q}\big)^{-1}
\end{align*}
holds for a universal constant $c>0$, independent of $\theta$, $F$,
$u_{0}$ and if $\psi$ satisfies Assumption~\ref{ass:weight} for
some sufficiently small $C_{\psi}$, then the solution $u$ to (\ref{eq:localRDE})
satisfies
\begin{align*}
\|\psi u\|_{\alpha,p,q} & \lesssim(\mathcal{T}^{2}\vee1)\big(|u(0)|+(\|F\|_{C^{2}}\vee\|F\|_{C^{2}}^{3})(\|\theta_{\mathcal{T}}\|_{\alpha,p,q}+1)\\
 & \qquad\times\big(\|\theta_{\mathcal{T}}\|_{\alpha,p,q}+\|\theta_{\mathcal{T}}\|_{\alpha,p,q}^{2}+\|\pi(\theta_{\mathcal{T}},\xi_{\mathcal{T}})\|_{2\alpha-1,p/2,q}\big)\big).
\end{align*}
\end{cor}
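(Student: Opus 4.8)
The plan is to combine the paracontrolled structure of Lemma~\ref{lem:characterization} with the linear resonant bound of Proposition~\ref{prop:BoundResonant}, and to close the resulting a priori estimate by absorption. Since $\theta_{\mathcal T}\in C_{\mathcal T}^{\infty}$ is smooth, the solution $u$ and the weighted solution $\tilde u:=\psi u$ are classical, so every Besov norm that appears below is a priori finite and the absorption step is legitimate. By Lemma~\ref{lem:characterization}(iii) we write $\tilde u=T_{F(\tilde u)}\theta_{\mathcal T}+u^{\#}$ with $u^{\#}$ governed by \eqref{eq:equiRDE-1}, and the goal is to bound $M:=\|\tilde u\|_{\alpha,p,q}+\|u^{\#}\|_{2\alpha,p/2,q}$; then $\|\psi u\|_{\alpha,p,q}=\|\tilde u\|_{\alpha,p,q}\le M$. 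Throughout I would use the embeddings $B_{p/2,q}^{2\alpha}\hookrightarrow B_{p,q}^{\alpha}$ (valid since $\alpha>1/p$, cf.\ \citep[Thm.~2.7.1]{triebel2010}) and $B_{p,q}^{\alpha}\hookrightarrow L^{\infty}$, the lifting estimates $\|\xi_{\mathcal T}\|_{\alpha-1,p,q}\lesssim\|\theta_{\mathcal T}\|_{\alpha,p,q}$ and $\|\dd F(\tilde u)\|_{\alpha-1,p,q}\lesssim\|F(\tilde u)\|_{\alpha,p,q}$, and \eqref{eq:F(u)} together with $F(0)=0$, which give $\|F(\tilde u)\|_{\alpha,p,q}+\|F(\tilde u)\|_{L^{\infty}}\lesssim\|F\|_{C^{1}}\|\tilde u\|_{\alpha,p,q}$.

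For the remainder, rewrite the right-hand side of \eqref{eq:equiRDE-1} using the Leibniz rule $\dd(T_{f}g)=T_{\dd f}g+T_{f}(\dd g)$ and Bony's decomposition of $F(\tilde u)\xi_{\mathcal T}$, the two copies of $T_{F(\tilde u)}\xi_{\mathcal T}$ cancelling:
\[
\dd u^{\#}=\pi(F(\tilde u),\xi_{\mathcal T})+T_{\xi_{\mathcal T}}F(\tilde u)-T_{\dd F(\tilde u)}\theta_{\mathcal T}+\tfrac{\psi'}{\psi}\tilde u .
\]
The resonant term is estimated in $B_{p/2,q}^{2\alpha-1}$ by Proposition~\ref{prop:BoundResonant}; this is the decisive input, since it is \emph{linear} in $M$ with prefactor $(\|F\|_{C^{2}}\vee\|F\|_{C^{2}}^{2})(\|\theta_{\mathcal T}\|_{\alpha,p,q}+\|\theta_{\mathcal T}\|_{\alpha,p,q}^{2}+\|\pi(\theta_{\mathcal T},\xi_{\mathcal T})\|_{2\alpha-1,p/2,q})$. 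The paraproducts $T_{\xi_{\mathcal T}}F(\tilde u)$ and $T_{\dd F(\tilde u)}\theta_{\mathcal T}$ lie in $B_{p/2,q}^{2\alpha-1}$ by Lemma~\ref{lem:paraproduct}(ii) and are both $\lesssim\|F\|_{C^{1}}\|\tilde u\|_{\alpha,p,q}\|\theta_{\mathcal T}\|_{\alpha,p,q}$. The weight term is handled using $\|\psi'/\psi\|_{\infty}\lesssim C_{\psi}$ together with the structural identity $\tfrac{\psi'}{\psi}\tilde u=\psi'u$, which holds because $\psi'\equiv0$ on $[-2\mathcal T,2\mathcal T]$ while $u=\psi^{-1}\tilde u$ is constant outside, so that $\psi'u$ is $\psi'$ times a bounded function agreeing with $u(\pm2\mathcal T)$ near $\supp\psi'$; this gives $\|\tfrac{\psi'}{\psi}\tilde u\|_{2\alpha-1,p/2,q}\lesssim C_{\psi}\|\tilde u\|_{\alpha,p,q}$. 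Collecting, $\|\dd u^{\#}\|_{2\alpha-1,p/2,q}\lesssim[(\|F\|_{C^{2}}\vee\|F\|_{C^{2}}^{2})(\|\theta_{\mathcal T}\|_{\alpha,p,q}+\|\theta_{\mathcal T}\|_{\alpha,p,q}^{2}+\|\pi(\theta_{\mathcal T},\xi_{\mathcal T})\|_{2\alpha-1,p/2,q})+C_{\psi}]M$. Lemma~\ref{lem:antiderivative}, applied with $(p/2,2\alpha)$ in place of $(p,\alpha)$ (note $2\alpha\in(2/p,1)$) to the antiderivative $u^{\#}-u^{\#}(0)$ with a compactly supported cut-off weight, and Remark~\ref{rem:weights} to return to the weight $\psi$, then give $\|u^{\#}\|_{2\alpha,p/2,q}\lesssim|u^{\#}(0)|+(1\vee\mathcal T^{2})\|\dd u^{\#}\|_{2\alpha-1,p/2,q}$, while $|u^{\#}(0)|\le|u(0)|+\|T_{F(\tilde u)}\theta_{\mathcal T}\|_{L^{\infty}}\lesssim|u(0)|+\|F\|_{C^{1}}\|\tilde u\|_{\alpha,p,q}\|\theta_{\mathcal T}\|_{\alpha,p,q}$.

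Finally, $\|T_{F(\tilde u)}\theta_{\mathcal T}\|_{\alpha,p,q}\lesssim\|F(\tilde u)\|_{L^{\infty}}\|\theta_{\mathcal T}\|_{\alpha,p,q}\lesssim\|F\|_{C^{1}}\|\tilde u\|_{\alpha,p,q}\|\theta_{\mathcal T}\|_{\alpha,p,q}$ by Lemma~\ref{lem:paraproduct}(i). Putting these bounds together with $\|u^{\#}\|_{\alpha,p,q}\lesssim\|u^{\#}\|_{2\alpha,p/2,q}$ and separating off every $M$-proportional contribution — each carrying a prefactor controlled, via the hypothesis, by $(1\vee\mathcal T^{2})[(\|F\|_{C^{2}}\vee\|F\|_{C^{2}}^{2})(\|\theta_{\mathcal T}\|_{\alpha,p,q}+\|\theta_{\mathcal T}\|_{\alpha,p,q}^{2}+\|\pi(\theta_{\mathcal T},\xi_{\mathcal T})\|_{2\alpha-1,p/2,q})+C_{\psi}]$, which the smallness of $\|F\|_{C^{2}}$ relative to the driver (with its $(\mathcal T^{3}\vee1)$ weight chosen precisely to beat the constant from Lemma~\ref{lem:antiderivative}) and of $C_{\psi}$ force below $\tfrac12$ — one absorbs these into the left-hand side and obtains the asserted bound. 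The step I expect to be the main obstacle is not conceptual but the careful bookkeeping: tracking how the $\mathcal T$-powers and weight constants from Lemma~\ref{lem:antiderivative} and Remark~\ref{rem:weights} propagate so that the smallness hypotheses genuinely suffice for the absorption, and, most delicately, making rigorous that $\tfrac{\psi'}{\psi}\tilde u$ contributes only $O(C_{\psi})M$ in the negative-regularity norm $B_{p/2,q}^{2\alpha-1}$, and that $u^{\#}$ really lies in $B_{p/2,q}^{2\alpha}$, despite $\psi'/\psi$ being merely bounded — which forces one to exploit the explicit structure of $\psi$ and $u$ rather than a generic pointwise-multiplier estimate.
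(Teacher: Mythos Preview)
Your overall architecture is exactly the paper's: decompose $\dd u^{\#}$ via the Leibniz rule and Bony's decomposition into the four pieces $\pi(F(\tilde u),\xi_{\mathcal T})+T_{\xi_{\mathcal T}}F(\tilde u)-T_{\dd F(\tilde u)}\theta_{\mathcal T}+\tfrac{\psi'}{\psi}\tilde u$, feed in Proposition~\ref{prop:BoundResonant} for the resonant term, bound the two paraproducts by Lemma~\ref{lem:paraproduct}, and close by absorption. Your treatment of the weight term through $\tfrac{\psi'}{\psi}\tilde u=\psi'u$ and the constancy of $u$ outside $[-2\mathcal T,2\mathcal T]$ is also how the paper handles it.

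There is, however, one genuine gap in your plan, precisely at the step you yourself flag as delicate: recovering $\|u^{\#}\|_{2\alpha,p/2,q}$ from Lemma~\ref{lem:antiderivative} together with Remark~\ref{rem:weights}. Remark~\ref{rem:weights} applies only to functions that are \emph{constant} on $(-\infty,-2\mathcal T]$ and on $[2\mathcal T,\infty)$, and $u^{\#}=\tilde u-T_{F(\tilde u)}\theta_{\mathcal T}$ is not of this form: the paraproduct $T_{F(\tilde u)}\theta_{\mathcal T}$ is built from band-limited blocks and is never eventually constant, even though $\theta_{\mathcal T}$ is compactly supported. So Lemma~\ref{lem:antiderivative} only gives you $\|\phi(u^{\#}-u^{\#}(0))\|_{2\alpha,p/2,q}$ for a compactly supported $\phi$, and there is no legitimate way to pass from this to the unweighted $\|u^{\#}\|_{2\alpha,p/2,q}$, which is what Proposition~\ref{prop:BoundResonant} needs on its right-hand side.

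The paper circumvents this by not invoking Lemma~\ref{lem:antiderivative} for $u^{\#}$ at all. Instead it uses the lifting property of Besov spaces \citep[Thm.~2.3.8]{triebel2010} in the form
\[
\|u^{\#}\|_{2\alpha,p/2,q}\lesssim\|u^{\#}\|_{L^{p/2}}+\|\dd u^{\#}\|_{2\alpha-1,p/2,q},
\]
and then estimates $\|u^{\#}\|_{L^{p/2}}\le\|T_{F(\tilde u)}\theta_{\mathcal T}\|_{L^{p/2}}+\|\tilde u\|_{L^{p/2}}$ directly from the ansatz. The second summand is where the constancy of $u$ (not $u^{\#}$) outside $[-2\mathcal T,2\mathcal T]$ is used, via the weight equivalence $\|\psi u\|_{L^{p/2}}\sim\|\psi\psi_2 u\|_{L^{p/2}}\le\|\psi_2\|_{L^p}\|\tilde u\|_{L^p}$. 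For the final bound on $\|\tilde u\|_{\alpha,p,q}$ the paper does apply Lemma~\ref{lem:antiderivative}, but to $\dd\tilde u$ rather than to $\dd u^{\#}$, and Remark~\ref{rem:weights} is then legitimate because $u$ itself is constant outside $[-2\mathcal T,2\mathcal T]$. Your alternative route of reading $\|\tilde u\|_{\alpha,p,q}$ off the ansatz once $\|u^{\#}\|_{2\alpha,p/2,q}$ is under control is perfectly fine, and arguably cleaner; you just need to replace the antiderivative-plus-weight step for $u^{\#}$ by the lifting-property argument above.
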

\begin{proof}
We recall the characterization of $\tilde{u}=\psi u$ from Lemma~\ref{lem:characterization}.
In order to obtain the desired estimate of the norm, we apply Bony's
decomposition and calculate 
\begin{align}
\dd u^{\#} & =F(\tilde{u})\xi_{\mathcal{T}}-\dd(T_{F(\tilde{u})}\theta_{\mathcal{T}})+\frac{\psi^{\prime}}{\psi}\tilde{u}\nonumber \\
 & =T_{F(\tilde{u})}\xi_{\mathcal{T}}+\pi(F(\tilde{u}),\xi_{\mathcal{T}})+T_{\xi_{\mathcal{T}}}(F(\tilde{u}))-\dd(T_{F(\tilde{u})}\theta_{\mathcal{T}})+\frac{\psi^{\prime}}{\psi}\tilde{u}\nonumber \\
 & =\pi(F(\tilde{u}),\xi_{\mathcal{T}})+T_{\xi_{\mathcal{T}}}(F(\tilde{u}))-T_{\dd(F(\tilde{u}))}\theta_{\mathcal{T}}+\frac{\psi^{\prime}}{\psi}\tilde{u}.\label{eq:phi1}
\end{align}
We bound the $B_{p/2,q}^{2\alpha-1}$-norm of these four terms separately.
The first term is bounded by Proposition~\ref{prop:BoundResonant}.
To estimate the second term in (\ref{eq:phi1}), Lemma~\ref{lem:paraproduct},
(\ref{eq:F(u)}) and a Besov embedding yield
\begin{align*}
\|T_{\xi_{\mathcal{T}}}(F(\tilde{u}))\|_{2\alpha-1,p/2,q} & \lesssim\|F\|_{C^{1}}\|\xi_{\mathcal{T}}\|_{\alpha-1,p,2q}\|\tilde{u}\|_{\alpha,p,2q}\\
 & \lesssim\|F\|_{C^{1}}\|\theta_{\mathcal{T}}\|_{\alpha,p,q}\|\tilde{u}\|_{\alpha,p,q}.
\end{align*}
The third term in (\ref{eq:phi1}) can be estimated with the lifting
property of Besov spaces, Lemma~\ref{lem:paraproduct}, (\ref{eq:F(u)})
and a Besov embedding 
\begin{align*}
\|T_{\dd(F(\tilde{u}))}\theta_{\mathcal{T}}\|_{2\alpha-1,p/2,q} & \lesssim\|\dd F(\tilde{u})\|_{\alpha-1,p,2q}\|\theta_{\mathcal{T}}\|_{\alpha,p,2q}\\
 & \lesssim\|F(\tilde{u})\|_{\alpha,p,2q}\|\theta_{\mathcal{T}}\|_{\alpha,p,2q}\lesssim\|F\|_{C^{1}}\|\tilde{u}\|_{\alpha,p,q}\|\theta_{\mathcal{T}}\|_{\alpha,p,q}.
\end{align*}
For the last term in (\ref{eq:phi1}) we note the norm equivalence
$\|\psi u\|_{L^{p/2}}\sim\|\tilde{\psi}u\|_{L^{p/2}}$ with for $u$
being constant outside of $[-2\mathcal{T},2\mathcal{T}]$, where we
set $\tilde{\psi}:=\psi\psi_{2}$ for another weight function $\psi_{2}$
satisfying Assumption~\ref{ass:weight}. Hence, $\|\tilde{u}\|_{L^{p/2}}\lesssim\|\psi_{2}\tilde{u}\|_{L^{p/2}}\le\|\text{\ensuremath{\psi}}_{2}\|_{L^{p}}\|\tilde{u}\|_{L^{p}}$
by Hölder's inequality. Since $2\alpha-1<0$, a Besov embedding yields
\[
\|\frac{\psi^{\prime}}{\psi}\tilde{u}\|_{2\alpha-1,p/2,q}\lesssim\|\frac{\psi^{\prime}}{\psi}\tilde{u}\|_{L^{p/2}}\lesssim\|\frac{\psi'}{\psi}\|_{\infty}\|\tilde{u}\|_{L^{p/2}}\lesssim(\mathcal{T}\vee1)\|\frac{\psi^{\prime}}{\psi}\|_{\infty}\|\tilde{u}\|_{L^{p}}.
\]
Combining all the above estimates, we obtain 
\begin{align*}
\|\dd u^{\#}\|_{2\alpha-1,p/2,q} & \lesssim C_{\xi,\mathcal{\theta}}(\|F\|_{C^{2}}\vee\|F\|_{C^{2}}^{2})\big(\|\tilde{u}\|_{\alpha,p,q}+\|u^{\#}\|_{2\alpha,p/2,q}\big)+(\mathcal{T}\vee1)\|\frac{\psi^{\prime}}{\psi}\|_{\infty}\|\tilde{u}\|_{\alpha,p,q}
\end{align*}
with 
\begin{align*}
C_{\xi,\mathcal{\theta}} & :=\|\theta_{\mathcal{T}}\|_{\alpha,p,q}+\|\theta_{\mathcal{T}}\|_{\alpha,p,q}^{2}+\|\pi(\theta_{\mathcal{T}},\xi_{\mathcal{T}})\|_{2\alpha-1,p/2,q}.
\end{align*}
Applying again the lifting property of Besov spaces \citep[Thm. 2.3.8]{triebel2010}
together with the definition of $u^{\#}$, $\|\tilde{u}\|_{L^{p/2}}\lesssim(\mathcal{T}\vee1)\|\tilde{u}\|_{L^{p}}$
and the compact support of $\theta_{\mathcal{T}}$, we have 
\begin{align}
\|u^{\#}\|_{2\alpha,p/2,q}\lesssim & \|u^{\#}\|_{L^{p/2}}+\|\dd u^{\#}\|_{2\alpha-1,p/2,q}\nonumber \\
\le & \|T_{F(\tilde{u})}\theta_{\mathcal{T}}\|_{L^{p/2}}+\|\tilde{u}\|_{L^{p/2}}+\|\dd u^{\#}\|_{2\alpha-1,p/2,q}\nonumber \\
\lesssim & (\mathcal{T}\vee1)\big(\|F\|_{\infty}\|\theta_{\mathcal{T}}\|_{L^{p}}+\|\tilde{u}\|_{L^{p}}\big)+\|\dd u^{\#}\|_{2\alpha-1,p/2,q}.\label{eq:u raute 1}
\end{align}
Hence, combining the last two inequalities leads to 
\begin{align*}
\|\dd u^{\#}\|_{2\alpha-1,p/2,q} & \lesssim C_{\xi,\mathcal{\theta}}(\|F\|_{C^{2}}\vee\|F\|_{C^{2}}^{2})\big(\|\tilde{u}\|_{\alpha,p,q}+\|\dd u^{\#}\|_{2\alpha-1,p/2,q}\big)\\
 & \quad+(\mathcal{T}\vee1)\big(C_{\xi,\theta}(\|F\|_{C^{2}}\vee\|F\|_{C^{2}}^{2})(\|F\|_{\infty}\|\theta_{\mathcal{T}}\|_{L^{p}}+\|\tilde{u}\|_{L^{p}})+\|\frac{\psi^{\prime}}{\psi}\|_{\infty}\|\tilde{u}\|_{\alpha,p,q}\big).
\end{align*}
If $C_{\xi,\mathcal{\theta}}(\|F\|_{C^{2}}\vee\|F\|_{C^{2}}^{2})$
is sufficiently small, we thus obtain 
\begin{align}
 & \|\dd u^{\#}\|_{2\alpha-1,p/2,q}\nonumber \\
 & \quad\lesssim(\mathcal{T}\vee1)C_{\xi,\mathcal{\theta}}(\|F\|_{C^{2}}\vee\|F\|_{C^{2}}^{2})\big(\|\tilde{u}\|_{\alpha,p,q}+\|F\|_{\infty}\|\theta_{\mathcal{T}}\|_{\alpha,p,q}\big)+(\mathcal{T}\vee1)\|\frac{\psi^{\prime}}{\psi}\|_{\infty}\|\tilde{u}\|_{\alpha,p,q}.\label{eq:u raute 2}
\end{align}
In combination with the ansatz and the bounds from above, Lemma~\ref{lem:paraproduct}
reveals
\begin{align*}
\|\dd\tilde{u}\|_{\alpha-1,p,q}\le & \|\dd(T_{F(\tilde{u})}\theta_{\mathcal{T}})\|_{\alpha-1,p,q}+\|\dd u^{\#}\|_{\alpha-1,p,q}\\
\lesssim & \|T_{\dd F(\tilde{u})}\theta_{\mathcal{T}}\|_{2\alpha-1,p/2,q}+\|T_{F(\tilde{u})}\xi_{\mathcal{T}}\|_{\alpha-1,p,q}+\|\dd u^{\#}\|_{2\alpha-1,p/2,q}\\
\lesssim & (\mathcal{T}\vee1)\Big(C_{\xi,\mathcal{\theta}}(\|F\|_{C^{2}}\vee\|F\|_{C^{2}}^{2})\big(\|\tilde{u}\|_{\alpha,p,q}+\|F\|_{\infty}\|\theta_{\mathcal{T}}\|_{\alpha,p,q}+1\big)+\|\frac{\psi^{\prime}}{\psi}\|_{\infty}\|\tilde{u}\|_{\alpha,p,q}\Big).
\end{align*}
Due to Remark~\ref{rem:weights} applied to $\tilde{\psi}=\psi\psi_{2}$,
we can apply Lemma~\ref{lem:antiderivative} to obtain
\begin{align*}
\|\tilde{u}\|_{\alpha,p,q} & \lesssim\|\psi_{2}\tilde{u}\|_{\alpha,p,q}\le(\mathcal{T}^{2}\vee1)\big(|u(0)|+\|\dd\tilde{u}\|_{\alpha-1,p,q}\big).\\
 & \lesssim\underbrace{\big((\mathcal{T}^{3}\vee1)C_{\xi,\mathcal{\theta}}(\|F\|_{C^{2}}\vee\|F\|_{C^{2}}^{2})+\|\frac{\psi'}{\psi}\|_{\infty}\big)}_{=:D}\|\tilde{u}\|_{\alpha,p,q}\\
 & \qquad+(\mathcal{T}^{2}\vee1)\big(|u(0)|+C_{\xi,\mathcal{\theta}}(\|F\|_{C^{2}}\vee\|F\|_{C^{2}}^{2})(\|F\|_{\infty}\|\theta_{\mathcal{T}}\|_{\alpha,p,q}+1)\big).
\end{align*}
For $D$ smaller than some universal constant we conclude the assertion.
\end{proof}
For any $F\in C^{\text{3}}$ and $\|F\|_{C^{3}}$ small enough, the
following lemma reveals that the weighted Itô-Lyons map $\hat{S}$
as introduced in (\ref{eq:localito}) is locally Lipschitz continuous
with respect to the Besov norms on $\mathbb{R}^{d}\times B_{p,q}^{\alpha-1}\times B_{p/2,q}^{2\alpha-1}$
and thus it can be uniquely extended in a continuous way. 
\begin{lem}
\label{lem:lipschitz} Let $\alpha\in(1/3,1/2)$, $p\ge3$, $q\ge1$
and let $F\in C^{3}$ with $F(0)=0$. Assume $\psi$ is a weight function
satisfying Assumption~\ref{ass:weight} and let $\theta_{\mathcal{T}}\in C_{0}^{\infty}$
with derivative $\xi_{\mathcal{T}}=\dd\theta_{\mathcal{T}}$. Then
there exits a polynomial on $\mathbb{R}^{3}$ such that, provided
the bound 
\[
\|F\|_{C^{3}}+\|F\|_{C^{2}}^{3}\le P(\mathcal{T}\vee1,\|\theta_{\mathcal{T}}\|_{\alpha,p,q},\|\pi(\xi_{\mathcal{T}},\theta_{\mathcal{T}})\|_{2\alpha-1,p,q})^{-1},
\]
holds and $C_{\psi}$ is sufficiently small, there exists for every
$u_{0}\in\mathbb{R}^{d}$ a unique global solution $u\in\mathcal{S}^{\prime}$
with $\psi u\in B_{p,q}^{\alpha}$ to the Cauchy problem~(\ref{eq:localRDE}).
Furthermore, for fixed $\mathcal{T}$, $\psi$ and $F$ the weighted
Itô-Lyons map $\hat{S}$ is local Lipschitz continuous on $\mathbb{R}^{d}\times C_{\mathcal{T}}^{\infty}$
around $(u_{0},\theta_{\mathcal{T}},\pi(\theta_{\mathcal{T}},\xi_{\mathcal{T}}))$.
\end{lem}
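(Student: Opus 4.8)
The plan is to run a Banach fixed point argument for the paracontrolled formulation from Lemma~\ref{lem:characterization} and then deduce the local Lipschitz continuity of $\hat S$ from the uniform contraction together with Lipschitz dependence on the data. By Lemma~\ref{lem:characterization} solving~(\ref{eq:localRDE}) is equivalent to finding a fixed point of the map $\mathcal M$ acting on $B_{p,q}^{\alpha}$ as follows: given $\tilde u\in B_{p,q}^{\alpha}$ set $v_{\tilde u}:=T_{F(\tilde u)}\theta_{\mathcal{T}}$, let $u^{\#}$ be the weighted antiderivative (obtained from Lemma~\ref{lem:antiderivative} applied to $\psi_2\tilde u$ for an auxiliary weight $\psi_2$, cf.\ Remark~\ref{rem:weights}) of the right-hand side of~(\ref{eq:equiRDE-1}) with the prescribed initial value, and put $\mathcal M(\tilde u):=v_{\tilde u}+u^{\#}$. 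Exactly the estimates in the proof of Corollary~\ref{cor:bound} — Proposition~\ref{prop:BoundResonant} for $\pi(F(\tilde u),\xi_{\mathcal{T}})$, together with Lemma~\ref{lem:paraproduct}, Lemma~\ref{lem:antiderivative}, Besov embeddings and~(\ref{eq:F(u)}) — show that under the stated smallness of $\|F\|_{C^3}+\|F\|_{C^2}^3$ (and of $C_\psi$) the map $\mathcal M$ sends a sufficiently large closed ball of $B_{p,q}^{\alpha}$ into itself, with radius controlled by the right-hand side of the bound in Corollary~\ref{cor:bound}.

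For the contraction, take $\tilde u^1,\tilde u^2$ in that ball, write $\tilde u^i=v_{\tilde u^i}+u^{\#,i}$ and subtract~(\ref{eq:equiRDE-1}). The decisive term is $\pi(F(\tilde u^1),\xi_{\mathcal{T}})-\pi(F(\tilde u^2),\xi_{\mathcal{T}})$, for which I would re-run the two steps of the proof of Proposition~\ref{prop:BoundResonant} in a ``difference'' form: in Step~1 one expands the $F''$-terms to first order, which is where $F\in C^3$ enters, turning the H\"older-in-$\|\cdot\|_\infty$ continuity of $R_F$ from Lemma~\ref{lem:linearization} into a genuine Lipschitz estimate for the modified remainder appearing in~(\ref{eq:RF(u)}); in Step~2 one exploits the argument-wise linearity of $\pi$, $T$ and $\Gamma$ together with~(\ref{eq:estimateProd}), Lemma~\ref{lem:commutator}, the bound~(\ref{eq:estFu}) for $F'(\tilde u^1)-F'(\tilde u^2)$ and the H\"older continuity of $R_F$, to arrive at
\[
\|\pi(F(\tilde u^1),\xi_{\mathcal{T}})-\pi(F(\tilde u^2),\xi_{\mathcal{T}})\|_{2\alpha-1,p/2,q}\lesssim (\|F\|_{C^3}\vee\|F\|_{C^2}^{2})\,C_{\xi,\theta}\,\big(\|\tilde u^1-\tilde u^2\|_{\alpha,p,q}+\|u^{\#,1}-u^{\#,2}\|_{2\alpha,p/2,q}\big),
\]
with $C_{\xi,\theta}=\|\theta_{\mathcal{T}}\|_{\alpha,p,q}+\|\theta_{\mathcal{T}}\|_{\alpha,p,q}^{2}+\|\pi(\theta_{\mathcal{T}},\xi_{\mathcal{T}})\|_{2\alpha-1,p/2,q}$. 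The remaining paraproduct terms $T_{\xi_{\mathcal{T}}}(F(\tilde u^i))$, $T_{\dd F(\tilde u^i)}\theta_{\mathcal{T}}$ and $\tfrac{\psi'}{\psi}\tilde u^i$ are treated as in Corollary~\ref{cor:bound}, now applied to the differences via~(\ref{eq:estFu}) and Besov embeddings. Feeding these through the analogues of~(\ref{eq:u raute 1})--(\ref{eq:u raute 2}) and the final application of Lemma~\ref{lem:antiderivative} gives $\|\mathcal M(\tilde u^1)-\mathcal M(\tilde u^2)\|_{\alpha,p,q}\le\kappa\|\tilde u^1-\tilde u^2\|_{\alpha,p,q}$ with $\kappa$ bounded by a polynomial in $(\mathcal{T}\vee1,\|\theta_{\mathcal{T}}\|_{\alpha,p,q},\|\pi(\theta_{\mathcal{T}},\xi_{\mathcal{T}})\|_{2\alpha-1,p/2,q})$ times $\|F\|_{C^3}+\|F\|_{C^2}^{3}$ plus a multiple of $C_\psi$; choosing the polynomial $P$ in the statement so that this is $<1$ makes $\mathcal M$ a contraction, and Banach's theorem yields a unique $\tilde u$, hence a unique $u=\psi^{-1}\tilde u\in\mathcal S'$ with $\psi u\in B_{p,q}^{\alpha}$ solving~(\ref{eq:localRDE}). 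Uniqueness in all of $\mathcal S'$ follows since Corollary~\ref{cor:bound} forces every solution into the ball on which $\mathcal M$ contracts.

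For the local Lipschitz continuity of $\hat S$, fix $\mathcal{T},\psi,F$ and consider two data sets $(u_0^i,\theta^i_{\mathcal{T}},\pi(\theta^i_{\mathcal{T}},\xi^i_{\mathcal{T}}))$, $i=1,2$, in a small ball of $\mathbb R^d\times C_{\mathcal{T}}^\infty$, with fixed points $\tilde u^i$ of the corresponding maps $\mathcal M^i$. Decompose $\tilde u^1-\tilde u^2=(\mathcal M^1(\tilde u^1)-\mathcal M^1(\tilde u^2))+(\mathcal M^1(\tilde u^2)-\mathcal M^2(\tilde u^2))$: the first bracket is bounded by $\kappa\|\tilde u^1-\tilde u^2\|_{\alpha,p,q}$ by the contraction above, while the second depends only on the data differences $u_0^1-u_0^2$, $\theta^1_{\mathcal{T}}-\theta^2_{\mathcal{T}}$ and $\pi(\theta^1_{\mathcal{T}},\xi^1_{\mathcal{T}})-\pi(\theta^2_{\mathcal{T}},\xi^2_{\mathcal{T}})$ and is estimated linearly in these by the same collection of inequalities (Proposition~\ref{prop:BoundResonant} in its bilinear form, Lemma~\ref{lem:commutator}, Lemma~\ref{lem:paraproduct}, Lemma~\ref{lem:antiderivative}). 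Absorbing $\kappa\|\tilde u^1-\tilde u^2\|_{\alpha,p,q}$ to the left yields
\[
\|\psi u^1-\psi u^2\|_{\alpha,p,q}\lesssim |u_0^1-u_0^2|+\|\theta^1_{\mathcal{T}}-\theta^2_{\mathcal{T}}\|_{\alpha,p,q}+\|\pi(\theta^1_{\mathcal{T}},\xi^1_{\mathcal{T}})-\pi(\theta^2_{\mathcal{T}},\xi^2_{\mathcal{T}})\|_{2\alpha-1,p/2,q},
\]
which is the asserted local Lipschitz continuity of $\hat S$ around $(u_0,\theta_{\mathcal{T}},\pi(\theta_{\mathcal{T}},\xi_{\mathcal{T}}))$. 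The technical heart, and the main obstacle, is precisely the ``difference'' version of Proposition~\ref{prop:BoundResonant}: one must re-run the modified paralinearization of its Step~1 and the five-term decomposition~(\ref{eq:pi decomposition}) of its Step~2 for two inputs simultaneously while keeping the bound \emph{linear} in $\|\tilde u^1-\tilde u^2\|_{\alpha,p,q}$ and $\|u^{\#,1}-u^{\#,2}\|_{2\alpha,p/2,q}$; this is exactly what forces $F\in C^3$ rather than $C^2$, since the $F''$-contributions generated in Step~1 must themselves be Taylor-expanded and the H\"older-in-$\|\cdot\|_\infty$ part of Lemma~\ref{lem:linearization} alone is insufficient. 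Everything else is a careful but routine repetition of the estimates of Section~\ref{sec:paracontrolled ansatz}.
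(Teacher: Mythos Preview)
Your overall strategy is reasonable, but there is a genuine gap in the fixed-point setup and a noteworthy difference from the paper in how the resonant term is handled.

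\textbf{The fixed point.} You define $\mathcal M$ on a ball of $B_{p,q}^\alpha$ and invoke Proposition~\ref{prop:BoundResonant} to control $\pi(F(\tilde u),\xi_{\mathcal T})$. But that proposition requires the \emph{input} $\tilde u$ to already satisfy $\tilde u=T_{F(\tilde u)}\theta_{\mathcal T}+u^{\#}$ with $u^{\#}\in B_{p/2,q}^{2\alpha}$; a generic element of a $B_{p,q}^\alpha$-ball does not, and for such $\tilde u$ the remainder $\tilde u-T_{F(\tilde u)}\theta_{\mathcal T}$ is only in $B_{p,q}^\alpha$, so the linear bound of Proposition~\ref{prop:BoundResonant} is unavailable. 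The fix is either to run the contraction on the paracontrolled space (pairs $(\tilde u,u^{\#})$ with norm $\|\tilde u\|_{\alpha,p,q}+\|u^{\#}\|_{2\alpha,p/2,q}$), or---and this is what the paper does---to drop the fixed point entirely: since $\theta_{\mathcal T}\in C_{\mathcal T}^\infty$, the equation~(\ref{eq:localRDE}) is a classical ODE, existence and uniqueness are immediate, and the paracontrolled structure of the solution then follows a posteriori from Lemma~\ref{lem:characterization}. The paper's proof in Appendix~\ref{app:proofLipschitz} accordingly takes the two solutions as given and estimates $\|\tilde u^1-\tilde u^2\|_{\alpha,p,q}$ directly, absorbing the self-referential terms at the end (this is morally your $\mathcal M^1(\tilde u^1)-\mathcal M^1(\tilde u^2)$ step, just without fixed-point language).

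\textbf{The resonant decomposition.} For the Lipschitz estimate you propose to re-run the five-term decomposition~(\ref{eq:pi decomposition}) of Proposition~\ref{prop:BoundResonant} in difference form, which forces you to derive a Lipschitz version of the modified remainder~(\ref{eq:RF(u)}) from its Step~1. The paper takes a shorter path: it uses the \emph{other} decomposition, Proposition~\ref{prop:reduction}, writing $\pi(F(\tilde u^j),\xi_{\mathcal T}^j)=F'(\tilde u^j)\pi(\tilde u^j,\xi_{\mathcal T}^j)+\Pi_F(\tilde u^j,\xi_{\mathcal T}^j)$ and then expanding $\pi(\tilde u^j,\xi_{\mathcal T}^j)$ via the ansatz and the commutator, arriving at seven terms $D_1^j,\dots,D_7^j$ in $\dd u^{\#,j}$. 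The point is that the Lipschitz bound for $\Pi_F$ (term $D_4$) is already available off-the-shelf from Proposition~\ref{prop:reduction} with $\gamma=1$, so no new paralinearization analysis is needed. The price is that the $\Pi_F$-bound is quadratic in $\|\tilde u^j\|_{\alpha,p,q}$ rather than linear, but this is harmless because Corollary~\ref{cor:bound} bounds those norms a priori. Your route through~(\ref{eq:pi decomposition}) should also go through, but it duplicates work that Proposition~\ref{prop:reduction} has already packaged.
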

The local Lipschitz continuity is the key ingredient to extend the
weighted Itô-Lyons map from smooth paths to irregular ones. The proof
works similarly to the proofs of Proposition~\ref{prop:BoundResonant}
and Corollary~\ref{cor:bound} with an additional application of
the Lipschitz result in Proposition~\ref{prop:reduction}. Due to
the necessary, but quite lengthy estimations, we postpone the proof
to Appendix~\ref{app:proofLipschitz} with the hope to increase the
readability of the paper. 

Finally, we can state our main result: There exist a continuous extension
of the weighted Itô-Lyons map $\hat{S}$ from $\mathbb{R}^{d}\times C_{\mathcal{T}}^{\infty}$
to the domain $\mathbb{R}^{d}\times\mathcal{B}_{p,q}^{0,\alpha}$.
Similarly to Theorem~\ref{thm:Young} we use a dilation argument
together with a localization procedure to circumvent the assumption
that $\|F\|_{C^{3}}$ has to be small. Allowing for general Besov
spaces, this theorem generalizes Lyons' celebrated Universal Limit
Theorem \citep[Thm. 6.2.2]{Lyons2002} and in particular \citep[Thm. 3.3]{Gubinelli2012}.
\begin{thm}
\label{thm:gobalsolution} Let $\mathcal{T}>0$, $\alpha\in(1/3,1/2)$,
$p\geq3$, $q\ge1$ and $F\in C^{3}$ with $F(0)=0$. If the weight
function $\psi$ satisfies Assumption~\ref{ass:weight} with $C_{\psi}$
sufficiently small, then the weighted Itô-Lyons map $\hat{S}$ as
introduced in (\ref{eq:localito}) can be continuously extended from
$\mathbb{R}^{d}\times C_{\mathcal{T}}^{\infty}$ to \textup{the domain
$\mathbb{R}^{d}\times\mathcal{B}_{p,q}^{0,\alpha}$}. In particular,
there exists a unique solution to (\ref{eq:localito}) for any geometric
Besov rough path $(\theta_{\mathcal{T}},\pi(\theta_{\mathcal{T}},\dd\theta_{\mathcal{T}}))\in\mathcal{B}_{p,q}^{0,\alpha}$.
\end{thm}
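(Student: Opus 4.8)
The plan is to combine the local Lipschitz estimate of Lemma~\ref{lem:lipschitz} with a density argument and a dilation/localization scheme analogous to the one used in the proof of Theorem~\ref{thm:Young}.

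\emph{Reduction to the small-data regime and density extension.} I would first treat the regime in which, for the data under consideration, the quantities $\mathcal{T}\vee1$, $\|\theta_{\mathcal T}\|_{\alpha,p,q}$ and $\|\pi(\theta_{\mathcal T},\xi_{\mathcal T})\|_{2\alpha-1,p/2,q}$ are small enough that the hypotheses of Lemma~\ref{lem:lipschitz} and Corollary~\ref{cor:bound} hold (and $C_\psi$ is small). In that regime $\hat S$ is locally Lipschitz, hence uniformly continuous on bounded sets, on $\mathbb{R}^d\times\{(\theta_{\mathcal T},\pi(\theta_{\mathcal T},\dd\theta_{\mathcal T})):\theta_{\mathcal T}\in C_{\mathcal T}^\infty\}$, and by Definition~\ref{def:geometric rough path} this set is dense in $\mathbb{R}^d\times\mathcal{B}_{p,q}^{0,\alpha}$ for the norm $\|\cdot\|_{\alpha,p,q}+\|\cdot\|_{2\alpha-1,p/2,q}$; a uniformly continuous map extends uniquely by density to a continuous map on the closure. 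The a priori bound of Corollary~\ref{cor:bound} survives the limit and shows $\hat S(u_0,\theta_{\mathcal T},\eta_{\mathcal T})\in B_{p,q}^\alpha$. To see that the limiting $u$ genuinely solves the localized weighted RDE~\eqref{eq:weightedRDE} (equivalently~\eqref{eq:localRDE}) I would pass to the limit in the paracontrolled fixed-point characterisation of Lemma~\ref{lem:characterization}, using the continuity of the paraproduct, resonant and commutator operators (Lemmas~\ref{lem:paraproduct},~\ref{lem:commutator} and Proposition~\ref{prop:reduction}); uniqueness passes to the limit in the same way.

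\emph{General $F$ by dilation and localization.} For arbitrary $F\in C^3$ and arbitrary $\mathcal{T}>0$ I would proceed as in Steps~2--3 of the proof of Theorem~\ref{thm:Young}: cover $[-\mathcal{T},\mathcal{T}]$ by finitely many sub-intervals of small length $\ell$, on each one re-anchor the signal at the corresponding centre, localise it with a bump of width of order $\ell$, and dilate to unit scale with the scaling operator $\Lambda_\ell$. By Lemma~\ref{lem:scaling} together with the compact support of $\theta_{\mathcal T}$, the rescaled path and its enhanced resonant term have norm of order $\ell^{\alpha}\to0$, uniformly over bounded sets of data, so for $\ell$ small enough the first part applies on the unit interval \emph{with the same vector field $F$}. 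Dilating back by $\Lambda_{\ell^{-1}}$ yields a continuous solution map on each sub-interval, and since $\ell$ does not depend on the initial condition the local solutions can be pasted via a smooth partition of unity exactly as in Step~3 of the proofs of Theorems~\ref{thm:Young} and~\ref{thm:lipschitz young} — multiplying by smooth compactly supported bumps preserves $B_{p,q}^\alpha$-regularity through~\eqref{eq:pointwiseMulti} since $\alpha>0$. Combining the finitely many local Lipschitz estimates, using the equivalence of weighted norms from Remark~\ref{rem:weights} and the continuity of $\Lambda_\ell$ and $\Lambda_{\ell^{-1}}$ from Lemma~\ref{lem:scaling}, gives local Lipschitz continuity of $\hat S$ on all of $\mathbb{R}^d\times C_{\mathcal T}^\infty$, hence — by the density argument of the first part — its unique continuous extension to $\mathbb{R}^d\times\mathcal{B}_{p,q}^{0,\alpha}$ for every $\mathcal{T}$, $\psi$ and $F$.

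\emph{Main obstacle.} I expect the delicate point to be the behaviour of the enhanced object under re-anchoring, localisation and dilation: none of these operations commutes with the Littlewood--Paley blocks, so $\pi(\Lambda_\ell\theta_{\mathcal T},\dd\Lambda_\ell\theta_{\mathcal T})$ is \emph{not} simply $\Lambda_\ell\pi(\theta_{\mathcal T},\dd\theta_{\mathcal T})$, and one has to argue — isolating the symmetric part $\theta\,\dd\theta$, which scales harmlessly, from the paraproduct corrections — that the transformed pair again lies in $\mathcal{B}_{p,q}^{0,\alpha}$ and genuinely has the advertised small norm. A secondary subtlety is that assembling the finitely many local Lipschitz estimates into a global one requires the local initial data fed into each sub-interval (the value $u(t_j)$ together with the relevant piece of the enhanced signal) to depend continuously on the driver; this is where the compact support of $\theta_{\mathcal T}$ and the weighted-norm equivalence of Remark~\ref{rem:weights} enter.
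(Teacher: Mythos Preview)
Your overall strategy --- dilation to reduce to the small-norm regime where Lemma~\ref{lem:lipschitz} applies, followed by pasting over sub-intervals and a density extension --- is the same as the paper's. The one substantive difference is the \emph{order of operations}, and it bears directly on the obstacle you flag.

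You first extend $\hat S$ by density in the small-data regime and then try to scale the \emph{limiting} geometric Besov rough path $(\theta_{\mathcal T},\eta_{\mathcal T})$ into that regime; this forces you to understand how the abstract second component $\eta_{\mathcal T}$ transforms under $\Lambda_\lambda$ and under multiplication by cut-offs, which (as you note) does not commute with the Littlewood--Paley blocks. The paper reverses the order: it fixes an approximating sequence $\theta_{\mathcal T}^n\in C_{\mathcal T}^\infty$ \emph{first}, scales each smooth $\theta_{\mathcal T}^n$ to $\theta_{\mathcal T}^{n,\lambda}:=\lambda^{-\alpha+1/p+\epsilon}\Lambda_\lambda\theta_{\mathcal T}^n$, applies Lemma~\ref{lem:lipschitz} to the smooth scaled data (for which $\pi(\theta_{\mathcal T}^{n,\lambda},\dd\theta_{\mathcal T}^{n,\lambda})$ is the honest resonant term and its norm is controlled directly via Lemma~\ref{lem:scaling}), and only \emph{then} passes to the limit in~$n$. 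Because the scaling and localisation act on genuinely smooth objects, the question ``is the transformed pair still a geometric Besov rough path with small norm?'' never arises --- it is automatic. The iteration over sub-intervals of length $2\lambda\mathcal T$ is likewise carried out at the level of the smooth approximants, and the limit $u$ is obtained as the $B_{p,q}^\alpha$-limit (after weighting by $\psi$) of the classical solutions $u^n$.

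In short, your proof would go through, but the obstacle you identify disappears if you scale \emph{before} taking the closure rather than after; this is exactly how the paper organises the argument.
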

An elementary formulation of Theorem~\ref{thm:gobalsolution} is
presented in the next lemma. The proof of Theorem~\ref{thm:gobalsolution}
is then an immediate consequence. 
\begin{lem}
Assume the weight function $\psi$ satisfies Assumption~\ref{ass:weight}
with $C_{\psi}$ sufficiently small. Let $\mathcal{T}>0$, $\alpha\in(1/3,1/2)$,
$p\geq3$, $q\ge1$ and $F\in C^{3}$ with $F(0)=0$. Let further
$u_{0}\in\R^{m}$ be an initial condition and $(\theta_{\mathcal{T}},\eta_{\mathcal{T}})\in\mathcal{B}_{p,q}^{0,\alpha}$
be a geometric Besov rough path. Let $(\theta_{\mathcal{T}}^{n})\subset C_{\mathcal{T}}^{\infty}$
be a sequence of functions with corresponding derivatives $(\xi_{\mathcal{T}}^{n})$
and $(u_{0}^{n})\subset\mathbb{R}^{m}$ be a sequence of initial conditions
such that $(u_{0}^{n},\theta_{\mathcal{T}}^{n},\pi(\theta_{\mathcal{T}}^{n},\xi_{\mathcal{T}}^{n}))$
converges to $(u_{0},\theta_{\mathcal{T}},\eta_{\mathcal{T}})$ in
$\mathbb{R}^{m}\times B_{p,q}^{\alpha-1}\times B_{p/2,q}^{2\alpha-1}$.
Denote by $u^{n}$ the unique solution to the Cauchy problem~(\ref{eq:localRDE})
with $u_{0}^{n}$ and $\xi_{\mathcal{T}}^{n}$ for all $n\in\N$.
Then there exists $u\in\mathcal{S}^{\prime}$ such that $\psi u\in B_{p,q}^{\alpha}$
and $\psi u^{n}\to\psi u$ in $B_{p,q}^{\alpha}$. The limit $u$
depends only on $(u_{0},\theta_{\mathcal{T}},\eta_{\mathcal{T}})$
and not on the approximating family $(u_{0}^{n},\theta_{\mathcal{T}}^{n},\pi(\theta_{\mathcal{T}}^{n},\xi_{\mathcal{T}}^{n}))$.\end{lem}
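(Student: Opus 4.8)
The plan is to derive the lemma from the local Lipschitz continuity of the weighted It\^o--Lyons map $\hat S$ proved in Lemma~\ref{lem:lipschitz}, first in the regime where $\|F\|_{C^3}+\|F\|_{C^2}^3$ is small enough for that lemma to apply, and then in general by the dilation and pasting procedure already used in Theorems~\ref{thm:Young} and \ref{thm:lipschitz young}.

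\emph{Step 1 (small coefficient).} Since the data $(u_0^n,\theta_{\mathcal{T}}^n,\pi(\theta_{\mathcal{T}}^n,\xi_{\mathcal{T}}^n))$ converge in the topology of $\mathcal{B}_{p,q}^{0,\alpha}$ (cf.\ Definition~\ref{def:geometric rough path}), the radius $R:=1+\sup_n\big(|u_0^n|+\|\theta_{\mathcal{T}}^n\|_{\alpha,p,q}+\|\pi(\theta_{\mathcal{T}}^n,\xi_{\mathcal{T}}^n)\|_{2\alpha-1,p/2,q}\big)$ is finite. Assume $\|F\|_{C^3}+\|F\|_{C^2}^3$ lies below the threshold of Lemma~\ref{lem:lipschitz} associated with $R$, and that $C_\psi$ is small as required there. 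For smooth $\theta_{\mathcal{T}}^n$ the Cauchy problem~(\ref{eq:localRDE}) has a classical solution $u^n$, and by construction of $\hat S$ in~(\ref{eq:localito}) we have $\psi u^n=\hat S(u_0^n,\theta_{\mathcal{T}}^n,\pi(\theta_{\mathcal{T}}^n,\xi_{\mathcal{T}}^n))$. Hence Lemma~\ref{lem:lipschitz} yields a constant $L=L(R,\mathcal{T},\psi,F)$ with
\[
\|\psi u^n-\psi u^m\|_{\alpha,p,q}\le L\Big(|u_0^n-u_0^m|+\|\theta_{\mathcal{T}}^n-\theta_{\mathcal{T}}^m\|_{\alpha,p,q}+\|\pi(\theta_{\mathcal{T}}^n,\xi_{\mathcal{T}}^n)-\pi(\theta_{\mathcal{T}}^m,\xi_{\mathcal{T}}^m)\|_{2\alpha-1,p/2,q}\Big),
\]
whose right-hand side tends to $0$ as $n,m\to\infty$. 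Thus $(\psi u^n)$ is Cauchy in the Banach space $B_{p,q}^\alpha$ and converges to some $g\in B_{p,q}^\alpha$. By Assumption~\ref{ass:weight} the weight $\psi$ is strictly positive, so $u:=\psi^{-1}g\in\mathcal{S}^{\prime}$ is well defined, $\psi u=g$, and $\psi u^n\to\psi u$ in $B_{p,q}^\alpha$. If $(\tilde u_0^n,\tilde\theta_{\mathcal{T}}^n,\pi(\tilde\theta_{\mathcal{T}}^n,\tilde\xi_{\mathcal{T}}^n))$ is another approximating family with the same limit and solutions $\tilde u^n$, interlacing the two families gives again a convergent family of data, so the estimate above forces $\psi\tilde u^n$ to converge to the \emph{same} $g$; hence $u$ depends only on $(u_0,\theta_{\mathcal{T}},\eta_{\mathcal{T}})$.

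\emph{Step 2 (general coefficient).} The smallness of $\|F\|_{C^3}$ is removed by the dilation argument of Step~2 and Step~3 in the proof of Theorem~\ref{thm:Young}. Fix $\epsilon\in(0,\alpha-1/p)$; for $\lambda\in(0,1)$ set $\theta_{\mathcal{T}}^{\lambda,n}:=\lambda^{1-\alpha+1/p+\epsilon}\Lambda_\lambda\theta_{\mathcal{T}}^n$ with derivative $\xi_{\mathcal{T}}^{\lambda,n}$, so that the coefficient in the correspondingly rescaled localized equation of the type~(\ref{eq:rde scaled young}) becomes $\lambda^{\alpha-1/p-\epsilon}F$, with small $C^3$-norm for small $\lambda$. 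Using Lemma~\ref{lem:scaling} together with the identity $\pi(\Lambda_\lambda f,\Lambda_\lambda g)=\Lambda_\lambda(fg)-T_{\Lambda_\lambda f}\Lambda_\lambda g-T_{\Lambda_\lambda g}\Lambda_\lambda f$ (which controls $\pi(\theta_{\mathcal{T}}^{\lambda,n},\xi_{\mathcal{T}}^{\lambda,n})$ in terms of $\Lambda_\lambda\pi(\theta_{\mathcal{T}}^n,\xi_{\mathcal{T}}^n)$ up to smoother para-remainders), one checks that the smallness requirement of Lemma~\ref{lem:lipschitz} is met for the rescaled problem once $\lambda$ is small enough, uniformly in $n$. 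Undoing the dilation via $\Lambda_{\lambda^{-1}}$ and applying Lemma~\ref{lem:scaling} once more transfers the convergence obtained from Step~1 to $\psi u^n\to\psi u$ on $[-\lambda\mathcal{T},\lambda\mathcal{T}]$. As in Step~3 of the proof of Theorem~\ref{thm:lipschitz young}, one then fixes a smooth partition of unity $(\mu_j)_{j\in\mathbb{Z}}$ subordinate to overlapping intervals of length $\le\lambda\mathcal{T}$ with anchor points spaced by $\lambda\mathcal{T}/2$; since all $u^n$ are supported in $[-2\mathcal{T},2\mathcal{T}]$, only finitely many $\mu_j$ are relevant, the initial values at consecutive anchor points converge because the preceding pieces do, and the multiplier estimate~(\ref{eq:pointwiseMulti}) assembles the local Cauchy estimates into $\|\psi(u^n-u^m)\|_{\alpha,p,q}\to0$. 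The limit $g$ again defines $u:=\psi^{-1}g$ with the asserted properties, and independence of the approximating family follows by the interlacing argument of Step~1. Theorem~\ref{thm:gobalsolution} is then immediate, since it asserts exactly the existence of this continuous extension of $\hat S$.

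\emph{Main obstacle.} All the analytic substance is already contained in Lemma~\ref{lem:lipschitz} (hence in Proposition~\ref{prop:reduction}, Proposition~\ref{prop:BoundResonant} and Corollary~\ref{cor:bound}); granting it, the only genuinely delicate point is Step~2, where the dilation must act \emph{simultaneously} on the path $\theta_{\mathcal{T}}$ and on its resonant enhancement $\pi(\theta_{\mathcal{T}},\xi_{\mathcal{T}})$ in a way compatible with the Besov topology of Definition~\ref{def:geometric rough path}, and where the logarithmic losses in Lemma~\ref{lem:scaling} have to be absorbed when choosing — uniformly in $n$ — the neighbourhood of data on which Step~1 applies, before letting $\lambda\to0$. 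The pasting that follows is routine and parallels the Young case verbatim.
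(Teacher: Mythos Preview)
Your proposal is correct and follows essentially the same strategy as the paper: reduce to the small-coefficient regime where Lemma~\ref{lem:lipschitz} applies via the dilation $\Lambda_\lambda$, then paste together local pieces as in the Young case. Two minor inaccuracies worth fixing: the scaling exponent for $\theta$ should be $\lambda^{-\alpha+1/p+\epsilon}$ (not $\lambda^{1-\alpha+1/p+\epsilon}$) so that $\xi_{\mathcal T}^{\lambda,n}=\dd\theta_{\mathcal T}^{\lambda,n}$ picks up the extra factor $\lambda$ and matches the paper's convention; and the solutions $u^n$ are not \emph{supported} in $[-2\mathcal T,2\mathcal T]$ but merely constant outside that interval, so it is $\psi u^n$ (and the fact that only finitely many $\mu_j$ meet $[-2\mathcal T,2\mathcal T]$) that makes the pasting sum finite.
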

\begin{proof}
In order to apply Lemma~\ref{lem:lipschitz}, we first need to ensure
that $\|F\|_{C^{3}}$ is small enough. Thus, as similarly done in
Step 2 of the proof of Theorem~\ref{thm:Young}, we scale $\theta_{\mathcal{T}}^{n}$:
For some fixed $\epsilon\in(0,\alpha-1/p)$ and for $\lambda\in(0,1)$
we set 
\[
\theta_{\mathcal{T}}^{n,\lambda}:=\lambda^{-\alpha+1/p+\epsilon}\Lambda_{\lambda}\theta_{\mathcal{T}}^{n}\quad\text{and}\quad\xi_{\mathcal{T}}^{n,\lambda}:=\lambda^{1-\alpha+1/p+\epsilon}\Lambda_{\lambda}\xi_{\mathcal{T}}^{n},
\]
where we recall the scaling operator $\Lambda_{\lambda}f=f(\lambda\cdot)$
for $f\in\mathcal{S}'$. Given this scaling, still $\xi_{\mathcal{T}}^{n,\lambda}=\dd\theta_{\mathcal{T}}^{n,\lambda}$
holds true and the corresponding norms of $\xi_{\mathcal{T}}^{n,\lambda}$
and $\theta_{\mathcal{T}}^{n,\lambda}$ can be controlled by the Lemmas~\ref{lem:antiderivative}
and \ref{lem:scaling}, i.e. 
\begin{align*}
\|\xi_{\mathcal{T}}^{n,\lambda}\|_{\alpha-1,p,q} & \lesssim\|\xi_{\mathcal{T}}^{n}\|_{\alpha-1,p,q}\quad\text{and}\quad\|\theta_{\mathcal{T}}^{n,\lambda}\|_{\alpha,p,q}\lesssim(1\vee\mathcal{T}^{2})\|\xi_{\mathcal{T}}^{n,\lambda}\|_{\alpha-1,p,q}\lesssim(1\vee\mathcal{T}^{2})\|\xi_{\mathcal{T}}^{n}\|_{\alpha-1,p,q}.
\end{align*}
Moreover, again using Lemma~\ref{lem:scaling} we can estimate 
\begin{align*}
\|\pi(\theta_{\mathcal{T}}^{n,\lambda},\xi_{\mathcal{T}}^{n,\lambda})\|_{2\alpha-1,p/2,q} & =\lambda^{1-2\alpha+2/p+2\epsilon}\|\pi(\Lambda_{\lambda}\theta_{\mathcal{T}}^{n},\Lambda_{\lambda}\xi_{\mathcal{T}}^{n})\|_{2\alpha-1,p/2,q}\\
 & \lesssim(\lambda^{2\epsilon}|\log\lambda|+\lambda^{1-2\alpha+2\epsilon})\|\pi(\theta_{\mathcal{T}}^{n},\xi_{\mathcal{T}}^{n})\|_{2\alpha-1,p/2,q}.
\end{align*}
Let us take once more the localization function $\phi$ from Assumption~\ref{ass:phi}
and noticing that $\phi_{2\mathcal{T}}\theta_{\mathcal{T}}^{n}=\theta_{\mathcal{T}}^{n}$
for all $n\in\mathbb{N}$. Therefore, Lemma~\ref{lem:lipschitz}
provides for $\lambda>0$ sufficiently small a unique global solution
$u^{n,\lambda}\in B_{p,q}^{\alpha}$ to 
\[
\dd u^{n,\lambda}=\lambda^{\alpha-1/p-\epsilon}F(u^{n,\lambda})\dd(\phi_{2\mathcal{T}}\theta_{\mathcal{T}}^{n,\lambda}),\quad u^{n,\lambda}(0)=u_{0}^{n}.
\]
Setting now $u^{n}:=\Lambda_{\lambda^{-1}}u^{n,\lambda}$, we have
constructed a unique global solution to 
\[
\dd u^{n}=F(u^{n})\dd(\phi_{2\lambda\mathcal{T}}\theta_{\mathcal{T}}^{n}),\quad u(0)=u_{0}^{n}.
\]
Since $(u_{0}^{n},\theta_{\mathcal{T}}^{n,\lambda},\pi(\theta_{\mathcal{T}}^{n,\lambda},\xi_{\mathcal{T}}^{n,\lambda}))$
converges to $(u_{0},\theta_{\mathcal{T}}^{\lambda},\pi(\theta_{\mathcal{T}}^{\lambda},\xi_{\mathcal{T}}^{\lambda}))$
in $\mathbb{R}^{d}\times B_{p,q}^{\alpha-1}\times B_{p/2,q}^{2\alpha-1}$,
the continuity of the Itô-Lyons map established in Lemma~\ref{lem:lipschitz}
implies that $u^{n,\lambda}$ converges to some $u^{\lambda}$ in
$B_{p,q}^{\alpha}$ weighted by $\psi$. Therefore, the solution $u^{n}$
converges to $u:=\Lambda_{\lambda^{-1}}u^{\lambda}$ in $B_{p,q}^{\alpha}$
weighted by $\psi$, due to Lemma~\ref{lem:scaling} and \ref{lem:scaling},
which can be seen analogously to Step 2 of the proof of Theorem~\ref{thm:lipschitz young}.
We note that $u|_{[-\lambda\mathcal{T},\lambda\mathcal{T}]}$ does
not depend on $\phi_{\lambda\mathcal{T}}$.

Following the same argumentation as in Step 3 of the proof of Theorem~\ref{thm:lipschitz young},
we can iterate this construction of $u^{n}$ and $u$ on intervals
of the length $2\lambda T$. In this way we end up with a continuous
function $u$ such that $\psi u\in B_{p,q}^{\alpha}$ and $\psi u^{n}$
converges to $\psi u$ in $B_{p,q}^{\alpha}$. Note that $u$ depends
only on $(u_{0},\theta_{\mathcal{T}},\pi(\theta_{\mathcal{T}},\xi_{\mathcal{T}}))$
but neither on approximating family $(u_{0}^{n},\theta_{\mathcal{T}}^{n},\pi(\theta_{\mathcal{T}}^{n},\xi_{\mathcal{T}}^{n}))$
nor on $\phi_{\lambda T}$. 
\end{proof}
While general Besov spaces contain functions with jumps, the paracontrolled
distribution approach to rough differential equations as explored
in the present section only studies continuous functions. Therefore,
we think a discussion is in order why the paracontrolled distribution
approach seems to be naturally restricted to continuous functions. 
\begin{rem}
\label{remark:jumps} The results in Section~\ref{sec:LinCom} apply
only to Besov spaces $B_{p,q}^{\alpha}$ for $p\ge1$. According to
(\ref{eq:boundResonant}), our estimates result in a bound of the
$B_{p/3,q}^{3\alpha-1}$-norm. Consequently, we require $p\ge3$ and
$\alpha>1/3$ in order to have positive regularity. In particular,
our main theorem applies only to the case $\alpha>1/p$ which implies
that $B_{p,q}^{\alpha}$ embeds into the space of continuous functions.

If we want to extend our results to discontinuous functions, corresponding
to $\alpha<1/p$, then we could hope that it helps to verify the previous
results for $p<1$. Let us sketch some details on this idea, where
we have to deal with the quasi-Banach space $B_{p,q}^{\alpha}$ for
$p<1$. In that case the triangle inequality only holds true up to
a multiplicative constant
\[
\|f+g\|_{\alpha,p,q}\le2^{1/p-1}\big(\|f\|_{\alpha,p,q}+\|g\|_{\alpha,p,q}\big)\quad\text{ for }f,g\in B_{p,q}^{\alpha}.
\]
Following the lines of the proof of Lemma~2.84 (or Lemma~2.49 respectively)
in \citet{Bahouri2011}, we obtain in the case $p\in(0,1)$, $q>1$,
$\alpha>1/p-1$, for $u:=\sum_{j}u_{j}$ with $\supp u_{j}\subset2^{j}\mathcal{B}$
for some ball $\mathcal{B}$ that
\[
\|u\|_{s-(1/p-1),p,q}\lesssim\big\|\big(2^{js}\|u_{j}\|_{L^{p}}\big)_{j}\big\|_{\ell^{q}},
\]
provided the right-hand side is finite. For the commutator lemma in
the case $p\in(0,1)$ we thus cannot hope for more than the following:
Replacing the assumption $p\ge1$ with $\alpha+\beta+\gamma>(\frac{1}{p}-1)\vee0$
in the situation of Lemma~\ref{lem:commutator}, we conjecture 
\[
\|\Gamma(f,g,h)\|_{\alpha+\beta+\gamma-(\frac{1}{p}-1)\vee0,p,q}\lesssim\|f\|_{\alpha,p_{1},q}\|g\|_{\beta,p_{2},q}\|h\|_{\gamma,p_{3},q}.
\]
Applying this bound to (\ref{eq:boundResonant}), we obtain for $p\in(0,1)$
\[
\|\Pi_{F}(u,\xi)\|_{3\alpha-1-(3/p-1),p/3,q}<\big(\|F'(u)\|_{\alpha,p,q}+\|u\|_{\alpha,p,q}\big)\|u\|_{\alpha,p,q}\|\xi\|_{\alpha-1,p,q}.
\]
However, $3\alpha-1-(3/p-1)>0$ is equivalent to $\alpha>1/p$, which
is the same condition as we had before, excluding discontinuous functions. 

Alternatively, a higher order expansion in the linearization Lemma~\ref{lem:linearization}
could be studied (corresponding to more additional information). If
such a second order expansion would succeed, we may have the condition
$4\alpha-1>0$, but with the price of imposing $p/4\ge1$. Consequently,
we would again obtain $\alpha>1/p$. 

In conclusion, it appears natural that this approach is restricted
to continuous functions. 
\end{rem}

\section{Stochastic differential equations\label{sec:SDE}}

The purely analytic results from the previous sections for rough differential
equations allow for treating a large class of stochastic differential
equations (SDEs) in a pathwise way. While we assumed so far that the
driving signal $\xi$ of the RDE~(\ref{eq:rde}) is given by a deterministic
function with a certain Besov regularity, we suppose from now on that
$\xi$ is the distributional derivative of some continuous stochastic
process $X$. Provided all involved stochastic objects live on a suitable
probability space $(\Omega,\mathcal{F},\mathbb{P})$ and setting $\xi:=\dd X$,
the RDE~(\ref{eq:rde}) becomes an SDE with the dynamic 
\begin{equation}
\dd u(t)=F(u(t))\dd X_{t},\quad u(0)=u_{0},\quad t\in[0,1],\label{eq:sde}
\end{equation}
where $u_{0}$ is a random variable in $\R^{m}$ and $X$ is some
$d$-dimensional stochastic process for simplicity on the interval
$[0,1]$. 

Instead of relying on classical stochastic integration in order to
give the SDE~(\ref{eq:sde}) a meaning, we shall demonstrate here
that the results of Section~\ref{sec:Young} and \ref{sec:paracontrolled ansatz}
are feasible for a wide class of SDEs. For this propose the present
section is devoted to show the required sample path properties of
a couple of stochastic processes. This allows for solving SDEs which
are beyond the scope of classical probability theory as well as for
recovering well-known examples. Let us emphasize that we present here
only a few exemplary stochastic processes to illustrate our results
and do not aim for the most general class of stochastic processes.

\subsection*{Gaussian processes}

A well-known but very common example for a stochastic driving signal
$X$ is the fractional Brownian motion, cf. \citep{Coutin2007,Mishura2008}.
A $d$-dimensional fractional Brownian motion $B^{H}=(B^{1},\dots,B^{d})$
with Hurst index $H\in(0,1)$ is a Gaussian process with zero mean,
independent components, and covariance function given by 
\[
\mathbb{E}[B_{s}^{i}B_{t}^{i}]=\frac{1}{2}\big(s^{2H}+t^{2H}-\vert t-s\vert^{2H}\big),\quad s,t\in[0,1],
\]
for $i=1,\dots,d$. The Besov regularity of (fractional) Brownian
motion is already known for a long time due to \citet{Roynette1993}
and \citet{Ciesielski1993}: it holds $(B_{t}^{H})_{t\in[0,1]}\in B_{p,\infty}^{H}([0,1],\mathbb{R}^{d})$
almost surely for any $p\in[1,\infty]$ and $(B_{t}^{H})_{t\in[0,1]}\notin B_{p,q}^{H}([0,1],\mathbb{R}^{d})$
almost surely if $q<\infty$, see for instance \citep[Corrollary 5.3]{Veraar2009}.
More recently, \citet{Veraar2009} investigated the Besov regularity
for more general Gaussian processes. The self-similar behavior of
fractional Brownian motion implies that $B^{H}$ has the same regularity
$H$ with respect to all $p$-scales of the Besov spaces. Therefore,
it suffices to focus on $p=\infty$ for this example. 

Even if one could still rely on results from rough path theory (\citet{Lyons1998}
or \citet{Gubinelli2012}) in the case $H>1/3$, the following lemma
shows how to recover the results for SDEs with our machinery. It in
particular covers the fractional Brownian motion.
\begin{lem}[{\citep[Cor. 3.10]{Gubinelli2012}}]
 Let $X$ be a centered $d$-dimensional Gaussian process with independent
components whose covariance function fulfills for some $H\in(1/4,1)$
the Coutin-Qian condition 
\begin{align}
\mathbb{E}[\vert X_{t}-X_{s}\vert^{2}] & \lesssim\vert t-s\vert^{2H}\quad\text{and}\nonumber \\
\vert\mathbb{E}[(X_{s+r}-X_{s})(X_{t+r}-X_{t})]\vert & \lesssim\vert t-s\vert^{2H-2}r^{2},\label{eq:gaussian}
\end{align}
for all $s,t\in\mathbb{R}$ and \textcolor{black}{all $r\in[0,\vert t-s\vert)$.}
For every $\alpha<H$ and any smooth function $\phi$ with compact
support we have $\phi X\in B_{\infty,\infty}^{\alpha}$. Moreover,
there exists an $\eta\in B_{\infty,\infty}^{2\alpha-1}$ such that
for every $\delta>0$ and every $\psi\in\mathcal{S}$ with $\int\psi(t)\d t=1$
it holds
\[
\lim_{n\to\infty}\mathbb{P}\big(\|\psi^{n}*(\phi X)-(\phi X))\|_{\alpha,\infty,\infty}+\|\pi(\psi^{n}*(\phi X),\dd(\psi^{n}*(\phi X))-\eta)\|_{2\alpha-1,\infty,\infty}>\delta\big)=0,
\]
where we denote $\psi^{n}:=n\psi(n\cdot)$. 
\end{lem}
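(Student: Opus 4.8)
The plan is to handle the two assertions separately: the pathwise Besov regularity of $\phi X$, which follows from a Kolmogorov-type criterion, and the existence of the resonant term $\eta$, which is the real content and rests on a second Wiener chaos estimate. Since only the case $p=q=\infty$ occurs here, $B^{\alpha}_{\infty,\infty}$ is the Hölder--Zygmund space on which \citet{Gubinelli2012} work and our paraproducts and resonant products coincide with theirs, so one could in principle just quote \citep[Cor.~3.10]{Gubinelli2012}; I will instead indicate the probabilistic argument behind it.

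\emph{Regularity of $\phi X$.} Since $X$ is centered Gaussian with $\E[|X_t-X_s|^2]\lesssim|t-s|^{2H}$, Gaussian moment comparison gives $\E[|X_t-X_s|^{2k}]\lesssim_k|t-s|^{2kH}$ for every $k\in\N$, so Kolmogorov's continuity theorem provides a modification whose restriction to any compact set is $\alpha'$-Hölder for each $\alpha'<H$. Fixing $\alpha'\in(\alpha,H)$ and multiplying by the smooth compactly supported $\phi$ yields a bounded, compactly supported, locally $\alpha'$-Hölder function, hence $\phi X\in C^{\alpha'}=B^{\alpha'}_{\infty,\infty}\subset B^{\alpha}_{\infty,\infty}$ almost surely. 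The surplus $\alpha'>\alpha$ is precisely what yields $\|\psi^n*(\phi X)-\phi X\|_{\alpha,\infty,\infty}\to0$ almost surely, because mollification of a $C^{\alpha'}$-function converges in $C^{\alpha}$.

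\emph{Construction of $\eta$.} Put $X^n:=\psi^n*(\phi X)$, a smooth and rapidly decaying process with $\d X^n=(\psi^n)'*(\phi X)$, and consider $\pi(X^n,\d X^n)=\sum_{|i-j|\le1}\Delta_i X^n\,\Delta_j \d X^n$. For fixed $x\in\R$ and $j\ge-1$, $\Delta_j\pi(X^n,\d X^n)(x)$ is a finite sum of products of two centered Gaussian variables, hence lies in the second Wiener chaos of $X$, so by Gaussian hypercontractivity all its $L^{2k}(\Omega)$-norms are comparable to its $L^{2}(\Omega)$-norm. The crux is the bound, for $m\ge n$,
\begin{equation*}
\sup_{x\in\R}\E\big[|\Delta_j(\pi(X^m,\d X^m)-\pi(X^n,\d X^n))(x)|^{2}\big]\;\lesssim\;\epsilon_n\,2^{-2j(2\alpha'-1)},\qquad \epsilon_n\to 0,
\end{equation*}
obtained by expressing the relevant covariances through the convolution kernels $\F^{-1}\rho_i$ and reducing them to second-order increments of the covariance function of $X$; the two inequalities in~(\ref{eq:gaussian}) are exactly what controls those mixed increments and delivers the dyadic decay (this is the computation of \citep{Gubinelli2012}, transcribed to Littlewood--Paley blocks). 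Upgrading this to all moments via hypercontractivity, passing from moments of $\Delta_j\pi(x)$ to moments of $\|\Delta_j\pi\|_{L^{\infty}}$ by a Bernstein estimate (the Fourier support of $\Delta_j$ lies in a ball of radius $\sim 2^{j}$, and the $X^n$ decay uniformly in space), and then invoking a standard criterion that upgrades such block moment bounds to almost-sure and $L^{2k}$ Besov bounds together with Lemma~\ref{lem:BesovBall}/Lemma~\ref{lem:BesovAnnulus}, one gets that $(\pi(X^n,\d X^n))_n$ is Cauchy in $L^{2k}(\Omega;B^{2\alpha-1}_{\infty,\infty})$; its limit is $\eta$.

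\emph{Mollifier independence and conclusion.} Running the same covariance estimate with $X^n$ replaced by $\psi^n*(\phi X)-\tilde\psi^n*(\phi X)$ (equivalently, interlacing two approximating sequences and noting the joint sequence is still Cauchy) shows $\eta$ does not depend on $\psi$. Finally $L^{2k}(\Omega;B^{2\alpha-1}_{\infty,\infty})$-convergence implies convergence in probability, so $\|\pi(X^n,\d X^n)-\eta\|_{2\alpha-1,\infty,\infty}\to0$ in probability; combined with the first step this is the asserted joint convergence. The main obstacle is the second-moment estimate above: one must pair the Littlewood--Paley blocks of $X^m$ and $X^n$ carefully, rewrite their covariances through the kernels, and invoke the Coutin--Qian bound~(\ref{eq:gaussian}) on second differences of the covariance — everything else (hypercontractivity, Bernstein, the Besov embeddings and the Kolmogorov-type upgrade) is routine.
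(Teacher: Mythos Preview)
The paper does not prove this lemma at all: it is stated with the attribution \citep[Cor.~3.10]{Gubinelli2012} and then immediately used, so the ``paper's own proof'' is simply a citation. Your proposal is therefore not competing with any argument in the paper; you are supplying a sketch of the proof from the cited reference, which is precisely what the authors defer to.

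Your sketch is a faithful outline of the Gubinelli--Imkeller--Perkowski argument: Kolmogorov for the Besov regularity, second Wiener chaos plus hypercontractivity for the resonant term, Bernstein to pass from pointwise to $L^\infty$ blocks, and a Cauchy argument in $L^{2k}(\Omega;B^{2\alpha-1}_{\infty,\infty})$. One point you leave implicit is where the lower bound $H>1/4$ enters: it is needed so that the second-difference control in~(\ref{eq:gaussian}) yields summable dyadic decay for the second moment of the resonant blocks (effectively $2(2H-1)>-1$, i.e.\ $4H-1>0$); without this the Cauchy estimate fails. Otherwise the sketch is correct and matches the route the paper is citing.
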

In other words, every $d$-dimensional Gaussian process $X$ satisfying
the Coutin-Qian condition~(\ref{eq:gaussian}) for some $H\in(1/3,1/2)$
can be enhanced to a geometric Besov rough path and especially Theorem~\ref{thm:gobalsolution}
can be applied to solve the SDE~(\ref{eq:sde}), cf. \citet{Coutin2002}
or \citet{Friz2010c}.

\subsection*{Stochastic processes via Schauder expansions\label{sub:schauder}}

Instead of approximating stochastic processes by processes with smooth
sample paths, in probability theory it is often more convenient to
construct a process via an expansion with respect to a basis of $L^{2}$.
The presumably most famous construction of this type is the Karhunen-Loève
expansion of Gaussian processes.

A classical construction of a Brownian motion on the interval $[0,1]$
is the Lévy-Ciesielski construction based on Schauder functions. More
generally, Schauder functions are a very frequently applied tool in
stochastic analysis. Notably, they are used to investigate the Besov
regularity of stochastic processes, cf. for example \citet{Ciesielski1993}
and \citet{Rosenbaum2009}, and very recently \citet{Gubinelli2014}
constructed directly the rough path integral in terms of Schauder
expansions. 

The Schauder functions can be defined as the antiderivatives of the
Haar functions. More explicitly they are given by 
\[
G_{j,k}(t):=2^{-j/2}\psi\big(2^{j}t-(k-1)\big)\quad\mbox{with}\quad\psi(t):=t\1_{[0,1/2]}(t)-(t-\tfrac{1}{2})\1_{(1/2,1]}(t),\quad t\in\R,
\]
for $j\in\mathbb{N}$ and $1\leq k\leq2^{n}$, and $G_{0,0}(0):=1$.
The Haar functions form a basis of $L^{2}([0,1],\mathbb{R})$ and
it is obvious that $G_{n,k}\in B_{p,q}^{\beta}$ for $0<\beta<1$
and $p,q\in[1,\infty]$ with $\beta>1/p$, cf. \citep[Prop. 9]{Rosenbaum2009}.
The next lemma explains why an approximation of stochastic processes
in terms of Schauder expansions can also be used to show that a process
can be enhanced to a geometric Besov rough path. 
\begin{lem}
\label{lem:schauder approximation} Let $\alpha\in(1/3,1/2)$, $\beta\in(1/2,1]$,
$p\geq2$ and $q\geq1$. Suppose $(f^{n})\subset B_{p,q}^{\beta}$
is a sequence of functions such that $\supp f^{n}\subset[0,1]$ for
all $n\in\mathbb{N}$. If $(f^{n},\pi(f^{n},\dd f^{n}))$ converges
in $B_{p,q}^{\alpha}\times B_{p/2,q}^{2\alpha-1}$ to some $(f,\pi(f,\dd f))\in B_{p,q}^{\alpha}\times B_{p/2,q}^{2\alpha-1}$,
then \textup{$(f,\pi(f,\dd f))\in\mathcal{B}_{p,q}^{0,\alpha}$.}\end{lem}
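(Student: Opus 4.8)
The plan is to exploit that $\mathcal{B}_{p,q}^{0,\alpha}$ is, by Definition~\ref{def:geometric rough path}, a \emph{closed} subset of $B_{p,q}^{\alpha}\times B_{p/2,q}^{2\alpha-1}$. Since the sequence $(f^{n},\pi(f^{n},\dd f^{n}))$ converges in that space to $(f,\pi(f,\dd f))$, it suffices to show that each $(f^{n},\pi(f^{n},\dd f^{n}))$ already lies in $\mathcal{B}_{p,q}^{0,\alpha}$, i.e.\ can be approximated in $B_{p,q}^{\alpha}\times B_{p/2,q}^{2\alpha-1}$ by pairs $(\theta_{\mathcal{T}},\pi(\theta_{\mathcal{T}},\dd\theta_{\mathcal{T}}))$ with $\theta_{\mathcal{T}}\in C_{\mathcal{T}}^{\infty}$. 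Fix $n$ and set $f^{n,\epsilon}:=\psi^{\epsilon}*f^{n}$, where $\psi\ge0$ is a smooth probability density with $\supp\psi\subset[0,1]$ and $\psi^{\epsilon}:=\epsilon^{-1}\psi(\cdot/\epsilon)$. Then $f^{n,\epsilon}$ is smooth and $\supp f^{n,\epsilon}\subset[0,1+\epsilon]$; moreover, since $f^{n}\in B_{p,q}^{\beta}\subset C$ (as $\beta>1/2\ge1/p$) vanishes on $(-\infty,0]$, so does $f^{n,\epsilon}$, whence $f^{n,\epsilon}(0)=0$ and $f^{n,\epsilon}\in C_{\mathcal{T}}^{\infty}$ once $\mathcal{T}$ is chosen with $[0,2]\subset[-2\mathcal{T},2\mathcal{T}]$.

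The heart of the argument is the elementary mollification estimate
\[
\|\psi^{\epsilon}*g-g\|_{\beta',p,q}\lesssim\epsilon^{\beta-\beta'}\|g\|_{\beta,p,q}\qquad\text{for }g\in B_{p,q}^{\beta},\ \beta'<\beta,
\]
which follows blockwise from $\|\psi^{\epsilon}*\Delta_{j}g-\Delta_{j}g\|_{L^{p}}\lesssim\min(2^{j}\epsilon,1)\|\Delta_{j}g\|_{L^{p}}$ (first-order Taylor expansion using $\int\psi^{\epsilon}=1$, together with Bernstein's inequality) and $\sup_{j\ge-1}\min(2^{j}\epsilon,1)2^{-j(\beta-\beta')}\lesssim\epsilon^{\beta-\beta'}$. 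The crucial point is that one cannot hope for convergence in the $B_{p,q}^{\beta}$-norm itself when $q=\infty$ (Schwartz functions are not dense), so one must concede an arbitrarily small amount of regularity; accordingly I fix $\beta'\in(1/2,\beta)$. Applying the estimate to $g=f^{n}$ gives $f^{n,\epsilon}\to f^{n}$ in $B_{p,q}^{\beta'}$, hence also in $B_{p,q}^{\alpha}$ via the embedding $B_{p,q}^{\beta'}\subset B_{p,q}^{\alpha}$ (valid since $\alpha<1/2<\beta'$). Applying it to $g=\dd f^{n}\in B_{p,q}^{\beta-1}$ — using the lifting property of Besov spaces, \citep[Thm.~2.3.8]{triebel2010} — gives $\dd f^{n,\epsilon}=\psi^{\epsilon}*\dd f^{n}\to\dd f^{n}$ in $B_{p,q}^{\beta'-1}$.

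For the resonant terms I would invoke Lemma~\ref{lem:paraproduct}(iii): since $\beta'+(\beta'-1)=2\beta'-1>0$, the operator $\pi$ maps $B_{p,q}^{\beta'}\times B_{p,q}^{\beta'-1}$ boundedly and bilinearly into $B_{p/2,q}^{2\beta'-1}$ (up to the usual bookkeeping of the fine indices, e.g.\ using $B_{p,q}^{\beta'-1}\subset B_{p,\infty}^{\beta'-1}$). Hence the two convergences above yield $\pi(f^{n,\epsilon},\dd f^{n,\epsilon})\to\pi(f^{n},\dd f^{n})$ in $B_{p/2,q}^{2\beta'-1}$, and a fortiori in $B_{p/2,q}^{2\alpha-1}$ because $2\beta'-1>0>2\alpha-1$; here $\pi(f^{n},\dd f^{n})$ is the classical resonant product, well defined for $f^{n}\in B_{p,q}^{\beta}$ with $\beta>1/2$. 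Thus $(f^{n,\epsilon},\pi(f^{n,\epsilon},\dd f^{n,\epsilon}))\to(f^{n},\pi(f^{n},\dd f^{n}))$ in $B_{p,q}^{\alpha}\times B_{p/2,q}^{2\alpha-1}$ with $f^{n,\epsilon}\in C_{\mathcal{T}}^{\infty}$, so $(f^{n},\pi(f^{n},\dd f^{n}))\in\mathcal{B}_{p,q}^{0,\alpha}$; letting $n\to\infty$ and using that $\mathcal{B}_{p,q}^{0,\alpha}$ is closed concludes the proof. The only genuinely delicate point is the tension in the last two paragraphs between the regularity loss forced by mollification and the threshold $1/2$: one needs $\beta'>1/2$ so that Bony's resonant estimate still applies, which is precisely where the hypothesis $\beta>1/2$ is used.
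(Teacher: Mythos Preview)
Your proof is correct and follows essentially the same strategy as the paper's: approximate each $f^{n}$ by smooth compactly supported functions, use the bilinear continuity of $\pi$ at regularity above $1/2$ (Lemma~\ref{lem:paraproduct}(iii)) to get convergence of the resonant terms, and then pass to the limit using that $\mathcal{B}_{p,q}^{0,\alpha}$ is closed. The paper simply asserts density of $C_{1}^{\infty}$ in $\{g\in B_{p,q}^{\beta}:\supp g\subset[0,1]\}$ and takes a diagonal sequence, while you make the approximation explicit via mollification and, importantly, drop to $\beta'\in(1/2,\beta)$ to obtain convergence even when $q=\infty$; this extra care is genuinely warranted, since smooth functions are not dense in $B_{p,\infty}^{\beta}$, so your version is in fact a bit more robust than the paper's at this one point.
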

\begin{proof}
Let us recall that $C_{1}^{\infty}$ is dense in $\{g\in B_{p,q}^{\beta}:\supp g\subset[0,1]\}$.
Hence, for every $n\in\mathbb{N}$ there exists a sequence of smooth
functions $(f^{n,m})_{m}\subset C_{1}^{\infty}$ such that $(f^{n,m},\dd f^{n,m})$
converges to $(f^{n},\dd f^{n})$ in $B_{p,q}^{\beta}\times B_{p,q}^{\beta-1}$
as $m$ goes to infinity, where the convergence of the second component
follows from the convergence of the first one using the lifting property
of Besov spaces. Since $\beta>1/2$, we also have by Lemma~\ref{lem:paraproduct}
that $\pi(f^{n,m},\dd f^{n,m})$ converges to $\pi(f^{n},\dd f^{n})$
as $m$ goes to infinity. Therefore, taking a diagonal sequence there
exists a sequence of smooth functions $(f^{n,m(n)})_{n}\subset C_{1}^{\infty}$
such that $(f,\pi(f,\dd f))=\lim_{n\to\infty}\pi(f^{n,m(n)},\dd f^{n,m(n)})$
where the limit is taken in $B_{p,q}^{\alpha}\times B_{p/2,q}^{2\alpha-1}$.
\end{proof}
Based on Lemma~\ref{lem:schauder approximation} it is now an immediate
consequence of Theorem 6.5 and 6.6. in \citep{Gubinelli2014} that
suitable hypercontractive processes and continuous martingales can
be lifted to geometric Besov rough paths since the Lévy area term
in \citep{Gubinelli2014} corresponds to our resonant term. Especially,
all examples from probability theory in \citep{Gubinelli2014} are
feasible with our results as well.

\subsection*{Random functions via wavelet expansions: a prototypical example}

Random Fourier series have been enhanced to rough paths by \citet{Friz2013a}.
Due to the localization of the trigonometric basis in Fourier domain,
it is quite convenient to use in their examples also the paracontrolled
approach. Working with Fourier series requires to localize the signal.
Motivated by the previous construction, we shall instead consider
stochastic processes which can be constructed as series expansion
with random coefficients and with respect to a wavelet basis. There
are several applications of such models, for instance, in non-parametric
Bayesian statistics to construct priors on function spaces. One advantage
is that the sample path regularity of such processes can be determined
precisely, cf. \citet{Abramovich1998}, \citet{Chioica2012} and \citet{Bochkina2013}.
Note that very similar calculations apply also to Fourier series,
requiring some extra technical effort for the localization function.

Wavelets can be taken to be localized in the time domain as well as
in the Fourier domain. The latter property is quite convenient when
working with Littlewood-Paley theory as we demonstrate in the following.
Let $\{\psi_{j,k}\,:\,j\in\mathbb{N},\,k\in\Z\}$ be an orthonormal
wavelet basis of $L^{2}(\mathbb{R})$, where $\psi_{j,k}(t):=2^{j/2}\psi(2^{j}t-k)$
for $j\ge1$, $k\in\Z$, $t\in\R$, and $\psi\in L^{2}(\mathbb{R})$.
Then, any function $f\in L^{2}(\mathbb{R})$ can be written as 
\[
f(t):=\sum_{j=0}^{\infty}\sum_{k\in\Z}\langle f,\psi_{j,k}\rangle\psi_{j,k}(t),\quad t\in\R,\quad\mbox{with}\quad\langle f,\psi_{j,k}\rangle:=\int_{\R}f(s)\psi_{j,k}(s)\d s.
\]
Replacing the deterministic wavelet coefficients with real valued
random variables $(Z_{j,k})_{j,k}$, we now study stochastic processes
of the type 

\begin{equation}
X_{t}:=\sum_{j\ge0}\sum_{k=-2^{j}}^{2^{j}}Z_{j,k}\psi_{j,k}(t),\quad t\in\R.\label{eq:random function}
\end{equation}
Without loss of generality, we truncated the series expansion in $k$
since we always have to localize the signal in order to apply our
results concerning RDEs, see the equations~(\ref{eq:lip-1}) and
(\ref{eq:localRDE}). Let us impose the following weak assumptions
on $(Z_{j,k})_{j,k}$ and $(\psi_{j,k})_{j,k}$:
\begin{assumption}
\label{ass:Z} Let $\{\psi_{j,k}\,:\,j\in\mathbb{N},\,k\in\Z\}$ be
an orthonormal and band limited wavelet basis of $L^{2}(\mathbb{R})$
and suppose $Z_{j,k}=A_{j,k}B_{j,k}$ for all $j\ge0$ and $k=-2^{j},\dots,2^{j}$
where
\begin{itemize}
\item $(A_{j,k})_{j,k}$ are random variables satisfying $\E[A_{j,k}^{p}]^{1/p}\lesssim2^{-js}$
for some $s>0$ and $p\in\{2,4\}$, 
\item \textup{$\mathbb{E}[A_{j,k}]=0$ for all $j,k$ and $\E[A_{j,k}A_{m,n}]=0$
for $j\neq m$ or $k\neq n$,}
\item $(B_{j,k})_{j,k}$ are Bernoulli random variables with\textup{ $\mathbb{P}(B_{j,k}=1)=2^{-jr}$
for some $r\in[0,1)$,}
\item $\E[A_{j,k}B_{j,k}A_{m,n}B_{m,n}]=\E[A_{j,k}A_{m,n}]\mathbb{E}[B_{j,k}B_{m,n}]$
for all $j,k,m,n$.
\end{itemize}
\end{assumption}
The assumption allows for a quite flexible class of stochastic processes
although it is chosen in a way to keep the required analysis simple.
Having in mind the construction of Brownian motion via Schauder functions,
as mentioned before, the process $X$ behaves like a Wiener process
if $(Z_{j,k})_{j,k}$ are i.i.d. standard normal distributed random
variables with $s=1$. In particular, the self-similar behavior of
Brownian motion is then achieved because all wavelet coefficients
at a level $j$ are of the same order of magnitude (especially $r=0$).
If instead $r\in(0,1)$, we expect only a number of $2\cdot2^{j(1-r)}$
non-zero wavelet coefficients at each level $j$ and we consequently
gain from measuring the regularity of $X$ in a $B_{p,q}^{\alpha}$-norm
for some finite $p$.

In order to profit from $(Z_{j,k})_{j,k}$ being uncorrelated we choose
an even number $p$. Together with the requirement $p\ge3$ in our
uniqueness and existence theorem for RDEs (Theorem~\ref{thm:gobalsolution}),
we thus take $p=4$. Keeping in mind that the Littlewood-Paley theory
relies on decomposing functions into blocks with compact support in
the Fourier domain, we postulate to take band limited wavelets, e.g.
Meyer wavelets. Note that $X$ then is not compactly supported, but
exponentially concentrated on a fixed interval for an appropriate
choice of $\psi$. We obtain the following sample path regularity
of $X$:
\begin{lem}
\label{lem:RegX} If $X$ is defined as in (\ref{eq:random function})
and satisfies Assumption~\ref{ass:Z}, then $X\in B_{p,1}^{\alpha}$
almost surely for any $\alpha<s+\frac{r}{p}-\frac{1}{2}$ and for
$p\in\{2,4\}$.\end{lem}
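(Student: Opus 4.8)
The plan is to estimate the Littlewood-Paley blocks $\Delta_i X$ in $L^p$ and then sum up with the $\ell^1$ weight $2^{i\alpha}$. First I would exploit the band-limitedness of the wavelet $\psi$: since $\psi_{j,k}$ has Fourier transform supported in $2^j$ times a fixed annulus (this is the point of taking, e.g., Meyer wavelets), the block $\Delta_i X$ only picks up the contributions from levels $j$ with $j \sim i$. Concretely,
\[
\Delta_i X = \sum_{j \sim i}\sum_{k=-2^j}^{2^j} Z_{j,k}\,\Delta_i \psi_{j,k},
\]
a finite sum over $j$. So it suffices to control $\E\big[\|\Delta_i X\|_{L^p}^p\big]$ for $p\in\{2,4\}$ and apply the Borel--Cantelli / Fubini argument: if $\sum_i 2^{i\alpha p}\,\E[\|\Delta_i X\|_{L^p}^p] < \infty$ then $(2^{i\alpha}\|\Delta_i X\|_{L^p})_i \in \ell^p \hookrightarrow \ell^1$... wait, that embedding goes the wrong way, so instead I want $(2^{i\alpha}\|\Delta_i X\|_{L^p})_i \in \ell^1$ almost surely, which follows from $\sum_i 2^{i\alpha}\,\E[\|\Delta_i X\|_{L^p}] < \infty$ together with monotone convergence. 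Using $\E\|\cdot\|_{L^p} \le (\E\|\cdot\|_{L^p}^p)^{1/p}$ reduces everything to the $p$-th moment bound.

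The core computation is the moment estimate. Writing $\|\Delta_i X\|_{L^p}^p = \int_{\R} |\Delta_i X(t)|^p\,dt$ and using that $A_{j,k}$ are centered and uncorrelated across $(j,k)$ (and the independence structure of $A$ and $B$ in Assumption~\ref{ass:Z}), for $p=2$ one gets
\[
\E\big[|\Delta_i X(t)|^2\big] = \sum_{j\sim i}\sum_{k} \E[A_{j,k}^2]\,\E[B_{j,k}]\,|\Delta_i\psi_{j,k}(t)|^2 \lesssim \sum_{j\sim i}\sum_k 2^{-2js}2^{-jr}|\Delta_i\psi_{j,k}(t)|^2,
\]
and for $p=4$ one expands the fourth moment; because the $A_{j,k}$ are mean-zero and uncorrelated (with the factorization hypothesis on the mixed moments with the Bernoulli variables) the only surviving terms are the "diagonal'' ones $\E[A_{j,k}^2 A_{m,n}^2]$-type pairings, which one bounds by Cauchy--Schwarz using $\E[A_{j,k}^4]^{1/4}\lesssim 2^{-js}$. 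In either case one is left with estimating $\int_{\R}\big(\sum_{j\sim i}\sum_k 2^{-j(2s+r)}|\Delta_i\psi_{j,k}(t)|^2\big)^{p/2}dt$. Now use $\|\Delta_i\psi_{j,k}\|_{L^\infty} \lesssim \|\psi_{j,k}\|_{L^1}\lesssim 2^{-j/2}$ (Young's inequality, since $\|\F^{-1}\rho_i\|_{L^1}\lesssim 1$) together with the fact that for fixed $j$ the functions $\psi_{j,k}(\cdot)$, $k=-2^j,\dots,2^j$, have essentially disjoint supports of length $\sim 2^{-j}$, so at each point $t$ only $O(1)$ of them (times the decay of $\psi$) are non-negligible, and the relevant $t$-range has length $O(1)$. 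Summing over the $O(2^j)$ indices $k$ that are actually present and integrating gives, roughly, $\E[\|\Delta_i X\|_{L^p}^p] \lesssim 2^{i(1-r)}\,2^{-i(2s+r)}\,2^{-ip/2}\cdot 2^{i(p/2-1)}$ after carefully tracking the powers; the net exponent works out so that $2^{i\alpha}(\E[\|\Delta_i X\|_{L^p}^p])^{1/p}$ decays geometrically precisely when $\alpha < s + \frac{r}{p} - \frac12$.

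\textbf{Main obstacle.} The delicate point is the bookkeeping in the last step: correctly counting how many of the $2^{j+1}+1$ translates $\psi_{j,k}$ contribute at a given $t$ (and over the whole line), and how the wavelet's tail decay interacts with the $\Delta_i$-convolution, so that one does not over- or under-count by a factor of $2^j$. For $p=4$ there is the additional bookkeeping of which index quadruples survive the moment expansion under the uncorrelatedness and the $A$--$B$ factorization hypothesis; one must be careful that the Bernoulli thinning contributes $\E[B_{j,k}]=2^{-jr}$ linearly (not $2^{-2jr}$) in the diagonal $p=2$ terms and $\E[B_{j,k}B_{m,n}]$ appropriately in the $p=4$ pairings. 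Once the exponent arithmetic is pinned down, the summability condition on $\alpha$ and the passage to "$B_{p,1}^\alpha$ almost surely'' via Borel--Cantelli are routine. The restriction to $p\in\{2,4\}$ is exactly so that the moment method applies cleanly with only second and fourth moments of the $A_{j,k}$.
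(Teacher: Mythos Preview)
Your overall strategy matches the paper's proof: use the band-limitedness of $\psi$ to reduce $\Delta_i X$ to levels $j\sim i$, compute $\E[\|\Delta_i X\|_{L^p}^p]$ via the uncorrelatedness of the $Z_{j,k}$, and then sum $2^{i\alpha}\E[\|\Delta_i X\|_{L^p}]$ to obtain $\E[\|X\|_{\alpha,p,1}]<\infty$. The difference is only in the endgame, where you make things harder than necessary and introduce an imprecision.

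Band-limited wavelets (Meyer wavelets, say) are never compactly supported, so the ``essentially disjoint supports of length $\sim 2^{-j}$'' picture is not literally available; at best you have rapid decay, which would require an almost-orthogonality argument. But none of this is needed: the paper simply uses Young's inequality $\|\Delta_i\psi_{j,k}\|_{L^p}\lesssim\|\psi_{j,k}\|_{L^p}$ together with the scaling identity $\|\psi_{j,k}\|_{L^p}=2^{j(1/2-1/p)}\|\psi\|_{L^p}$. For $p=2$ this gives immediately
$\E[\|\Delta_i X\|_{L^2}^2]\lesssim\sum_{j\sim i}2^{-j(2s+r)}\sum_{k=-2^j}^{2^j}\|\psi_{j,k}\|_{L^2}^2\sim 2^{i(1-2s-r)}$,
hence the threshold $\alpha<s+r/2-1/2$; the $p=4$ case goes through the analogous fourth-moment expansion (exactly as in the proof of Lemma~\ref{lem:exResonant}) and $\|\psi_{j,k}\|_{L^4}\sim 2^{j/4}$. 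Note also that your rough exponent tally $2^{i(1-r)}\cdot 2^{-i(2s+r)}\cdot\ldots$ double-counts the Bernoulli thinning: the factor $2^{-jr}$ from $\E[B_{j,k}]$ already sits inside $\E[Z_{j,k}^2]\lesssim 2^{-j(2s+r)}$, so the number of translates to sum over is the full $\sim 2^j$, not $2^{j(1-r)}$.
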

\begin{proof}
Applying formally the Littlewood-Paley decomposition, one has $X=\sum_{j\geq-1}\Delta_{j}X$
and for the sake of brevity we introduce the multi-indices $\lambda=(j,k)$
with $|\lambda|:=j$. Noting that by the assumption on the wavelet
basis $\supp\F\psi_{\lambda}\subset2^{|\lambda|}\mathcal{A}$ for
some annulus $\mathcal{A}$ independent of $\lambda,$ we obtain $\Delta_{j}\psi_{\lambda}=0$
if $|j-|\lambda||$ is larger than some fixed integer. Therefore,
the Littlewood-Paley blocks are well-defined and given by  
\[
\Delta_{j}X=\sum_{\lambda:|\lambda|\sim j}Z_{\lambda}\Delta_{j}\psi_{\lambda}\quad\text{for}\quad j\geq-1.
\]
Further, let us remark that $X$ as given in (\ref{eq:random function})
exists in $B_{p,1}^{\alpha}$ if $\sum_{j}\Delta_{j}X$ exists as
limit in $B_{p,1}^{\alpha}$.

In order to show the claimed Besov regularity, we have to verify 
\[
\|\Delta_{j}X\|_{L^{p}}\lesssim2^{-j(s+r/p-1/2)}\quad\text{for}\quad j\geq-1,\quad p\in\{2,4\}.
\]
Let us focus on $p=2$. The case $p=4$ can be proved similarly relying
on the estimates for the forth moments of $(Z_{\lambda})$, see also
Lemma~\ref{lem:exResonant} below. For $j\geq-1$ we have
\begin{align*}
\E[\|\Delta_{j}X\|_{L^{2}}^{2}] & =\int_{\R}\E\Big[\Big(\sum_{\lambda}Z_{\lambda}\Delta_{j}\psi_{j,k}(t)\Big)^{2}\Big]\dd t\\
 & =\sum_{\lambda,\lambda'}\E[Z_{\lambda}Z_{\lambda'}]\int\Delta_{j}\psi_{\lambda}(t)\Delta_{j}\psi_{\lambda'}(t)\d t\lesssim\sum_{\lambda}2^{-(2s+r)|\lambda|}\int(\Delta_{j}\psi_{\lambda})^{2}(t)\d t,
\end{align*}
where the last equality follows from $(Z_{\lambda})$ being mutually
uncorrelated. Hence, we further estimate
\begin{align*}
\E[\|\Delta_{j}X\|_{L^{2}}^{2}] & \lesssim\sum_{\lambda:|\lambda|\sim j}2^{-(2s+r)|\lambda|}\|\Delta_{j}\psi_{\lambda}\|_{L^{2}}^{2}\\
 & \lesssim\sum_{j'\sim j}2^{-(2s+r)j'}\sum_{k=-2^{j'}}^{2^{j'}}\|\psi_{j',k}\|_{L^{2}}^{2}=2\sum_{j'\sim j}2^{-2j'(s+r/2-1/2)}.
\end{align*}
By the Littlewood-Paley characterization of the Besov norm we conclude
\[
\E[\|X\|_{\alpha,p,1}]=\sum_{j\ge-1}2^{j\alpha}\E[\|\Delta_{j}X\|_{L^{p}}]\lesssim\sum_{j\ge-1}2^{j(\alpha-s-r/2+1/2)},
\]
which is finite whenever $\alpha<s+r/2-1/2$. \end{proof}
\begin{rem}
With analogous estimates as in Lemma~\ref{lem:RegX} it is easy to
show that $X\in B_{p,1}^{\alpha}$ a.s. for any $\alpha<s+\frac{r}{p}-\frac{1}{2}$
for any even $p\ge2$ provided $\E[A_{j,k}^{p}]^{1/p}\lesssim2^{-s}$
still holds for these higher powers.
\end{rem}
The derivative of $X$ is naturally given by $\dd X_{t}=\sum_{j,k}Z_{j,k}\psi_{j,k}^{\prime}(t)$
for $t\in\R$. The crucial point is now, that we can indeed verify
that the resonant term $\pi(X,\dd X)$ is in $B_{2,1}^{2\alpha-1}$
almost surely due to the probabilistic nature of $X$. The following
lemma highlights how the stochastic setting nicely complements the
analytical foundation.
\begin{lem}
\label{lem:exResonant} Suppose $X$ is given by (\ref{eq:random function})
and satisfies Assumption~\ref{ass:Z}, then
\[
X\in B_{4,1}^{\alpha}\quad\text{and\quad}\pi(X,\dd X)\in B_{2,1}^{2\alpha-1}
\]
 almost surely for any $\alpha<s+\frac{r}{4}-\frac{1}{2}.$\end{lem}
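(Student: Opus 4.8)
The first assertion is immediate: applying Lemma~\ref{lem:RegX} with $p=4$ gives $X\in B_{4,1}^{\alpha}$ almost surely for every $\alpha<\alpha_{0}:=s+\tfrac r4-\tfrac12$. For the resonant term the plan is to reduce everything to the single second--moment estimate
\begin{equation}
\E\big[\|\Delta_{m}\pi(X,\dd X)\|_{L^{2}}^{2}\big]\lesssim2^{m(4-4s-r)},\qquad m\ge-1.\label{eq:keyResonant}
\end{equation}
Indeed, since $\E[\|\Delta_{m}\pi(X,\dd X)\|_{L^{2}}]\le\E[\|\Delta_{m}\pi(X,\dd X)\|_{L^{2}}^{2}]^{1/2}$, Tonelli and (\ref{eq:keyResonant}) yield
\[
\E\big[\|\pi(X,\dd X)\|_{2\alpha-1,2,1}\big]=\sum_{m\ge-1}2^{m(2\alpha-1)}\E\big[\|\Delta_{m}\pi(X,\dd X)\|_{L^{2}}\big]\lesssim\sum_{m\ge-1}2^{m(2\alpha+1-2s-r/2)},
\]
which is finite exactly when $\alpha<\alpha_{0}$; hence $\pi(X,\dd X)\in B_{2,1}^{2\alpha-1}$ almost surely (and the defining series converges there). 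Note $2\alpha_{0}-1=2s+\tfrac r2-2$, so $4-4s-r=-2(2\alpha_{0}-1)$: estimate (\ref{eq:keyResonant}) provides precisely the decay needed to land $\pi(X,\dd X)$ in $B_{2,1}^{2\alpha-1}$ for all $\alpha<\alpha_{0}$. After shrinking $\alpha$ slightly we may also assume $2\alpha-1<0$.

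To prove (\ref{eq:keyResonant}) I would expand in the wavelet basis. Since the $\psi_{j,k}$ are band limited, for each $i$ the block $\Delta_{i}X=\sum_{|\lambda|\sim i}Z_{\lambda}\Delta_{i}\psi_{\lambda}$ is a finite sum (analogously $\Delta_{i}\dd X$ with $\psi_{\lambda}'$), so that
\[
\pi(X,\dd X)=\sum_{|i-\ell|\le1}\Delta_{i}X\,\Delta_{\ell}\dd X=\sum_{(\lambda,\mu)\in\mathcal R}Z_{\lambda}Z_{\mu}\,g_{\lambda,\mu},
\]
where $\mathcal R$ is the set of ``resonant'' index pairs (necessarily $|\lambda|\sim|\mu|$) and $g_{\lambda,\mu}$ is a finite combination of products $(\Delta_{i}\psi_{\lambda})(\Delta_{\ell}\psi_{\mu}')$. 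Two elementary facts about $g_{\lambda,\mu}$ will drive the argument: its Fourier transform is supported in a ball of radius $\lesssim2^{|\lambda|}$, so $\Delta_{m}g_{\lambda,\mu}$ vanishes unless $2^{m}\lesssim2^{|\lambda|}$; and, by the spatial localization of the wavelets, $\|g_{\lambda,\mu}\|_{L^{1}}\lesssim2^{|\lambda|}w_{\lambda,\mu}$ with weights $w_{\lambda,\mu}$ that are rapidly decaying in $\big||\lambda|-|\mu|\big|$ and in the spatial offset of $\lambda,\mu$ (in particular $\sum_{|\lambda|=i}\sum_{\mu}w_{\lambda,\mu}^{2}\lesssim2^{i}$), whence $\|\Delta_{m}g_{\lambda,\mu}\|_{L^{2}}\lesssim2^{m/2}\|\widehat{g_{\lambda,\mu}}\|_{L^{\infty}}\lesssim2^{m/2}2^{|\lambda|}w_{\lambda,\mu}$ when $2^{m}\lesssim2^{|\lambda|}$.

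Expanding the second moment,
\[
\E\big[\|\Delta_{m}\pi(X,\dd X)\|_{L^{2}}^{2}\big]=\sum_{(\lambda_{1},\mu_{1}),(\lambda_{2},\mu_{2})\in\mathcal R}\E[Z_{\lambda_{1}}Z_{\mu_{1}}Z_{\lambda_{2}}Z_{\mu_{2}}]\,\langle\Delta_{m}g_{\lambda_{1},\mu_{1}},\Delta_{m}g_{\lambda_{2},\mu_{2}}\rangle_{L^{2}},
\]
one uses the centering, decorrelation and independence built into Assumption~\ref{ass:Z}: a summand survives only if the four multi--indices split into matching pairs, leaving (a) the diagonal and swap terms with $\{\lambda_{1},\mu_{1}\}=\{\lambda_{2},\mu_{2}\}$, $\lambda_{1}\neq\mu_{1}$; (b) the fully degenerate terms $\lambda_{1}=\mu_{1}=\lambda_{2}=\mu_{2}$; and (c) the self--contracted terms with $\lambda_{1}=\mu_{1}$, $\lambda_{2}=\mu_{2}$, $\lambda_{1}\neq\lambda_{2}$. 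For (a) and (b) the key inputs are the moment bounds $\E[Z_{\lambda}^{2}Z_{\mu}^{2}]\lesssim2^{-(4s+r)|\lambda|}$ (for distinct resonant $\lambda,\mu$) and $\E[Z_{\lambda}^{4}]\lesssim2^{-(4s+r)|\lambda|}$, where the factor $2^{-r|\lambda|}$ — responsible for the $r/4$ gain in the regularity — comes precisely from the sparsity $\mathbb P(B_{j,k}=1)=2^{-jr}$; since there are $\sim2^{i}$ resonant pairs at level $i$ and the $w_{\lambda,\mu}$ are summable, (a) and (b) are bounded by
\[
\sum_{i\gtrsim m}2^{i}\cdot2^{-(4s+r)i}\cdot\big(2^{m/2}2^{i}\big)^{2}=2^{m}\sum_{i\gtrsim m}2^{i(3-4s-r)}\lesssim2^{m(4-4s-r)},
\]
the geometric series converging because $4s+r>3$ in the regime $\alpha_{0}>\tfrac13$ relevant here. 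For (c) one notices that summing the self--contracted terms reconstructs $\|\Delta_{m}\E[\pi(X,\dd X)]\|_{L^{2}}^{2}$, and that $\E[\pi(X,\dd X)]=\tfrac12G'$ with $G:=\sum_{j,k}\E[Z_{j,k}^{2}]\psi_{j,k}^{2}\ge0$; estimating the dyadic blocks of the nonnegative function $G$ directly ($G_{j}=\sum_{k}\E[Z_{j,k}^{2}]\psi_{j,k}^{2}$ has $\|G_{j}\|_{L^{2}}\lesssim2^{j(1-2s-r)}$ and Fourier support in a ball of radius $\lesssim2^{j}$) gives $G\in B_{2,\infty}^{2s+r-1}$, so by the lifting property of Besov spaces \citep[Thm.~2.3.8]{triebel2010} one gets $\E[\pi(X,\dd X)]\in B_{2,\infty}^{2s+r-2}\subset B_{2,1}^{2\alpha-1}$ and in particular $\|\Delta_{m}\E[\pi(X,\dd X)]\|_{L^{2}}^{2}\lesssim2^{m(4-4s-2r)}\le2^{m(4-4s-r)}$. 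Adding (a), (b), (c) yields (\ref{eq:keyResonant}).

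The hard part will be (\ref{eq:keyResonant}) itself, with the sharp power of $2^{m}$: one must simultaneously track the fourth--order expansion and discard every mixed contribution via the decorrelation in Assumption~\ref{ass:Z}, feed the Bernoulli sparsity into the second and fourth moments of $Z_{j,k}$ — this, rather than the plain moment bound, is what turns the exponent $s-\tfrac12$ into $s+\tfrac r4-\tfrac12$ — and deal with the non--vanishing mean $\E[\pi(X,\dd X)]$, which becomes harmless only once it is recognised as the derivative of the nonnegative function $G$ and handled with the lifting property. The rest — the frequency localization of the blocks $g_{\lambda,\mu}$ and the bookkeeping of the summable spatial--decay weights — is routine but must be carried out carefully so as not to lose powers of $2^{m}$.
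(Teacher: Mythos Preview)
Your strategy coincides with the paper's at the core: expand in the wavelet basis, take the second moment of an $L^{2}$-norm, and use the pairing structure of the $(Z_{j,k})$ to reduce the fourfold sum. The paper organises this more economically. Rather than bounding $\E[\|\Delta_{m}\pi(X,\dd X)\|_{L^{2}}^{2}]$ directly, it decomposes $\pi(X,\dd X)=\sum_{j}R_{j}$ with $R_{j}:=\sum_{|\nu|\le1}(\Delta_{j-\nu}X)(\Delta_{j}\dd X)$ (whose Fourier transform is supported in $2^{j}$ times a fixed ball) and estimates $\E[\|R_{j}\|_{L^{2}}^{2}]$; all four wavelet indices $\lambda_{1},\dots,\lambda_{4}$ then live at level $\sim j$, so your cross-level ``self-contracted'' case~(c) simply never appears, and no separate argument for the mean $\E[\pi(X,\dd X)]$ is needed --- one concludes via Lemma~\ref{lem:BesovBall} for the ball-supported family $(R_{j})$. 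One technical point on your route: the identification of (c) with $\|\Delta_{m}\E[\pi(X,\dd X)]\|_{L^{2}}^{2}$ would require $\E[Z_{\lambda}^{2}Z_{\mu}^{2}]=\E[Z_{\lambda}^{2}]\E[Z_{\mu}^{2}]$ for $\lambda\neq\mu$, which Assumption~\ref{ass:Z} (only uncorrelation of the $A_{j,k}$ and a second-order factorisation) does not supply, so (c) would have to be bounded differently or the assumption strengthened. Conversely, your explicit spatial-localisation weights $w_{\lambda,\mu}$ --- using that band-limited wavelets are Schwartz --- are exactly what collapses the off-diagonal double sum to an effectively one-dimensional one; the paper replaces this by a cruder H\"older step on the $L^{4}$-norms of $\psi_{j,k}$ and $\psi_{j,k}'$. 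Your direct block estimate also avoids the implicit positivity restriction in Lemma~\ref{lem:BesovBall}, at the cost of the side condition $4s+r>3$ you flag.
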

\begin{proof}
We start as in the classical proof of Bony's estimate (Lemma \ref{lem:paraproduct}~(iii),
cf. \citep[Thm. 2.85]{Bahouri2011}), and decompose 
\[
\pi(X,\dd X)=\sum_{j\geq-1}R_{j}\quad\mbox{with}\quad R_{j}:=\sum_{|\nu|\le1}(\Delta_{j-\nu}X)(\Delta_{j}\dd X).
\]
By the properties of the Littlewood-Paley blocks the Fourier transform
of $R_{j}$ is supported in $2^{j}$ times some fixed ball. Consequently,
$\Delta_{j'}R_{j}=0$ if $j'\gtrsim j$ and thus 
\[
\big\|\Delta_{j'}\pi(X,\dd X)\big\|_{L^{2}}=\Big\|\sum_{j\gtrsim j'}\Delta_{j'}R_{j}\Big\|_{L^{2}}\lesssim\sum_{j\gtrsim j'}\sum_{|\nu|\le1}\|(\Delta_{j-\nu}X)(\Delta_{j}\dd X)\|_{L^{2}}.
\]
Now we proceed similarly to Lemma~\ref{lem:RegX} (using again the
multi-indices $\lambda=(j,k)$):
\begin{align*}
 & \E\big[\|(\Delta_{j-\nu}X)(\Delta_{j}\dd X)\|_{L^{2}}^{2}\big]\\
 & \quad=\int_{\R}\E\Big[\Big(\sum_{\lambda_{1},\lambda_{2}}Z_{\lambda_{1}}Z_{\lambda_{2}}(\Delta_{j-\nu}\psi_{\lambda_{1}})(\Delta_{j}\psi'_{\lambda_{2}})\Big)^{2}\Big]\d t\\
 & \quad=\sum_{\substack{\lambda_{1},\dots,\lambda_{4}:\\
|\lambda_{\cdot}|\sim j
}
}\E\big[Z_{\lambda_{1}}Z_{\lambda_{2}}Z_{\lambda_{3}}Z_{\lambda_{3}}\big]\int_{\R}(\Delta_{j-\nu}\psi_{\lambda_{1}})(\Delta_{j}\psi'_{\lambda_{2}})(\Delta_{j-\nu}\psi_{\lambda_{3}})(\Delta_{j}\psi'_{\lambda_{4}})\d t\\
 & \quad\leq\sum_{\substack{\lambda_{1}\neq\lambda_{2}:\\
|\lambda_{\cdot}|\sim j
}
}\E\big[Z_{\lambda_{1}}^{2}Z_{\lambda_{2}}^{2}\big]\int_{\R}\Big((\Delta_{j-\nu}\psi_{\lambda_{1}})^{2}(\Delta_{j}\psi'_{\lambda_{2}})^{2}+(\Delta_{j-\nu}\psi_{\lambda_{1}})(\Delta_{j}\psi'_{\lambda_{1}})(\Delta_{j-\nu}\psi_{\lambda_{2}})(\Delta_{j}\psi'_{\lambda_{2}})\Big)\d t\\
 & \qquad+\sum_{\lambda:|\lambda|\sim j}\E\big[Z_{\lambda}^{4}\big]\int_{\R}(\Delta_{j-\nu}\psi_{\lambda})^{2}(\Delta_{j}\psi'_{\lambda})^{2}\d t\\
 & \quad\lesssim\sum_{\substack{\lambda_{1}\neq\lambda_{2}:\\
|\lambda_{\cdot}|\sim j
}
}2^{-(4s+2r)j}\|\psi_{\lambda_{1}}\|_{L^{4}}\|\psi'_{\lambda_{2}}\|_{L^{4}}\big(\|\psi_{\lambda_{1}}\|_{L^{4}}\|\psi'_{\lambda_{2}}\|_{L^{4}}+\|\psi_{\lambda_{2}}\|_{L^{4}}\|\psi'_{\lambda_{1}}\|_{L^{4}}\big)\\
 & \qquad+\sum_{\lambda:|\lambda|\sim j}2^{-(4s+r)j}\|\psi_{\lambda}\|_{L^{4}}^{2}\|\psi'_{\lambda}\|_{L^{4}}^{2}.
\end{align*}
Plugging in $\psi_{j,k}=2^{j/2}\psi(2^{j}\cdot-k)$, we obtain 
\[
\E\big[\|(\Delta_{j-\nu}X)(\Delta_{j}\dd X)\|_{L^{2}}\big]\lesssim2^{-j(2s+r/2-2)}.
\]
The assertion follows from Lemma~\ref{lem:BesovBall} by the compact
support of $\F R_{j}$ for $j\geq-1$.
\end{proof}
Combining the two previous lemmas, we conclude that stochastic models
of the form (\ref{eq:random function}) are prototypical examples
of geometric Besov rough paths, which were introduced in Definition~\ref{def:geometric rough path},
and thus Theorem~\ref{thm:gobalsolution} can be applied to the corresponding
stochastic differential equations. 
\begin{prop}
Let $\phi$ satisfy Assumption~\ref{ass:phi} and $X=(X^{1},\dots,X^{n})$
be an $n$-dimensional stochastic process. Suppose each component
$X^{d}$, $d=1,\dots n$, is of the form (\ref{eq:random function}),
fulfills Assumption~\ref{ass:Z} for $\frac{5}{6}<s+\frac{r}{4}$
and the corresponding coefficients $(Z_{j,k}^{d})$ and $(Z_{j,k}^{m})$
are independent for $d\ne m$ and all $j,k$. Then, the localized
process $\phi X$ can be enhanced to a geometric Besov rough path,
that is $\phi X\in\mathcal{B}_{4,1}^{0,\alpha}$ almost surely for
$\alpha\in(\frac{1}{3},s+\frac{r}{4}-\frac{1}{2})$. \end{prop}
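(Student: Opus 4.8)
The plan is to verify directly that the pair $(\phi X,\pi(\phi X,\dd(\phi X)))$ lies in the closure $\mathcal B_{4,1}^{0,\alpha}$ of $\{(\theta,\pi(\theta,\dd\theta)):\theta\in C_{1}^{\infty}\}$ inside $B_{4,1}^{\alpha}\times B_{2,1}^{2\alpha-1}$; here I take $\mathcal T=1$, so that the support condition $\supp\theta\subset[-2,2]$ from Definition~\ref{def:geometric rough path} matches $\supp\phi$, and I work under the normalization $X_{0}=0$ (which is needed anyway, since every element of this closure vanishes at the origin). Fix $\alpha<\alpha'<\min\{\tfrac12,\,s+\tfrac r4-\tfrac12\}$. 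The first step is the sample path regularity: applying Lemma~\ref{lem:RegX} to each component, $X\in B_{4,1}^{\alpha'}$ almost surely, and the same estimates show that the partial sums $X^{n}:=\sum_{j=0}^{n}\sum_{k}Z_{j,k}\psi_{j,k}$ converge to $X$ in $B_{4,1}^{\alpha'}$ almost surely. Since $\phi$ is smooth with compact support and $\alpha'>0$, the pointwise multiplier bound~\eqref{eq:pointwiseMulti} then gives $\phi X\in B_{4,1}^{\alpha'}\subset B_{4,1}^{\alpha}$ with compact support, $\phi X^{n}\to\phi X$ in $B_{4,1}^{\alpha}$ almost surely, and the lifting property of Besov spaces yields $\dd(\phi X)\in B_{4,1}^{\alpha'-1}$.

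Next I would identify the resonant term $\pi(\phi X,\dd(\phi X))$, whose entries are $\pi(\phi X^{d},\dd(\phi X^{m}))$ for $1\le d,m\le n$. Writing $\dd(\phi X^{m})=\phi'X^{m}+\phi\,\dd X^{m}$, the contribution $\pi(\phi X^{d},\phi'X^{m})$ has positive total regularity $2\alpha'$, so Lemma~\ref{lem:paraproduct}(iii) places it in $B_{2,1}^{2\alpha'}\subset B_{2,1}^{2\alpha-1}$. For the rough contribution $\pi(\phi X^{d},\phi\,\dd X^{m})$ I would apply Bony's decomposition to both arguments; as $\phi$ lies in every $B_{\infty,1}^{N}$, all the resulting terms except $\pi(T_{\phi}X^{d},T_{\phi}\,\dd X^{m})$ carry a paraproduct against $\phi$ and therefore land, by Lemma~\ref{lem:paraproduct}, in some $B_{2,1}^{N+\alpha'-1}\subset B_{2,1}^{2\alpha-1}$. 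Two applications of the commutator Lemma~\ref{lem:commutator}, together with the symmetry of $\pi$, then give
\[
\pi(T_{\phi}X^{d},T_{\phi}\,\dd X^{m})=\phi^{2}\pi(X^{d},\dd X^{m})+\phi\,\Gamma(\phi,\dd X^{m},X^{d})+\Gamma(\phi,X^{d},T_{\phi}\,\dd X^{m}),
\]
where, since $\alpha'\in(\tfrac13,\tfrac12)$, both $\Gamma$-terms have index $3\alpha'-1>0$ and lie in $B_{2,1}^{3\alpha'-1}\subset B_{2,1}^{2\alpha-1}$ deterministically. What remains is $\phi^{2}\pi(X^{d},\dd X^{m})$: for $d=m$ Lemma~\ref{lem:exResonant} already gives $\pi(X^{d},\dd X^{d})\in B_{2,1}^{2\alpha'-1}$ almost surely, and for $d\ne m$ I would re-run the proof of that lemma, which in fact simplifies, because independence of $(Z_{j,k}^{d})$ and $(Z_{j,k}^{m})$ kills all mixed fourth moments except $\E[(Z_{\lambda_{1}}^{d})^{2}]\,\E[(Z_{\lambda_{2}}^{m})^{2}]$; this yields $\E\|(\Delta_{j-\nu}X^{d})(\Delta_{j}\,\dd X^{m})\|_{L^{2}}\lesssim2^{-j(2s+r/2-2)}$ and hence, via Lemma~\ref{lem:BesovBall}, $\pi(X^{d},\dd X^{m})\in B_{2,1}^{2\alpha'-1}$ almost surely because $\alpha'<s+\tfrac r4-\tfrac12$. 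Multiplying by the smooth function $\phi^{2}$ (Bony's decomposition once more, the negative-regularity factor handled by Lemma~\ref{lem:paraproduct}) stays in $B_{2,1}^{2\alpha'-1}\subset B_{2,1}^{2\alpha-1}$, so $\pi(\phi X,\dd(\phi X))\in B_{2,1}^{2\alpha-1}$ almost surely.

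Finally I would produce an approximating sequence: set $\theta^{n}:=\phi\,(X^{n}-X_{0}^{n})\in C_{1}^{\infty}$, which is smooth, supported in $[-2,2]$ and vanishes at $0$. Since $X_{0}^{n}\to X_{0}=0$ almost surely, $\theta^{n}\to\phi X$ in $B_{4,1}^{\alpha}$, and the additive constant $X_{0}^{n}$ contributes to $\pi(\theta^{n},\dd\theta^{n})$ only terms of the form $X_{0}^{n}$ times a resonant term with a smooth factor, which vanish; thus it suffices to show $\pi(\phi X^{n},\dd(\phi X^{n}))\to\pi(\phi X,\dd(\phi X))$ in $B_{2,1}^{2\alpha-1}$. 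Expanding this difference via $\pi(A-B,\dd A-\dd B)=\pi(A,\dd A)-\pi(A,\dd B)-\pi(B,\dd A)+\pi(B,\dd B)$ with $A=\phi X$ and $B=\phi X-\phi X^{n}=\phi(X-X^{n})$, every resulting resonant term carries at least one factor built from the tail $X-X^{n}$, whose Littlewood--Paley content sits at frequencies $j\gtrsim n$; repeating the estimates of the previous paragraph but summing only over $j\gtrsim n$ gives $\E\|\pi(\phi X^{n},\dd(\phi X^{n}))-\pi(\phi X,\dd(\phi X))\|_{2\alpha-1,2,1}\lesssim\sum_{j\gtrsim n}2^{j(2\alpha'+1-2s-r/2)}\to0$, so a subsequence converges almost surely. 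This exhibits $(\phi X,\pi(\phi X,\dd(\phi X)))$ as a limit of points $(\theta^{n},\pi(\theta^{n},\dd\theta^{n}))$ with $\theta^{n}\in C_{1}^{\infty}$, that is $\phi X\in\mathcal B_{4,1}^{0,\alpha}$; alternatively, once $\theta^{n}\to\phi X$ and $\pi(\theta^{n},\dd\theta^{n})\to\pi(\phi X,\dd(\phi X))$ are established, one may invoke Lemma~\ref{lem:schauder approximation} in its evident version with $[0,1]$ replaced by $[-2,2]$.

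The hard part will be the interplay of the last two steps: transporting the probabilistic regularity $\pi(X,\dd X)\in B_{2,1}^{2\alpha-1}$ of Lemma~\ref{lem:exResonant} through the smooth cut-off $\phi$ while simultaneously controlling the localized resonant terms of the partial sums. The algebraic bookkeeping this requires---Bony's decomposition against a smooth multiplier and two uses of the commutator $\Gamma$---is routine once Lemmas~\ref{lem:paraproduct} and~\ref{lem:commutator} are available; the only genuinely new estimate is the cross-component fourth-moment bound, which is a direct variant of the computation in the proof of Lemma~\ref{lem:exResonant} and is actually easier, since independence of the coordinates removes all mixed contributions.
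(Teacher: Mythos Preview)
Your proof is correct and follows essentially the same strategy as the paper: establish $X\in B_{4,1}^{\alpha'}$ and $\pi(X^d,\dd X^m)\in B_{2,1}^{2\alpha'-1}$ via Lemmas~\ref{lem:RegX} and~\ref{lem:exResonant} (the off-diagonal case being the easier variant you describe), then transfer everything through the smooth cut-off $\phi$ using Bony's decomposition and the commutator Lemma~\ref{lem:commutator}, and finally approximate by the smooth truncations. The only cosmetic difference is that you apply Bony's decomposition to both factors of $\pi(\phi X^d,\phi\,\dd X^m)$ at once and then invoke the commutator twice, whereas the paper peels off one factor at a time, writing $\pi(\phi X,\phi\,\dd X)=\phi\pi(X,\phi\,\dd X)+\Gamma(\phi,X,\phi\,\dd X)+\pi(\pi(\phi,X),\phi\,\dd X)+X\pi(\phi,\phi\,\dd X)+\Gamma(X,\phi,\phi\,\dd X)$ and then repeating on the first term; both routes land on $\phi^{2}\pi(X,\dd X)$ plus terms controlled by Lemmas~\ref{lem:paraproduct} and~\ref{lem:commutator}. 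Your observation that the definition of $\mathcal B_{p,q}^{0,\alpha}$ forces $(\phi X)(0)=X_{0}=0$ is a fair point the paper glosses over.
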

\begin{proof}
The regularity for each component $X^{d}$, $d=1,\dots,n$, is determined
by Lemma~\ref{lem:RegX} and thus $X\in B_{4,1}^{\alpha}$ for $\alpha\in(\frac{1}{3},s+\frac{r}{4}-\frac{1}{2})$.
Furthermore, a smooth approximation is given by the projection of
$X$ onto the first $J\ge1$ Littlewood-Paley blocks as used in the
proof of Lemma~\ref{lem:RegX} or similarly by projecting on the
first $J\ge1$ wavelet resolution levels. 

The resonant terms $\pi(X^{d},\dd X^{d})$, $d=1,\dots,n$, are constructed
in Lemma~\ref{lem:exResonant} again by a smooth approximation in
terms of Littlewood-Paley blocks. Due to the independence of the corresponding
coefficients $(Z_{j,k}^{d})$ and $(Z_{j,k}^{m})$ for $d\ne m$,
an analogous calculation shows that the resonant terms $\pi(X^{d},\dd X^{m})$
for $d\neq m$ exists as limit of the same approximation in terms
of Littlewood-Paley blocks, too. 

It remains to deduce the above results for the localized process $\phi X$
as well. The regularity and approximation of $\phi X$ is implied
by Lemma~\ref{lem:antiderivative}. For the resonant term $\pi(\phi X,\dd(\phi X))$
we observe that 
\[
\pi(\phi X,\dd(\phi X))=\pi(\phi X,\phi^{\prime}X)+\pi(\phi X,\phi\dd X),
\]
where the first term turns out to be no issue thanks to Lemma~\ref{lem:paraproduct}.
For the second one we apply Bony's decomposition to $\phi X$ and
our commutator lemma (Lemma~\ref{lem:commutator}) to get 
\[
\pi(\phi X,\phi\dd X)=\phi\pi(X,\phi\dd X)+\phi\Gamma(\phi,X,\phi\dd X)+\pi(\pi(\phi,X),\phi\dd X)+X\pi(\phi,\phi\dd X)+\Gamma(X,\phi,\phi\dd X).
\]
Due to the regularity of $\phi$ and $X$ it remains to only handle
the first term. By another analogous application of the commutator
lemma, we finally see that the approximation of the resonant term
of the localized process can be deduced from the above approximation
of the non-localized process and therefore $\phi X\in\mathcal{B}_{4,1}^{0,\alpha}$.
\end{proof}

\appendix

\section{Appendix\label{sec:appendix}}

\subsection{Nonhomogeneous Besov spaces}

In this part of the appendix we collect for the reader's convenience
some results which allow to estimate the Besov norm of a function.
For a general introduction to Littlewood-Paley theory and Besov spaces
we recommend \citet{triebel2010} as well as \citet{Bahouri2011}.
\begin{lem}
\citep[Lem. 2.69]{Bahouri2011}\label{lem:BesovAnnulus} Let $\mathcal{A}\subset\mathbb{R}^{d}$
be an annulus, $\alpha\in\mathbb{R}$ and $p,q\in[1,\infty]$. Suppose
that $(f_{j})$ is a sequence of smooth functions such that 
\[
\supp\mathcal{F}f_{j}\subset2^{j}\mathcal{A}\quad\text{and}\quad\big\|\big(2^{\alpha j}\|f_{j}\|_{L^{p}}\big)_{j}\big\|_{\ell^{q}}<\infty.
\]
Then $f:=\sum_{j}f_{j}$ satisfies 
\[
f\in B_{p,q}^{\alpha}(\R^{d})\quad\text{and}\quad\|f\|_{\alpha,p,q}\lesssim\big\|\big(2^{\alpha j}\|f_{j}\|_{L^{p}}\big)_{j}\big\|_{\ell^{q}}.
\]

\begin{lem}
\citep[Lem. 2.84]{Bahouri2011}\label{lem:BesovBall} Let $\mathcal{B}\subset\mathbb{R}^{d}$
be a ball, $\alpha\in\mathbb{R}$ and $p,q\in[1,\infty]$. Suppose
that $(f_{j})$ is a sequence of smooth functions such that 
\[
\supp\mathcal{F}f_{j}\subset2^{j}\mathcal{B}\quad\text{and}\quad\big\|\big(2^{\alpha j}\|f_{j}\|_{L^{p}}\big)_{j}\big\|_{\ell^{q}}<\infty.
\]
Then $f:=\sum_{j}f_{j}$ satisfies 
\[
f\in B_{p,q}^{\alpha}(\R^{d})\quad\text{and}\quad\|f\|_{\alpha,p,q}\lesssim\big\|\big(2^{\alpha j}\|f_{j}\|_{L^{p}}\big)_{j}\big\|_{\ell^{q}}.
\]

\begin{lem}
\citep[Prop. 2.79]{Bahouri2011}\label{lem:bahouri} Let $p,q\in[1,\infty]$,
$\alpha<0$ and $f$ be a tempered distribution. Then, $f\in B_{p,q}^{\alpha}(\R^{d})$
if and only if 
\[
\big(2^{\alpha j}\|S_{j}f\|_{L^{p}}\big)_{j}\in\ell^{q},
\]
where we recall $S_{j}f:=\sum_{k=-1}^{j-1}\Delta_{k}f$. Furthermore,
there exists a constant $C>0$ such that 
\[
C^{-|\alpha|+1}\|f\|_{\alpha,p,q}\leq\big\|\big(2^{\alpha j}\|S_{j}f\|_{L^{p}}\big)_{j}\big\|_{\ell^{q}}\leq C\bigg(1+\frac{1}{|\alpha|}\bigg)\|f\|_{\alpha,p,q}.
\]

\end{lem}
\end{lem}
\end{lem}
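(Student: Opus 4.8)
Since this is exactly \citep[Prop.~2.79]{Bahouri2011}, the plan is to reproduce that (short) standard argument. The whole proof stays at the level of scalar sequences: writing $a_{j}:=\|\Delta_{j}f\|_{L^{p}}$ and $b_{j}:=\|S_{j}f\|_{L^{p}}$, it amounts to comparing $(2^{\alpha j}a_{j})_{j}$ and $(2^{\alpha j}b_{j})_{j}$ in $\ell^{q}$, using only the telescoping relations $S_{j}f=\sum_{k=-1}^{j-1}\Delta_{k}f$ and $\Delta_{j}f=S_{j+1}f-S_{j}f$ (with the convention $S_{-1}f=0$) together with Young's inequality for sequences. For any tempered distribution $f$ one has $S_{j}f\to f$ in $\mathcal{S}'(\R^{d})$, so recovering $f$ is not an issue in either direction and the two conditions really are equivalent once the norm bounds are established.

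For the implication $f\in B_{p,q}^{\alpha}\Rightarrow(2^{\alpha j}b_{j})_{j}\in\ell^{q}$ I would use the triangle inequality to write
\[
2^{\alpha j}b_{j}\le\sum_{k=-1}^{j-1}2^{\alpha(j-k)}\bigl(2^{\alpha k}a_{k}\bigr)=\sum_{m\ge1}2^{\alpha m}\bigl(2^{\alpha(j-m)}a_{j-m}\bigr),
\]
which exhibits $(2^{\alpha j}b_{j})_{j}$ as dominated by the convolution of $(2^{\alpha j}a_{j})_{j}\in\ell^{q}$ with the sequence $m\mapsto 2^{\alpha m}\1_{\{m\ge1\}}$. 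As $\alpha<0$, this sequence lies in $\ell^{1}$ with norm $\sum_{m\ge1}2^{\alpha m}=2^{\alpha}/(1-2^{\alpha})$, so Young's inequality for sequences yields
\[
\bigl\|\bigl(2^{\alpha j}\|S_{j}f\|_{L^{p}}\bigr)_{j}\bigr\|_{\ell^{q}}\le\frac{2^{\alpha}}{1-2^{\alpha}}\,\|f\|_{\alpha,p,q}.
\]
Since $2^{|\alpha|}-1\ge|\alpha|\ln 2$, equivalently $1-2^{\alpha}\ge|\alpha|(\ln 2)\,2^{\alpha}$, the prefactor is at most $(|\alpha|\ln 2)^{-1}\lesssim 1+|\alpha|^{-1}$, which is the asserted upper bound.

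For the converse, I would start from $\Delta_{j}f=S_{j+1}f-S_{j}f$ to get $2^{\alpha j}a_{j}\le 2^{-\alpha}\,2^{\alpha(j+1)}b_{j+1}+2^{\alpha j}b_{j}$, and then pass to $\ell^{q}$-norms, obtaining $\|f\|_{\alpha,p,q}\le(1+2^{|\alpha|})\bigl\|(2^{\alpha j}\|S_{j}f\|_{L^{p}})_{j}\bigr\|_{\ell^{q}}$; this rearranges to a lower bound of the claimed form. Collecting the constants from the two estimates then gives the precise two-sided inequality in the statement. For the way the lemma is used in this paper the exponent $\alpha$ stays in a fixed bounded range, so only the existence of such a norm equivalence is relevant, not the exact numerical form of the constant.

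The main (and essentially only) point is to notice that transferring information between the partial sums $S_{j}f$ and the Littlewood--Paley blocks $\Delta_{j}f$ is a discrete convolution/telescoping operation; once that is seen, each direction is a single application of Young's inequality. The one quantitative subtlety, the blow-up of the equivalence constant as $\alpha\uparrow 0$, is fully accounted for by the geometric series $\sum_{m\ge1}2^{\alpha m}\sim(|\alpha|\ln 2)^{-1}$, which is exactly the source of the $1+|\alpha|^{-1}$ factor.
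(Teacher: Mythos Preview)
The paper does not give its own proof of any of these three lemmas; they are simply quoted from \citep{Bahouri2011} in the appendix as background facts. Your argument for the third statement (Lemma~\ref{lem:bahouri}) is correct and is precisely the standard proof from \citep[Prop.~2.79]{Bahouri2011}: triangle inequality plus Young's inequality for the discrete convolution in one direction, and the telescoping identity $\Delta_{j}f=S_{j+1}f-S_{j}f$ in the other, with the $\alpha<0$ hypothesis entering only through summability of the geometric series $\sum_{m\ge1}2^{\alpha m}$.

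One minor remark: the statement block actually contains three nested lemmas (an artefact of the LaTeX source), and you have only treated the last one. The other two (Lemmas~\ref{lem:BesovAnnulus} and~\ref{lem:BesovBall}) are likewise quoted without proof from \citep{Bahouri2011}; their arguments are of the same flavour---the annulus case uses that $\Delta_{j'}f_{j}=0$ unless $j'\sim j$, while the ball case needs the additional hypothesis $\alpha>0$ (omitted in the paper's statement but present in \citep[Lem.~2.84]{Bahouri2011}) to sum the tail $\sum_{j\gtrsim j'}2^{-\alpha(j-j')}$. Since the paper itself does not supply these proofs either, there is nothing further to compare.
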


\subsection{Proof of Lemma~\ref{lem:lipschitz}: Lipschitz continuity\label{app:proofLipschitz}}

This subsection is devoted to the proof of Lemma~\ref{lem:lipschitz}.
For $j=1,2$ let $u_{0}^{j}\in\mathbb{R}^{d}$ and $\theta_{\mathcal{T}}^{j}\in C_{\mathcal{T}}^{\infty}$
with derviative $\xi_{\mathcal{T}}^{j}=\dd\theta_{\mathcal{T}}^{j}$.
Denote by $u^{j}$, $j=1,2,$ the solutions to corresponding Cauchy
problems~(\ref{eq:localRDE}) and $\tilde{u}^{j}=\psi u^{j}$ for
a weight function $\psi$ satisfying Assumption~\ref{ass:weight}.
Then Lemma~\ref{lem:lipschitz} is proven if we can show that
\[
\|\tilde{u}^{1}-\tilde{u}^{2}\|_{\alpha,p,q}\le C\big(\|\theta_{\mathcal{T}}^{1}-\theta_{\mathcal{T}}^{2}\|_{\alpha,p,q}+\|\pi(\theta_{\mathcal{T}}^{1},\xi_{\mathcal{T}}^{1})-\pi(\theta_{\mathcal{T}}^{2},\xi_{\mathcal{T}}^{2})\|_{2\alpha-1,p/2,q}\big),
\]
for a constant $C$ which does not depend on $\tilde{u}$. Roughly
speaking, the verification of this bound follows the pattern of the
proofs of Proposition~\ref{prop:BoundResonant} and Corollary~\ref{cor:bound}.
However, since Lemma~\ref{lem:lipschitz} is essential for one of
our main results, we shall present it here in full length. 

Taking another weight function $\psi_{2}$ fulfilling Assumption~\ref{ass:weight}
and keeping Remark~\ref{rem:weights} in mind, we obtain
\begin{align}
 & \|\tilde{u}^{1}-\tilde{u}^{2}\|_{\alpha,p,q}\lesssim\|\psi_{2}(\tilde{u}^{1}-\tilde{u}^{2})\|_{\alpha,p,q}\nonumber \\
 & \quad\lesssim(\mathcal{T}^{2}\vee1)\big(|u^{1}(0)-u^{2}(0)|+\|\dd(\tilde{u}^{1}-\tilde{u}^{2})\|_{\alpha-1,p,q}\big)\nonumber \\
 & \quad\leq(\mathcal{T}^{2}\vee1)\big(|u^{1}(0)-u^{2}(0)|+\|\dd(T_{F(\tilde{u}^{1})}\theta_{\mathcal{T}}^{1}-T_{F(\tilde{u}^{2})}\theta_{\mathcal{T}}^{2})\|_{\alpha-1,p,q}+\|\dd(u^{\#,1}-u^{\#,2})\|_{\alpha-1,p,q}\big),\label{eq:lipschitzproof}
\end{align}
where Lemma~\ref{lem:antiderivative} is used in the second line
and the paracontrolled ansatz $\tilde{u}^{j}=T_{F(\tilde{u}^{j})}\theta_{\mathcal{T}}^{j}+u^{\#,j}$
in the third one. Let us continue by further estimating the term $\dd(T_{F(\tilde{u}^{1})}\theta_{\mathcal{T}}^{1}-T_{F(\tilde{u}^{2})}\theta_{\mathcal{T}}^{2})$.
Applying the Leibniz rule and the triangle inequality leads to 
\begin{align*}
 & \|\dd(T_{F(\tilde{u}^{1})}\theta_{\mathcal{T}}^{1}-T_{F(\tilde{u}^{2})}\theta_{\mathcal{T}}^{2})\|_{\alpha-1,p,q}\\
 & \quad\leq\|T_{\dd F(\tilde{u}^{1})}\theta_{\mathcal{T}}^{1}-T_{\dd F(\tilde{u}^{2})}\theta_{\mathcal{T}}^{2}\|_{\alpha-1,p,q}+\|T_{F(\tilde{u}^{1})}\xi_{\mathcal{T}}^{1}-T_{F(\tilde{u}^{2})}\xi_{\mathcal{T}}^{2}\|_{\alpha-1,p,q}\\
 & \quad\leq\|T_{\dd F(\tilde{u}^{1})}(\theta_{\mathcal{T}}^{1}-\theta_{\mathcal{T}}^{2})\|_{\alpha-1,p,q}+\|T_{\dd F(\tilde{u}^{1})-\dd F(\tilde{u}^{2})}\theta_{\mathcal{T}}^{2}\|_{\alpha-1,p,q}+\|T_{F(\tilde{u}^{1})}(\xi_{\mathcal{T}}^{1}-\xi_{\mathcal{T}}^{2})\|_{\alpha-1,p,q}\\
 & \qquad+\|T_{F(\tilde{u}^{1})-F(\tilde{u}^{2})}\xi_{\mathcal{T}}^{2}\|_{\alpha-1,p,q}.
\end{align*}
Based on Lemma~\ref{lem:paraproduct}, Besov embeddings, the lifting
property of Besov spaces \citep[Thm. 2.3.8]{triebel2010}, (\ref{eq:F(u)})
and (\ref{eq:estFu}), one has 
\begin{align}
 & \|\dd(T_{F(\tilde{u}^{1})}\theta_{\mathcal{T}}^{1}-T_{F(\tilde{u}^{2})}\theta_{\mathcal{T}}^{2})\|_{\alpha-1,p,q}\nonumber \\
 & \quad\lesssim\|\dd F(\tilde{u}^{1})\|_{\alpha-1,p,q}\|\theta_{\mathcal{T}}^{1}-\theta_{\mathcal{T}}^{2}\|_{0,\infty,\infty}+\|\dd F(\tilde{u}^{1})-\dd F(\tilde{u}^{2})\|_{\alpha-1,p,q}\|\theta_{\mathcal{T}}^{2}\|_{0,\infty,\infty}\nonumber \\
 & \qquad+\|F\|_{\infty}\|\xi_{\mathcal{T}}^{1}-\xi_{\mathcal{T}}^{2}\|_{\alpha-1,p,q}+\|F(\tilde{u}^{1})-F(\tilde{u}^{2})\|_{\infty}\|\xi_{\mathcal{T}}^{2}\|_{\alpha-1,p,q}\nonumber \\
 & \quad\lesssim\|F(\tilde{u}^{1})\|_{\alpha,p,q}\|\theta_{\mathcal{T}}^{1}-\theta_{\mathcal{T}}^{2}\|_{\alpha,p,q}+\|F(\tilde{u}^{1})-F(\tilde{u}^{2})\|_{\alpha,p,q}\|\theta_{\mathcal{T}}^{2}\|_{\alpha,p,q}+\|F\|_{\infty}\|\xi_{\mathcal{T}}^{1}-\xi_{\mathcal{T}}^{2}\|_{\alpha-1,p,q}\nonumber \\
 & \qquad+\|F^{\prime}\|_{\infty}\|\tilde{u}^{1}-\tilde{u}^{2}\|_{\infty}\|\xi_{\mathcal{T}}^{2}\|_{\alpha-1,p,q}\nonumber \\
 & \quad\lesssim\|F\|_{C^{1}}\|\tilde{u}^{1}\|_{\alpha,p,q}\|\theta_{\mathcal{T}}^{1}-\theta_{\mathcal{T}}^{2}\|_{\alpha,p,q}+\|F^{\prime}\|_{\infty}\|\theta_{\mathcal{T}}^{2}\|_{\alpha,p,q}\|\tilde{u}^{1}-\tilde{u}^{2}\|_{\alpha,p,q}+\|F\|_{\infty}\|\xi_{\mathcal{T}}^{1}-\xi_{\mathcal{T}}^{2}\|_{\alpha-1,p,q}\nonumber \\
 & \qquad+\|F^{\prime}\|_{\infty}\|\xi_{\mathcal{T}}^{2}\|_{\alpha-1,p,q}\|\tilde{u}^{1}-\tilde{u}^{2}\|_{\alpha,p,q}\nonumber \\
 & \quad\lesssim\|F\|_{C^{1}}\big(1+\|\tilde{u}^{1}\|_{\alpha,p,q}+\|\xi_{\mathcal{T}}^{2}\|_{\alpha-1,p,q}+\|\theta_{\mathcal{T}}^{2}\|_{\alpha,p,q}\big)\nonumber \\
 & \qquad\times\big(\|\xi_{\mathcal{T}}^{1}-\xi_{\mathcal{T}}^{2}\|_{\alpha-1,p,q}+\|\theta_{\mathcal{T}}^{1}-\theta_{\mathcal{T}}^{2}\|_{\alpha,p,q}+\|\tilde{u}^{1}-\tilde{u}^{2}\|_{\alpha,p,q}\big).\label{eq:lipschitzproof2}
\end{align}
It remains to consider the difference of derivatives $\dd\tilde{u}^{\#,j}$,
which can be decomposed (cf. (\ref{eq:phi1})) into 
\begin{align*}
\dd\tilde{u}^{\#,j} & =\pi(F(\tilde{u}^{j}),\xi_{\mathcal{T}}^{j})+T_{\xi_{\mathcal{T}}^{j}}(F(\tilde{u}^{j}))-T_{\dd F(\tilde{u}^{j})}\theta_{\mathcal{T}}^{j}+\frac{\psi^{\prime}}{\psi}\tilde{u}^{j}\quad\text{for }j=1,2.
\end{align*}
Applying Proposition~\ref{prop:reduction}, we can rewrite the resonant
term, differently than in the proof of Proposition~\ref{prop:BoundResonant},
as 
\begin{equation}
\pi(F(\tilde{u}^{j}),\xi_{\mathcal{T}}^{j})=F^{\prime}(\tilde{u}^{j})\pi(\tilde{u}^{j},\xi_{\mathcal{T}}^{j})+\Pi_{F}(\tilde{u}^{j},\xi_{\mathcal{T}}^{j})\label{eq:resDecomp-1}
\end{equation}
and, taking the ansatz $\tilde{u}^{j}=T_{F(\tilde{u}^{j})}\theta_{\mathcal{T}}^{j}+u^{\#,j}$
into account and applying the commutator Lemma~\ref{lem:commutator},
we have
\begin{align*}
\pi(\tilde{u}^{j},\xi_{\mathcal{T}}^{j}) & =\pi(T_{F(\tilde{u}^{j})}\theta_{\mathcal{T}}^{j},\xi_{\mathcal{T}}^{j})+\pi(u^{\#,j},\xi_{\mathcal{T}}^{j})\\
 & =F(\tilde{u}^{j})\pi(\theta_{\mathcal{T}}^{j},\xi_{\mathcal{T}}^{j})+\Gamma(F(\tilde{u}^{j}),\theta_{\mathcal{T}}^{j},\xi_{\mathcal{T}}^{j})+\pi(u^{\#,j},\xi_{\mathcal{T}}^{j}).
\end{align*}
Therefore, we decompose $\dd u^{\#,j}$ into the following seven terms
\begin{align*}
\dd\tilde{u}^{\#,j} & =F^{\prime}(\tilde{u}^{j})F(\tilde{u}^{j})\pi(\theta_{\mathcal{T}}^{j},\xi_{\mathcal{T}}^{j})+F^{\prime}(\tilde{u}^{j})\Gamma(F(\tilde{u}^{j}),\theta_{\mathcal{T}}^{j},\xi_{\mathcal{T}}^{j})+F^{\prime}(\tilde{u}^{j})\pi(u^{\#,j},\xi_{\mathcal{T}}^{j})+\Pi_{F}(\tilde{u}^{j},\xi_{\mathcal{T}}^{j})\\
 & \quad+T_{\xi_{\mathcal{T}}^{j}}(F(\tilde{u}^{j}))-T_{\dd F(\tilde{u}^{j})}\theta_{\mathcal{T}}^{j}+\frac{\psi^{\prime}}{\psi}\tilde{u}^{j}\\
 & =:D_{1}^{j}+\dots+D_{7}^{j}.
\end{align*}
Let us tackle the differences of these seven terms: The first term
is estimated as follows
\begin{align*}
 & \|D_{1}^{1}-D_{1}^{2}\|_{2\alpha-1,p/2,q}=\big\| F'(\tilde{u}^{1})F(\tilde{u}^{1})\pi(\theta_{\mathcal{T}}^{1},\xi_{\mathcal{T}}^{1})-F'(\tilde{u}^{2})F(\tilde{u}^{2})\pi(\theta_{\mathcal{T}}^{2},\xi_{\mathcal{T}}^{2})\big\|_{2\alpha-1,p/2,q}\\
 & \quad\lesssim\|F'(\tilde{u}^{1})F(\tilde{u}^{1})-F'(\tilde{u}^{2})F(\tilde{u}^{2})\|_{\alpha,p,q}\|\pi(\theta_{\mathcal{T}}^{1},\xi_{\mathcal{T}}^{1})\|_{2\alpha-1,p/2,q}\\
 & \qquad+\|F'(\tilde{u}^{2})F(\tilde{u}^{2})\|_{\alpha,p,q}\|\pi(\theta_{\mathcal{T}}^{1},\xi_{\mathcal{T}}^{1})-\pi(\theta_{\mathcal{T}}^{2},\xi_{\mathcal{T}}^{2})\|_{2\alpha-1,p/2,q}\\
 & \quad\lesssim\|\pi(\theta_{\mathcal{T}}^{1},\xi_{\mathcal{T}}^{1})\|_{2\alpha-1,p/2,q}\big(\|(F'(\tilde{u}^{1})-F'(\tilde{u}^{2}))F(\tilde{u}^{1})\|_{\alpha,p,q}+\|F'(\tilde{u}^{2})(F(\tilde{u}^{1})-F(\tilde{u}^{2}))\|_{\alpha,p,q}\big)\\
 & \qquad+\|F\|_{C^{2}}^{2}\|\tilde{u}^{2}\|_{\alpha,p,q}\|\pi(\theta_{\mathcal{T}}^{1},\xi_{\mathcal{T}}^{1})-\pi(\theta_{\mathcal{T}}^{2},\xi_{\mathcal{T}}^{2})\|_{2\alpha-1,p/2,q}\\
 & \quad\lesssim\|F\|_{C^{2}}^{2}\Big(\|\pi(\theta_{\mathcal{T}}^{1},\xi_{\mathcal{T}}^{1})\|_{2\alpha-1,p/2,q}\big(\|\tilde{u}^{1}\|_{\alpha,p,q}+\|\tilde{u}^{2}\|_{\alpha,p,q}\big)\|\tilde{u}^{1}-\tilde{u}^{2}\|_{\alpha,p,q}\\
 & \qquad+\|\tilde{u}^{2}\|_{\alpha,p,q}\|\pi(\theta_{\mathcal{T}}^{1},\xi_{\mathcal{T}}^{1})-\pi(\theta_{\mathcal{T}}^{2},\xi_{\mathcal{T}}^{2})\|_{2\alpha-1,p/2,q}\Big),
\end{align*}
where we refer to (\ref{eq:pointwiseMulti}), (\ref{eq:F(u)}), (\ref{eq:estFu})
and (\ref{eq:estimateProd}) for explanations to the above estimates.
Applying (\ref{eq:estimateProd}), Lemma~\ref{lem:commutator} and
Besov embeddings, we see for the next term that 
\begin{align*}
 & \|D_{2}^{1}-D_{2}^{2}\|_{2\alpha-1,p/2,q}=\big\| F'(\tilde{u}^{1})\Gamma(F(\tilde{u}^{1}),\theta_{\mathcal{T}}^{1},\xi_{\mathcal{T}}^{1})-F'(\tilde{u}^{2})\Gamma(F(\tilde{u}^{2}),\theta_{\mathcal{T}}^{2},\xi_{\mathcal{T}}^{2})\big\|_{2\alpha-1,p/2,q}\\
 & \quad\lesssim\|F^{\prime}\|_{\infty}\big(\|\Gamma(F(\tilde{u}^{1})-F(\tilde{u}^{2}),\theta_{\mathcal{T}}^{1},\xi_{\mathcal{T}}^{1})\|_{3\alpha-1,p/3,q}\\
 & \qquad+\|\Gamma(F(\tilde{u}^{2}),\theta_{\mathcal{T}}^{1}-\theta_{\mathcal{T}}^{2},\xi_{\mathcal{T}}^{1})\|_{3\alpha-1,p/3,q}+\|\Gamma(F(\tilde{u}^{2}),\theta_{\mathcal{T}}^{2},\xi_{\mathcal{T}}^{1}-\xi_{\mathcal{T}}^{2})\|_{3\alpha-1,p/3,q}\big)\\
 & \qquad+\|F'(\tilde{u}^{1})-F^{\prime}(\tilde{u}^{2})\|_{\infty}\|\Gamma(F(\tilde{u}^{2}),\theta_{\mathcal{T}}^{2},\xi_{\mathcal{T}}^{2})\|_{3\alpha-1,p/3,q}\\
 & \quad\lesssim\|F\|_{C^{1}}^{2}\Big(\|\theta_{\mathcal{T}}^{1}\|_{\alpha,p,q}\|\xi_{\mathcal{T}}^{1}\|_{\alpha-1,p,q}\|\tilde{u}^{1}-\tilde{u}^{2}\|_{\alpha,p,q}\\
 & \qquad+\|\xi_{\mathcal{T}}^{1}\|_{\alpha-1,p,q}\|\tilde{u}^{2}\|_{\alpha,p,q}\|\theta_{\mathcal{T}}^{1}-\theta_{\mathcal{T}}^{2}\|_{\alpha,p,q}+\|\tilde{u}^{2}\|_{\alpha,p,q}\|\theta_{\mathcal{T}}^{2}\|_{\alpha,p,q}\|\xi_{\mathcal{T}}^{1}-\xi_{\mathcal{T}}^{2}\|_{\alpha-1,p,q}\Big)\\
 & \qquad+\|F^{\prime\prime}\|_{\infty}\|F\|_{C^{1}}\|\tilde{u}^{2}\|_{\alpha,p,q}\|\theta_{\mathcal{T}}^{2}\|_{\alpha,p,q}\|\xi_{\mathcal{T}}^{2}\|_{\alpha-1,p,q}\|\tilde{u}^{1}-\tilde{u}^{2}\|_{\alpha,p,q}.
\end{align*}
For the third term, again due to (\ref{eq:estimateProd}) as well
as Lemma~\ref{lem:paraproduct} and Besov embeddings, we obtain 
\begin{align*}
 & \|D_{3}^{1}-D_{3}^{2}\|_{2\alpha-1,p/2,q}=\|F'(\tilde{u}^{1})\pi(u^{\#,1},\xi_{\mathcal{T}}^{1})-F'(\tilde{u}^{2})\pi(u^{\#,2},\xi_{\mathcal{T}}^{2})\|_{2\alpha-1,p/2,q}\\
 & \quad\lesssim\|F'(\tilde{u}^{1})\pi(u^{\#,1}-u^{\#,2},\xi_{\mathcal{T}}^{1})\|_{2\alpha-1,p/2,q}+\|F'(\tilde{u}^{1})\pi(u^{\#,2},\xi_{\mathcal{T}}^{1}-\xi_{\mathcal{T}}^{2})\|_{2\alpha-1,p/2,q}\\
 & \qquad+\|(F'(\tilde{u}^{1})-F'(\tilde{u}^{2}))\pi(u^{\#,2},\xi_{\mathcal{T}}^{2})\|_{2\alpha-1,p/2,q}\\
 & \quad\lesssim\|F'(\tilde{u}^{1})\|_{\infty}\|\pi(u^{\#,1}-u^{\#,2},\xi_{\mathcal{T}}^{1})\|_{3\alpha-1,p/3,q}+\|F'(\tilde{u}^{1})\|_{\infty}\|\pi(u^{\#,2},\xi_{\mathcal{T}}^{1}-\xi_{\mathcal{T}}^{2})\|_{3\alpha-1,p/3,q}\\
 & \qquad+\|\pi(u^{\#,2},\xi_{\mathcal{T}}^{2})\|_{3\alpha-1,p/3,q}\|F^{\prime}(\tilde{u}^{1})-F^{\prime}(\tilde{u}^{2})\|_{\infty}\\
 & \quad\lesssim\|F\|_{C^{1}}\|\xi_{\mathcal{T}}^{1}\|_{\alpha-1,p,q}\|u^{\#,1}-u^{\#,2}\|_{2\alpha,p/2,q}+\|F\|_{C^{1}}\|u^{\#,2}\|_{2\alpha,p/2,q}\|\xi_{\mathcal{T}}^{1}-\xi_{\mathcal{T}}^{2}\|_{\alpha-1,p,q}\\
 & \qquad+\|F^{\prime\prime}\|_{\infty}\|u^{\#,2}\|_{2\alpha,p/2,q}\|\xi_{\mathcal{T}}^{2}\|_{\alpha-1,p,q}\|\tilde{u}^{1}-\tilde{u}^{2}\|_{\alpha,p,q}.
\end{align*}
Proposition~\ref{prop:reduction} and the embedding $B_{p/3,\infty}^{3\alpha-1}\subset B_{p/2,q}^{2\alpha-1}$
yield for the fourth term 
\begin{align*}
\|D_{4}^{1}-D_{4}^{2}\|_{2\alpha-1,p/2,q} & =\|\Pi_{F}(\tilde{u}^{1},\xi_{\mathcal{T}}^{1})-\Pi_{F}(\tilde{u}^{2},\xi_{\mathcal{T}}^{2})\|_{2\alpha-1,p/2,q}\\
 & \lesssim\|\Pi_{F}(\tilde{u}^{1},\xi_{\mathcal{T}}^{1})-\Pi_{F}(\tilde{u}^{2},\xi_{\mathcal{T}}^{2})\|_{3\alpha-1,p/3,\infty}\\
 & \lesssim\|F\|_{C^{3}}C(\tilde{u}^{1},\tilde{u}^{2},\xi_{\mathcal{T}}^{1},\xi_{\mathcal{T}}^{2})\Big(\|\tilde{u}^{1}-\tilde{u}^{2}\|_{\alpha,p,q}+\|\xi_{\mathcal{T}}^{1}-\xi_{\mathcal{T}}^{2}\|_{\alpha-1,p,q}\Big),
\end{align*}
where the constant $C(\tilde{u}^{1},\tilde{u}^{2},\xi_{\mathcal{T}}^{1},\xi_{\mathcal{T}}^{2})$
is given in Proposition~\ref{prop:reduction}. The fifth term can
be bounded by 
\begin{align*}
 & \|D_{5}^{1}-D_{5}^{2}\|_{2\alpha-1,p/2,q}=\|T_{\xi_{\mathcal{T}}^{1}}(F(\tilde{u}^{1}))-T_{\xi_{\mathcal{T}}^{2}}(F(\tilde{u}^{2}))\|_{2\alpha-1,p/2,q}\\
 & \quad\lesssim\|T_{\xi_{\mathcal{T}}^{1}-\xi_{\mathcal{T}}^{2}}(F(\tilde{u}^{1}))\|_{2\alpha-1,p/2,q}+\|T_{\xi_{\mathcal{T}}^{2}}(F(\tilde{u}^{1})-F(\tilde{u}^{2}))\|_{2\alpha-1,p/2,q}\\
 & \quad\lesssim\|F\|_{C^{1}}\|\tilde{u}^{1}\|_{\alpha,p,2q}\|\xi_{\mathcal{T}}^{1}-\xi_{\mathcal{T}}^{2}\|_{\alpha-1,p,2q}+\|\xi_{\mathcal{T}}^{2}\|_{\alpha-1,p,2q}\|F(\tilde{u}^{1})-F(\tilde{u}^{2})\|_{\alpha,p,2q}\\
 & \quad\lesssim\|F\|_{C^{1}}\|\tilde{u}^{1}\|_{\alpha,p,q}\|\xi_{\mathcal{T}}^{1}-\xi_{\mathcal{T}}^{2}\|_{\alpha-1,p,q}+\|F^{\prime}\|_{\infty}\|\xi_{\mathcal{T}}^{2}\|_{\alpha-1,p,q}\|\tilde{u}^{1}-\tilde{u}^{2}\|_{\alpha,p,q}
\end{align*}
because of Lemma~\ref{lem:paraproduct} and (\ref{eq:estFu}). For
the sixth term, the lifting property \citep[Thm. 2.3.8]{triebel2010},
an analog to (\ref{eq:estFu}) and (\ref{eq:F(u)}) yield 
\begin{align*}
 & \|D_{6}^{1}-D_{6}^{2}\|_{2\alpha-1,p/2,q}=\|T_{\dd F(\tilde{u}^{1})}\theta_{\mathcal{T}}^{1}-T_{\dd F(\tilde{u}^{2})}\theta_{\mathcal{T}}^{2}\|_{2\alpha-1,p/2,q}\\
 & \quad\lesssim\|T_{\dd F(\tilde{u}^{1})-\dd F(\tilde{u}^{2})}\theta_{\mathcal{T}}^{1}\|_{2\alpha-1,p/2,q}+\|T_{\dd F(\tilde{u}^{2})}(\theta_{\mathcal{T}}^{1}-\theta_{\mathcal{T}}^{2})\|_{2\alpha-1,p/2,q}\\
 & \quad\lesssim\|\dd F(\tilde{u}^{1})-\dd F(\tilde{u}^{2})\|_{\alpha-1,p,q}\|\theta_{\mathcal{T}}^{1}\|_{\alpha,p,q}+\|\dd F(\tilde{u}^{2})\|_{\alpha-1,p,q}\|\theta_{\mathcal{T}}^{1}-\theta_{\mathcal{T}}^{2}\|_{\alpha,p,q}\\
 & \quad\lesssim\|F(\tilde{u}^{1})-F(\tilde{u}^{2})\|_{\alpha,p,q}\|\theta_{\mathcal{T}}^{1}\|_{\alpha,p,q}+\|F(\tilde{u}^{2})\|_{\alpha,p,q}\|\theta_{\mathcal{T}}^{1}-\theta_{\mathcal{T}}^{2}\|_{\alpha,p,q}\\
 & \quad\lesssim\|F'\|_{\infty}\|\theta_{\mathcal{T}}^{1}\|_{\alpha,p,q}\|\tilde{u}^{1}-\tilde{u}^{2}\|_{\alpha,p,q}+\|F\|_{C^{1}}\|\tilde{u}^{2}\|_{\alpha,p,q}\|\theta_{\mathcal{T}}^{1}-\theta_{\mathcal{T}}^{2}\|_{\alpha,p,q}.
\end{align*}
Since $2\alpha-1<0$, the last difference $D_{7}^{1}-D_{7}^{2}$ can
be easily estimated by
\[
\|\frac{\psi^{\prime}}{\psi}(\tilde{u}^{1}-\tilde{u}^{2})\|_{2\alpha-1,p/2,q}\lesssim\|\frac{\psi^{\prime}}{\psi}(\tilde{u}^{1}-\tilde{u}^{2})\|_{L^{p/2}}\leq\|\frac{\psi^{\prime}}{\psi}\|_{\infty}\|\tilde{u}^{1}-\tilde{u}^{2}\|_{L^{p/2}}\lesssim(\mathcal{T}\vee1)\|\frac{\psi^{\prime}}{\psi}\|_{\infty}\|\tilde{u}^{1}-\tilde{u}^{2}\|_{\alpha,p,q}.
\]
 Defining the constants
\begin{align*}
 & \tilde{C}_{\tilde{u},u^{\#}}:=1+\sum_{i=1,2}\big(\|\tilde{u}^{j}\|_{\alpha,p,q}+\|\tilde{u}^{j}\|_{\alpha,p,q}^{2}+\|u^{\#,j}\|_{2\alpha,p/2,q}\big),\\
 & C_{\xi^{j},\theta^{j}}:=\|\theta_{\mathcal{T}}^{j}\|_{\alpha,p,q}+\|\theta_{\mathcal{T}}^{j}\|_{\alpha,p,q}^{2}+\|\pi(\theta_{\mathcal{T}}^{j},\xi_{\mathcal{T}}^{j})\|_{2\alpha-1,p/2,q},\quad j=1,2,\\
 & \tilde{C}{}_{\xi,\theta}:=1+C_{\xi^{1},\theta^{1}}+C_{\xi^{2},\theta^{2}},
\end{align*}
we altogether obtain
\begin{align*}
 & \|\dd u^{\#,1}-\dd u^{\#,2}\|_{2\alpha-1,p/2,q}\\
 & \quad\lesssim\tilde{C}{}_{\xi,\theta}\tilde{C}_{\tilde{u},u^{\#}}\big(\|F\|_{C^{3}}+\|F\|_{C^{2}}^{2}\big)\\
 & \qquad\times\Big(\|\tilde{u}^{1}-\tilde{u}^{2}\|_{\alpha,p,q}+\|u^{\#,1}-u^{\#,2}\|_{2\alpha,p/2,q}\\
 & \qquad\qquad+\|\xi_{\mathcal{T}}^{1}-\xi_{\mathcal{T}}^{2}\|_{\alpha-1,p,q}+\|\theta_{\mathcal{T}}^{1}-\theta_{\mathcal{T}}^{2}\|_{\alpha,p,q}+\|\pi(\theta_{\mathcal{T}}^{1},\xi_{\mathcal{T}}^{1})-\pi(\theta_{\mathcal{T}}^{2},\xi_{\mathcal{T}}^{2})\|_{2\alpha-1,p/2,q}\Big)\\
 & \qquad+(\mathcal{T}\vee1)\|\frac{\psi^{\prime}}{\psi}\|_{\infty}\|\tilde{u}^{1}-\tilde{u}^{2}\|_{\alpha,p,q}.
\end{align*}
The factor $\tilde{C}_{\tilde{u},u^{\#}}$ is (locally) bounded since
$\|\tilde{u}^{1}\|_{\alpha,p,q}$ and $\|\tilde{u}^{2}\|_{\alpha,p,q}$
can be bounded by Corollary~\ref{cor:bound} and $\|u^{\#,j}\|_{2\alpha,p/2,q}$,
for $j=1,2$, can be bounded analogously to (\ref{eq:u raute 1})
and (\ref{eq:u raute 2}) by 
\begin{align*}
\|u^{\#,j}\|_{2\alpha,p/2,q} & \lesssim(\mathcal{T}\vee1)\Big(\big(\|F\|_{\infty}\|\theta_{\mathcal{T}}^{j}\|_{L^{p}}+\|\tilde{u}^{j}\|_{L^{p}}\big)\\
 & \quad+C_{\xi^{j},\mathcal{\theta}^{j}}(\|F\|_{C^{2}}\vee\|F\|_{C^{2}}^{2})\big(\|\tilde{u}^{j}\|_{\alpha,p,q}+\|F\|_{\infty}\|\theta_{\mathcal{T}}^{j}\|_{\alpha,p,q}\big)+\|\frac{\psi^{\prime}}{\psi}\|_{\infty}\|\tilde{u}^{j}\|_{\alpha,p,q}\Big)\\
 & \lesssim(\mathcal{T}\vee1)\big(1+(\|F\|_{C^{2}}\vee\|F\|_{C^{2}}^{3})(1+\|\theta_{\mathcal{T}}^{j}\|)C_{\xi^{j},\theta^{j}}+\|\frac{\psi^{\prime}}{\psi}\|_{\infty}\big)(1+\|\tilde{u}^{j}\|_{\alpha,p,q}).
\end{align*}
Relying on the lifting property of Besov spaces together with the
definition of $u^{\#}$, $\|\tilde{u}^{1}-\tilde{u}^{2}\|_{L^{p/2}}\lesssim(\mathcal{T}\vee1)\|\tilde{u}^{1}-\tilde{u}^{2}\|_{L^{p}}$
and the compact support of $\theta_{\mathcal{T}}^{j}$, we have 
\begin{align*}
 & \|u^{\#,1}-u^{\#,2}\|_{2\alpha,p/2,q}\\
 & \quad\lesssim\|u^{\#,1}-u^{\#,2}\|_{L^{p/2}}+\|\dd u^{\#,1}-\dd u^{\#,2}\|_{2\alpha-1,p/2,q}\\
 & \quad\le\|T_{F(\tilde{u}^{1})}\theta_{\mathcal{T}}^{1}-T_{F(\tilde{u}^{2})}\theta_{\mathcal{T}}^{2}\|_{L^{p/2}}+\|\tilde{u}{}^{1}-\tilde{u}^{2}\|_{L^{p/2}}+\|\dd u^{\#,1}-\dd u^{\#,2}\|_{2\alpha-1,p/2,q}\\
 & \quad\le\|T_{F(\tilde{u}^{1})-F(\tilde{u}^{2})}\theta_{\mathcal{T}}^{1}\|_{L^{p/2}}+\|T_{F(\tilde{u}^{2})}(\theta_{\mathcal{T}}^{1}-\theta_{\mathcal{T}}^{2})\|_{L^{p/2}}+\|\tilde{u}{}^{1}-\tilde{u}^{2}\|_{L^{p/2}}\\
 & \qquad+\|\dd u^{\#,1}-\dd u^{\#,2}\|_{2\alpha-1,p/2,q}\\
 & \quad\lesssim(\mathcal{T}\vee1)\big(\|F(\tilde{u}^{1})-F(\tilde{u}^{2})\|_{\infty}\|\theta_{\mathcal{T}}^{1}\|_{L^{p}}+\|F\|_{\infty}\|\theta_{\mathcal{T}}^{1}-\theta_{\mathcal{T}}^{2}\|_{L^{p}}+\|\tilde{u}{}^{1}-\tilde{u}^{2}\|_{L^{p}}\big)\\
 & \qquad+\|\dd u^{\#,1}-\dd u^{\#,2}\|_{2\alpha-1,p/2,q}\\
 & \quad\lesssim(\mathcal{T}\vee1)\big(\|F^{\prime}\|_{\infty}\|\theta_{\mathcal{T}}^{1}\|_{\alpha,p,q}\|\tilde{u}^{1}-\tilde{u}^{2}\|_{\alpha,p,q}+\|F\|_{\infty}\|\theta_{\mathcal{T}}^{1}-\theta_{\mathcal{T}}^{2}\|_{\alpha,p,q}+\|\tilde{u}{}^{1}-\tilde{u}^{2}\|_{\alpha,p,q}\big)\\
 & \qquad+\|\dd u^{\#,1}-\dd u^{\#,2}\|_{2\alpha-1,p/2,q}.
\end{align*}
Therefore, if $\|F\|_{C^{3}}+\|F\|_{C^{2}}^{2}$ is sufficiently small,
depending on $\tilde{C}_{\xi,\theta}$, $\tilde{C}_{\tilde{u},u^{\#}}$
and $\mathcal{T}$, then

\begin{align*}
 & \|\dd u^{\#,1}-\dd u^{\#,2}\|_{2\alpha-1,p/2,q}\\
 & \quad\lesssim(1+\|\theta_{\mathcal{T}}^{1}\|_{\alpha,p,q})\tilde{C}_{\xi,\theta}\tilde{C}{}_{\tilde{u},u^{\#}}(\mathcal{T}\vee1)(\|F\|_{C^{3}}+\|F\|_{C^{2}}^{3})\\
 & \qquad\times\Big(\|\tilde{u}^{1}-\tilde{u}^{2}\|_{\alpha,p,q}+\|\xi_{\mathcal{T}}^{1}-\xi_{\mathcal{T}}^{2}\|_{\alpha-1,p,q}+\|\theta_{\mathcal{T}}^{1}-\theta_{\mathcal{T}}^{2}\|_{\alpha,p,q}\\
 & \qquad+\|\pi(\theta_{\mathcal{T}}^{1},\xi_{\mathcal{T}}^{1})-\pi(\theta_{\mathcal{T}}^{2},\xi_{\mathcal{T}}^{2})\|_{2\alpha-1,p/2,q}\Big)+(\mathcal{T}\vee1)\|\frac{\psi^{\prime}}{\psi}\|_{\infty}\|\tilde{u}^{1}-\tilde{u}^{2}\|_{\alpha,p,q}.
\end{align*}
Plugging this estimate and (\ref{eq:lipschitzproof2}) into (\ref{eq:lipschitzproof}),
we obtain
\begin{align*}
 & \|\tilde{u}^{1}-\tilde{u}^{2}\|_{\alpha,p,q}\\
 & \quad\lesssim(\mathcal{T}^{2}\vee1)|u^{1}(0)-u^{2}(0)|+(1+\|\theta_{\mathcal{T}}^{1}\|_{\alpha,p,q})\tilde{C}_{\xi,\theta}\tilde{C}_{\tilde{u},u^{\#}}(\mathcal{T}\vee1)(\|F\|_{C^{3}}+\|F\|_{C^{2}}^{3})\\
 & \qquad\times\Big(\|\tilde{u}^{1}-\tilde{u}^{2}\|_{\alpha,p,q}+\|\xi_{\mathcal{T}}^{1}-\xi_{\mathcal{T}}^{2}\|_{\alpha-1,p,q}+\|\theta_{\mathcal{T}}^{1}-\theta_{\mathcal{T}}^{2}\|_{\alpha,p,q}\\
 & \qquad+\|\pi(\theta_{\mathcal{T}}^{1},\xi_{\mathcal{T}}^{1})-\pi(\theta_{\mathcal{T}}^{2},\xi_{\mathcal{T}}^{2})\|_{2\alpha-1,p/2,q}\Big)+(\mathcal{T}^{2}\vee1)\|\frac{\psi^{\prime}}{\psi}\|_{\infty}\|\tilde{u}^{1}-\tilde{u}^{2}\|_{\alpha,p,q}.
\end{align*}
For a possibly smaller $\|F\|_{C^{3}}+\|F\|_{C^{2}}^{3}$ and a sufficiently
small $\|\frac{\psi^{\prime}}{\psi}\|_{\infty}$, we conclude
\begin{align*}
\|\tilde{u}^{1}-\tilde{u}^{2}\|_{\alpha,p,q} & \lesssim(\mathcal{T}^{2}\vee1)|u^{1}(0)-u^{2}(0)|+(1+\|\theta_{\mathcal{T}}^{1}\|_{\alpha,p,q})\tilde{C}_{\xi,\theta}\tilde{C}_{\tilde{u},u^{\#}}(\mathcal{T}\vee1)(\|F\|_{C^{3}}+\|F\|_{C^{2}}^{3})\\
 & \quad\times\Big(\|\theta_{\mathcal{T}}^{1}-\theta_{\mathcal{T}}^{2}\|_{\alpha,p,q}+\|\pi(\theta_{\mathcal{T}}^{1},\xi_{\mathcal{T}}^{1})-\pi(\theta_{\mathcal{T}}^{2},\xi_{\mathcal{T}}^{2})\|_{2\alpha-1,p/2,q}\Big).
\end{align*}
Finally, note again that $\tilde{C}_{\tilde{u},u^{\#}}$ is (locally)
bounded by Corollary~\ref{cor:bound}. \qed

\bibliographystyle{chicago}
\bibliography{quellenN}

\end{document}